\documentclass{amsart}[12pt]
\usepackage{amssymb,amsmath,color}
\usepackage{color}
\usepackage{tikz}
\usepackage{comment}
\usetikzlibrary{positioning}
\usetikzlibrary{calc}
 \usepackage{array,multirow,graphicx}
\usepackage{pgf}
\usepackage{pgfplots}
\pgfplotsset{compat=1.10}
\usepgfplotslibrary{fillbetween}
\usepackage{bm}
\usetikzlibrary{shapes.geometric}
\usetikzlibrary{graphs,graphs.standard,quotes}
\usetikzlibrary{calc}
\usepackage{bm}
 \usepackage[left=1in,right=1in,top=0.9in,bottom=0.9in]{geometry}

\newtheorem{theorem}{Theorem}[section]
\newtheorem{conjecture}[theorem]{Conjecture}
\newtheorem{lemma}[theorem]{Lemma}
\newtheorem{corollary}[theorem]{Corollary}
\newtheorem{proposition}[theorem]{Proposition}
\theoremstyle{definition}
\newtheorem{definition}[theorem]{Definition}

\theoremstyle{remark}
\newtheorem{remark}[theorem]{Remark}

\newtheorem{exampleth}[theorem]{Example}
\newenvironment{example}{\begin{exampleth}}{\hfill $\diamond$\\ \end{exampleth}}

\def \trop {\operatorname{trop}}
\def \hom {\operatorname{hom}}
\def \Hom {\operatorname{Hom}}
\def \HDE {\operatorname{HDE}}
\newcommand{\GU}{{\mathcal{G}_\mathcal{U}}}
\newcommand{\NU}{{\mathcal{N}_\mathcal{U}}}
\newcommand{\tropNU}{\trop(\NU)}
\newcommand{\tropGU}{\trop(\GU)}
\newcommand{\tropn}{\trop(\mathcal{N}_{\mathcal{U}_n})}
\newcommand{\troptn}{\trop(\mathcal{N}_{\mathcal{U}_{2n+1}})}

\newcommand{\projC}{\textup{proj}_{2n+1}(C)}
\newcommand{\RR}{\mathbb{R}}
\newcommand{\ZZ}{\mathbb{Z}}
\newcommand \U {\mathcal{U}}

\DeclareMathOperator*{\argmax}{arg\,max}

\numberwithin{equation}{section} 

\title{A Path Forward: Tropicalization in Extremal Combinatorics}
\thanks{Grigoriy Blekherman was partially supported by NSF grant DMS-1901950. Annie Raymond was partially supported by NSF grant DMS-2054404.}

\author{Grigoriy Blekherman}
\address{School of Mathematics, Georgia Institute of Technology,
686 Cherry Street
Atlanta, GA 30332}\email{greg@math.gatech.edu}

\author{Annie Raymond}
\address{Department of Mathematics and Statistics,
Lederle Graduate Research Tower, 1623D,
University of Massachusetts Amherst
710 N. Pleasant Street
Amherst, MA 01003} \email{raymond@math.umass.edu}

\begin{document}

\begin{abstract}
Many important problems in extremal combinatorics can be be stated as proving a pure binomial inequality in graph homomorphism numbers, i.e., proving that $\hom (H_1,G)^{a_1}\cdots \hom (H_k,G)^{a_k}\geq \hom (H_{k+1},G)^{a_{k+1}}\cdots \hom (H_m,G)^{a_m}$ holds for some fixed graphs $H_1,\dots,H_m$ and all graphs $G$. One prominent example is Sidorenko's conjecture. For a  fixed collection of graphs $\mathcal{U}=\{H_1,\dots,H_m\}$, the exponent vectors of valid pure binomial inequalities in graphs of $\mathcal{U}$ form a convex cone. We compute this cone for several families of graphs including complete graphs, even cycles, stars and paths; the latter is the most interesting and intricate case that we compute. In all of these cases, we observe a tantalizing polyhedrality phenomenon: the cone of valid pure binomial inequalities is actually rational polyhedral, and therefore all valid pure binomial inequalities can be generated from the finite collection of exponent vectors of the extreme rays. Using the work of Kopparty and Rossman (\cite{koppartyrossman}), we show that the cone of valid inequalities is indeed rational polyhedral when all graphs $H_i$ are series-parallel and chordal, and we conjecture that polyhedrality holds for any finite collection $\mathcal{U}$. We demonstrate that the polyhedrality phenomenon also occurs in matroids and simplicial complexes. Our description of the inequalities for paths involves a generalization of the Erd\H{o}s-Simonovits conjecture recently proved in its original form in \cite{saglam} and a new family of inequalities not observed previously. We also solve an open problem of Kopparty and Rossman on the homomorphism domination exponent of paths. One of our main tools is tropicalization, a well-known technique in complex algebraic geometry, first applied in extremal combinatorics in \cite{BRST2}. We prove several results about tropicalizations which may be of independent interest.
\end{abstract}

\maketitle 

% !TEX root =  tropicalizationofpaths-finitesetting.tex

\section{Introduction}The \emph{number of homomorphisms} from a graph $H$ to a graph $G$, denoted by $\hom(H;G)$, is the number of maps from $V(H)$ to $V(G)$ that yield a graph homomorphism, i.e., that map every edge of $H$ to an edge of $G$. 
Many important problems and results in extremal graph theory can be framed as certifying the validity of polynomial inequalities in the number of graph homomorphisms which are valid on all graphs. We use $P_k$ to denote a path with $k$ edges and $K_m$ for the complete graph on $m$ vertices; note that $P_0=K_1$ and $P_1=K_2$. We will often use $H$ as a short-hand for $\hom(H;G)$ for the purposes of writing inequalities. By $H^k$, we denote both  $\hom(H;G)^k$ and $\hom(k \textup{ disjoint copies of } H; G)$ as they are equal. For example, the Goodman bound \cite{goodman} (which implies Mantel's theorem \cite{mantel}) states that $K_1 K_3 \geq 2K_2^2-K_2K_1^2$ and can be derived from $P_0P_2\geq P_1^2$,
and Sidorenko's conjecture \cite{Sid93} can be stated as
$P_0^{2|E(H)|-|V(H)|} \cdot H \geq  P_1^{|E(H)|},$
for any bipartite graph $H$.

Instead of homomorphism numbers, many papers consider \emph{homomorphism densities} where $t(H;G):=\frac{\hom(H;G)}{|V(G)|^{|V(H)|}}$ is the probability that a random map from $V(H)$ to $V(G)$ yields a graph homomorphism. Understanding all $s$-tuples of numbers that can occur as either homomorphism numbers or densities for a fixed collection $\U=\{U_1,\dots,U_s\}$ is an extremely complicated problem.  We will call the set of all $s$-tuples the \emph{number} (resp. \emph{density}) \emph{profile} of the collection $\U$. To the best of our knowledge, full descriptions of all $s$-tuples are only known for pairs of graphs, and even then in a very limited number of cases. For instance an important result of Razborov (\cite{RazTriangle}) completely describes the density profile of $\mathcal{U}=\{K_2,K_3\}$ (see picture on the left of Figure~\ref{fig:edge-triangle-profile1}). This was extended by Nikiforov to $\U=\{K_2,K_4\}$ in \cite{Nikiforov}, and generalized by Reiher to $\U=\{K_2,K_n\}$ \cite{MR3549620}. 
Understanding all $s$-tuples is essentially equivalent to understanding all polynomial inequalities in homomorphism densities or numbers which are valid on all graphs. It is known that the problem of checking whether a polynomial expression in either numbers or densities is nonnegative on all graphs is undecidable \cite{ioannidisramakrishnan, HN11}.

\begin{figure}[ht]
  \centering
\begin{tikzpicture}[scale=0.7]
\begin{axis}[xlabel=\large{Edge density $t(K_2;G)$}, ylabel=\large{Triangle density $t(K_3;G)$}, xmin=0, ymin=0, xmax=1, ymax=1, xtick={0,1}, ytick={0,1}, samples=50]
\addplot[fill=gray!50, draw=none, domain=0:1] {x^(3/2)} \closedcycle;
\addplot[thick, samples=50,domain=0:1] {x^(3/2)};
\addplot[fill=white, draw=none, domain=0:1] {-8*(x-2/3)^2+2/9} \closedcycle;
\addplot[thick, samples=30,domain=1/2:2/3] {-8*(x-2/3)^2+2/9};
\addplot[fill=white, draw=none, domain=0:1]    {-22*(x-3/4)^2 +0.375} \closedcycle;
\addplot[thick, samples=30,domain=2/3:3/4] {-22*(x-3/4)^2 +0.375};
\addplot[fill=white, draw=none, domain=0:1]   {-42*(x-4/5)^2+0.48} \closedcycle;
\addplot[thick, samples=30,domain=3/4:4/5] {-42*(x-4/5)^2+0.48};
\addplot[fill=white, draw=none, domain=0:1]  {-68*(x-5/6)^2+5/9} \closedcycle;
\addplot[thick, samples=30,domain=4/5:5/6] {-68*(x-5/6)^2+5/9};
\addplot[fill=white, draw=none, domain=0:1] {-100*(x-6/7)^2+0.6122}  \closedcycle;
\addplot[thick, samples=30,domain=5/6:6/7] {-100*(x-6/7)^2+0.6122};
\addplot[fill=white, draw=none, domain=0:1] {2.7146*x-1.7146} \closedcycle;
\addplot[thick, samples=30, domain=6/7:1] {2.7146*x-1.7146};
\addplot[thick, samples=30, domain=0:1/2] {0};
\end{axis}
\end{tikzpicture} \quad \quad \quad \quad \begin{tikzpicture}[scale=0.8]
\begin{axis}[
  axis x line=center,
  axis y line=center,
  xtick={0},
  ytick={0},
  xlabel={\footnotesize{$\log(t(K_2;G))$}},
  ylabel={\footnotesize{$\log(t(K_3;G))$}},
  xlabel style={right},
  ylabel style={above},
  xmin=-5.5,
  xmax=1.5,
  ymin=-5.5,
  ymax=1.5]
\addplot +[mark=none, thick, black] coordinates {(0,0) (0,-5.5)};
\addplot[name path=A, thick, black, domain=-11/3:0] {3*x/2};
\addplot[name path=B, thick, black, domain=-5.5/10000:0] {10000*x};
\addplot[gray!50] fill between[of=A and B];
\end{axis}
\end{tikzpicture}
\caption{\label{fig:edge-triangle-profile1} The density profile of an edge and triangle, and its tropicalization.}
\end{figure}
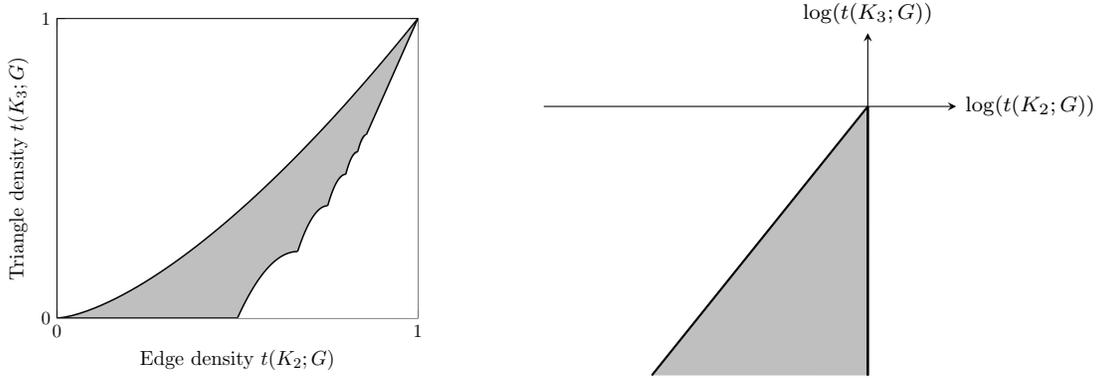

\emph{A pure binomial inequality} has the form $\mathbf{x}^{\bm{\alpha}} \geq \mathbf{x}^{\bm{\beta}}$ where $\mathbf{x}=(x_1,\dots,x_s) \in \mathbb{R}^s_{\geq 0}$ with $\bm{\alpha}, \bm{\beta} \in \mathbb{R}_{\geq 0}^s$, and it is equivalent to \emph{a linear inequality in logarithms:} $\langle \bm{\alpha}, \log \mathbf{x} \rangle \geq \langle \bm{\beta}, \log \mathbf{x} \rangle$. This inequality holds regardless of the base of the logarithm. \emph{Tropicalization} provides a base-independent way of studying the image of the logarithm map. It is a well-known technique in complex algebraic geometry and it was first applied to graph homomorphism problems in \cite{BRST2}. 
Instead of trying to understand the validity of a single pure binomial inequality, or a family of such inequalities, \emph{tropicalization allows us to analyze all valid pure binomial inequalities for a finite collection of graphs.}

While extremely complicated, both homomorphisms numbers and density $s$-tuples are closed under coordinatewise (Hadamard) multiplication. This implies that the logarithmic image is closed under addition, and tropicalization of any set closed under Hadamard multiplication is a closed, convex cone. Therefore analyzing binomial inequalities that are valid on an $s$-tuple of graphs appears to be a significantly simpler problem.  For instance, for any pair of graphs, the tropicalization of its number or density profile is a $2$-dimensional closed convex cone, so it is defined by two inequalities. (See the right side of Figure~\ref{fig:edge-triangle-profile1} for the tropicalization of the density profile of $\{K_2,K_3\}$.) 

In particular, there seems to be a hidden \emph{polyhedrality phenomenon}: tropicalizations that we are able to compute are always rational polyhedral cones. Dually, this allows us to generate all binomial inequalities for a fixed $\U$ from a finite collection of binomial inequalities. This polyhedrality phenomenon is not limited to graphs and counting homomorphisms, we also find it in \emph{simplicial complexes and matroids}. We stress that understanding binomial inequalities in graph homomorphism numbers is far from simple, for instance, despite some considerable amount of attention and progress, the Sidorenko conjecture remains open \cite{Sid93, li2011logarithmic, szegedy2014information, MR3456171,conlon2018sidorenko, conlonkimleelee}. Another interesting example of a binomial inequality is the Erd\H{o}s-Simonovits conjecture on paths which states that $P_0^{2v-2w}\cdot P_{2v+1}^{2w+1}  \geq  P_{2w+1}^{2v+1}$ for any $w<v$, and which was recently solved in \cite{saglam}.

We start with an illustrative example, which demonstrates the type of results that we can prove:

\begin{example}\label{ex:intro}
Consider the collection of even cycles $C_4,C_6,\dots,C_{2k}$. The following pure binomial inequalities hold for even cycles:
\begin{align*}
&\textrm{log-convexity:} \,\, C_{2m-2} C_{2m+2} \geq C_{2m}^2,  \,\,\, 3\leq m \leq k,\\ 
&\textrm{non-decreasing:} \,\, C_{2m-2} \leq C_{2m}, \,\,\, 3\leq m\leq k,\\
&\textrm{log-sublinear growth:} \,\, C_{2m-2}^m \geq C_{2m}^{m-1}, \,\,\, 3\leq m\leq k.
\end{align*}
Moreover, any pure binomial inequality in even cycles can be deduced in a finite way from the above inequalities, by exponentiating these inequalities and multiplying them together. For example, $C_4^3 C_{10}^2 \geq C_8^4$ is implied by the inequalities above, namely $C_6 C_{10} \geq C_8^2$ and $C_4^3 \geq C_6^2$, since they can be combined as $C_4^3 C_{10}^2 \geq C_6^2 C_{10}^2 \geq C_8^4.$
\end{example}
We compute all pure binomial inequalities in homomorphism numbers for several natural collections of graphs, including paths, complete graphs, star graphs, even cycles and (separately) odd cycles. In all these cases, we observe the same tantalizing phenomenon: similarly to Example \ref{ex:intro}, all pure binomial inequalities can be deduced from a \emph{finite collection of pure binomial inequalities}. This was observed in some of these families for density inequalities in \cite{BRST2}, but the statements for homomorphism numbers are strictly more general. 

Building on the work of Kopparty and Rossman \cite{koppartyrossman}, we show the following:

\begin{theorem}\label{thm:informalchordalseriesparallel}
Let $\mathcal{U}$ be a finite collection of chordal series-parallel graphs. Then there exists a finite collection of binomial inequalities, such that any pure binomial inequality in the graphs of $\U$ can be deduced in a finite way from this finite collection.
\end{theorem}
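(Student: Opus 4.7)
The plan is to translate the statement into a rational polyhedrality question via tropical duality, and then to invoke Kopparty--Rossman's linear-programming characterization. I would first observe that the exponent vectors of valid pure binomial inequalities in $\mathcal{U}$ form the dual of $\tropNU$ (up to separating the positive and negative entries of an exponent vector), and that saying every such inequality is deducible from finitely many by exponentiating and multiplying is equivalent to this dual being finitely generated. By standard convex duality this is in turn equivalent to $\tropNU \subseteq \RR^{|\mathcal{U}|}$ being a \emph{rational polyhedral cone}, and this is what I would aim to prove.

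Next I would invoke the Kopparty--Rossman result (\cite{koppartyrossman}): when $H_1$ is chordal and $H_2$ is series-parallel, $\HDE(H_1,H_2) := \sup\{c : \hom(H_1;G) \geq \hom(H_2;G)^c \text{ for all } G\}$ is the optimal value of a rational linear program built from tree decompositions of $H_1$ and $H_2$ together with entropy/submodularity constraints, and the optimum is attained by an explicit graph sequence $G_n$. The goal is to run this LP machinery jointly on the entire collection $\mathcal{U}$. For each weight vector $\omega \in \RR^{|\mathcal{U}|}$ define the support function
\[
h(\omega) := \sup\{\langle \omega, \mathbf{x}\rangle : \mathbf{x} \in \tropNU\},
\]
and set up a single rational LP, in a fixed finite-dimensional space determined by the common $2$-tree structure of the graphs in $\mathcal{U}$, whose value equals $h(\omega)$. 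The upper bound direction is handled by the Shearer-type entropy inequalities of Kopparty--Rossman, applied to the coordinates of $\omega^+$ and $\omega^-$ simultaneously; the lower bound is produced by combining their LP-dual-driven constructions to obtain a graph sequence that is jointly extremal for all coordinates of $\omega$ at once.

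Because this joint LP has rational coefficients and a combinatorial size depending only on $\mathcal{U}$ (not on $\omega$), the map $\omega \mapsto h(\omega)$ is piecewise linear with finitely many linear pieces. Hence $\tropNU = \{\mathbf{x} : \langle \omega, \mathbf{x}\rangle \leq h(\omega) \text{ for all } \omega\}$ is defined by finitely many rational linear inequalities, namely those corresponding to the domains of linearity of $h$. Dualizing, one gets the finite collection of generating pure binomial inequalities promised by the theorem, with the explicit generators read off from the LP's vertex solutions.

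The hard part is the passage from pairwise $\HDE$ to a joint LP handling arbitrary multi-variable exponents. Kopparty--Rossman's entropy argument is tailored to a single inequality with a single exponent $c$, and its accounting needs to be recast so that all graphs in $\mathcal{U}$ appear on both sides simultaneously with arbitrary rational weights. In particular, the matching constructions must be jointly extremal for all coordinates of $\omega$ at once, rather than just for one pair, and the feasible region of the entropy LP must be shown to be uniform across $\omega$. This is precisely where the chordal series-parallel hypothesis is used: the uniformly bounded treewidth and the common $2$-tree decomposition give a single polytope of entropy profiles into which the constraints for every $H_i$ embed, so that varying $\omega$ changes only the linear objective, not the feasible region. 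I expect the rest of the argument to be a bookkeeping extension of \cite{koppartyrossman}.
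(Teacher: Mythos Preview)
Your overall plan---reduce the statement to rational polyhedrality of $\tropNU$ and then use the Kopparty--Rossman linear program---matches the paper's strategy. But the way you propose to bridge the gap between the pairwise $\HDE$ framework and arbitrary exponent vectors does not work as written, and you are missing the structural lemma that makes the reduction clean.

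The paper does \emph{not} build a joint LP for arbitrary $\omega \in \RR^s$. Instead it first proves (Corollary~\ref{cor:onenegative}, a consequence of max-closedness of $\tropNU$) that every extreme ray of $\tropNU^*$ has at most one negative coordinate. This reduces the problem to understanding, for each fixed $j$, the inequalities $\prod_{i\neq j} H_i^{\alpha_i} \geq H_j^\beta$, i.e.\ to computing $\HDE\!\bigl(\prod_{i\neq j} H_i^{\alpha_i};\, H_j\bigr)$. Now the Kopparty--Rossman LP applies directly: the feasible region is $\mathcal{P}(H_j)$ (a fixed rational polytope depending only on the single target $H_j$), and the paper observes that the objective is linear in the integer exponents $\alpha_i$ once one subdivides $\mathcal{P}(H_j)$ according to which homomorphism $\varphi$ achieves the inner maximum. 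Piecewise linearity in $\alpha$ then follows because only the objective varies.

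Your proposal tries to skip the max-closedness step and handle general $\omega$ with an arbitrary split $\omega = \omega^+ - \omega^-$. The claim that ``the uniformly bounded treewidth and the common $2$-tree decomposition give a single polytope of entropy profiles into which the constraints for every $H_i$ embed'' is not correct: the Kopparty--Rossman polytope $\mathcal{P}(F_2)$ lives in $\RR^{2^{|V(F_2)|}}$ and is built from the specific vertex set and separating-set structure of $F_2$. Different $H_i$ do not share a $2$-tree decomposition, and there is no single polytope in which all of them sit so that varying $\omega$ only changes the objective. Without the one-negative-coordinate reduction you would need, for each sign pattern of $\omega$, a separate LP with feasible region depending on which graphs lie on the negative side, and you give no argument that these glue together into a finite description. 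The missing ingredient is precisely Corollary~\ref{cor:onenegative}.
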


We conjecture that this polyhedrality phenomenon holds for any finite collection of graphs (see Conjecture \ref{conj:polyhedrality} for a precise formulation). 
The most complicated case we are able to completely understand is the full characterization of pure binomial inequalities in paths. Here, instead of indexing paths by their number of edges as before, it is more intuitive to index them by the number of vertices. We let $Q_u$ be a path with $u$ vertices. 

\begin{theorem}\label{thm:pathintro}
The following inequalities hold for homomorphism numbers of paths into any graph $G$ with no isolated vertices: 
\begin{align}
&\textrm{log-convexity between odd paths:} \,\, Q_a^{c-b}Q_{c}^{b-a} \geq Q_b^{c-a},\,\, 0\leq a\leq b \leq c, \,\, \textrm{and}\,\, a,c \,\, \textrm{odd},\\
\label{ineq:erds}&\textrm{log-convexity for odd and even paths, even middle:} \,\, Q_a^{c-b}Q_{c}^{b-a} \geq Q_b^{c-a},\,\, a\leq b \leq c, \,\, a\,\, \text{odd}, \,\, b,c \,\, \textrm{even},\\
\label{ineq:new} &\textrm{``weak convexity''  for odd and even path, odd middle:}\,\, Q_a^{\frac{c}{2}}Q_{c} \geq Q_b^{\frac{c}{2}},\,\, a\leq b \leq c, \,\, a,b\,\, \textrm{odd}, \,\, c \,\, \textrm{even},\\
&\textrm{non-decreasing:} \,\, Q_a \leq Q_b, \,\, a \leq b,\\
&\textrm{log-subadditivity:} \,\, Q_aQ_b \leq Q_{a+b}.
\end{align}
Moreover, any pure binomial inequality in paths can be deduced in a finite way from the above inequalities. In particular, for a binomial inequality where the largest path has $v$ vertices, only inequalities involving paths on at most $2v$ vertices need to be considered.
\end{theorem}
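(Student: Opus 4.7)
The plan splits into two parts: (A) verifying that each listed inequality holds, and (B) establishing completeness via tropicalization, following the general framework of this paper and \cite{BRST2}.

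For (A), I would use the identity $\hom(Q_u; G) = \mathbf{1}^T A^{u-1}\mathbf{1}$, where $A$ is the adjacency matrix of $G$. Monotonicity $Q_a \le Q_b$ follows from the no-isolated-vertex hypothesis: every walk of length $k-1$ extends to at least one walk of length $k$. Log-subadditivity follows by decomposing $Q_{a+b} = \sum_{(u,v)\in \vec{E}(G)} W_a(u)W_b(v)$, where $W_k(u)$ counts walks of length $k-1$ ending at $u$, and comparing this edge-sum to the unrestricted sum $\sum_{u,v} W_a(u)W_b(v) = Q_aQ_b$. Log-convexity between odd paths uses the spectral identity $Q_{2k+1} = \|A^k\mathbf{1}\|^2$ followed by Cauchy--Schwarz on the sequence $\|A^k \mathbf{1}\|$, combined with monotonicity to handle a non-odd middle index. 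Inequality \eqref{ineq:erds} is a generalization of Erd\H{o}s--Simonovits, whose base case ($a=1$) is \cite{saglam}; I would attempt to extend to general odd $a$ by a Cauchy--Schwarz interpolation between the Saglam bound and odd-path log-convexity. The new inequality \eqref{ineq:new} is the most delicate and will likely require a bespoke argument combining log-subadditivity with odd-path log-convexity in a parity-sensitive way, or a direct spectral estimate on the vectors $A^k\mathbf{1}$ that distinguishes $b$ odd from $b$ even.

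For (B), I would pass to the tropicalization $T_v \subset \mathbb{R}^v$: the closed conical hull of the log-profiles $(\log Q_1(G),\dots,\log Q_v(G))$ over graphs $G$ without isolated vertices. This is a closed convex cone because the number profile is closed under the Hadamard product. The listed inequalities cut out a rational polyhedral cone $P_v \supseteq T_v$, and the goal is to show $P_v = T_v$. I would enumerate the extreme rays of $P_v$ and, for each ray, exhibit a parametric graph family whose normalized log-profile approaches the ray. Natural candidate families are cliques $K_n$ (asymptotic ray $\log Q_u \sim u\log n$), stars $K_{1,n-1}$ (parity-split ray $\log Q_{2k+1}\sim (k+1)\log n$, $\log Q_{2k}\sim k\log n$), disjoint unions of edges (constant profile), and likely one further family --- perhaps a star with a short path appended, or a specific blowup --- realizing the new weak-convexity facet.

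The main obstacle is the completeness step, and within it the most delicate point is showing that the new inequality \eqref{ineq:new} corresponds to a genuine facet of $P_v$ realized by an explicit graph family. The parity bifurcation in the inequalities forces a careful case analysis: one must track odd- and even-indexed log-profiles separately and confirm that no cross-parity extremal direction is overlooked. Once the full list of extreme rays is pinned down, the $2v$-vertex bound in the moreover clause should follow from a Fourier--Motzkin elimination showing that the coordinate projection $P_{2v}\to P_v$ coincides with $P_v$ as computed directly from the stated inequalities; this step is combinatorial but routine once the polyhedral description is in hand.
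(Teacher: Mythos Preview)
Your plan for part~(A) is reasonable for the first, fourth, and fifth families, but for inequalities~\eqref{ineq:erds} and~\eqref{ineq:new} you have no actual argument. The paper does not prove these by spectral or Cauchy--Schwarz methods; it uses the Kopparty--Rossman linear program for $\HDE(F_1;F_2)$ (Section~\ref{sec:kr}), exhibiting for each $p\in\mathcal{P}(P_w)$ an explicit homomorphism $\psi$ from a disjoint union of paths into $P_w$ that covers every edge and every internal vertex the correct number of times (Theorems~\ref{thm:generalizationoferdossimonovits} and~\ref{thm:generalizationoferdossimonovits2}). Your suggested ``Cauchy--Schwarz interpolation'' for \eqref{ineq:erds} and ``bespoke argument combining log-subadditivity with odd-path log-convexity'' for \eqref{ineq:new} are placeholders, not proofs, and there is no indication that such elementary arguments exist.

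The more serious gap is in part~(B). You propose to enumerate the extreme rays of the polyhedral cone $P_v$ cut out by the listed inequalities and realize each one by a simple graph family (cliques, stars, unions of edges, perhaps one more). The paper explicitly states that no simple description of the extreme rays, or even the doubly extreme rays, of $\projC$ was found. Instead, the paper works with a \emph{lifted} cone $C\subset\mathbb{R}^{4n+4}$ in variables $y_0,\dots,y_{4n+3}$ --- this is precisely where the ``at most $2v$ vertices'' clause comes from, not from Fourier--Motzkin elimination --- and constructs a large parametrized family $\mathcal{R}$ of rays via Kopparty--Rossman blow-up graphs $B_m$ with carefully designed weight functions $p_{b,s,\mathbf{d}}$ on $P_{2f+1}$ (Definition~\ref{def:blowupgraph}, Theorem~\ref{thm:construction}). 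The vectors $\mathbf{d}$ satisfy intricate symmetry and domination constraints, and the family is then enlarged by convex and tropical (max) combinations in several stages. Showing $\projC\subseteq\operatorname{dh}(\mathcal{R})$ requires, for each point $\mathbf{r}\in\projC$ and each coordinate $l$, an explicit construction of a dominating ray $\mathbf{r}'_l\in\mathcal{R}$ agreeing at coordinate $l$; this occupies Section~\ref{subsec:doublehullofconstruction} and relies heavily on the max-closure of $\troptn$, which you do not mention. Your handful of candidate families is far too small to span this cone.
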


\begin{remark}
We prove a more general version of this theorem where we consider inequalities valid for any graph (instead of graphs with no isolated vertices), but restricting to the slightly simpler case of no isolated vertices avoids some slightly technical inequalities, which do not contribute additional insight into the big picture. Note that inequality \eqref{ineq:erds} is a generalization of the Erd\H{o}s-Simonovits inequality (which is restricted to the case when $a=1$), while inequality \eqref{ineq:new} is a new inequality, which we have not found in the literature. 
\end{remark}

Some of the proofs for the validity of inequalities in Theorem \ref{thm:pathintro} are based on the work in \cite{koppartyrossman}. Our global approach of analyzing all binomial inequalities between several paths allows us to solve the following open problem for the homomorphism domination exponent of any two paths that was posed in that paper:  find the largest $c_{m,n}$ so that $Q_m \geq Q_n^{c_{m,n}}$ is a valid inequality for a general $m$ and $n$.

\begin{theorem}\label{thm:iHDE}
The largest $c_{m,n}$ so that $Q_m \geq Q_n^{c_{m,n}}$ is a valid inequality is

$$c_{m,n}=\left\{\begin{array}{ll}
\frac{m}{n+1} & \textup{when }m \textup{ is even and }n \textup{ is odd and } m\leq n\\
\frac{km-(m-1)}{k(n-1)+2k-n} & \textup{when } m \textup{ and } n \textup{ are both even and } m\leq n\\
\frac{m}{n} & \textup{when } m \textup{ is odd and } m\leq n\\
1 & \textup{when } m\geq n
\end{array}\right.$$
where $k$ is the smallest integer such that $k\cdot m\geq n$. Note that the last two lines were already proven in \cite{koppartyrossman}. Our contribution is the first two lines.
\end{theorem}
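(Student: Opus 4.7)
The plan is to derive Theorem~\ref{thm:iHDE} from Theorem~\ref{thm:pathintro}. Writing $x_u=\log Q_u$, the inequality $Q_m\geq Q_n^c$ becomes $x_m\geq c\,x_n$, and Theorem~\ref{thm:pathintro} identifies the tropicalization of the path profile as the rational polyhedral cone cut out by the five families listed there. So $c_{m,n}=\inf\{x_m/x_n\}$ over that cone, a finite linear-programming problem. Cases 3 and 4 are already in \cite{koppartyrossman}; for the new Cases 1 and 2 (both require $m$ even and $m\leq n$), I need to (a) prove validity by writing $x_m\geq c_{m,n}\,x_n$ as a nonnegative combination of the basic inequalities, and (b) prove sharpness by producing a graph attaining the ratio.

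For Case 1 ($m$ even, $n$ odd), I would apply log-convexity between odd paths with $a=m-1$, $b=n$, $c=qm-1$ (odd and $>n$) for any integer $q\geq 2$, obtaining $(qm-1-n)x_{m-1}+(n-m+1)x_{qm-1}\geq (q-1)m\,x_n$. Then non-decreasing gives $x_{m-1}\leq x_m$, and the decomposition $qm-1=(q-1)m+(m-1)$ combined with iterated log-subadditivity gives $x_{qm-1}\leq (q-1)x_m+x_{m-1}\leq q\,x_m$; these collapse the left-hand side to $(q-1)(n+1)\,x_m$, and the spurious factor $q-1$ cancels to leave $(n+1)x_m\geq m\,x_n$. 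Sharpness follows from the stars $K_{1,k}$ as $k\to\infty$: $Q_u\sim 2k^{u/2}$ for even $u$ and $Q_u\sim k^{(u+1)/2}$ for odd $u$, so $x_m/x_n\to m/(n+1)$.

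For Case 2 ($m,n$ both even, $k=\lceil n/m\rceil$), the same template works with~\eqref{ineq:erds} instead: applying it with $a=m-1$, $b=n$, $c=km$ (both even, and $km\leq m+n\leq 2n$, so within the Theorem~\ref{thm:pathintro} range) gives $(km-n)x_{m-1}+(n-m+1)x_{km}\geq((k-1)m+1)x_n$. Substituting $x_{m-1}\leq x_m$ and $x_{km}\leq k\,x_m$ collapses the left-hand side to $[k(n+1)-n]\,x_m$, proving $c_{m,n}\geq((k-1)m+1)/(k(n+1)-n)$.

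The hard part will be matching sharpness in Case 2: the star only gives $c\leq m/n$, strictly larger than the claimed $c_{m,n}$ whenever $m\nmid n$, so a different graph family is required. By polyhedrality the infimum is attained on an extreme ray of the cone, and the validity argument identifies the three inequalities that must be tight there: \eqref{ineq:erds} with the chosen $(a,b,c)$, the subadditive bound $x_{km}\leq k\,x_m$, and the non-decreasing bound $x_{m-1}\leq x_m$. I would search among trees -- most likely depth-controlled spiders or subdivided stars whose arm length is linked to $k=\lceil n/m\rceil$ -- for a family that realizes all three as asymptotic tropical equalities. I would also check the degenerate subcase $m\mid n$, in which $k=n/m$, inequality~\eqref{ineq:erds} is trivial, and the bound $c_{m,n}=m/n$ reduces to pure iterated log-subadditivity.
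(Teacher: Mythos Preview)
Your validity arguments for both new cases are correct and, once unpacked as conic combinations, essentially match the paper's. For Case~2 your derivation is almost identical to the paper's: the paper combines the inclusion inequality $Q_m^k\geq Q_{km}$, the non-decreasing inequality $Q_m\geq Q_{m-1}$, and the generalized Erd\H{o}s--Simonovits inequality (your \eqref{ineq:erds}), which is exactly what your substitutions amount to. For Case~1 your route through log-convexity between odd paths plus log-subadditivity is a clean reorganization of the paper's more explicit conic combination; one small correction is that ``any $q\geq 2$'' should read ``any $q$ with $qm-1\geq n$''. Your star construction for sharpness in Case~1 is correct and pleasantly simpler than the paper's blow-up of $P_{2j}$.

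The genuine gap is Case~2 sharpness. You acknowledge that stars only give the weaker bound $m/n$, and your plan to ``search among trees --- most likely depth-controlled spiders or subdivided stars'' is not a proof. The paper resolves this with an explicit construction from its blow-up machinery (Section~\ref{subsec:blowup} and Definition~\ref{def:blowupgraph}): take $s=2(k-1)$, $b=2k-1$, and the vector $\mathbf{d}$ with $d_u=1$ exactly when $u$ is a multiple of $m/2$ up to $(k-1)m/2$, and $d_u=0$ otherwise. The resulting blow-up graph realizes a ray on which $x_m=(k-1)m+1$ and $x_n=k(n+1)-n$, giving the exact ratio. This construction is not a tree; it is a weighted-path blow-up whose odd-index ``bumps'' are spaced at period $m/2$, which is what forces the three inequalities you identified (Erd\H{o}s--Simonovits, subadditivity, non-decreasing) to be simultaneously tight. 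Finding this family by searching among spiders would not succeed, so this is where your plan would stall.
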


Pure binomial inequalities in path homomorphism numbers have a long history and often go under the name of ``walks in graphs''. 
Besides the  Erd\H{o}s-Simonovits conjecture and its solution and the results of Kopparty and Rossman, Lagarias, Mazo, Shepp and McKay showed in \cite{Lagarias} that $P_0P_{2a+2b} \geq P_{2a+b}P_b.$ In \cite{DressGutman}, Dress and Gutman showed that $P_{2a}P_{2b} \geq P_{a+b}^2.$ In \cite{Hemmecke}, Hemmecke, Kosub, Mayr, T\"{a}ubig and Weihmann generalized all previous listed inequalities by certifying the following two types of inequalities: $P_{2a}P_{2(a+b+c)} \geq P_{2a+c}P_{2(a+b)+c}$ and $P_{2l+pk}P_{2l}^{k-1}\geq P_{2l+p}^k.$ 
Note that pure binomial inequalities in path numbers are also related to spectral properties of the graph, such as the spectral radius \cite{MR2257594}.  A related interesting question of what can be said about graphs that have the same path homomorphism numbers for all paths was addressed in \cite{MR3829971}.

We develop new techniques for analyzing tropicalizations. Some of our results mirror results from ``tropical convexity", but as we will explain, there is a slight difference in setup, which forces us to provide proofs. We believe that some of our tropicalization results are of independent interest.

We now discuss some definitions to make certain concepts more precise and state some important results more formally. We also discuss the proof strategies that we use.

\subsection{Definitions, Results and Proof Strategies:}

The {\em number graph profile} of a collection of connected graphs $\mathcal{U} = \{H_1, \ldots, H_s \}$, denoted as $\mathcal{N}_\mathcal{U}$,
 is the set of all vectors $(\hom(H_1;G), \hom(H_2;G), \ldots, \hom(H_s;G))$ as $G$ varies over all graphs. The \emph{density graph profile} of $\mathcal{U}$, denoted as $\mathcal{G}_\mathcal{U}$, is the closure of all vectors $(t(H_1;G), t(H_2; G), \ldots, t(H_s;G))$ as $G$ varies over all graphs. For any $\mathcal{U}$, the density graph profile $\GU$ is contained in $[0,1]^s$ and the number graph profile $\NU$ is contained in $\mathbb{N}^s$. We say that a subset $\mathcal{S}$ of $\RR^s$ has \emph{the Hadamard property} if $\mathcal{S}$ is closed under coordinatewise (Hadamard) multiplication. By considering tensor product of graphs, it is easy to see that both the numbers and density profiles have the Hadamard property. 
 
Pure binomial inequalities in homomorphism numbers are more general than pure binomial inequalities in densities. Any valid inequality for the density graph profile $\mathcal{G}_\mathcal{U}$ corresponds to a valid inequality for the number graph profile $\mathcal{N}_{\mathcal{U}'}$ where $\mathcal{U}'$ is the union of $\mathcal{U}$ and the one vertex graph $P_0$. Indeed, one can multiply the densities by $|V(G)|$ to a high enough power to cancel out all denominators, and thus obtain an inequality involving homomorphism numbers where different terms are multiplied by $|V(G)|$ to different powers. Since $\hom(P_0;G)=|V(G)|$ this is an equivalent inequality in homomorphism numbers. Note that such an inequality must have the same number of vertices in each term, which doesn't need to be true for inequalities for the number graph profile. In this paper, we will only consider number graph profiles, which we hereafter sometimes refer to simply as \emph{profiles} or \emph{graph profiles}. 
 
 Let $\log \,:\, \RR^s_{>0} \rightarrow \RR^s$ be defined as $\log({\bf v}) := (\log_e(v_1), \ldots, \log_e(v_s))$. If we need to change the
base of the log from $e$ to $\alpha$, then we will explicitly write $\log_\alpha$.
For a set $\mathcal{S} \subseteq \RR^s_{\geq 0}$, we define $\log(\mathcal{S}) := \log (\mathcal{S} \cap \RR^s_{>0})$.
The tropicalization of $\mathcal{S}$, which is also called the {\em logarithmic limit set} of $\mathcal{S}$, is defined to be:
$$\trop(\mathcal{S}) := \lim_{\tau \rightarrow \infty} \log_{\tau}(\mathcal{S}).$$ In \cite[Lemma 2.2]{BRST2}, it was shown that if $\mathcal{S}\subseteq \mathbb{R}^s_{\geq 0}$ has the Hadamard property, then $\trop(\mathcal{S})$ is a closed convex cone, and moreover the tropicalization of $\mathcal{S}$ is equal to the closure of the conical hull of $\log(\mathcal{S})$, and so $\trop(\NU) = \textup{cl}(\textup{cone}(\log(\NU)))$. The extreme rays of the dual cone $\tropNU^*$ generate all of the pure binomial inequalities valid on $\NU$. 

Note that $\trop(\NU) \subseteq \mathbb{R}_{\geq 0}^s$ since $\NU \cap \RR^s_{>0}$ contains only points where every coordinate is at least one. We show in Proposition \ref{prop:dontlose} that we do not add any spurious binomial inequalities by removing points with zero coordinates from $\NU$, except for adding the pure binomial inequalities that every coordinate is at least one.

Another nice property of $\tropNU$ is that it is \emph{max-closed}: if $(x_1, \ldots, x_s), (y_1, \ldots, y_s)\in \tropNU$, then $(\max\{x_1,y_1\}, \ldots, \max\{x_s,y_s\})\in \tropNU$. In Theorem \ref{cor:onenegative}, we use this property to show that the only pure binomial inequalities needed to fully describe $\tropNU$ have the form $H_1^{\alpha_1} \cdots H_s^{\alpha_s} \geq H_j^{\beta}$ for some $\alpha_1, \ldots, \alpha_s, \beta \geq 0$ and $j\in [s]$.

Given a subset $\mathcal{S}\in \mathbb{R}_{\geq 0}^s$, \emph{the double hull} of $\mathcal{S}$ is the smallest closed and max-closed convex cone containing $\mathcal{S}$.
It follows from Corollary \ref{cor:dualc} that the double hull of a polyhedral cone is polyhedral, and we show that a closed and max-closed convex cone in $\mathbb{R}^s_{\geq 0}$ is the double hull of its doubly extreme rays in Theorem \ref{thm:dext}.

In Section \ref{sec:examplesoftropicalizations}, we compute $\tropNU$ for different classes of graphs and also for simplicial complexes and matroids.
Our strategy is to first find a proposed $H$-description of the tropicalization. 
We certify that all of the inequalities of the $H$-description come from valid pure binomial inequalities on the profile $\NU$. Certification can be done via the sums of squares method, using AM-GM or H\"older's inequality, or using the tools presented in \cite{koppartyrossman}. This shows that the cone defined by the $H$-description contains the tropicalization $\tropNU$.

To show that the cone defined by the $H$-description is contained in the tropicalization, we use different techniques. In the simplest cases, e.g., for the density cases done in \cite{BRST2} and some cases in this paper, we find all extreme rays of the cone defined by the $H$-description and show that all these rays are \emph{realizable}. An extreme ray $\mathbf{r}$ is realizable if there exists a graph $G$ or a sequence of graphs $G_n$ on $n$ vertices such that  $\alpha(\log \hom(H_1;G), \ldots, \log \hom(H_s;G))=\mathbf{r}$ or $\alpha(\log \hom(H_1;G_n), \ldots, \log \hom(H_s;G_n))\rightarrow \mathbf{r}$ as $n\rightarrow \infty$ respectively for some constant $\alpha\in \mathbb{R}_{\geq 0}$. These sequences of graphs often arise by constructing blow-up graphs defined in Section \ref{subsec:blowup}, and taking tensor powers and disjoint unions of the blow-up graphs. 

In some cases, the set of doubly extreme rays of the cone defined by the $H$-description is significantly simpler that the set of all extreme rays. After finding the doubly extreme rays, we verify that they are realizable and that their double hull contains the cone defined by the $H$-description. An example of this strategy is the case of even cycles in Theorem \ref{thm:ecycles}.

The most challenging case is when we consider the path profile: $\mathcal{U}=\{P_0, P_1, \ldots, P_{2n+1}\}$. We do not give an explicit $H$-description of the tropicalization of the path profile. Instead we construct a \emph{lifted representation} by using inequalities involving larger paths.

\begin{theorem}
Let $\mathcal{U}:=\{P_0, P_1, \ldots, P_{2n+1}\}$, $y_i:=\log(\hom(P_i;G))$, and 
\begin{align*}
C:=\big\{\mathbf{y}=&(y_0, y_1, y_2, \ldots, y_{4n+3}) \in \mathbb{R}^{4n+4} | &&\\
 &y_{2u}-2y_{2u+1}+y_{2u+2} \geq 0 && \forall 0 \leq u \leq 2n \\
& y_{2u} - 2y_{2u+2} + y_{2u+4} \geq 0 && \forall 0 \leq u \leq 2n-1 \\
& -y_{2u} + y_{2u+1} \geq 0 && \forall 1\leq u \leq 2n+1  \\
& y_{2u+1}+y_{2v+1}-y_{2u+2v+3} \geq 0 && \forall 0 \leq u \leq v \textup{ such that } u+v \leq 2n\\
& 2y_{2u}-(2v+1-2u)y_{2v-1}+(2v-1-2u)y_{2v+1} \geq 0 && \forall 0 \leq u<v-1, v\leq 2n+1   \\
& (u+1)y_{2u-2}-(u+1)y_{2u}+y_{2u+1} \geq 0  && \forall 1 \leq u\leq 2n+1 \\
& y_1-2y_2+y_4\geq 0\\
& y_1-(2v+1)y_{2v-1}+(2v-1)y_{2v+1} \geq 0 && \forall 2 \leq v\leq 2n+1\\
& -y_1+y_2 \geq 0 \big\}.
\end{align*}

Further, let $\projC=\{(y_0, y_1, \ldots, y_{2n+1})|(y_0, y_1, \ldots, y_{4n+3})\in C\}$. Then $\projC=\tropNU$.
\end{theorem}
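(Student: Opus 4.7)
The plan is to establish both inclusions separately. The lifted cone $C \subseteq \mathbb{R}^{4n+4}$ encodes a finite collection of linear inequalities that are the logarithmic forms of pure binomial inequalities between homomorphism counts of paths of up to $4n+3$ edges. Its coordinate projection $\projC$ onto the first $2n+2$ coordinates is a rational polyhedral, hence closed convex, cone in $\mathbb{R}^{2n+2}$, which is what makes the clean equality $\projC = \tropNU$ plausible to aim for.

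For the inclusion $\tropNU \subseteq \projC$, I would verify that every defining inequality of $C$ is a valid pure binomial inequality on the larger profile $\mathcal{N}_{\{P_0, \ldots, P_{4n+3}\}}$. Each such inequality should match a member of one of the families listed in Theorem \ref{thm:pathintro} after the translation $Q_u = P_{u-1}$: the first two families are log-convexity along consecutive even-odd-even and even-even-even triples, the next is monotonicity between paths differing by one edge, the next is log-subadditivity among odd-edge paths, and the remaining ones are exactly the generalized Erd\H{o}s--Simonovits inequality \eqref{ineq:erds} and the weak-convexity inequality \eqref{ineq:new}. Granting these validities, for any graph $G$ the full log-vector $\mathbf{y}(G) := (\log \hom(P_i;G))_{i=0}^{4n+3}$ lies in $C$, and its first $2n+2$ coordinates lie in $\projC$. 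Hence $\log(\NU) \subseteq \projC$, and since $\projC$ is a closed convex cone, passing to the closure of the conical hull gives $\tropNU \subseteq \projC$.

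For the reverse inclusion $\projC \subseteq \tropNU$, I would apply the double-hull machinery of Theorem \ref{thm:dext}: since $\tropNU$ is a closed max-closed convex cone in $\mathbb{R}^{2n+2}_{\geq 0}$, it coincides with the double hull of its doubly extreme rays, so it suffices to enumerate the doubly extreme rays of $\projC$ and realize each as a limit of normalized log-homomorphism vectors coming from an explicit graph construction. Theorem \ref{cor:onenegative} further restricts the casework to rays dual to inequalities of the form $H_1^{\alpha_1}\cdots H_s^{\alpha_s} \geq H_j^{\beta}$. Realizations should be built from the blow-up graphs of Section \ref{subsec:blowup}, combined via tensor products (which scale log-vectors) and disjoint unions (which act coordinatewise as max in the logarithmic limit); these operations match precisely the convex and max-closed structure of $\projC$. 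For each doubly extreme ray I would compute the asymptotic log-homomorphism counts of a candidate family of iterated blow-ups, and verify that they tend to the prescribed direction.

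The main obstacle will be the realization step. The necessity of a \emph{lifted} description with auxiliary coordinates up to $P_{4n+3}$ strongly suggests that the extreme rays of $\projC$ arise as images of higher-dimensional faces of $C$ rather than of extreme rays of $C$, so their combinatorial structure is subtle and cannot be read off a single tight inequality. In particular, the interplay between the new weak-convexity inequality \eqref{ineq:new} and the generalized Erd\H{o}s--Simonovits inequality \eqref{ineq:erds} should produce extremal configurations where tightness along odd-vertex and even-vertex path counts must occur simultaneously; realizing these via explicit graph sequences is likely to require carefully tuned multi-parameter families of iterated blow-ups, and this, together with the bookkeeping of which pair of lifted inequalities is tight on each face whose projection gives a doubly extreme ray, is where I expect the bulk of the technical work to lie.
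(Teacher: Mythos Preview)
Your treatment of the inclusion $\tropNU \subseteq \projC$ matches the paper's: one checks that each defining inequality of $C$ is the logarithm of a valid pure binomial inequality on $\mathcal{N}_{\mathcal{U}_{4n+3}}$, and then projects. One point you gloss over is that the last three families of inequalities in $C$ (those beginning with $y_1$ rather than $y_0$) are not literally instances of the families in Theorem~\ref{thm:pathintro}; they are the ``isolated-vertex variants'' needed precisely because the theorem concerns all target graphs $G$, not only those without isolated vertices. The paper handles this separately in Section~\ref{subsec:findknownineqs}.

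The gap is in your plan for $\projC \subseteq \tropNU$. You propose to enumerate the doubly extreme rays of $\projC$ and realize each one. The paper states explicitly that it was \emph{unable} to find a workable description of the extreme or doubly extreme rays of $\projC$; the lifted presentation via $C$ was introduced precisely because the intrinsic $H$-description of $\projC$ is complicated, and correspondingly its extremal structure is not tractable. Your invocation of Corollary~\ref{cor:onenegative} does not help here: that result constrains the extreme rays of the \emph{dual} cone $\tropNU^*$ (i.e., the shape of valid inequalities), not the extreme rays of $\projC$ that you would need to realize.

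The paper's actual argument bypasses any enumeration of extremal rays. It first constructs, via carefully weighted blow-ups of a path $P_{2f+1}$ (Definition~\ref{def:blowupgraph}, Theorem~\ref{thm:construction}), a concrete multi-parameter family $\mathcal{R}$ of rays lying in $\tropNU$, and then enlarges $\mathcal{R}$ by successive max- and conic-closure steps. The heart of the proof is then a direct \emph{pointwise} argument: given an arbitrary $\mathbf{r}\in\projC$, for each coordinate $l$ one builds an explicit $\mathbf{r}'_l\in\mathcal{R}$ with $(\mathbf{r}'_l)_l=r_l$ and $\mathbf{r}'_l\leq\mathbf{r}$ coordinatewise (Definitions~\ref{def:rlodd} and~\ref{def:rleven}), so that $\mathbf{r}=\bigoplus_l \mathbf{r}'_l$. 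Verifying both that each $\mathbf{r}'_l$ satisfies the defining conditions of $\mathcal{R}$ and that $\mathbf{r}'_l\leq\mathbf{r}$ requires the auxiliary valid inequalities of Section~\ref{subsec:moreaboutC}, which is where the lifted variables $y_{2n+2},\dots,y_{4n+3}$ are genuinely used. In short, the technical work is not in matching tight inequalities on faces of $C$, but in designing $\mathcal{R}$ and the maps $\mathbf{r}\mapsto\mathbf{r}'_l$; your proposal does not anticipate this mechanism.
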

 
\begin{remark}
Note that the defining inequalities of the the lift cone $C$ are of a special simple form. They involve at most three variables and exactly one variable appears with a negative sign. The fact that there is at most one variable occurring with a negative sign is a consequence of max-closedness of tropicalizations of number profiles (see Corollary \ref{cor:onenegative}). We do not have an explanation for why there are at most two variables occurring with a positive sign. In fact, if we consider the $H$-description of the actual tropicalization, instead of the lifted representation, then we find inequalities involving more than three variables.
\end{remark}

We certify that all of the inequalities involved in defining $C$ above come from valid pure binomial inequalities in Section \ref{sec:truebinomialinequalitieswithpaths}. In the process we generalize the Erd\H{o}s-Simonovits conjecture and find a previously unknown family of inequalities.

The most technical part of the paper is showing that the cone $\projC$ is contained in the tropilicalization $\troptn$. We were not able to find a simple description of the extreme rays or doubly extreme rays of $\projC$. Instead in Section \ref{sec:troppaths} we build a larger collection of rays whose double hull contains $\projC$ and show that all of these are realizable.

\subsection{Organization of the paper}

In Section \ref{sec:propertiestropicalizations}, we discuss properties of tropicalizations and the double hull. We compute $\tropNU$ when $\mathcal{U}$ is a collection of even cycles, odd cycles, stars, complete graphs, simplicial complexes and matroids in Section \ref{sec:examplesoftropicalizations}. Section \ref{sec:kr} describes ideas and results from \cite{koppartyrossman} that we need in the sequel. In Section \ref{sec:truebinomialinequalitieswithpaths}, we prove pure binomial inequalities involving paths, and give among other things a proof of the Erd\H{o}s-Simonovits conjecture that was first proved by Sa\u{g}lam, as well as a generalization of the inequalities involved in that conjecture. In Section \ref{sec:troppaths}, we compute $\tropNU$ when $\mathcal{U}=\{P_0, P_1, \ldots, P_{2n+1}\}$. We end the paper with applications of our complete description of $\tropNU$ in Section \ref{sec:applications}. We compute the homomorphism domination exponent for any two paths in Section \ref{subsec:HDEPmPn}, a question that was posed in \cite{koppartyrossman}.

% !TEX root =  tropicalizationofpaths-finitesetting.tex
\section{Properties of tropicalizations}\label{sec:propertiestropicalizations}

Some theorems in this section have direct counterparts in \emph{tropical convexity}. Tropical convexity studies max-closed sets in $\RR^s$ under the additional assumption that a set $\mathcal{S}$ has the vector $\vec{1}=(1,1,\dots,1)$ in the lineality space: $\vec{1}\in \mathcal{S}$ and if $\mathbf{x}\in \mathcal{S}$, then $\mathbf{x}+\lambda \vec{1}\in \mathcal{S}$ for all $\lambda \in \RR$. Tropically convex sets, i.e., max-closed sets with $\vec{1}$ in the lineality space, are usually studied in the tropical projective space, which is $\mathbb{R}^s$ modulo the span of $\vec{1}$. We show that many of the theorems from tropical convexity also hold for cones contained in the nonnegative orthant $\RR^s_{\geq 0}$. The results about double hulls and doubly extreme rays are new, and we think they are of independent interest. All results in this section also have equivalent formulations for cones contained in $\RR^s_{\leq 0}$, which is useful for tropicalizations of density profiles; it is important that all coordinates have the same sign.

\begin{definition}
For vectors $\mathbf{x}, \mathbf{y} \in \mathbb{R}^s$, we let $\mathbf{x}\oplus \mathbf{y}$ denote their \textit{tropical sum}: $$\mathbf{x}\oplus \mathbf{y}=(\max\{x_1, y_1\}, \ldots, \max\{x_s, y_s\}).$$
We call a set $\mathcal{S}\subseteq \mathbb{R}^s$ \textit{max-closed} if for any $\mathbf{x}, \mathbf{y} \in \mathcal{S}$ we have $\mathbf{x}\oplus \mathbf{y}\in \mathcal{S}$.
\end{definition}

It turns out that tropicalizations of graph profiles have this property.

\begin{lemma}\label{tropismaxclosed1}
Let $\mathcal{S}\subset \ZZ^s_{\geq 0}$ be a semiring under coordinatewise addition and coordinatewise (Hadamard) multiplication. Then $\trop(\mathcal{S})$ is a max-closed convex cone.
\end{lemma}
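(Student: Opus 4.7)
The plan is to build on [BRST2, Lemma 2.2], which the authors cite as asserting that \emph{Hadamard closure alone} makes $\trop(\mathcal{S})$ a closed convex cone equal to $\operatorname{cl}(\operatorname{cone}(\log(\mathcal{S})))$. So the convex cone half of the conclusion is already in hand, and the task reduces to showing that coordinatewise additive closure of $\mathcal{S}$ forces $\trop(\mathcal{S})$ to be closed under the tropical sum $\oplus$.

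The key analytic input is the elementary identity $(a^k+b^k)^{1/k}\to\max(a,b)$ as $k\to\infty$ for $a,b>0$, equivalently $\tfrac{1}{k}\log(a^k+b^k)\to\max(\log a,\log b)$. To exploit this inside $\mathcal{S}$, I would fix $\mathbf{u},\mathbf{v}\in\mathcal{S}\cap\RR^s_{>0}$ and define $\mathbf{w}_k\in\RR^s$ coordinatewise by $(\mathbf{w}_k)_i:=u_i^k+v_i^k$. Hadamard closure places the coordinatewise $k$-th powers of $\mathbf{u}$ and of $\mathbf{v}$ in $\mathcal{S}$, and additive closure then gives $\mathbf{w}_k\in\mathcal{S}$. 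Rescaling, $\tfrac{1}{k}\log(\mathbf{w}_k)\in\operatorname{cone}(\log(\mathcal{S}))$ and converges coordinatewise to $\log(\mathbf{u})\oplus\log(\mathbf{v})$, so closedness of $\trop(\mathcal{S})$ yields $\log(\mathbf{u})\oplus\log(\mathbf{v})\in\trop(\mathcal{S})$.

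To promote this to arbitrary $\mathbf{x},\mathbf{y}\in\trop(\mathcal{S})$, I would pass through a dense subset. Any rational-coefficient conical combination $\sum_i (p_i/q)\log(\mathbf{u}_i)$ equals $\tfrac{1}{q}\log(\mathbf{w})$, where $\mathbf{w}$ is the coordinatewise product of the $\mathbf{u}_i$ each raised coordinatewise to the power $p_i$; by Hadamard closure, $\mathbf{w}\in\mathcal{S}$. Such points $\tfrac{1}{N}\log(\mathbf{u})$ are dense in $\operatorname{cone}(\log(\mathcal{S}))$, hence in $\trop(\mathcal{S})$. For two such points with common denominator $L$, rerunning the $\mathbf{w}_k$ construction on the numerators (and dividing by $Lk$ rather than $k$) realizes their tropical sum as a limit of points in $\operatorname{cone}(\log(\mathcal{S}))\subseteq\trop(\mathcal{S})$. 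Continuity of the coordinatewise maximum together with closedness of $\trop(\mathcal{S})$ then extends max-closedness from this dense subset to all of $\trop(\mathcal{S})$.

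The main obstacle I anticipate is organizing the two-stage approximation cleanly; the analytic core is entirely contained in the $(a^k+b^k)^{1/k}\to\max(a,b)$ limit, which draws on Hadamard and additive closure in exactly one step each.
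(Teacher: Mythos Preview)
Your proposal is correct and follows essentially the same route as the paper: both invoke \cite{BRST2} for the closed convex cone structure, then establish max-closedness by forming $\mathbf{u}^{k}+\mathbf{v}^{k}\in\mathcal{S}$ (using Hadamard closure for the powers and additive closure for the sum) and passing to the limit $\tfrac{1}{k}\log(\mathbf{u}^{k}+\mathbf{v}^{k})\to\log(\mathbf{u})\oplus\log(\mathbf{v})$, with a rational-denominator reduction to handle general points of $\trop(\mathcal{S})$. Your density-plus-continuity wrap-up is slightly more explicit than the paper's, but the substance is identical.
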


\begin{proof}
The fact that $\trop(\mathcal{S})$ is a convex cone equal to $\textup{cl}(\textup{cone}(\log(\mathcal{S})))$ follows from Lemma 2.2 in \cite{BRST2} since $\mathcal{S}$ is closed under Hadamard multiplication. We now show that $\trop(\mathcal{S})$ is max-closed, that is, for any $\mathbf{x},\mathbf{y}\in \trop(\mathcal{S})$, we want to show that $\mathbf{x}\oplus \mathbf{y} \in \trop(\mathcal{S})$. Since $\trop(\mathcal{S})= \textup{cl}(\textup{cone}(\log(\mathcal{S})))$,  it suffices to consider $\mathbf{x}= \alpha \mathbf{x}'$ and $\mathbf{y}=\beta \mathbf{y}'$ for some $\mathbf{x}', \mathbf{y}' \in \log(\mathcal{S})$ and $\alpha, \beta\in \mathbb{R}_{\geq 0}$. We can assume that $\alpha, \beta \in \mathbb{Q}_{\geq 0}$ since $\trop(\mathcal{S})$ is closed, and so we can further assume that $\alpha, \beta$ have the same denominator, say $\alpha = \frac{a_1}{b}$ and $\beta=\frac{a_2}{b}$ where $a_1, a_2, b \in \mathbb{N}$. Since $\trop(\mathcal{S})$ is a cone, it suffices to show that $a_1 \mathbf{x}'\oplus a_2\mathbf{y}' \in \trop(\mathcal{S})$. Since $\mathbf{x}', \mathbf{y}' \in \log(\mathcal{S})$, we have $\mathbf{x}'=\log(\mathbf{v})$ and $\mathbf{y}'=\log(\mathbf{w})$ for some $\mathbf{v}, \mathbf{w}\in \mathcal{S}$. Observe that $\mathbf{v}^{a_1l}+\mathbf{w}^{a_2l}\in \mathcal{S}$ for any $l\in \mathbb{N}$ since $\mathcal{S}$ is closed under coordinatewise addition and Hadamard multiplication. Further, note that $$\frac{\log(\mathbf{v}^{a_1l}+\mathbf{w}^{a_2l})}{l} \rightarrow \log(\mathbf{v}^{a_1})\oplus \log(\mathbf{w}^{a_2}) = a_1 \mathbf{x}' \oplus a_2 \mathbf{y}'$$ as $l\rightarrow \infty$. Therefore, $a_1 \mathbf{x}' \oplus a_2 \mathbf{y}' \in \trop(\mathcal{S})$ since $\frac{\log(\mathbf{v}^{a_1l}+\mathbf{w}^{a_2l})}{l}\in \trop(\mathcal{S})$ for every $l$ and $\trop(\mathcal{S})$ is closed. Hence, $\trop(\mathcal{S})$ is max-closed.
\end{proof}

\begin{corollary}\label{tropismaxclosed2}
Let $\mathcal{U}=\{H_1, \ldots, H_s\}$ be a finite collection of connected graphs. Then $\tropNU$ is a max-closed convex cone.
\end{corollary}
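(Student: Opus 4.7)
The plan is to verify that $\NU$ satisfies the hypotheses of Lemma~\ref{tropismaxclosed1} and then apply it directly. Since $\NU \subseteq \ZZ^s_{\geq 0}$ by definition (homomorphism numbers are nonnegative integers), it only remains to exhibit graph operations realizing coordinatewise addition and Hadamard multiplication of profile vectors.

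First I would handle Hadamard closure. The categorical (tensor) product $G_1 \times G_2$ satisfies the classical identity $\hom(H; G_1\times G_2) = \hom(H; G_1) \cdot \hom(H; G_2)$ for every graph $H$, since a homomorphism into a product is precisely a pair of homomorphisms into the factors. Applied coordinatewise to each $H_i \in \mathcal{U}$, this shows that the vector in $\NU$ corresponding to $G_1 \times G_2$ is the Hadamard product of the vectors corresponding to $G_1$ and $G_2$.

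Next I would handle coordinatewise addition via disjoint union $G_1 \sqcup G_2$. Here the connectedness hypothesis on the $H_i$ is essential: any homomorphism from a \emph{connected} graph $H$ into $G_1 \sqcup G_2$ has image contained in a single connected component, so $\hom(H; G_1 \sqcup G_2) = \hom(H; G_1) + \hom(H; G_2)$. Applying this to each $H_i$ shows that $\NU$ is closed under coordinatewise addition.

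Combining these two facts, $\NU$ is a subset of $\ZZ^s_{\geq 0}$ closed under both coordinatewise addition and Hadamard multiplication, hence a semiring in the sense of Lemma~\ref{tropismaxclosed1}. The lemma then immediately yields that $\trop(\NU)$ is a max-closed convex cone. There is no real obstacle here; the only subtlety worth flagging is the use of connectedness of the $H_i$ to ensure additivity under disjoint union, which is the single place where the hypothesis of the corollary (as opposed to the lemma) gets invoked.
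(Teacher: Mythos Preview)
Your proposal is correct and matches the paper's own proof essentially verbatim: both verify the two identities $\hom(H_i;G_1\times G_2)=\hom(H_i;G_1)\hom(H_i;G_2)$ and $\hom(H_i;G_1\sqcup G_2)=\hom(H_i;G_1)+\hom(H_i;G_2)$ (the latter using connectedness of the $H_i$) and then invoke Lemma~\ref{tropismaxclosed1}. Your explicit remark that connectedness is exactly what makes the disjoint-union additivity work is a helpful clarification that the paper leaves implicit.
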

\begin{proof}
It is well known that $\hom(H_i;G_1)\cdot \hom(H_i;G_2) = \hom(H_i; G_1 \times G_2)$ and $\hom(H_i;  G_1) + \hom(H_i; G_2) = \hom(H_i; G_1G_2)$ where $G_1\times G_2$ is the categorical product of $G_1$ and $G_2$, and $G_1G_2$ is the disjoint union of $G_1$ and $G_2$. Thus $\NU$ is closed under coordinatewise addition and Hadamard multiplication, and so the previous Lemma applies.
\end{proof}

As we mentioned in the introduction, the dual cone to $\trop(\mathcal{S})$ essentially encodes the valid pure binomial inequalities on $\mathcal{S}$. We now make this precise.

\begin{proposition}\label{prop:dontlose}
Let $\mathcal{S}\subset \ZZ^s_{\geq 0}$ be a semiring under coordinatewise addition and coordinatewise multiplication such that $\mathcal{S}$ is not contained in a coordinate hyperplane. Let $\bm\alpha \in \trop(\mathcal{S})^*$. Write $\bm\alpha=\bm\alpha_+-\bm\alpha_-$ where $\bm\alpha_+,\bm\alpha_- \in \RR^s_{\geq 0}$ are the positive and negative parts of $\bm\alpha$. If $\bm\alpha_- \neq \vec{0}$, then $\mathbf{x}^{\bm\alpha_+} \geq \mathbf{x}^{\bm\alpha_-}$ is a valid binomial inequality on $\mathcal{S}$.
\end{proposition}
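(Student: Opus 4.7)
My plan is to prove $\mathbf{x}^{\bm\alpha_+} \geq \mathbf{x}^{\bm\alpha_-}$ on all of $\mathcal{S}$ by handling the fully positive elements directly from $\bm\alpha \in \trop(\mathcal{S})^*$ and then reducing the zero-containing elements to the positive case via a Hadamard-power-plus-shift construction. First I would build a fully positive witness $\mathbf{u} \in \mathcal{S}$: since $\mathcal{S}$ is not contained in any coordinate hyperplane, for each $k \in [s]$ I can pick $\mathbf{u}^{(k)} \in \mathcal{S}$ with $u^{(k)}_k \geq 1$; the coordinatewise sum $\mathbf{u} := \sum_{k=1}^s \mathbf{u}^{(k)} \in \mathcal{S}$ then has every coordinate $\geq 1$. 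For any $\mathbf{y} \in \mathcal{S} \cap \mathbb{R}^s_{>0}$, the vector $\log \mathbf{y}$ lies in $\log(\mathcal{S}) \subseteq \trop(\mathcal{S})$, so the hypothesis $\bm\alpha \in \trop(\mathcal{S})^*$ gives $\langle \bm\alpha, \log \mathbf{y}\rangle \geq 0$, which exponentiates to $\mathbf{y}^{\bm\alpha_+} \geq \mathbf{y}^{\bm\alpha_-}$.

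Now fix an arbitrary $\mathbf{x} \in \mathcal{S}$ and set $J := \{j : x_j \geq 1\}$, $Z := [s]\setminus J$. The element $\mathbf{y}_n := \mathbf{x}^n + \mathbf{u} \in \mathcal{S}$ is fully positive, so the first step yields $\mathbf{y}_n^{\bm\alpha_+} \geq \mathbf{y}_n^{\bm\alpha_-}$ for every $n$. Taking $n$-th roots and letting $n \to \infty$, one has $(x_j^n + u_j)^{1/n} \to x_j$ when $j \in J$ and $(u_j)^{1/n} \to 1$ when $j \in Z$, which produces the limiting inequality $\prod_{j \in J} x_j^{(\bm\alpha_+)_j} \geq \prod_{j \in J} x_j^{(\bm\alpha_-)_j}$. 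This already gives the desired conclusion whenever $\textup{supp}(\bm\alpha_+) \cup \textup{supp}(\bm\alpha_-) \subseteq J$, and it suffices automatically when $\textup{supp}(\bm\alpha_-) \cap Z \neq \emptyset$ (since then $\mathbf{x}^{\bm\alpha_-} = 0$).

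The only remaining case is $\textup{supp}(\bm\alpha_+) \cap Z \neq \emptyset$ while $\textup{supp}(\bm\alpha_-) \subseteq J$, in which $\mathbf{x}^{\bm\alpha_+} = 0 < \mathbf{x}^{\bm\alpha_-}$. Here I would use $\bm\alpha_- \neq \vec{0}$ to rule out the existence of such an $\mathbf{x}$ by exhibiting a $\mathbf{v} \in \trop(\mathcal{S})$ on which $\bm\alpha$ is strictly negative, contradicting $\bm\alpha \in \trop(\mathcal{S})^*$. The basic construction is to take the additive scaling $N\mathbf{x} + \mathbf{u} \in \mathcal{S}$, normalize its logarithm by $\log N$, and pass to $N \to \infty$; this produces the indicator vector $\mathbf{1}_J \in \trop(\mathcal{S})$ and the necessary condition $\sum_{j \in J} \alpha_j \geq 0$. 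The main obstacle is that this single constraint is too weak by itself — it must be combined with Hadamard powers $\mathbf{x}^n$, further coordinate-wise amplifications using the individual $\mathbf{u}^{(k)}$, and the specific assumption $\bm\alpha_- \neq \vec{0}$ in order to produce enough rays in $\trop(\mathcal{S})$ to detect the asymmetry between a coordinate $i \in Z \cap \textup{supp}(\bm\alpha_+)$ and the coordinates of $\textup{supp}(\bm\alpha_-)$ that all lie inside $J$, and thereby force the desired sign contradiction.
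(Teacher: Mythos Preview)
Your argument and the paper's follow the same overall shape: both establish the inequality on strictly positive points directly from $\bm\alpha \in \trop(\mathcal S)^*$, and then try to extend to points with zero coordinates by shifting with a fixed positive element. The paper does this in one stroke: assuming $\mathbf{b}^{\bm\alpha_+} < \mathbf{b}^{\bm\alpha_-}$ for some $\mathbf{b}\in\mathcal S$, it forms the strictly positive point $\mathbf{b}^k+\mathbf{a}$ and asserts that for large $k$ one has $(\mathbf{b}^k+\mathbf{a})^{\bm\alpha_+} < (\mathbf{b}^k+\mathbf{a})^{\bm\alpha_-}$, contradicting the positive case. This is essentially your $\mathbf{y}_n$ construction, except that the paper does not isolate your cases (a)--(c); it simply claims the limiting inequality goes the right way.

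Your case (c) is precisely where that claim needs work, and your sketch there is not a proof --- but this gap is shared by the paper, and in fact it cannot be closed for arbitrary semirings $\mathcal S$. Take $s=3$ and let $\mathcal S\subset\mathbb Z_{\ge0}^3$ be the semiring generated by $(3,0,2)$ and $(1,1,1)$; one checks that its elements are exactly the vectors $\bigl(\sum_k m_k 3^k,\; m_0,\; \sum_k m_k 2^k\bigr)$ with $m_k\in\mathbb Z_{\ge0}$ not all zero. Every strictly positive element has $m_0\ge1$, so $x_1x_2 \ge \sum_k m_k 3^k \ge \sum_k m_k 2^k = x_3$; hence $\bm\alpha=(1,1,-1)\in\trop(\mathcal S)^*$ and $\bm\alpha_-\neq\vec 0$. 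Yet $\mathbf b=(3,0,2)\in\mathcal S$ gives $\mathbf b^{\bm\alpha_+}=0<2=\mathbf b^{\bm\alpha_-}$, so the proposition fails as stated. Correspondingly, $\mathbf b^k+(1,1,1) = (3^k+1,\,1,\,2^k+1)$ satisfies $(\mathbf b^k+\mathbf a)^{\bm\alpha_+}=3^k+1 > 2^k+1=(\mathbf b^k+\mathbf a)^{\bm\alpha_-}$ for every $k$, so the paper's asserted contradiction never materializes. Your instinct that case (c) needs genuinely new input is correct; some additional hypothesis on $\mathcal S$ (beyond being a semiring not contained in a coordinate hyperplane) is required.
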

\begin{proof}
Since $\bm\alpha \in \trop(\mathcal{S})^*$, we have $\langle \bm\alpha_+, \mathbf{y} \rangle \geq \langle \bm\alpha_-, \mathbf{y}\rangle$ for all $\mathbf{y} \in \log(\mathcal{S})$. Therefore, $\mathbf{x}^{\bm\alpha_+}\geq \mathbf{x}^{\bm\alpha_-}$ is a valid binomial inequality for all positive points in $\mathcal{S}$. Since $\mathcal{S}$ is not contained in a coordinate hyperplane and $\mathcal{S}$ is closed under addition, we see that $\mathcal{S}$ contains a strictly positive point $\mathbf{a}$. Suppose that $\mathbf{b}^{\bm\alpha_+} < \mathbf{b}^{\bm\alpha_-}$ for some $\mathbf{b} \in \mathcal{S}$. Consider $\mathbf{b}^k+\mathbf{a} \in \mathcal{S}$ which is a strictly positive point. For a sufficiently large $k$, we see that $(\mathbf{b}^k+\mathbf{a})^{\bm\alpha_+} < (\mathbf{b}^k+\mathbf{a})^{\bm\alpha_-}$, which is a contradiction.

\end{proof}
\begin{remark}
Observe that if ${\bm\alpha_-}=0$, then this corresponds to a pure binomial inequality $x_1^{\alpha_1}\dots x_s^{\alpha_s} \geq 1$ with 
$\alpha_i \geq 0$. This inequality is valid on all points of $\mathcal{S}$ with non-zero coordinates, but may fail if $\mathcal{S}$ has a point with a zero coordinate. Note that all such inequalities are generated from inequalities $x_i \geq 1$ for $1\leq i \leq s$. We remark that even though $x_i\geq 1$ may fail to hold on $\mathcal{S}$, a slight perturbation $x_i^{1+\varepsilon} \geq x_i^{\varepsilon}$ is valid on $\mathcal{S}$. 
\end{remark}

We call a set $\mathcal{S}\subseteq \mathbb{R}^s$ a \textit{cone} if $\mathcal{S}$ is closed under multiplication by nonnegative scalars. Note that a cone does not have to be convex.

For $i\in\{1,\dots,s\}$, let $\mathcal{Q}_i$ be the orthant of $\mathbb{R}^s$ consisting of all points where the $i$-th coordinate is nonpositive and the rest are nonnegative. The corresponding result in tropical convexity is known as the tropical Farkas Lemma \cite{MR2117413,hill2019tropical}:

\begin{lemma}\label{lem:mhull}
Let $\mathcal{S}\subset \mathbb{R}^s_{\geq 0}$ be a cone and let $$\mathcal{M}=\bigcap_{i=1}^s (\mathcal{S}+\mathcal{Q}_i),$$ where $+$ denotes Minkowski addition. Then $\mathcal{M}$ is a max-closed cone contained in $\mathbb{R}^s_{\geq 0}$, and it is the smallest max-closed cone containing $\mathcal{S}$.

\end{lemma}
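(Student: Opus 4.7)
The plan is to establish in turn that $\mathcal{M}$ is a cone, that it contains $\mathcal{S}$, that it lies in $\mathbb{R}^s_{\geq 0}$ and is max-closed, and finally that it is contained in every max-closed cone that contains $\mathcal{S}$. The first two facts are essentially formal: each Minkowski sum $\mathcal{S}+\mathcal{Q}_i$ is a cone because both summands are, and the intersection of cones is a cone; and since $\mathbf{0}\in \mathcal{Q}_i$ for every $i$, we have $\mathcal{S}\subseteq \mathcal{S}+\mathcal{Q}_i$ for each $i$, hence $\mathcal{S}\subseteq \mathcal{M}$.

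To see $\mathcal{M}\subseteq \mathbb{R}^s_{\geq 0}$ (taking $s\geq 2$; the $s=1$ case is trivial), fix $\mathbf{y}\in \mathcal{M}$ and a coordinate $j$, and choose any $i\neq j$. A decomposition $\mathbf{y}=\mathbf{x}+\mathbf{q}$ with $\mathbf{x}\in \mathcal{S}\subseteq \mathbb{R}^s_{\geq 0}$ and $\mathbf{q}\in \mathcal{Q}_i$ gives $y_j=x_j+q_j\geq 0$, because $j\neq i$ lies among the nonnegative coordinates of $\mathcal{Q}_i$. For max-closedness, take $\mathbf{y},\mathbf{z}\in \mathcal{M}$, fix $i$, and assume without loss of generality that $y_i\geq z_i$. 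Decompose $\mathbf{y}=\mathbf{x}+\mathbf{q}$ with $\mathbf{x}\in \mathcal{S}$ and $\mathbf{q}\in \mathcal{Q}_i$, so $x_i\geq y_i$ and $x_j\leq y_j$ for $j\neq i$. Then the residual $\mathbf{y}\oplus \mathbf{z}-\mathbf{x}$ has $i$-th coordinate $y_i-x_i\leq 0$ and $j$-th coordinate $\max(y_j,z_j)-x_j\geq y_j-x_j\geq 0$, so it belongs to $\mathcal{Q}_i$. Letting $i$ vary shows $\mathbf{y}\oplus \mathbf{z}\in \mathcal{M}$.

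The heart of the proof is minimality. Let $\mathcal{N}$ be any max-closed cone with $\mathcal{S}\subseteq \mathcal{N}$, and take $\mathbf{y}\in \mathcal{M}$; the goal is to realize $\mathbf{y}$ as a tropical sum of elements of $\mathcal{N}$. For each $i$, a witness decomposition supplies $\mathbf{x}^{(i)}\in \mathcal{S}$ with $x^{(i)}_i\geq y_i$ and $x^{(i)}_j\leq y_j$ for $j\neq i$. I scale: set $\lambda_i:=y_i/x^{(i)}_i$ if $x^{(i)}_i>0$, and $\lambda_i:=0$ otherwise, noting that $x^{(i)}_i=0$ forces $y_i=0$ so the choice is harmless. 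Since $\mathcal{S}$ is a cone, $\lambda_i\mathbf{x}^{(i)}\in \mathcal{S}\subseteq \mathcal{N}$. A coordinatewise check then shows $\bigoplus_{i=1}^s \lambda_i\mathbf{x}^{(i)}=\mathbf{y}$: in the $i$-th slot the $i$-th term contributes exactly $y_i$, while every other term contributes $\lambda_j x^{(j)}_i\leq x^{(j)}_i\leq y_i$. Max-closedness of $\mathcal{N}$ places $\mathbf{y}$ in $\mathcal{N}$.

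The delicate step I expect to be the main obstacle is the scaling in the minimality argument: one must crucially use that $\mathcal{S}$ is a cone (and not merely max-closed) in order to contract each witness $\mathbf{x}^{(i)}$ by a factor $\lambda_i\in[0,1]$ so that its $i$-th coordinate drops precisely to $y_i$ while the other coordinates are not inflated. Everything else is careful bookkeeping with the two sign patterns defining $\mathcal{Q}_i$.
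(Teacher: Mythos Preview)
Your proof is correct and shares its core idea with the paper's: both establish minimality by taking the witnesses $\mathbf{x}^{(i)}\in\mathcal{S}$ from the decompositions $\mathbf{y}=\mathbf{x}^{(i)}+\mathbf{q}^{(i)}$ and scaling each by $\lambda_i\in[0,1]$ so that the $i$-th coordinate lands exactly on $y_i$, yielding $\mathbf{y}=\bigoplus_i \lambda_i\mathbf{x}^{(i)}$.

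The organizational difference is that the paper packages this scaling argument as a characterization, proving first that $\mathcal{M}=\{\bigoplus_{i=1}^s \mathbf{y}_i : \mathbf{y}_i\in\mathcal{S}\}$, and then reads off both max-closedness and minimality from that description. You instead prove max-closedness directly from the definition of $\mathcal{M}$---given $\mathbf{y},\mathbf{z}\in\mathcal{M}$ and $i$ with $y_i\geq z_i$, you reuse the $\mathcal{Q}_i$-witness for $\mathbf{y}$ to witness $\mathbf{y}\oplus\mathbf{z}\in\mathcal{S}+\mathcal{Q}_i$---which is a slightly slicker route for that step. The paper's explicit characterization, however, is what feeds directly into the subsequent Corollary on the structure of the dual cone, so it is not merely a detour. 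One minor remark: your ``the $s=1$ case is trivial'' is a bit glib, since for $s=1$ the set $\mathcal{M}=\mathcal{S}+\mathbb{R}_{\leq 0}$ actually fails to lie in $\mathbb{R}_{\geq 0}$; but this is an edge case of the lemma statement itself rather than a defect in your argument.
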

\begin{proof}
It is clear that $\mathcal{M}$ is a cone contained in the nonnegative orthant. We claim that $\mathbf{x}\in \mathcal{M}$ if and only if there exist $\mathbf{y}_1,\ldots, \mathbf{y}_s \in \mathcal{S}$ such that $\mathbf{x}=\oplus_{i=1}^s \mathbf{y}_i$. We first show that if $\mathbf{x}\in \mathcal{M}$, then there exist $\mathbf{y}_1,\ldots, \mathbf{y}_s \in \mathcal{S}$ such that $\mathbf{x}=\oplus_{i=1}^s \mathbf{y}_i$. Observe that for all $i \in [s]$, we have $\mathbf{x}=\mathbf{u}_i+\mathbf{p}_i$ where $\mathbf{u}_i\in \mathcal{S}$ and $\mathbf{p}_i\in \mathcal{Q}_i$. Let $x_j$, $u_{ij}$ and $p_{ij}$ be the $j$th coordinate of $\mathbf{x}$, $\mathbf{u}_i$ and $\mathbf{p}_i$ respectively. Thus, we have that $x_j=u_{ij}+p_{ij}$ for all $i, j\in [s]$. Note that $p_{ii}\leq 0$ and $p_{ij} \geq 0$ for all $j\neq i$. Thus, $x_{i}=u_{ii}+p_{ii}$ implies that $x_i \leq u_{ii}$ for all $i\in [s]$. Similarly, $x_j=u_{ij}+p_{ij}$ implies that $x_j \geq u_{ij}$ for all $i,j\in [s]$ such that $i\neq j$. This shows that $\mathbf{x}$ lies inside the box defined by the following inequalities: $\max\{ u_{ij} | j\neq i\} \leq x_i \leq u_{ii}$ for all $i\in [s]$. 
 
We would like $\mathbf{x}$ to be the \emph{maximum corner} of that box, i.e., we would like that $x_i=u_{ii}$ for all $i$. We claim that if this is not the case, we can build a new box that still contains $\mathbf{x}$ and for which $\mathbf{x}$ is in the maximum corner. Indeed, suppose that $x_i < u_{ii}$ for some particular $i$. Then replace $\mathbf{u}_i$ with $\mathbf{u}'_i:= \lambda \mathbf{u}_i$ so that $x_i=\lambda u_{ii}=: u'_{ii}$. 
Certainly $\mathbf{u}'_i \in \mathcal{S}$ since $\mathcal{S}$ is a cone.
 Furthermore, there exists a point $\mathbf{p}'_i\in \mathcal{Q}_i$ such that $\mathbf{x}=\mathbf{u}'_i+\mathbf{p}'_i$, namely $p'_{ii}:=0$ and $p'_{ij}:= p_{ij}+(1-\lambda)u_{ij}\geq p_{ij} \geq 0$ since $\lambda <1$. Let $I\subset [s]$ be the set of indices for which $x_i=u_{ii}$. By replacing $\mathbf{u}_i$ by $\mathbf{u}'_i$ for all $i \in [s] \setminus I$ we obtain a new box where $\mathbf{x}$ is in the maximum corner. Thus, $\mathbf{x}$ can be seen as the coordinatewise maximum of $\mathbf{u}_i$ for all $i\in I$ and $\mathbf{u}'_i$ for all $i\in [s]\backslash I$.  
 
We now show the converse. Let $\mathbf{y}_1,\dots, \mathbf{y}_s \in \mathcal{S}$ and let $\mathbf{x}=\oplus_{i=1}^s \mathbf{y}_i$. Then for all $j \in [s]$, the $j$th coordinate of $\mathbf{x}$ is equal to the $j$th coordinate of some $\mathbf{y}_i$. Therefore, $\mathbf{x} \in \mathcal{S}+\mathcal{Q}_i$ for all $i \in [s]$, and thus $\mathbf{x}\in \mathcal{M}$.

To show that $\mathcal{M}$ is max-closed, let $\mathbf{a},\mathbf{b} \in \mathcal{M}$. Then $\mathbf{a}=\oplus_{i=1}^s \mathbf{y}_i$ and $\mathbf{b}=\oplus_{i=s+1}^{2s} \mathbf{y}_i$ with $\mathbf{y}_i \in \mathcal{S}$. Therefore, $\mathbf{a}\oplus \mathbf{b}=\oplus_{i=1}^{2s} \mathbf{y}_i$ and $\mathbf{a}\oplus \mathbf{b} \in \mathcal{M}$. Furthermore, we showed that any point of $\mathcal{M}$ is a tropical sum of points from $\mathcal{S}$, and therefore $\mathcal{M}$ is the smallest max-closed set containing $\mathcal{S}$.
 \end{proof}

We immediately obtain the following corollary.

\begin{corollary}\label{cor:mcconvex}
Let $\mathcal{S}\subset \mathbb{R}^s_{\geq 0}$ be a convex cone. The max-closure of $\mathcal{S}$ is a convex cone.
\end{corollary}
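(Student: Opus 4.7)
The plan is to reduce the corollary directly to the explicit description of the max-closure furnished by Lemma \ref{lem:mhull}. Since $\mathcal{S}$ is a convex cone contained in $\mathbb{R}^s_{\geq 0}$, that lemma identifies the smallest max-closed cone containing $\mathcal{S}$ as
\[
\mathcal{M} \;=\; \bigcap_{i=1}^s (\mathcal{S}+\mathcal{Q}_i).
\]
Note that because $\mathcal{S}$ is already closed under nonnegative scaling, the smallest max-closed set containing $\mathcal{S}$ coincides with the smallest max-closed cone containing $\mathcal{S}$ (taking tropical sums commutes with scaling by a nonnegative constant, so the cone hull of a max-closed set is again max-closed). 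Thus it suffices to show that $\mathcal{M}$ is convex.

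First I would observe that each orthant $\mathcal{Q}_i$ is itself a convex cone, being defined by sign conditions on coordinates. Next I would use the standard fact that the Minkowski sum of two convex sets is convex: given $\mathbf{a}_1,\mathbf{a}_2 \in \mathcal{S}$ and $\mathbf{b}_1,\mathbf{b}_2 \in \mathcal{Q}_i$, and $\lambda \in [0,1]$, we have
\[
\lambda(\mathbf{a}_1+\mathbf{b}_1)+(1-\lambda)(\mathbf{a}_2+\mathbf{b}_2)
= \bigl(\lambda\mathbf{a}_1+(1-\lambda)\mathbf{a}_2\bigr) + \bigl(\lambda\mathbf{b}_1+(1-\lambda)\mathbf{b}_2\bigr) \in \mathcal{S}+\mathcal{Q}_i,
\]
using the convexity of $\mathcal{S}$ and of $\mathcal{Q}_i$. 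Hence every $\mathcal{S}+\mathcal{Q}_i$ is convex.

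Finally, since arbitrary intersections of convex sets are convex, $\mathcal{M}=\bigcap_{i=1}^s (\mathcal{S}+\mathcal{Q}_i)$ is convex, completing the argument. There is no real obstacle here: the entire content lies in Lemma \ref{lem:mhull}, and once that representation of the max-closure is in hand, convexity is immediate from the fact that Minkowski sums and intersections preserve convexity.
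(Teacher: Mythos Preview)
Your proposal is correct and matches the paper's approach: the paper states the corollary as an immediate consequence of Lemma~\ref{lem:mhull}, and your argument spells out exactly the intended reasoning (each $\mathcal{S}+\mathcal{Q}_i$ is convex as a Minkowski sum of convex cones, and the intersection of convex sets is convex).
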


For a convex cone $\mathcal{S}\subset \mathbb{R}^s$, let $\mathcal{S}^*$ denote its dual cone with respect to the standard inner product.

\begin{corollary}\label{cor:dualc}
Let $\mathcal{S}\subset \mathbb{R}^s_{\geq 0}$ be a max-closed convex cone. Then 
$$\mathcal{S}^*=\sum_{i=1}^s (\mathcal{S}^*\cap \mathcal{Q}_i),$$
where $\sum$ denotes Minkowski addition. 
\end{corollary}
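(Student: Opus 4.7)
The plan is to apply duality to Lemma \ref{lem:mhull}. Set $T := \sum_{i=1}^s (\mathcal{S}^*\cap \mathcal{Q}_i)$; the inclusion $T \subseteq \mathcal{S}^*$ is immediate since each summand lies in $\mathcal{S}^*$ and dual cones are closed under Minkowski sum. For the reverse inclusion I would first replace $\mathcal{S}$ by its topological closure (this preserves max-closedness since $\oplus$ is continuous) so that $\mathcal{S}^{**} = \mathcal{S}$.

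Next I would observe that $\mathcal{Q}_i$ is self-dual: its extreme rays $-e_i$ and $\{e_j\}_{j\neq i}$ coincide with its facet normals, so $\mathcal{Q}_i^* = \mathcal{Q}_i$. Combining the standard identities $(A+B)^* = A^*\cap B^*$ and $(A\cap B)^* = \overline{A^* + B^*}$ for closed convex cones yields
$$T^* \;=\; \bigcap_{i=1}^s (\mathcal{S}^*\cap \mathcal{Q}_i)^* \;=\; \bigcap_{i=1}^s \overline{\mathcal{S} + \mathcal{Q}_i}.$$
If each $\mathcal{S} + \mathcal{Q}_i$ is already closed, this simplifies to $\bigcap_i (\mathcal{S}+\mathcal{Q}_i)$, which by Lemma \ref{lem:mhull} and max-closedness of $\mathcal{S}$ equals $\mathcal{S}$ itself. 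Thus $T^* = \mathcal{S}$, so $T^{**} = \mathcal{S}^*$; and once $T$ is shown to be closed, $T = T^{**} = \mathcal{S}^*$, which is the conclusion.

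The proof then reduces to two closedness claims. To see that $\mathcal{S}+\mathcal{Q}_i$ is closed, I would take $y_n + q_n \to x$ with $y_n \in \mathcal{S}$, $q_n \in \mathcal{Q}_i$. For each $j \neq i$, both $y_{n,j}$ and $q_{n,j}$ are nonnegative and bounded by their convergent sum, so along a subsequence $y_n$ converges in all coordinates except possibly the $i$-th. If $y_{n,i}$ is bounded, closedness of $\mathcal{S}$ and $\mathcal{Q}_i$ finishes the argument; otherwise $y_n/y_{n,i} \to e_i$, forcing $e_i \in \mathcal{S}$, and then $x = \max(x_i,0)e_i + \bigl(x - \max(x_i,0)e_i\bigr)$ exhibits $x \in \mathcal{S} + \mathcal{Q}_i$ directly. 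To see that $T$ itself is closed, I would invoke the standard criterion that a Minkowski sum of closed convex cones is closed when the only way to write $0 = \sum_i \beta_i$ with $\beta_i \in C_i$ is with all $\beta_i = 0$; here, in the $j$-th coordinate the lone nonpositive term $(\beta_j)_j$ and the nonnegative terms $(\beta_i)_j$ for $i \neq j$ sum to $0$, so all vanish, and varying $j$ gives $\beta_i = 0$ for every $i$.

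The main obstacle is bookkeeping the closure operators: the abstract duality chain produces $\mathcal{S}^*$ only up to a closure, and removing it requires the nonnegativity $\mathcal{S} \subset \mathbb{R}^s_{\geq 0}$ and the specific sign pattern of the orthants $\mathcal{Q}_i$ in an essential way.
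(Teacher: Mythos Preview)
Your proposal is correct and follows essentially the same approach as the paper: dualize the identity $\mathcal{S}=\bigcap_i(\mathcal{S}+\mathcal{Q}_i)$ from Lemma~\ref{lem:mhull}, use self-duality of $\mathcal{Q}_i$, and the standard rules $(A+B)^*=A^*\cap B^*$ and $(A\cap B)^*=\overline{A^*+B^*}$. The paper's proof simply invokes ``standard results in convexity'' for these steps; you go further by explicitly verifying the two closedness claims (that each $\mathcal{S}+\mathcal{Q}_i$ is closed, and that the Minkowski sum $T$ is closed) needed to drop the closure bars, which the paper leaves implicit.
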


\begin{proof}
We know from Lemma \ref{lem:mhull} that $\mathcal{S}=\bigcap_{i=1}^s (\mathcal{S}+\mathcal{Q}_i)$. The rest follows from standard results in convexity. The dual cone of an intersection of cones is equal to the Minkowski sum of the dual cones, so we have: $\mathcal{S}^*=\oplus_{i=1}^s (\mathcal{S}+\mathcal{Q}_i)^*$. The dual cone of Minkowski sum of cones is the intersection of dual cones, so we have $\mathcal{S}^*=\oplus_{i=1}^s (\mathcal{S}^*\cap \mathcal{Q}_i^*)$. Finally, the cone $\mathcal{Q}_i$ is self-dual: $\mathcal{Q}_i^*=\mathcal{Q}_i$ for all $i$, and the desired conclusion follows.
\end{proof}

We now prove an important structural result on extreme rays of dual cones to tropicalization of graph profiles.

\begin{corollary}\label{cor:onenegative}
Any extreme ray of the dual cone $\tropNU^*$ is spanned by a vector with at most one negative coordinate.
\end{corollary}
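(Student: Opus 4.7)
The plan is to combine two results already established in the excerpt. By Corollary \ref{tropismaxclosed2}, $\tropNU$ is a max-closed convex cone contained in $\mathbb{R}^s_{\geq 0}$. Corollary \ref{cor:dualc} then applies and yields the decomposition
\[
\tropNU^{*} \;=\; \sum_{i=1}^{s}\bigl(\tropNU^{*}\cap \mathcal{Q}_i\bigr),
\]
where $\sum$ is Minkowski addition and $\mathcal{Q}_i$ is the orthant with the $i$-th coordinate nonpositive and all other coordinates nonnegative. Any vector lying in $\mathcal{Q}_i$ has at most one negative coordinate (namely its $i$-th one), so it suffices to show that every extreme ray of $\tropNU^{*}$ lies entirely inside some single summand $\tropNU^{*}\cap \mathcal{Q}_i$.

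The remaining ingredient is the standard convex-geometry fact that the extreme rays of a Minkowski sum of convex cones are forced to come from one of the summands. I would justify this in one line: for convex cones $C_1,\dots,C_s$, each $C_i$ is closed under nonnegative scaling, so
\[
C_1+\cdots+C_s \;=\; \mathrm{conv}\bigl(C_1\cup\cdots\cup C_s\bigr),
\]
and hence any extreme ray of the sum must be an extreme ray of the union, which in turn means it is generated by a vector already belonging to some $C_i$. Applying this with $C_i = \tropNU^{*}\cap \mathcal{Q}_i$ gives the desired conclusion.

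Putting the steps in order: (1) invoke Corollary \ref{tropismaxclosed2} to get max-closedness of $\tropNU$; (2) apply Corollary \ref{cor:dualc} to obtain the Minkowski-sum decomposition of $\tropNU^{*}$; (3) prove the extreme-ray lemma for Minkowski sums of convex cones via the conic-hull identity above; (4) deduce that each extreme ray of $\tropNU^{*}$ sits in some $\mathcal{Q}_i$ and therefore is spanned by a vector with at most one negative coordinate. I do not foresee a serious obstacle, since the result is essentially a corollary of Corollary \ref{cor:dualc}; the only care needed is a clean statement of the extreme-ray argument, which is where a careless reader might worry about cancellation between summands, but this is ruled out by the fact that each $C_i$ is a cone and by the characterization of extremality.
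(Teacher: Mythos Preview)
Your proposal is correct and follows the same route as the paper: use max-closedness of $\tropNU$ (Corollary~\ref{tropismaxclosed2}) to invoke Corollary~\ref{cor:dualc}, then observe that an extreme ray of a Minkowski sum of convex cones must lie in one of the summands $\tropNU^{*}\cap\mathcal{Q}_i$. The paper's proof simply asserts this last step without justification, whereas you spell it out via the conic-hull identity; your version is slightly more explicit but not a different argument.
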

\begin{proof}
Since $\tropNU$ is a max-closed convex cone contained in $\mathbb{R}^s_{\geq 0}$ we can apply Corollary~\ref {cor:dualc}. It follows that an extreme ray of $\tropNU^*$ is also an extreme ray of one of the intersections $\tropNU^*\cap Q_i$ and is therefore spanned by a vector with at most one negative coordinate.
\end{proof}

\begin{definition}
Let $\mathcal{S}\subset \mathbb{R}^s_{\geq 0}$ be a cone. We define the \textit{double hull} of $\mathcal{S}$ to be the smallest max-closed convex cone containing $\mathcal{S}$ and denote it by $\operatorname{dh}(\mathcal{S})$.
\end{definition}
We see from Corollary \ref{cor:mcconvex} that the double hull of $\mathcal{S}$ is the max-closure of the convex hull of $\mathcal{S}$.

\begin{definition}
We call a point $\mathbf{p}\in \mathcal{S}$ \textit{max-extreme} if whenever $\mathbf{p}=\mathbf{x}\oplus \mathbf{y}$ with $\mathbf{x},\mathbf{y} \in \mathcal{S}$, then $\mathbf{p}=\mathbf{x}$ or $\mathbf{p}=\mathbf{y}$. It is easy to see that $\mathbf{p}$ is max-extreme if and only if any point on the ray spanned by $\mathbf{p}$ is also max-extreme in $\mathcal{S}$.

Let $\mathcal{S}\subset \mathbb{R}^s_{\geq 0}$ be a closed convex cone. We say that $\mathbf{p}\in \mathcal{S}$ spans a \textit{doubly extreme ray} of $\mathcal{S}$ if $\mathbf{p}$ spans an extreme ray of $\mathcal{S}$ and $\mathbf{p}$ is max-extreme.
\end{definition}

\begin{theorem}\label{thm:tmink}
Let $\mathcal{S}\subseteq \mathbb{R}^s_{\geq 0}$ be a closed and max-closed cone. Then $\mathcal{S}$ is equal to the max-closure of its max-extreme rays.
\end{theorem}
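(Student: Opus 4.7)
The plan is to prove both containments of $\mathcal{S}$ and the max-closure of its max-extreme rays. One direction is essentially by definition: $\mathcal{S}$ contains all its max-extreme rays, and since $\mathcal{S}$ is itself a max-closed cone, it contains their max-closure. The content is the reverse direction, and my strategy is to show that every $\mathbf{p}\in\mathcal{S}$ can be written as a \emph{finite} tropical sum $\mathbf{p}=\bigoplus_{i\in I}\mathbf{r}_i$ where each $\mathbf{r}_i$ spans a max-extreme ray of $\mathcal{S}$. This mirrors the classical Minkowski/Krein--Milman strategy, with the tropical sum playing the role of convex combination.

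Fix $\mathbf{p}\in\mathcal{S}$, and for each coordinate $i$ with $p_i>0$ consider the ``level slice''
\[
\mathcal{S}_i(\mathbf{p}) := \{\mathbf{x}\in\mathcal{S} : x_i=p_i,\ x_j\leq p_j \text{ for all } j\}.
\]
This set is nonempty (it contains $\mathbf{p}$) and compact (closed by closedness of $\mathcal{S}$, bounded inside the box $[0,\mathbf{p}]$). Let $\mathbf{r}_i$ be a point of $\mathcal{S}_i(\mathbf{p})$ minimizing the continuous functional $\phi(\mathbf{x})=\sum_{j\neq i}x_j$. The key claim is that $\mathbf{r}_i$ is max-extreme in $\mathcal{S}$. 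Indeed, if $\mathbf{r}_i=\mathbf{x}\oplus\mathbf{y}$ with $\mathbf{x},\mathbf{y}\in\mathcal{S}$, then coordinatewise $x_j,y_j\leq (\mathbf{r}_i)_j\leq p_j$, and since $\max(x_i,y_i)=p_i$ we may assume $x_i=p_i$, so $\mathbf{x}\in\mathcal{S}_i(\mathbf{p})$. Minimality of $\mathbf{r}_i$ together with $x_j\leq(\mathbf{r}_i)_j$ for $j\neq i$ forces equality in every coordinate, so $\mathbf{x}=\mathbf{r}_i$. This is the step I expect to be the main creative input: choosing the right functional ($\phi$) so that the minimizer is forced to be max-extreme.

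Finally, I verify $\mathbf{p}=\bigoplus_{i:\,p_i>0}\mathbf{r}_i$. For each coordinate $j$, every $\mathbf{r}_i$ satisfies $(\mathbf{r}_i)_j\leq p_j$, while $(\mathbf{r}_j)_j=p_j$ whenever $p_j>0$, so the tropical max across $i$ equals $p_j$; and for $j$ with $p_j=0$, the bound $(\mathbf{r}_i)_j\leq 0$ gives $(\mathbf{r}_i)_j=0$ for all $i$, consistent with $p_j=0$. The case $\mathbf{p}=\mathbf{0}$ is trivial. Since each $\mathbf{r}_i$ lies on a max-extreme ray (any positive rescaling of a max-extreme point is max-extreme, as noted after the definition), $\mathbf{p}$ is a tropical sum of points on max-extreme rays, completing the proof. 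Note that no topological closure is required because the expression for $\mathbf{p}$ is a finite tropical sum; closedness of $\mathcal{S}$ is used only to guarantee that the minimum defining $\mathbf{r}_i$ is attained.
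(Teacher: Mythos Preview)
Your proof is correct and follows essentially the same strategy as the paper: for each coordinate $i$, take the compact slice $\{\mathbf{x}\in\mathcal{S}: x_i=p_i,\ \mathbf{x}\leq\mathbf{p}\}$, pick a minimizer, show it is max-extreme, and observe that the tropical sum of these minimizers recovers $\mathbf{p}$. The only difference is the choice of minimizing criterion: the paper takes the \emph{lexicographic} minimum of the slice, whereas you minimize the linear functional $\phi(\mathbf{x})=\sum_{j\neq i}x_j$; both choices force coordinatewise equality in the decomposition $\mathbf{r}_i=\mathbf{x}\oplus\mathbf{y}$ and hence max-extremality, and your argument via $\phi$ is arguably a touch cleaner.
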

\begin{proof}
Let $\mathbf{p}\in \mathcal{S}$. Observe that for $i\in\{1,\dots, s\}$, the subset $\mathcal{S}_i$ of $\mathcal{S}$ consisting of points 
that are coordinatewise less that or equal to $\mathbf{p}$ and with $i$-th coordinate equal to $p_i$ is compact. For each $i\in\{1, \dots, s\}$, construct points $\mathbf{y}_i$ as follows: $y_{ii}=p_i$ and $\mathbf{y}_i$ is a lexicographically minimal point of $\mathcal{S}_i$, which means that $y_{i1}\leq x_1$ for all $\mathbf{x}\in \mathcal{S}_i$ and if $y_{i1}=x_1$ then $y_{i2}\leq x_2$, and so on. We claim that $\mathbf{y}_i$ are max-extreme points of $\mathcal{S}$. Suppose that $\mathbf{y}_i=\mathbf{v}\oplus \mathbf{w}$. Then $y_{ii}=v_i$ or $y_{ii}=w_i$, and then we must have $\mathbf{y}_i=\mathbf{v}$ or $\mathbf{y}_i=\mathbf{w}$. Thus $\mathbf{y}_i$ is max-extreme and we have $\mathbf{p}=\oplus_{i=1}^s \mathbf{y}_i$.

\end{proof}

\begin{theorem}\label{thm:dext}
Let $\mathcal{S}\subseteq \mathbb{R}^s_{\geq 0}$ be a closed convex cone that is also max-closed. Then $\mathcal{S}$ is equal to the double hull of its doubly extreme rays.
\end{theorem}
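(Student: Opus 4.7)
The plan is to use Theorem \ref{thm:tmink} to reduce the problem to showing that every max-extreme point of $\mathcal{S}$ already lies in the convex conical hull of the doubly extreme rays. Let $DE$ denote the set of doubly extreme rays of $\mathcal{S}$ and let $\mathcal{T}$ denote their double hull. One inclusion, $\mathcal{T}\subseteq \mathcal{S}$, is immediate since $DE\subseteq \mathcal{S}$ and $\mathcal{S}$ is a max-closed convex cone. For the reverse inclusion, Theorem \ref{thm:tmink} expresses $\mathcal{S}$ as the max-closure of its max-extreme rays, so since $\mathcal{T}$ is itself max-closed, it is enough to show that every max-extreme point of $\mathcal{S}$ sits in $\mathcal{T}$, and in fact in the convex conical hull of $DE$.

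Next, I would take an arbitrary max-extreme point $\mathbf{y}\in \mathcal{S}$ and produce a finite conical decomposition into extreme rays of $\mathcal{S}$. Since $\mathcal{S}\subseteq \mathbb{R}^s_{\geq 0}$ it is a closed pointed convex cone in a finite-dimensional space, so slicing by an affine hyperplane of the form $\{\mathbf{x}:\sum_i x_i=c\}$ yields a compact convex set to which Minkowski's theorem and Carath\'eodory's theorem apply. Lifting back to the cone gives a finite expression $\mathbf{y}=\sum_{j=1}^k \lambda_j \mathbf{r}_j$ with $\lambda_j>0$ and each $\mathbf{r}_j$ an extreme ray of $\mathcal{S}$.

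The core of the proof will be the following claim: each $\mathbf{r}_j$ appearing in such a decomposition of a max-extreme point $\mathbf{y}$ is itself max-extreme, and therefore doubly extreme. To show this, suppose $\mathbf{r}_j=\mathbf{a}\oplus \mathbf{b}$ with $\mathbf{a},\mathbf{b}\in \mathcal{S}$. I would then form the two points
\[
u:=\lambda_j\mathbf{a}+\sum_{k\neq j}\lambda_k\mathbf{r}_k,\qquad v:=\lambda_j\mathbf{b}+\sum_{k\neq j}\lambda_k\mathbf{r}_k,
\]
both of which lie in $\mathcal{S}$ by convexity of the cone. A direct coordinatewise computation, using that $\max(\lambda_j a_i,\lambda_j b_i)=\lambda_j r_{j,i}$, shows $u\oplus v=\mathbf{y}$. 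The max-extremeness of $\mathbf{y}$ then forces $\mathbf{y}=u$ or $\mathbf{y}=v$, and in either case one of $\mathbf{a}$ or $\mathbf{b}$ must equal $\mathbf{r}_j$ after cancelling the common summand and dividing by $\lambda_j>0$. Hence $\mathbf{r}_j$ is max-extreme, and being also an extreme ray of $\mathcal{S}$, it is doubly extreme. Consequently $\mathbf{y}\in\mathrm{cone}(DE)\subseteq \mathcal{T}$, which completes the argument.

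The step I expect to be the main (mild) obstacle is the key claim above: one must design the pair $(u,v)$ so that max-extremeness of $\mathbf{y}$ transfers to max-extremeness of each $\mathbf{r}_j$ in its conical decomposition. The trick of replacing only the $j$-th summand by $\mathbf{a}$ and by $\mathbf{b}$ while leaving the rest of the combination intact is what makes the tropical identity $u\oplus v=\mathbf{y}$ reduce, after subtraction, to a genuine statement about $\mathbf{r}_j$. Everything else is a routine application of Theorem \ref{thm:tmink}, finite-dimensional convex geometry (pointedness of $\mathcal{S}$ follows from $\mathcal{S}\subseteq \mathbb{R}^s_{\geq 0}$), and the definition of the double hull.
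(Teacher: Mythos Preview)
Your proposal is correct and follows essentially the same approach as the paper. Both proofs reduce via Theorem~\ref{thm:tmink} to showing that a max-extreme point lies in the conical hull of doubly extreme rays, and both hinge on the identity $(\lambda\mathbf{a}+\mathbf{c})\oplus(\lambda\mathbf{b}+\mathbf{c})=\lambda(\mathbf{a}\oplus\mathbf{b})+\mathbf{c}$ to transfer max-extremeness from the point to the constituents of a convex decomposition. The only cosmetic difference is that the paper phrases this through the face structure (showing that the entire minimal face containing a max-extreme point consists of max-extreme points, so its extreme rays are doubly extreme), whereas you pick a specific Carath\'eodory decomposition into extreme rays and argue directly for each summand; these are two packagings of the same step.
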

\begin{proof}
It is clear that $\mathcal{S}$ contains the double hull of its doubly extreme rays. To show the other inclusion, by Theorem \ref{thm:tmink}, it suffices to show that any max-extreme point of $\mathcal{S}$ is contained in the conical hull of double rays of $\mathcal{S}$.

Let $\mathbf{p}\neq 0$ be a max-extreme point of $\mathcal{S}$. Then $\mathbf{p}$ lies on the boundary of $\mathcal{S}$, and therefore $\mathbf{p}$ lies in the relative interior of a proper face $F$ of $\mathcal{S}$. We claim that any point $\mathbf{v}\in F$ is max-extreme. Suppose not,  then $\mathbf{v}=\mathbf{x}\oplus \mathbf{y}$, with $\mathbf{x},\mathbf{y}\neq \mathbf{v}$, and we can write $\mathbf{p}=\lambda \mathbf{v} +(1-\lambda) \mathbf{w}$ for some $\mathbf{w}\in F$ and $0<\lambda<1$. Therefore $\mathbf{p}=(\lambda \mathbf{x}+(1-\lambda) \mathbf{w})\oplus (\lambda \mathbf{y}+(1-\lambda) \mathbf{w})$, and since $\mathbf{x},\mathbf{y}\neq \mathbf{v}$ we see that $\mathbf{p}$ is not max-extreme, which is a contradiction.

Therefore, we see that extreme rays of $F$ are doubly extreme rays of $\mathcal{S}$ since they are max-extreme and since they are also extreme rays of $\mathcal{S}$. Thus, any max-extreme point is in the convex hull of doubly extreme rays of $\mathcal{S}$, and we may conclude by Theorem \ref{thm:tmink}.
\end{proof}

%-------------------------------------------------
\subsection{Some examples of tropicalizations}\label{sec:examplesoftropicalizations}

We illustrate the previous properties with some relatively simple examples. In Section \ref{sec:troppaths}, we find the tropicalization of a more complicated example, namely when $\mathcal{U}$ contains all paths up to a certain length. We note that the tropicalizations in all our examples are rational polyhedral cones, including the general case of chordal series-parallel graphs of Theorem~\ref{thm:chordalseriesparallel}, which leads us to make the following conjecture. Moreover, we compute the tropicalization for an example (matroids) without the Hadamard property where we also observe polyhedrality.

\begin{conjecture}\label{conj:polyhedrality}
Let $\mathcal{U}$ be a finite collection of finite connected simple graphs. Then $\tropNU$ is a rational polyhedral cone.
\end{conjecture}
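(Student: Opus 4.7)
The plan is to combine the structural reduction provided by max-closedness with a combinatorial upper bound and an explicit realizability argument. By Corollary \ref{cor:onenegative} and Corollary \ref{cor:dualc}, since $\tropNU$ is a max-closed convex cone in $\RR^s_{\geq 0}$, we have
\[
\tropNU^* \;=\; \sum_{j=1}^s \bigl(\tropNU^* \cap \mathcal{Q}_j\bigr),
\]
and a Minkowski sum of rational polyhedral cones is rational polyhedral. So it suffices to prove, for each fixed $H_j \in \U$, that the cone $\tropNU^* \cap \mathcal{Q}_j$ --- which encodes the valid domination inequalities $\prod_{i \neq j} \hom(H_i;G)^{\alpha_i} \geq \hom(H_j;G)^{\alpha_j}$ --- is finitely generated with rational generators.

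For the upper bound on each such cone, the plan is to extend the Kopparty--Rossman framework (Section \ref{sec:kr}) from chordal series-parallel graphs to arbitrary connected graphs. Their proof certifies homomorphism domination via fractional tree decompositions of the dominated graph together with gluing data indexed by the dominating graphs. Each decomposition is a finite combinatorial object and produces an inequality with a rational exponent vector; the cone generated by all such ``decomposition inequalities'' is contained in $\tropNU^* \cap \mathcal{Q}_j$ and is rational polyhedral provided only finitely many decomposition types are relevant. Establishing this latter finiteness amounts to bounding the ``complexity'' (e.g., treewidth, together with a bounded index set of attachment patterns) of optimal decompositions purely in terms of the graphs of $\U$.

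The matching lower bound requires realizability: every extreme ray of the proposed polyhedral cone should be the asymptotic log-homomorphism vector of an explicit sequence of graphs. The natural candidates are blow-ups (Section \ref{subsec:blowup}) together with their tensor powers and disjoint unions, which are precisely the constructions used in Theorems \ref{thm:ecycles}, \ref{thm:chordalseriesparallel}, and \ref{thm:pathintro} to realize extreme or doubly extreme rays. Concretely, for each decomposition certifying an extreme inequality, one would construct a parametrized blow-up family whose asymptotic log-vector traces out the corresponding extreme ray; Theorem \ref{thm:dext} then reduces the task to finding doubly extreme rays rather than all extreme rays.

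The hard part is that matching the two sides entangles the problem with Sidorenko-type statements: showing that every valid inequality comes from a structural decomposition is, in spirit, a Sidorenko-style claim and is likely to require new ideas. A more modest fallback, which might be tractable in generality, is to prove that $\tropNU^*$ is finitely generated without explicitly computing the generators. One natural route is a compactness argument over graph limits, showing that the set of suitably normalized valid exponent vectors is compact in a finite-dimensional ambient space whose dimension is controlled by $\U$; combined with the max-closedness reduction above, such compactness would force only finitely many extreme rays. Even executing this non-constructive version in full generality appears to demand a quantitative understanding of $\hom$-profiles that we do not yet possess, and this is where I expect the main obstacle to lie.
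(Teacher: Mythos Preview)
The statement you are attempting to prove is labeled \emph{Conjecture} in the paper, not \emph{Theorem}: the paper does not prove it and does not claim to. What the paper actually establishes is the special case where every graph in $\mathcal{U}$ is chordal and series-parallel (Theorem~\ref{thm:chordalseriesparallel}), together with several explicit families (cycles, stars, complete graphs, paths) computed by hand. Your text is therefore not being compared against a paper proof, because there is none.

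Your proposal is an honest research outline rather than a proof, and you say so yourself (``likely to require new ideas'', ``this is where I expect the main obstacle to lie''). The reduction via Corollary~\ref{cor:dualc} to the cones $\tropNU^* \cap \mathcal{Q}_j$ is correct and is exactly how the paper proves the chordal series-parallel case. But after that reduction nothing is actually proved. Two concrete gaps: first, ``extending the Kopparty--Rossman framework to arbitrary connected graphs'' is not a step one can simply carry out---their linear program relies on the entropy identity for chordal graphs and on the polymatroid description being tight for series-parallel targets, and neither ingredient survives in general; this extension \emph{is} the open problem, not a lemma toward it. Second, your fallback compactness argument cannot work as stated: compactness of a normalized slice of $\tropNU^*$ does not imply finitely many extreme rays (a circular cone has compact cross-section), so some additional rationality or discreteness input would be needed, and none is supplied.
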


We first prove the example of even cycles presented in the introduction. Our proof strategy involves the double hull to give an example of how it can be used. 

\begin{theorem}\label{thm:ecycles}
Let $\mathcal{U}=\{C_4, C_6, C_8, \ldots C_{2m}\}$. Then $$\tropNU=\left\{ \begin{array}{lll}
\mathbf{y}\in\mathbb{R}^{m-1} | & y_{2i-2}-2y_{2i}+y_{2i+2} \geq 0 & \forall \, 3 \leq i \leq m-1\\
& -y_{4} + y_{6} \geq 0 &\\
& m \cdot y_{2m-2} - (m-1) \cdot y_{2m} \geq 0 & 
\end{array} \right\}.$$ 

The set $\tropNU$ is the double hull of the following two doubly extreme rays: $\vec{1}$ and $(2,3,4,\dots,m)$. 
\end{theorem}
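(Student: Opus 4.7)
Let me call the polyhedral cone in the statement $P$. The plan is to prove $\tropNU = P$ by showing (a) the defining inequalities of $P$ are valid pure binomial inequalities, giving $\tropNU \subseteq P$, and (b) the two claimed rays are realizable in $\tropNU$ and their double hull equals $P$.

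For (a), I would write $\hom(C_{2k};G) = \operatorname{tr}(A_G^{2k}) = \sum_j \lambda_j^{2k}$ using adjacency eigenvalues. The log-convexity $\hom(C_{2i-2})\hom(C_{2i+2}) \geq \hom(C_{2i})^2$ is Cauchy--Schwarz applied to $(\lambda_j^{i-1})$ and $(\lambda_j^{i+1})$. The log-sublinear bound $\hom(C_{2m-2})^m \geq \hom(C_{2m})^{m-1}$ is the $\ell^p$-norm monotonicity $\|\lambda\|_{2m} \leq \|\lambda\|_{2m-2}$ (counting measure) raised to the power $2m(m-1)$. The non-decreasing bound $\hom(C_6) \geq \hom(C_4)$ admits a clean combinatorial proof: for any vertex $v$ of positive degree, prepending a back-and-forth traversal of an incident edge to a closed $v$-walk of length $4$ yields a closed $v$-walk of length $6$, so $[A_G^6]_{vv} \geq \deg(v)\,[A_G^4]_{vv} \geq [A_G^4]_{vv}$, while isolated vertices contribute zero to both sides.

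For (b), I would realize the two rays. The graph $K_2$ has eigenvalues $\pm 1$, so $\hom(C_{2i};K_2) = 2$ for all $i$, realizing $\vec{1}$ up to positive scaling. The sequence $K_n$ has eigenvalues $n-1$ and $-1$ (with multiplicity $n-1$), hence $\hom(C_{2i};K_n) = (n-1)^{2i} + (n-1) \sim n^{2i}$, so $(\log \hom(C_{2i};K_n))/\log n \to 2i$, realizing $(2,3,\ldots,m)$ up to positive scaling. Since $\tropNU$ is a max-closed convex cone by Corollary~\ref{tropismaxclosed2}, it contains $\operatorname{dh}(\{\vec{1},(2,3,\ldots,m)\})$, so it remains to show $P \subseteq \operatorname{dh}(\{\vec{1},(2,3,\ldots,m)\})$.

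I would do this by enumerating the extreme rays of $P$ in $\mathbb{R}^{m-1}$: each arises by making $m-2$ of the $m-1$ defining inequalities tight. If the log-sublinear inequality is slack, the remaining tight inequalities force $y_{2i}$ constant, yielding $\vec{1}$; if the non-decreasing inequality is slack, they force $y_{2i}$ linear with zero intercept, yielding $(2,3,\ldots,m)$; if some log-convexity $L_k$ is slack for $k \in \{3,\ldots,m-1\}$, then the non-decreasing inequality together with the tight $L_i$ for $i<k$ force $y_{2i}$ to be constant on $\{2,\ldots,k\}$, while the log-sublinear together with the tight $L_i$ for $i>k$ force $y_{2i}$ to lie on a line through the origin on $\{k,\ldots,m\}$; matching these pieces at $i=k$ gives the hinge ray $\mathbf{h}_k = (\max(k,2), \max(k,3), \ldots, \max(k,m))$. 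One sees directly that $\mathbf{h}_k = k\vec{1} \oplus (2,3,\ldots,m)$ in each coordinate, so every extreme ray of $P$ lies in $\operatorname{dh}(\{\vec{1},(2,3,\ldots,m)\})$. Since $P$ is the convex hull of its extreme rays, $P \subseteq \operatorname{dh}(\{\vec{1},(2,3,\ldots,m)\}) \subseteq \tropNU$, which combined with (a) gives $\tropNU = P$. The doubly-extreme claim then follows: the hinge rays are visibly not max-extreme (being proper tropical sums), while $\vec{1}$ and $(2,3,\ldots,m)$ are max-extreme via a short check that any element of $P$ coordinatewise dominated by $\vec{1}$ (resp.\ by $(2,3,\ldots,m)$) must be a nonneg scalar multiple of it. The main obstacle is the extreme-ray enumeration in the hinge case, which requires tracking the forward propagation of tight equalities from $N$ and the backward propagation from $S$ so that they meet consistently at the chosen index $k$.
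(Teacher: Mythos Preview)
Your proof is correct in its main thrust and takes a genuinely different route from the paper for the key containment $P \subseteq \operatorname{dh}(\{\vec{1},(2,\ldots,m)\})$. The paper works \emph{dually}: it uses Corollary~\ref{cor:dualc} to write the dual of the double hull as $\sum_i \mathcal{S}^*\cap \mathcal{Q}_i$, enumerates the extreme rays of each summand, and shows these coincide (up to redundancy) with the defining inequalities of $P$. You instead work \emph{primally}: you observe that $P$ is a simplicial cone (being cut out by $m-1$ linearly independent inequalities in $\mathbb{R}^{m-1}$), enumerate its $m-1$ extreme rays directly by dropping one inequality at a time, and exhibit each hinge ray $\mathbf{h}_k=(\max(k,2),\ldots,\max(k,m))$ explicitly as $k\vec{1}\oplus(2,\ldots,m)$. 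Your approach is more elementary here---it avoids the dual-cone machinery entirely and gives a transparent picture of the extreme rays---while the paper's approach serves as a first illustration of how Corollary~\ref{cor:dualc} can be used in practice, which is one of the tools they develop.

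One small correction: your stated check for max-extremality, ``any element of $P$ coordinatewise dominated by $\vec{1}$ must be a nonnegative scalar multiple of it,'' is false as written (e.g.\ $\varepsilon\cdot(2,\ldots,m)$ for small $\varepsilon$ is dominated by $\vec{1}$ but is not a multiple of it). The correct argument is nearby: if $\vec{1}=\mathbf{x}\oplus\mathbf{y}$ with $\mathbf{x},\mathbf{y}\in P$, then one of them, say $\mathbf{x}$, has $x_4=1$; since every element of $P$ is nondecreasing (from $N$ and the convexity inequalities) and $\mathbf{x}\le\vec{1}$, this forces $\mathbf{x}=\vec{1}$. Similarly for $(2,\ldots,m)$, if $x_{2m}=m$ then $S$ forces $x_{2m-2}=m-1$, and downward induction via the tight convexity inequalities gives $x_{2i}=i$ for all $i$.
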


\begin{proof}
Let $Q_\mathcal{U}$ be the cone on the righthand side of the equation in the theorem. 

\textbf{Claim 1:} We have that $\tropNU \subseteq Q_\mathcal{U}$. 

The inequality $C_{2i} \geq C_{2i-2}$ for any $i\geq 2$ holds since there is a surjective homomorphism from $C_{2i}$ to $C_{2i-2}$. Now, let $A_G$ be the adjacency matrix of a graph $G$ on $n$ vertices. Recall that the entry $(v_1,v_2)$ of $A_G^i$ is the number of walks of length $i$ between the vertices $v_1$ and $v_2$ of $G$, and so, in particular, the entry $(v,v)$ of $A_G^i$ is the number of walks of length $i$ starting and ending at some vertex $v$. This means that $\hom(C_{2i};G)=\textup{tr}(A_G^{2i})$ which in turn is equal to $\lambda_1^{2i} + \ldots + \lambda_n^{2i}$, where $\lambda_1, \ldots, \lambda_n$ are the eigenvalues of $A_G$. Then $C_{2i-2}C_{2i+2}\geq C_{2i}^2$ and $C_{2i-2}^i \geq C_{2i}^{i-1}$ can be viewed as inequalities for $\ell_{2p}$ norms of real vectors. The second inequality is thus simply stating that $||\mathbf{\lambda}||_{2i} \geq ||\mathbf{\lambda}||_{2i-2}$ where $\mathbf{\lambda}=(\lambda_1, \ldots, \lambda_n)$, and the other follows from an application of H\"older's inequality: $||\mathbf{\lambda}_{i-1}||_2 ||\mathbf{\lambda}_{i+1}||_2 \geq ||\mathbf{\lambda}_{i-1} \cdot \mathbf{\lambda}_{i+1}||_1$ where $\mathbf{\lambda}_{u}=(\lambda_1^u, \ldots, \lambda_n^u)$. 

These binomial inequalities for $\NU$ imply the linear inequalities $y_{2i-2}-2y_{2i}+y_{2i+2}\geq 0$, $-y_{2i-2} + y_{2i} \geq 0$ and $ i \cdot y_{2i-2} - (i-1) \cdot y_{2i}\geq 0$ for $\tropNU$. Thus $\tropNU \subseteq Q_\mathcal{U}$.

\textbf{Claim 2:} The rays $(1,1, \ldots, 1)$ and $(2,3,4, \ldots, m)$ are in $\tropNU$.

First note that $(1,1, \ldots, 1)\in \tropNU$ since $\hom(C_{2i}, K_2)=2$ for all $2 \leq i \leq m$. Moreover, note that $(2,3,4,\ldots, m)\in \tropNU$ since $\frac{\log \hom(C_{2i}, K_n)}{\log n} \rightarrow 2i$ as $n\rightarrow \infty$. 

\textbf{Claim 3:} We have that $Q_\mathcal{U} \subseteq \textup{dh}(\textup{cone}((1,1,\ldots, 1), (2,3,4, \ldots, m)))$, and so $Q_\mathcal{U}\subseteq \tropNU$.

Let $\mathcal{S}=\textup{cone}((1,1\ldots, 1), (2,3,4, \ldots, m))$ and let $\mathcal{D}:=\operatorname{dh} (\mathcal{S})$. From Corollary \ref{cor:dualc}, we know that $\mathcal{D}^*= \sum_{i=2}^m \mathcal{S}^* \cap \mathcal{Q}_{2i}$ where $\mathcal{Q}_{2i}$ is the orthant of $\mathbb{R}^{m-1}$ consisting of all points where the coordinate corresponding to $y_{2i}$ is nonpositive, and all other coordinates are nonnegative, and where $\sum$ denotes the Minkowski sum. Therefore, the set of extreme rays of $\mathcal{D}^*$ is contained in the union of extreme rays of $\mathcal{S}^*\cap \mathcal{Q}_{2i}$ for $i\in \{2, 3, \ldots, m\}$. Note that the extreme rays of $\mathcal{D}^*$ correspond to linear inequalities for $\mathcal{D}$.

The cone $\mathcal{S}^*\cap Q_{2i}$ is defined by $m+1$ inequalities, and at least $m-2$ of them must be tight to form an extreme ray. Let's consider various possibilities for which inequalities are tight.

Consider a ray $\mathbf{r}=(r_4, r_6, \ldots, r_{2m})$ of $\mathcal{S}^*\cap Q_{2i}$ with $m-2$ tight inequalities including the two inequalities corresponding to $(1,1,\ldots, 1)$ and $(2,3,\ldots, m)$. Then there must be at least $m-4$ inequalities of $Q_{2i}$ that are tight, which implies that there are at most three coordinates of $\mathbf{r}$ that are non-zero, say $r_{2i}$, $r_{2j}$ and $r_{2k}$ where $i<j<k$. Then we know $r_{2i}+r_{2j}+r_{2k}=0$ and $ir_{2i}+jr_{2j}+kr_{2k}=0$. Without loss of generality, this implies that $r_{2i}=k-j$, $r_{2j}=i-k$, $r_{2k}=j-i$, and $r_{2l}=0$ for all $l\in \{2,3,\ldots, m\}\backslash\{i,j,k\}$. 

If $j=i+1$ and $k=j+1$, then those rays correspond exactly to the inequalities $y_{2j-2}-2y_{2j}+y_{2j+2} \geq 0$ for all $3 \leq j \leq m-1$. The general rays obtained are redundant since $(k-j)y_{2i} + (i-k) y_{2j} + (j-i) y_{2k} \geq 0$ can be written as the following conic combination of the previous case:

\begin{align*}
&\sum_{u=i}^{j-1} (k-j)\cdot(u-i+1)\cdot(y_{2i}-2y_{2i+2}+y_{2i+4} \geq 0)\\
+& \sum_{u=j}^{k-2} (j-i)\cdot (k-1-u)\cdot(y_{2i}-2y_{2i+2}+y_{2i+4} \geq 0).
\end{align*}

Now consider a ray $\mathbf{r}=(r_4, r_6, \ldots, r_{2m})$ of $\mathcal{S}^*\cap Q_{2i}$ with $m-2$ tight inequalities including the inequality corresponding to $(1,1,\ldots, 1)$, but not $(2,3,\ldots, m)$. Then there must be at least $m-3$ inequalities of $Q_{2i}$ that are tight, which implies that there are at most two coordinates of $\mathbf{r}$ that are non-zero, say $r_{2i}$ and $r_{2j}$ where $i<j$. Then we know $r_{2i}+r_{2j}=0$ and $ir_{2i}+jr_{2j}>0$. Without loss of generality, this implies that $r_{2i}=-1$, $r_{2j}=1$, and $r_{2k}=0$ for all $k\in \{2,3,\ldots, m\}\backslash\{i,j\}$. If $i=2$ and $j=3$, then this corresponds to the inequality $-y_4+y_6 \geq 0$. The other rays are redundant since $-y_{2j-2}+y_{2j} \geq 0$  for any $4 \leq j \leq m$ can be written as the following conic combination:

\begin{align*}
\sum_{l=3}^{j-1} & (y_{2l-2}-2y_{2l}+y_{2l+2} \geq 0)\\
+& (-y_4+y_6\geq 0),
\end{align*}
which implies that $-y_{2i} + y_{2j} \geq 0$ for all $i<j$. 

Now consider a ray $\mathbf{r}=(r_4, r_6, \ldots, r_{2m})$ of $\mathcal{S}^*\cap Q_{2i}$ with $m-2$ tight  inequalities including the inequality corresponding to $(2,3,\ldots, m)$, but not $(1,1,\ldots, 1)$. Then there must be at least $m-3$ inequalities of $Q_{2i}$ that are tight, which implies that there are at most two coordinates of $\mathbf{r}$ that are non-zero, say $r_{2i}$ and $r_{2j}$ where $i<j$. Then we know $ir_{2i}+jr_{2j}=0$ and $r_{2i}+r_{2j}>0$. Without loss of generality, this implies that $r_{2i}=j$, $r_{2j}=-i$, and $r_{2k}=0$ for all $k\in \{2,3,\ldots, m\}\backslash\{i,j\}$. If $i=m-1$ and $j=m$, then this corresponds to the inequality $my_{2m-2}-(m-1)y_m \geq 0$. The other rays are redundant since $(i+1)y_{2i}-iy_{2i+2} \geq 0$ for any $2 \leq i \leq m-2$ can be written as the following conic combination:

\begin{align*}
 \sum_{l=i+1}^{m-1} j\cdot &(y_{2l-2}-2y_{2l}+y_{2l+2} \geq 0)\\
+ & (my_{2m-2} - (m-1)y_m \geq 0).
\end{align*}
Note that this implies that $jy_{2i}-iy_{2j} \geq 0$ for any $2 \leq i < j \leq m$ since it can be written as a conic combination of the previous inequalities:

\begin{align*}
\sum_{l=i}^{j-1} \frac{i \cdot j}{l(l+1)} ((l+1)y_{2l} - ly_{2l+2} \geq 0).
\end{align*}

Finally, consider a ray $\mathbf{r}=(r_4, r_6, \ldots, r_{2m})$ of $\mathcal{S}^*\cap Q_{2i}$ tight with $m-2$ inequalities where neither inequalities corresponding to $(2,3,\ldots, m)$ or $(1,1,\ldots, 1)$ are tight. Then there is at most one coordinate of $\mathbf{r}$ that is non-zero, so without loss of generality, $\mathbf{r}=\mathbf{e}_{2i}$ for some $2 \leq i\leq m$. This corresponds to inequalities stating the nonnegativity of each inequalities. Since we've already seen that rays are non-decreasing, the only potentially non-redundant inequality would be $y_4 \geq 0$, however, it is redundant as it can be written as

\begin{align*}
\sum_{j=3}^m j\cdot &(y_{2j-2} -2y_{2j}+y_{2j+2}\geq 0)\\
+ 2 \cdot & (-y_4 + y_6 \geq 0)\\
+ & (my_{2m-2} - (m-1)y_{2m} \geq 0).
\end{align*}

Thus $Q_\mathcal{U} \subseteq \textup{dh}(\textup{cone}((1,1,\ldots, 1), (2,3,4, \ldots, m)))$, and we know $\textup{dh}(\textup{cone}((1,1,\ldots, 1), (2,3,4,\ldots, m))) \subseteq \tropNU$ because $(1,1,\ldots, 1)$ and $(2,3,4,\ldots, m)$ are in $\tropNU$ by the previous claim, so we obtain that $Q_\mathcal{U}\subseteq \tropNU$.
\end{proof}

%-------
We now do the same for odd cycles. 

\begin{theorem}
Let $\mathcal{U}=\{C_3, C_5, C_7, \ldots C_{2m+1}\}$. Then $$\tropNU=\left\{ \begin{array}{lll}
\mathbf{y}\in\mathbb{R}^{m} | &y_3 \geq 0& \\
&  -y_{2i-1} + y_{2i+1} \geq 0  & 2 \leq i \leq m
\end{array} \right\}.$$ 
The set $\tropNU$ is the double hull of the following doubly extreme rays: $(\underbrace{0, \ldots, 0}_{i-1 \textup{ times}}, 1, \ldots, 1)$ for $1 \leq i \leq m$. 
\end{theorem}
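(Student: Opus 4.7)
My plan is to mimic the structure of the even-cycle proof, Theorem~\ref{thm:ecycles}, by separately establishing the two containments $\tropNU\subseteq Q_\mathcal{U}$ and $Q_\mathcal{U}\subseteq \tropNU$, where $Q_\mathcal{U}$ denotes the cone on the right-hand side of the theorem. As in the even-cycle case, the forward containment reduces to certifying the defining inequalities of $Q_\mathcal{U}$ as valid binomial inequalities on $\NU$, while the reverse containment follows once I realize the listed rays $r_i$ in $\tropNU$ and invoke the max-closedness/double-hull machinery of Section~\ref{sec:propertiestropicalizations}.

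For the forward containment, I would verify each defining inequality of $Q_\mathcal{U}$ directly. The inequality $y_3 \geq 0$ corresponds to $\hom(C_3;G)\geq 1$, which is automatic on $\NU\cap \mathbb{R}^s_{>0}$ since hom-counts are positive integers. For $i\geq 2$, the inequality $-y_{2i-1}+y_{2i+1}\geq 0$ is the logarithmic form of $\hom(C_{2i+1};G)\geq \hom(C_{2i-1};G)$, which follows from the existence of a surjective graph homomorphism $C_{2i+1}\to C_{2i-1}$ (obtained by folding two adjacent edges onto one): such a surjection induces an injection $\Hom(C_{2i-1};G)\hookrightarrow \Hom(C_{2i+1};G)$ by precomposition. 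For the reverse containment I would realize each $r_i$ with an explicit sequence. For $r_1=\vec{1}$, take $G_n$ to be the disjoint union of $n$ copies of $K_3$, so that $\hom(C_{2j+1};G_n)=n\cdot\hom(C_{2j+1};K_3)$ (since $C_{2j+1}$ is connected, every hom lives in one component), and every coordinate of $\log\hom(\cdot;G_n)/\log n$ tends to $1$. For $i\geq 2$, take $G_n$ to be the disjoint union of $K_3$ with $n$ copies of $C_{2i+1}$; a parity/winding argument shows $\hom(C_{2j+1};C_{2i+1})=0$ when $j<i$ (a closed walk of odd length $2j+1$ in $C_{2i+1}$ would need a net signed displacement that is a nonzero multiple of $2i+1$, exceeding $2j+1$). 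Hence $\hom(C_{2j+1};G_n)$ is the positive constant $\hom(C_{2j+1};K_3)$ for $j<i$, while it grows linearly in $n$ for $j\geq i$, so $\log\hom(\cdot;G_n)/\log n\to r_i$.

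To finish, I would show $Q_\mathcal{U}=\operatorname{dh}(\{r_1,\ldots,r_m\})$, which combined with Corollary~\ref{tropismaxclosed2} yields $Q_\mathcal{U}\subseteq \tropNU$. The cone $Q_\mathcal{U}$ is cut out by $m$ linearly independent inequalities in $\mathbb{R}^m$, hence is simplicial, and a direct computation of which defining inequalities are tight identifies its $m$ extreme rays as exactly $r_1,\ldots,r_m$. Each defining inequality of $Q_\mathcal{U}$ involves at most one negative coefficient, so $Q_\mathcal{U}$ is easily seen to be max-closed. A short monotonicity argument then shows each $r_i$ is max-extreme: if $r_i=x\oplus y$ with $x,y\in Q_\mathcal{U}$, then $x_{2j+1}=y_{2j+1}=0$ for $j<i$, and WLOG $x_{2i+1}=1$, whereupon monotonicity $x_{2j+1}\geq x_{2i+1}=1$ together with the upper bound $x_{2j+1}\leq (r_i)_{2j+1}=1$ for $j\geq i$ forces $x=r_i$. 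Theorem~\ref{thm:dext} then gives $Q_\mathcal{U}=\operatorname{dh}(\{r_1,\ldots,r_m\})$, completing the proof. The main (and really only) non-routine step is the parity/winding verification $\hom(C_{2j+1};C_{2i+1})=0$ for $j<i$ underlying the realization of the rays $r_i$ with $i\geq 2$; everything else is essentially a simpler version of the computation done for even cycles.
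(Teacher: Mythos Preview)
Your proposal is correct and follows essentially the same approach as the paper: the forward containment via the surjective homomorphism $C_{2i+1}\to C_{2i-1}$, the realization of each $r_i$ by the disjoint union of one triangle with $n$ copies of $C_{2i+1}$, and the monotonicity argument for max-extremeness are all identical to the paper's proof. The only difference is organizational: the paper first concludes $Q_\mathcal{U}\subseteq\tropNU$ directly from the fact that the extreme rays of the simplicial cone $Q_\mathcal{U}$ are realizable (convex-hull containment), and then separately checks doubly-extremeness, whereas you fold the doubly-extreme verification into the containment step via Theorem~\ref{thm:dext}; this is a cosmetic reordering rather than a different argument.
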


\begin{proof}
Let $Q_\mathcal{U}$ be the cone on the righthand side of the equation in the theorem. 

\textbf{Claim 1:} We have that $\tropNU \subseteq Q_\mathcal{U}$. 

The inequality $C_{2i-1} \geq C_{2i+1}$ for any $i\geq 2$ holds because there is a surjective homomorphism from $C_{2i+1}$ to $C_{2i-1}$. These binomial inequalities for $\NU$ imply the defining inequalities for $Q_\mathcal{U}$, and so they are valid on $\tropNU$. Moreover, since $\tropNU \subseteq \mathbb{R}_{\geq 0}^m$, we have that $y_3 \geq 0$ is a valid inequality for $\tropNU$. Thus $\tropNU \subseteq Q_\mathcal{U}$. 

\textbf{Claim 2:} The extreme rays of $Q_\mathcal{U}$ are $\mathbf{r}_i=(r_1, \ldots, r_m)$  for $1 \leq i \leq m$ where $$r_j=\left\{\begin{array}{ll}
0 & \textup{if } j < i,\\
1  & \textup{if } j \geq i.
\end{array}\right.$$

Observe that $Q_\mathcal{U}=\{\mathbf{y}\in\mathbb{R}^{m}| A\mathbf{y} \geq 0\}$ where

$$A:=\begin{pmatrix}
1 & 0 & 0 & 0 &  \ldots & 0 & 0 & 0\\
-1 & 1 & 0 & 0 & \ldots & 0 & 0 & 0\\
0 & -1 & 1 & 0 & \ldots & 0 & 0 & 0\\
0 & 0 & -1 & 1 & \ldots & 0 & 0 & 0\\
& & & &\ddots & & & \\
0 & 0 & 0 & 0 &  \ldots & -1 & 1 & 0\\
0 & 0 & 0 & 0 &  \ldots & 0 & -1 & 1\\
\end{pmatrix}$$
where the $i$th row of $A$ is $\mathbf{a}_i$. Since $A \in \mathbb{R}^{m\times m}$, the candidate extreme rays of $Q_\mathcal{U}$ are obtained by setting $m-1$ of the defining constraints $\mathbf{a}_i^\top \mathbf{y} \geq 0$ to equality. Since there are exactly $m$ constraints, it implies there are at most $m$ extreme rays. Observe that $$\mathbf{r}_i=(\underbrace{0, \ldots, 0}_{i-1 \textup{ times}}, 1, \ldots, 1)$$ satisfies all but the $i$th constraint at equality. Since $\mathbf{r}_i\in Q_\mathcal{U}$, it is an extreme ray for each $1 \leq i \leq m$.

\textbf{Claim 3:} The extreme rays of $Q_\mathcal{U}$ are in $\tropNU$, and hence $Q_\mathcal{U}=\tropNU$.

To realize $\mathbf{r}_i$, let $G_n$ be the graph with the disjoint union of $n$ $(2i+1)$-cycles and one 3-cycle. Then as $n \rightarrow \infty$, $\frac{\log \hom(C_{2j+1}; G_n)}{\log n} \rightarrow 1$ if $j\geq i$, and $\frac{\log \hom(C_{2j+1}; G_n)}{\log n} \rightarrow 0$ if $j<i$. 

\textbf{Claim 4:} The rays $\mathbf{r}_i$ are doubly extreme for $1 \leq i \leq m$.

First note that any ray in $\tropNU$ must be non-decreasing and nonnegative. 

Assume that $\mathbf{r}_i$ is not max-extreme for some $i$. Then there exists $\mathbf{t}:=(t_3, t_5, \ldots, t_{2m+1})$ and $\mathbf{t}':=(t'_3, t'_5, \ldots, t_{2m+1})$, both in $\tropNU$, such that $\mathbf{t}\oplus\mathbf{t'}=\mathbf{r}_i$ and neither $\mathbf{t}$ or $\mathbf{t}'$ is $\mathbf{r}_i$. Since $\mathbf{t},\mathbf{t}'\in \tropNU$, we know that $t_{2j+1}, t'_{2j+1} \geq 0$ for every $1 \leq j \leq m$, which implies that $t_{2j+1}=t'_{2j+1}=0$ for every $1 \leq j \leq i-1$. We also know that $t_{2j+1}\oplus t'_{2j+1}=1$ for every $i \leq j \leq m$, so $\max\{t_{2j+1},t'_{2j+1}\} =1$ and $t_{2j+1}, t'_{2j+1} \leq 1$ for every $i \leq j \leq m$. Without loss of generality, assume $t_{2i+1}=1$. Then $t_{2j+1}=1$ for $i \leq j \leq m$ since $t_{2j-1} \leq t_{2j+1}$ for every $2 \leq j \leq m$ because $\mathbf{t}\in \tropNU$, and each entry is bounded above by 1. Thus, $\mathbf{t} = \mathbf{r}_i$, a contradiction.

\end{proof}

%-----------

We call a tree on $k+1$ vertices where one vertex is connected to all the other $k$ vertices a \emph{star with $k$ branches} and denote it by $S_k$.
In \cite{BRST2}, we computed $\tropGU$ when $\mathcal{U}$ is a collection of stars. As explained in the introduction, $\NU$ is more general than $\GU$, and we see here that $\tropNU$ is indeed slightly more complicated than $\tropGU$. In particular, the inequalities $S_2 \geq S_1$ and $S_{m-1}^m \geq S_{m}^{m-1}$ for $\NU$ do not have any equivalent inequalities for homomorphism densities.

\begin{theorem}
Let $\mathcal{U}=\{S_0, S_1, \ldots, S_m\}$ where $S_i$ is the star graph with $i$ branches. Then $$\tropNU=\left\{ \begin{array}{lll}
\mathbf{y}\in\mathbb{R}^{m+1} | & y_{i-1}-2y_{i}+y_{i+1} \geq 0 & \forall 1 \leq i \leq m-1\\
& -y_{1} + y_{2} \geq 0 &\\
& y_0 + y_{m-1} - y_m \geq 0 & \\
& m \cdot y_{m-1} - (m-1) \cdot y_{m} \geq 0 & 
\end{array} \right\}.$$ 

The set $\tropNU$ is the double hull of the following doubly extreme rays: $\vec{1}$, $(1,0,0,\ldots, 0)$, $(1,1,2,3,\ldots, m)$, and $(1,2,3,4,\ldots,m+1)$. 
\end{theorem}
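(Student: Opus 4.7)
The plan is to mirror the argument of Theorem \ref{thm:ecycles}. Denote the right-hand side of the displayed equation by $Q_{\mathcal{U}}$, let $\mathbf{v}_1=\vec 1$, $\mathbf{v}_2=(1,0,\dots,0)$, $\mathbf{v}_3=(1,1,2,3,\dots,m)$, $\mathbf{v}_4=(1,2,3,\dots,m+1)$, and set $\mathcal{S}=\textup{cone}(\mathbf{v}_1,\mathbf{v}_2,\mathbf{v}_3,\mathbf{v}_4)$. I will establish the chain $\tropNU\subseteq Q_{\mathcal{U}}\subseteq \operatorname{dh}(\mathcal{S})\subseteq \tropNU$, the first inclusion by certifying that each defining inequality of $Q_{\mathcal{U}}$ lifts to a valid pure binomial inequality in $\NU$, the third by realizing each $\mathbf{v}_k$ via an explicit graph sequence, and the middle by the dual-case analysis provided by Corollary~\ref{cor:dualc}.

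The validity step uses $\hom(S_k;G)=\sum_{v\in V(G)}\deg(v)^k$ for $k\geq 1$ and $\hom(S_0;G)=|V(G)|$. Then $S_{i-1}S_{i+1}\geq S_i^2$ is Cauchy--Schwarz applied to the vectors $(\deg(v)^{(i-1)/2})$ and $(\deg(v)^{(i+1)/2})$; $S_2\geq S_1$ follows from $d^2\geq d$ for nonnegative integers $d$; $S_0\cdot S_{m-1}\geq S_m$ follows from $\sum_v d_v^m\leq (\max_v d_v)\sum_v d_v^{m-1}\leq |V(G)|\sum_v d_v^{m-1}$; and $S_{m-1}^m\geq S_m^{m-1}$ is the $\ell^p$-norm monotonicity $\|(d_v^{m-1})\|_1\geq \|(d_v^{m-1})\|_{m/(m-1)}$. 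For realizability, I use $K_2$ (which gives $\hom(S_i;K_2)=2$ for all $i$) for $\mathbf{v}_1$; the disjoint union of $K_2$ with $n$ isolated vertices for $\mathbf{v}_2$; the star $S_n$ on $n+1$ vertices for $\mathbf{v}_3$, where $\hom(S_0;S_n)=n+1$, $\hom(S_1;S_n)=2n$ and $\hom(S_i;S_n)=n^i+n$ for $i\geq 2$; and $K_n$ for $\mathbf{v}_4$, using $\hom(S_i;K_n)=n(n-1)^i$.

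The bulk of the work is the inclusion $Q_{\mathcal{U}}\subseteq \operatorname{dh}(\mathcal{S})$. By Corollary~\ref{cor:dualc}, $\operatorname{dh}(\mathcal{S})^*=\sum_{j=0}^{m}(\mathcal{S}^*\cap\mathcal{Q}_j)$, so every extreme ray of the dual is an extreme ray of some $\mathcal{S}^*\cap \mathcal{Q}_j$. Each such cone is cut out in $\mathbb{R}^{m+1}$ by the four facets $\langle\bm{\alpha},\mathbf{v}_k\rangle\geq 0$ and the $m+1$ orthant constraints, so its extreme rays arise by making $m$ of these $m+5$ inequalities tight. I would organize cases by how many of the four $\mathbf{v}_k$-facets are active: when all four are active, the ray is supported on at most five coordinates in a constrained pattern, and I would write the resulting inequality as an explicit telescoping nonnegative combination of the log-convexity inequalities $y_{i-1}-2y_i+y_{i+1}\geq 0$ together with the two endpoint inequalities $y_0+y_{m-1}-y_m\geq 0$ and $my_{m-1}-(m-1)y_m\geq 0$; when fewer $\mathbf{v}_k$-facets are active, the support shrinks and the candidate inequality is either generated by $-y_1+y_2\geq 0$ combined with log-convexity or is outright redundant. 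Double extremality of the four rays is then a direct check: log-convexity applied at an appropriate index propagates any coordinate value that is pinned by the tropical sum through the rest of the sequence, forcing one of the two summands in $\mathbf{v}_k=\mathbf{x}\oplus\mathbf{y}$ to equal $\mathbf{v}_k$. The main obstacle will be the bookkeeping of the dual case analysis; in particular the two ``growth-bound'' inequalities $y_0+y_{m-1}-y_m\geq 0$ and $my_{m-1}-(m-1)y_m\geq 0$ have no analogue in the even-cycle proof, and identifying the correct telescoping coefficients that combine both of them is the step that requires care.
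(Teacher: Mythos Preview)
Your overall architecture and the endpoint steps (validity of the defining inequalities, realizability of the four rays, and the double-extremality check) are correct and coincide with the paper's. The one substantive difference is how you establish the middle inclusion $Q_{\mathcal{U}}\subseteq\operatorname{dh}(\mathcal{S})$.

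You propose the \emph{dual} route, transplanting the even-cycle argument: decompose $\operatorname{dh}(\mathcal{S})^*=\sum_j(\mathcal{S}^*\cap\mathcal{Q}_j)$ via Corollary~\ref{cor:dualc} and reduce every extreme ray of each $\mathcal{S}^*\cap\mathcal{Q}_j$ to a conic combination of the $m+2$ defining inequalities of $Q_{\mathcal{U}}$. The paper instead works \emph{primally}: it first enumerates all extreme rays of $Q_{\mathcal{U}}$ (besides the four named rays there are two parametric families $\mathbf{r}_i$ and $\mathbf{s}_i$), and then exhibits each as a single tropical sum of nonnegative combinations of $\mathbf{v}_1,\dots,\mathbf{v}_4$, e.g.\ $\mathbf{r}_i=\mathbf{v}_3\oplus i\,\mathbf{v}_1$ and $\mathbf{s}_i=i\,\mathbf{v}_3\oplus\bigl((i-1)\mathbf{v}_4+\mathbf{v}_1\bigr)$. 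Your approach is perfectly valid in principle, but with four generators instead of two the case split on which subset of the $\mathbf{v}_k$-facets is active grows (up to $2^4$ subcases before symmetry), and in the ``all four active'' case you must handle rays supported on five indices subject to four linear relations---this is where your sketch is thinnest, and the telescoping combinations you need will not be as uniform as in the even-cycle proof. The paper's primal approach sidesteps this because $Q_{\mathcal{U}}$ has only $m+2$ facets in $\mathbb{R}^{m+1}$, so its extreme rays are easy to list and the tropical decompositions are one-line identities.
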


\begin{proof}
Let $Q_\mathcal{U}$ be the cone on the righthand side of the equation in the theorem. 

\textbf{Claim 1:} We have that $\tropNU \subseteq Q_\mathcal{U}$. 

Let $G$ be a graph on $n$ vertices with degree sequence $\mathbf{d}=(d_1,\dots,d_n)$. Then $\hom(S_i;G)=d_1^i+\dots+d_n^i=||\mathbf{d}||_{i}^i$. For $m=0$, we define $0^0=1$. The above inequalities are known inequalities (e.g., H\"older inequalities) for $\ell_{p}$ norms of real nonnegative vectors applied to the degree sequence $\mathbf{d}$.

\textbf{Claim 2:} The extreme rays of $Q_\mathcal{U}$ are $(1,0,\ldots, 0)$, $(1,2,3,4,\ldots, m+1)$, $\mathbf{r}_i=(r_0, r_1, \ldots, r_m)$  for $2 \leq i \leq m$ where $$r_j=\left\{\begin{array}{ll}
i & \textup{if } j \leq i,\\
j & \textup{if } j \geq i,
\end{array}\right.$$
and $\mathbf{s}_i=(s_0, s_1, \ldots, s_m)$ for $1 \leq i \leq m-1$ where $$s_j=\left\{\begin{array}{ll}
i+j(i-1) & \textup{if } j\leq i,\\
i+i(i-1)+(j-i)i & \textup{if } j\geq i.
\end{array} \right.$$

Observe that $Q_\mathcal{U}=\{\mathbf{y}\in\mathbb{R}^{m+1}| A\mathbf{y} \geq 0\}$ where

$$A:=\begin{pmatrix}
0 & -1 & 1 & 0 & 0 & \ldots & 0 & 0 & 0\\
1 & -2 & 1 & 0 & 0 & \ldots & 0 & 0 & 0\\
0 & 1 & -2 & 1 & 0 & \ldots & 0 & 0 & 0\\
0 & 0 & 1 & -2 & 1 & \ldots & 0 & 0 & 0\\
& & & & & \ddots & & & \\
0 & 0 & 0 & 0 & 0 & \ldots & 1 & -2 & 1\\
1 & 0 & 0 & 0 & 0 & \ldots & 0 & 1 & -1\\
0 & 0 & 0 & 0 & 0 & \ldots & 0 & m & -(m-1)
\end{pmatrix}$$
where the $i$th row of $A$ is $\mathbf{a}_i$. Since $A \in \mathbb{R}^{(m+2)\times(m+1)}$, the extreme ray candidates of $Q_\mathcal{U}$ are obtained by setting $m$ of the defining constraints $\mathbf{a}_i^\top \mathbf{y} \geq 0$ to equality.

Let's first check that the vectors in the claim are indeed rays. Certainly, all of them satisfy all inequalities. Moreover, $(1,0,\ldots, 0)$ is tight with all inequalities except the second and the $(m+1)$th ones, and $(1, 2, 3, \ldots, m+1)$ with all but the first and $(m+2)$th one. Note that $\mathbf{r}_i$ is tight with all inequalities except the $(i+1)$th and $(m+1)$th, and that $\mathbf{s}_i$ is tight with all except for the first and $(i+1)$th ones. We still need to show that no other extreme ray exists.

We first show that there exists no extreme ray such that $\mathbf{a}_1^\top \mathbf{y} = 0$ and $\mathbf{a}_{m+1}^\top \mathbf{y}=0$. Consider a point $\mathbf{z}=(z_0, z_1, \ldots, z_m) \in Q_\mathcal{U}$ such that $z_m=z_{m-1}+z_0$ and such that $z_1=z_2$. From $\mathbf{a}_2^\top\mathbf{y}\geq 0$ and the assumption that $z_1=z_2$, we know that $z_0 \geq z_2$, and from $\mathbf{a}_{m+2}^\top \mathbf{y} \geq 0 $ and the assumption that $z_m=z_{m-1}+z_0$, we get that $z_{m-1} \geq (m-1)z_0$. Thus, the average gap $z_{i+1}-z_{i}$ for $2 \leq i \leq m-2$ is $\frac{z_{m-1}-z_2}{m-3} \geq \frac{(m-1)z_0 - z_0}{m-3}= \frac{m-2}{m-3}z_0 > z_0$. This implies there exists $2 \leq j \leq m-2$ such that $z_{j+1}-z_j > z_0$. Let $j^*$ be the biggest such index. Then $z_{j^*+1}-z_{j^*} > z_0$ and $z_{j^*+2}-z_{j^*+1} \leq z_0$, which implies that $z_{j^*}-2z_{j^*+1}+z_{j^*+2} < 0$, a contradiction to $z\in Q_\mathcal{U}$. Therefore, any extreme ray must be such that $\mathbf{a}_1^\top \mathbf{y} > 0$ or $\mathbf{a}_{m+1}^\top \mathbf{y}>0$ or both. The rays we have listed are the ones where exactly one of those two inequalities holds.

Let's finally check that there is no extreme ray where  $\mathbf{a}_1^\top \mathbf{y} > 0$ and $\mathbf{a}_{m+1}^\top \mathbf{y}> 0$. If there was, say $\mathbf{z}$, then all other inequalities would have to be tight. In particular, this means that $z_i-z_{i-1}=z_2-z_1=:k$ where $k>0$ (because of $\mathbf{a}_1^\top \mathbf{y} > 0$) for every $1 \leq i \leq m$ because of the tight log-convexity inequalities. Moreover, since $z_{m-1}=z_1+(m-2)k$, $z_{m}=z_1+(m-1)k$ and the last inequality is tight, we have that $m(z_1+(m-2))k=(m-1)(z_1+(m-1)k)$ which implies that $k=z_1$, i.e., $z_2=2z_1$. Thus, $z_0+z_{m-1}-z_m= 0 + (m-1)z_1 - mz_1 <0$. So there is no extreme ray where  $\mathbf{a}_1^\top \mathbf{y} > 0$ and $\mathbf{a}_{m+1}^\top \mathbf{y}> 0$.

\textbf{Claim 3:} The extreme rays of $Q_\mathcal{U}$ are in $\tropNU$, and hence $Q_\mathcal{U}=\tropNU$.

Just like for the even cycles case, a single edge and the complete graph on $n$ vertices as $n$ grows yield $(1,1,\ldots, 1)$ and $(1,2,3,4,\ldots, m+1)$ respectively. We also have that $(1,0,0, \ldots, 0)$ is realizable, for example with a graph on $n$ vertices as $n\rightarrow \infty$ containing a single edge.

Moreover, note that $\frac{\log \hom(S_i, S_n)}{\log n} \rightarrow i$ as $n\rightarrow \infty$ for $i\geq 1$, and $\frac{\log \hom(S_0, S_n)}{\log n} \rightarrow 1$, so $(1,1,2,3,4,\ldots, m) \in \tropNU$ as well. Observe that $\mathbf{r}_i=(1,1,2,3, \ldots, m)\oplus i\cdot (1,1,\ldots, 1)$, and $\mathbf{s}_i=i\cdot (1,1,2,3,\ldots, m)\oplus((i-1)\cdot (1,2,3,\ldots, m+1)+(1,1,1, \ldots, 1))$. This implies that $\tropNU$ is the double hull of $\vec{1}$, $(1,0,0,\ldots, 0)$, $(1,1,2,3,\ldots, m)$, and $(1,2,3,4,\ldots,m+1)$. 

\textbf{Claim 4:} The rays $\vec{1}$, $(1,0,0,\ldots, 0)$, $(1,1,2,3,\ldots, m)$, and $(1,2,3,4,\ldots,m+1)$ are doubly extreme.

First note that $y_i \leq y_{i+1}$ for every $1 \leq i \leq m-1$ since it can be written as the following linear combination:

\begin{align*}
\sum_{j=2}^i &(y_{j-1}-2y_j+y_{j+1} \geq 0)\\
+& (-y_1+y_2 \geq 0).
\end{align*} 

So any ray in $\tropNU$ is non-decreasing after the first component. Also, observe that $iy_{i-1}-(i-1)y_i \geq 0$ for every $2 \leq i \leq m$ since it can be written as the following linear combination:

\begin{align*}
\sum_{j=i}^{m-1} j\cdot & (y_{j-1}-2y_{j}+y_{j+1} \geq 0)\\
+ & (my_{m-1} - (m-1)y_m \geq 0).
\end{align*}

Moreover, $y_i \geq 0$ for every $0 \leq i \leq m$ since we know $\tropNU \subset \mathbb{R}^{m+1}_{\geq 0}$.

Now let $\mathbf{r}=(r_0, r_1, \ldots, r_m)$ and $\mathbf{r}'=(r'_0, r'_1, \ldots, r'_m)$, both in $\tropNU$. 

Suppose that $\mathbf{r}\oplus\mathbf{r}'=(1,0,0, \ldots, 0)$. Then since $r_i, r'_i \geq 0$, we have that $r_i=r'_i=0$ for all $1 \leq i \leq m$. Moreover, we know that $r_0 \oplus r'_0 = 1$. Without loss of generality, let $r_0=1$. Then $\mathbf{r}=(1,0,0, \ldots, 0)$, and so $(1,0,0,\ldots, 0)$ is max-extreme.

Suppose that $\mathbf{r}\oplus\mathbf{r}'=\vec{1}$. Without loss of generality, assume $r_1=1$ since we know that $r_1\oplus r'_1 =1$. Then $r_i = 1$ for all $1 \leq i \leq m$ since $\mathbf{r}$ is non-decreasing after $r_1$ because it is in $\tropNU$. Finally, we know that $r_0 \leq 1$ since $r_0\oplus r'_0 = 1$, and $r_0 \geq 1$ since $r_0 - 2r_1 + r_2 = r_0 -1 \geq 0$ since $\mathbf{r}\in \tropNU$. Thus $r_0 = 1$, and so $\mathbf{r}=\vec{1}$, which implies that $\vec{1}$ is max-extreme. 

Suppose that $\mathbf{r}\oplus\mathbf{r}'=(1,1,2,3,\ldots, m)$. Without loss of generality, assume $r_m=m$ since $r_m\oplus r'_m = m$. We know that $i\cdot r_{i-1} - (i-1) r_i \geq 0$ for all $2 \leq i \leq m$ since $\mathbf{r}\in \tropNU$. If there exists $i$ such that this is a strict inequality, let $i^*$ be the greatest such $i$. Then $r_{i^*-1} > i^*-1$ which is a contradiction to $r_{i^*-1} \oplus r'_{i^*-1}= i^*-1$. So this implies that $r_i=i$ for all $1 \leq i \leq m$. Additionally, we have that $r_0 \geq 1$ because $r_0 + r_{m-1} - r_m = r_0-1 \geq 0$ since $\mathbf{r}\in \tropNU$. We also know that $r_0 \leq 1$ since $r_0\oplus r'_0 =1$. Thus $r_0=1$, and $\mathbf{r}=(1,1,2,3,\ldots, m)$, and so $(1,1,2,3,\ldots, m)$ is max-extreme.

Finally, suppose that $\mathbf{r}\oplus \mathbf{r}'=(1,2,3,\ldots, m+1)$. Without loss of generality, assume $r_m=m+1$ since $r_m\oplus r'_m = m+1$. Since $\mathbf{r}\in \tropNU$, we know that $r_0+r_{m-1} - r_m \geq 0$ which implies that $r_0+r_{m-1} \geq m+1$. Since $r_0 \leq 1$ and $r_{m-1} \leq m$ because $r_0\oplus r'_0 = 1$ and $r_{m-1}\oplus r'_{m-1} = m$, we have that $r_0=1$ and $r_{m-1}=m$. Suppose that $r_{j+1}=j+2$ and $r_j=j+1$ for some $1 \leq j \leq m-1$. Then since $r_{j-1}-2r_j+r_{j+1} =r_{j-1}-j \geq 0$ because $\mathbf{r}\in \tropNU$, we have that $r_{j-1} \geq j$. Moreover, since $r_{j-1}\oplus r'_{j-1} = j$, we know that $r_{j-1} \leq j$. Thus, $r_{j-1}=j$, and by induction, we get that $r_i = i+1$ for all $0 \leq i \leq m+1$, and so that $\mathbf{r}=(1,2,3,\ldots, m+1)$. Therefore, $(1,2,3,\ldots, m+1)$ is max-extreme.

\end{proof}
%----------
On the other hand, the tropicalization of homomorphism numbers for complete graphs is not more complicated than the one for graph densities presented in \cite{BRST2}. We still go through it as it will be helpful to understand the tropicalization of simplicial complexes afterwards.

\begin{theorem}\label{thm:tropcompletegraphs}
Let $\mathcal{U}=\{K_1, \ldots, K_m\}$ where $K_i$ is a complete graph on $i$ vertices. Then $$\tropNU=\left\{ \begin{array}{lll}
\mathbf{y}\in\mathbb{R}^{m} | & i \cdot y_{i-1} - (i-1) \cdot y_{i} \geq 0 & 2 \leq i \leq m\\
& y_m \geq 0 & 
\end{array} \right\}.$$ 

The set $\tropNU$ is the double hull of the following doubly extreme rays: $\mathbf{r}_i=(1, 2, \ldots, i, 0, \ldots, 0)$ for $1 \leq i \leq m$. 

\end{theorem}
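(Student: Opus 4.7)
The plan is to mirror the strategy used in Theorem~\ref{thm:ecycles}: establish validity to obtain $\tropNU \subseteq Q_\mathcal{U}$, compute the doubly extreme rays of $Q_\mathcal{U}$, realize each as a point of $\tropNU$, and then apply Theorem~\ref{thm:dext} to deduce the reverse containment via the double hull. Since $Q_\mathcal{U}$ has only $m$ facets in $\RR^m$, the ray analysis is much cleaner than in the cycle or star cases.

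For validity, the inequality $y_m \geq 0$ holds because $\tropNU \subseteq \RR^m_{\geq 0}$. The log-convexity inequalities $i\, y_{i-1} - (i-1) y_i \geq 0$ correspond to the pure binomial inequality $\hom(K_{i-1};G)^i \geq \hom(K_i;G)^{i-1}$. I would prove this via Shearer's entropy inequality: let $(X_1,\dots,X_i)$ be uniformly random among ordered $i$-cliques in $G$, and take the covering subsets $\{1,\dots,i\}\setminus\{l\}$ for $l=1,\dots,i$, so that each index is covered exactly $i-1$ times. Then
\[
(i-1)\log\hom(K_i;G) = (i-1) H(X_1,\dots,X_i) \leq \sum_{l=1}^i H\bigl(X_{\{1,\dots,i\}\setminus\{l\}}\bigr) \leq i\log\hom(K_{i-1};G),
\]
where the last step uses that each marginal $X_{\{1,\dots,i\}\setminus\{l\}}$ is supported on ordered $(i-1)$-cliques. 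Exponentiating gives the desired inequality, in the spirit of the entropy arguments of \cite{koppartyrossman}.

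For the reverse containment, I first enumerate the extreme rays of $Q_\mathcal{U}$ by a short case analysis according to whether $y_m \geq 0$ is tight. If $y_m > 0$, then all $m-1$ log-convexity inequalities must be tight, forcing $y_j / j = y_1$ and producing $\mathbf{r}_m = (1,2,\dots,m)$. If $y_m = 0$, then exactly one of the log-convexity inequalities, say $(i+1) y_i - i\, y_{i+1} \geq 0$, must be strict (otherwise all coordinates collapse to zero); the remaining tight equalities then force $y_j = j\, y_1$ for $j \leq i$ and $y_j = 0$ for $j > i$, giving $\mathbf{r}_i$. Next I verify each $\mathbf{r}_i$ is max-extreme: writing $\mathbf{r}_i = \mathbf{x} \oplus \mathbf{y}$ with $\mathbf{x}, \mathbf{y} \in Q_\mathcal{U} \subseteq \RR^m_{\geq 0}$, the vanishing coordinates of $\mathbf{r}_i$ force $x_k = y_k = 0$ for $k > i$; then assuming $x_i = i$, the log-convexity inequality $k x_{k-1} \geq (k-1) x_k$ combined with the coordinatewise upper bound $x_k \leq r_{i,k} = k$ forces $x_k = k$ inductively for all $k \leq i$, so $\mathbf{x} = \mathbf{r}_i$. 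Hence $\mathbf{r}_1,\dots,\mathbf{r}_m$ are precisely the doubly extreme rays of $Q_\mathcal{U}$.

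Finally I realize each $\mathbf{r}_i$ in $\tropNU$ by taking $G_n$ to be the disjoint union of the complete multipartite graph $K_{n,\dots,n}$ with $i$ parts of size $n$ together with a fixed copy of $K_m$ (the latter included only to keep every coordinate strictly positive). Then $\hom(K_j;G_n) = j! \binom{i}{j} n^j + O(1)$ for $j \leq i$, while $\hom(K_j;G_n)$ is a positive constant for $j > i$, so $\log \hom(K_j;G_n) / \log n \to r_{i,j}$ as $n \to \infty$, giving $\mathbf{r}_i \in \tropNU$. Consequently $\tropNU$, being a closed max-closed convex cone containing every $\mathbf{r}_i$, contains $\operatorname{dh}(\{\mathbf{r}_1,\dots,\mathbf{r}_m\}) = Q_\mathcal{U}$ by Theorem~\ref{thm:dext}, completing the proof. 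The main obstacle is the validity step, which is handled cleanly by the Shearer argument above.
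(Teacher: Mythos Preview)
Your proof is correct and follows essentially the same four-step structure as the paper's own proof (validity, enumerate extreme rays, realize them, verify max-extremality). The only substantive difference is in the validity step: you use Shearer's entropy lemma to certify $\hom(K_{i-1};G)^i \geq \hom(K_i;G)^{i-1}$, whereas the paper invokes the Kruskal--Katona theorem. Both arguments are standard and yield the same inequality; your Shearer argument is arguably more self-contained since it does not rely on an outside combinatorial result.

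One minor logical point: your final sentence applies Theorem~\ref{thm:dext} to conclude $\operatorname{dh}(\{\mathbf{r}_1,\dots,\mathbf{r}_m\}) = Q_\mathcal{U}$, but that theorem requires the cone to be max-closed, which you have not established for $Q_\mathcal{U}$ directly. In fact the double hull is unnecessary for the containment: since you have enumerated \emph{all} extreme rays of $Q_\mathcal{U}$ and placed each one inside $\tropNU$, ordinary convexity already gives $Q_\mathcal{U} = \operatorname{cone}(\mathbf{r}_1,\dots,\mathbf{r}_m) \subseteq \tropNU$, hence $Q_\mathcal{U} = \tropNU$. Max-closedness of $Q_\mathcal{U}$ then follows a posteriori from Corollary~\ref{tropismaxclosed2}, after which Theorem~\ref{thm:dext} (together with your max-extremality argument) delivers the double-hull statement. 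This is exactly how the paper orders the argument; your version just needs the dependencies reordered slightly.
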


\begin{proof}
Let $Q_\mathcal{U}$ be the cone on the righthand side of the equation in the theorem. 

\textbf{Claim 1:} We have that $\tropNU \subseteq Q_\mathcal{U}$. 

The Kruskal-Katona theorem \cite{Kruskal, Katona} implies that $K_p^q - K_q^p \geq 0$ for any $2\leq p < q$, and it is also clear that $K_1^2 \geq K_2$. These binomial inequalities for $\NU$ imply the defining inequalities for $Q_\mathcal{U}$, and so they are valid on $\tropNU$.  Further, $y_m\geq 0$ is a valid inequality for $\tropNU$ since $\tropNU\subseteq \mathbb{R}_{\geq 0}^m$. Thus $\tropNU \subseteq Q_\mathcal{U}$. 

\textbf{Claim 2:} The extreme rays of $Q_\mathcal{U}$ are $\mathbf{r}_i=(r_1, \ldots, r_m)$  for $1 \leq i \leq m$ where $$r_j=\left\{\begin{array}{ll}
j & \textup{if } j \leq i,\\
0  & \textup{if } j > i.
\end{array}\right.$$

Observe that $Q_\mathcal{U}=\{\mathbf{y}\in\mathbb{R}^{m}| A\mathbf{y} \geq 0\}$ where

$$A:=\begin{pmatrix}
2 & -1 & 0 & 0 & \ldots & 0 & 0 & 0\\
0 & 3 & -2 & 0 & \ldots & 0 & 0 & 0\\
0 & 0 & 4 & -3 & \ldots & 0 & 0 & 0\\
& & & &\ddots & & & \\
0 & 0 & 0 & 0 &  \ldots & m-1 & -(m-2) & 0\\
0 & 0 & 0 & 0 &  \ldots & 0 & m & -(m-1)\\
0 & 0 & 0 & 0 &  \ldots & 0 & 0 & 1\\
\end{pmatrix}$$
where the $i$th row of $A$ is $\mathbf{a}_i$. Since $A \in \mathbb{R}^{m\times m}$, the extreme ray candidates of $Q_\mathcal{U}$ are obtained by setting $m-1$ of the defining constraints $\mathbf{a}_i^\top \mathbf{y} \geq 0$ to equality. Since there are exactly $m$ constraints, it implies there are at most $m$ extreme rays. Observe that $\mathbf{r}_i=(1, 2, \ldots, i, 0, \ldots, 0)$ satisfies all but the $i$th constraint at equality. Since $\mathbf{r}_i\in Q_\mathcal{U}$, it is an extreme ray for each $1 \leq i \leq m$.

\textbf{Claim 3:} The extreme rays of $Q_\mathcal{U}$ are in $\tropNU$, and hence $Q_\mathcal{U}=\tropNU$.

To realize $\mathbf{r}_i$, let $G_n$ be an $i$-partite complete graph where each part has $\frac{n}{i}$ vertices (i.e., a Tur\'an graph) with a disjoint copy of $K_m$.  Then as $n\rightarrow \infty$, $\frac{\log \hom(K_j; G_n)}{\log n} \rightarrow j$ if $j\leq i$ and $\frac{\log \hom(K_j; G_n)}{\log n} \rightarrow 0$ if $j>i$. 

\textbf{Claim 4:} The rays $\mathbf{r}_i$ for $1 \leq i \leq m$ are doubly extreme.

Let $\mathbf{t}=(t_1, \ldots, t_m)$ and $\mathbf{t}'=(t'_1, \ldots, t'_m)$, both in $\tropNU$, and suppose that $\mathbf{t}\oplus\mathbf{t}'=\mathbf{r}_i$ for some $1 \leq i \leq m$. Since $\mathbf{t}$ and $\mathbf{t}'$ are in $\tropNU$, $t_j, t'_j \geq 0$ for every $1 \leq j \leq m$. Thus since $t_j\oplus t'_j=0$ for $i+1 \leq j \leq m$, we have that $t_j=t_j'=0$ for all $i+1 \leq j \leq m$. Now, without loss of generality, assume that $t_i=i$ since $t_i\oplus t'_i=i$. Since $\mathbf{t}\in \tropNU$, we know that $jt_{j-1}-(j-1)t_j \geq 0$ for every $2 \leq j \leq m$. If there exists $2 \leq j \leq i$ such that $jt_{j-1}-(j-1)t_j >0$, let $j^*$ be the greatest such $j$. Then $t_{j^*-1} > j-1$, and so $t_{j*^-1}\oplus t'_{j^*-1} \neq j^*-1$, a contradiction. Thus, $jt_{j-1}-(j-1)t_j = 0$ for every $2 \leq j \leq i$, and $\mathbf{t}=\mathbf{r}_i$, which implies that $\mathbf{r}_i$ is max-extreme.

\end{proof}

%---------
Polyhedrality also shows up when tropicalizing counts other combinatorial structures as we show in the following examples about simplicial complexes and matroids.

Let $S_1$ and $S_2$ be simplicial complexes on $[m]$ and $[n]$.

\begin{definition}

\begin{itemize}
\item Let $S_1\otimes S_2$ be a simplicial complex on $[m]\times [n]$ as follows: $F\subset [m]\times [n]$ is a $k$-face of $S_1\otimes S_2$ if and only if $\pi_m(F)$ is a $k$-face of $S_1$ and $\pi_n(F)$ is a $k$-face of $S_2$.

\item For a simplicial complex $S$, define the scaled $f$-vector $\mathbf{f}_S$ as follows: the $k$th entry, denoted as $f_k$, records the number of $k$-faces of $S$ multiplied by $k!$.
\end{itemize}
\end{definition}

\begin{lemma}\label{lem:hadamardsimplicialcomplexes}
We have that $\mathbf{f}_{S\otimes T}=\mathbf{f}_S\cdot \mathbf{f}_T$ where $\cdot$ is the Hadamard product.
\end{lemma}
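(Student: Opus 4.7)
The plan is to prove the identity coordinate-by-coordinate via a direct bijective counting argument. Fix $k$ and let $f_k(S)$ denote the number of $k$-faces of a simplicial complex $S$, so that the $k$-th entry of $\mathbf{f}_S$ is $k!\,f_k(S)$. It suffices to show $f_k(S_1\otimes S_2) = k!\cdot f_k(S_1)\cdot f_k(S_2)$, since then the $k$-th entry on the left becomes $k!\,f_k(S_1\otimes S_2) = (k!)^2 f_k(S_1) f_k(S_2) = \bigl(k!\,f_k(S_1)\bigr)\bigl(k!\,f_k(S_2)\bigr)$, which is precisely the $k$-th entry of the Hadamard product.

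The main step is to set up a bijection between $k$-faces $F$ of $S_1\otimes S_2$ and triples $(A,B,\sigma)$ where $A$ is a $k$-face of $S_1$, $B$ is a $k$-face of $S_2$, and $\sigma : A\to B$ is a bijection. The key observation is that if $F$ is a $k$-face of $S_1\otimes S_2$, then by definition both $\pi_m(F)$ and $\pi_n(F)$ are $k$-faces, hence have the same cardinality as $F$ itself; since $|F|\ge|\pi_m(F)|$ and $|F|\ge|\pi_n(F)|$ with equality in both, the restrictions of $\pi_m$ and $\pi_n$ to $F$ must each be bijective. Thus $F$ is exactly the graph of a bijection $\sigma$ from $A:=\pi_m(F)$ to $B:=\pi_n(F)$. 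Conversely, the graph of any bijection $\sigma:A\to B$ between a $k$-face $A$ of $S_1$ and a $k$-face $B$ of $S_2$ is a subset of $[m]\times[n]$ whose two projections are $A$ and $B$ respectively, which is precisely the defining condition for it to be a $k$-face of $S_1\otimes S_2$. These two maps are mutually inverse.

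The final step is then pure counting: the number of such triples is $f_k(S_1)\cdot f_k(S_2)\cdot k!$, since there are exactly $k!$ bijections between two $k$-element sets. This gives the formula for $f_k(S_1\otimes S_2)$ and, after multiplying by $k!$, the Hadamard identity. There is no real obstacle here; the scaling factor $k!$ built into the definition of $\mathbf{f}_S$ is engineered precisely to absorb the $k!$ bijections arising in matching up the two projections, and the only subtlety to verify carefully is that the projection conditions in the definition of $S_1\otimes S_2$ force the matched cardinalities and hence the bijection structure.
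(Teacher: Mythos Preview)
Your proof is correct and follows essentially the same approach as the paper's: both arguments reduce to counting $k$-faces of $S_1\otimes S_2$ as triples consisting of a $k$-face of $S_1$, a $k$-face of $S_2$, and one of the $k!$ bijections (matchings) between them. Your version is somewhat more explicit in justifying why the projections $\pi_m|_F$ and $\pi_n|_F$ are bijective, but the underlying idea is identical.
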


\begin{proof}
First, observe that the $k$th entry of $\mathbf{f}_S \cdot \mathbf{f}_T$ is $k!k!s_kt_k$ where $s_k$ is the number of $k$-faces in $S$ and $t_k$ is the number of $k$-faces in $T$. So we need to show that the number of $k$-faces in $S\otimes T$ is equal to $k!s_kt_k$. This is true since for each of the $s_kt_k$ pairs of $k$-faces in $S$ and $T$ (with one coming from $S$ and one coming from $T$), we get a $k$-face of $S\otimes T$ by matching the $k$ elements of the $k$-face of S and the $k$ elements of the $k$-face of $T$ that we are considering, and there are $k!$ (perfect) matchings. 
\end{proof}

Fix some $m\in \mathbb{N}$ and let $\mathcal{S}=\{\mathbf{f}_S=(f_0, f_1, \ldots, f_m) \in \mathbb{Z}_{\geq 0}^m \,\, \mid \,\, S \textup{ is a simplicial complex}\}$. Then the tropicalization of $\mathcal{S}$ is the tropicalization of the complete graph profile.

\begin{theorem}
We have that $$\trop(\mathcal{S})=\left\{ \begin{array}{lll}
\mathbf{y}\in\mathbb{R}^{m} | & (i+1) \cdot y_{i-1} - i \cdot y_{i} \geq 0 & 1 \leq i \leq m\\
& y_m \geq 0 & 
\end{array} \right\}.$$ 

The set $\trop(\mathcal{S})$ is the double hull of the following doubly extreme rays: $\mathbf{r}_i=(1, 2, \ldots, i+1, 0, \ldots, 0)$ for $0 \leq i \leq m$. 

\end{theorem}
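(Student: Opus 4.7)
\emph{Proof plan.} The plan is to mirror the proof of Theorem~\ref{thm:tropcompletegraphs}, since the cone has essentially the same shape up to a reindexing. Write $Q$ for the cone on the right-hand side. By Lemma~\ref{lem:hadamardsimplicialcomplexes} together with the observation that disjoint unions of simplicial complexes add scaled $f$-vectors coordinatewise, the set $\mathcal{S}$ is closed under Hadamard multiplication and under coordinatewise addition, so Lemma~\ref{tropismaxclosed1} makes $\trop(\mathcal{S})$ a max-closed convex cone and the full toolkit of Section~\ref{sec:propertiestropicalizations} applies.

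For the inclusion $\trop(\mathcal{S})\subseteq Q$, the bound $y_m\geq 0$ is automatic from $\trop(\mathcal{S})\subseteq\mathbb{R}^{m+1}_{\geq 0}$, while each remaining inequality $(i+1)y_{i-1}-iy_i\geq 0$ is the tropicalization of a pure binomial inequality $f_{i-1}^{i+1}\geq C_i f_i^i$ which I would derive from the Kruskal--Katona theorem. In its Lov\'asz form, if $s_i=\binom{x}{i+1}$ for some real $x\geq i+1$, then $s_{i-1}\geq\binom{x}{i}$, so
\begin{equation*}
\frac{s_{i-1}^{i+1}}{s_i^i}\;\geq\;\frac{\binom{x}{i}^{i+1}}{\binom{x}{i+1}^{i}}\;=\;\frac{(i+1)^{i+1}}{(i+1)!}\cdot\frac{x(x-1)\cdots(x-i+1)}{(x-i)^i},
\end{equation*}
and a short monotonicity check (differentiating the log) shows that this expression is decreasing in $x$ with limit $(i+1)^i/i!>0$, producing a uniform positive lower bound. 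Converting to the scaled $f$-vector via $f_k=k!\,s_k$ gives the desired $C_i>0$; taking logs and normalizing yields the linear inequality.

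The remaining task is to show $Q\subseteq\trop(\mathcal{S})$. A square-system computation exactly as in Theorem~\ref{thm:tropcompletegraphs} (each defining inequality is set to equality in turn) identifies the extreme rays of $Q$ as precisely $\mathbf{r}_i=(1,2,\ldots,i+1,0,\ldots,0)$ for $0\leq i\leq m$. To realize $\mathbf{r}_i$, I would take $S_n$ to be the $i$-skeleton of the simplex on $n$ vertices, disjointly unioned with a fixed full $m$-simplex so that no coordinate of $\mathbf{f}$ vanishes; then $\log f_k(S_n)/\log n\to k+1$ for $k\leq i$ and $\to 0$ for $k>i$. For max-extremeness, if $\mathbf{t}\oplus\mathbf{t}'=\mathbf{r}_i$ with $\mathbf{t},\mathbf{t}'\in\trop(\mathcal{S})\subseteq Q$, then nonnegativity forces $t_k=t'_k=0$ for $k>i$, and after WLOG taking $t_i=i+1$, a downward induction using $(k+1)t_{k-1}\geq k\,t_k$ together with $t_{k-1}\leq k$ from the max condition pins down $t_k=k+1$ for all $0\leq k\leq i$, so $\mathbf{t}=\mathbf{r}_i$. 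Applying Theorem~\ref{thm:dext} to these rays (now doubly extreme in $Q$ and realized in $\trop(\mathcal{S})$) gives $Q=\operatorname{dh}(\{\mathbf{r}_0,\ldots,\mathbf{r}_m\})\subseteq\trop(\mathcal{S})$, and combining with the first inclusion finishes the proof. The only genuinely non-formal step is the Kruskal--Katona calculation in the first inclusion; everything else is a direct translation of the complete-graph template, which is the reason the two theorems end up having essentially the same structure.
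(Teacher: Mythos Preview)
Your argument is correct, but it redoes work that the paper avoids via a single observation. For the inclusion $Q\subseteq\trop(\mathcal{S})$, the paper notes that the vector $(\hom(K_1;G),\ldots,\hom(K_{m+1};G))$ is, up to fixed multiplicative constants, the scaled $f$-vector of the clique complex of $G$; hence the complete-graph number profile sits inside $\mathcal{S}$, and its tropicalization---already shown in Theorem~\ref{thm:tropcompletegraphs} to equal $Q$ after reindexing---sits inside $\trop(\mathcal{S})$. This one line replaces your extreme-ray computation, your realization via skeleta of simplices, your max-extremeness check, and your appeal to Theorem~\ref{thm:dext}; the doubly-extreme-ray statement is likewise inherited verbatim. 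For the reverse inclusion $\trop(\mathcal{S})\subseteq Q$, the paper invokes Kruskal--Katona in the clean form $f_{p-1}^{\,q}\geq f_{q-1}^{\,p}$ (together with $f_0^2\geq f_1$), sidestepping your Lov\'asz-form computation and the floating constant $C_i$; since constants wash out under tropicalization, your version is equally valid, just longer. What your approach buys is self-containment: it would stand even without Theorem~\ref{thm:tropcompletegraphs}. What the paper's approach buys is a two-sentence proof and a conceptual explanation for why the simplicial-complex and complete-graph cones coincide.
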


\begin{proof}
Note that the $H$-description of the tropicalization $\trop (\mathcal{S})$ is the same as the description of the tropicalization complete graph profile in Theorem~\ref{thm:tropcompletegraphs} (with the indexing shifted by 1 to account for the fact that $f_0$ counts the number of vertices which were previously indexed by $1$). The vector of homomorphism numbers from complete graphs into a graph $G$ is the same as the scaled $f$-vector of the clique complex of $G$. It follows that $\trop (\mathcal{S})$ contains the tropicalization of the complete graph profile. Therefore it suffices to verify that the inequalities of the $H$-description correspond to valid binomial inequalities on $\mathcal{S}$. The Kruskal-Katona theorem \cite{Kruskal, Katona} implies that $f_{p-1}^q - f_{q-1}^p \geq 0$ for any $2\leq p < q$ for any simplicial complex, and it is also clear that $f_0^2 \geq f_1$ and $f_m \geq 1$ since we tropicalize only simplicial complexes that have at least one $m$-face.
\end{proof}

We now look at a particular type of simplicial complexes: the independence complex of a matroid $M$, i.e., the collection of subsets of the ground set of $M$ that are independent.  Even though the scaled $f$-vectors of the independence complexes of matroids do not have the Hadamard property, we still show that its tropicalization is polyhedral. The proof differs from previous examples since we do not know a priori that the tropicalization is a closed convex cone.

%\begin{lemma}
%Let $M_1$ and $M_2$ be matroids. The $M_1\otimes M_2$ is a matroid and $\mathbf{f}_{M_1 \otimes M_2}=\mathbf{f}_{M_1}\cdot \mathbf{f}_{M_2}$ where $\cdot$ is the Hadamard product.
%\end{lemma}

%\begin{proof}
%If we restrict to some subset of elements in $M_1$ and to another subset of elements in $M_2$, we know that the corresponding restriction in $M_1\otimes M_2$ will be pure by Lemma \ref{lem:hadamardpuresimplicialcomplexes}
%\end{proof}

\begin{theorem}
Let $\mathcal{F}_m \in \mathbb{N}^{m+1}$ consist of all possible first $m+1$ entries of scaled $f$-vectors of matroids. Then $$\trop(\mathcal{F}_m) =\left\{ \begin{array}{lll}
\mathbf{y}\in\mathbb{R}^{m+1} | & 2 y_0 - y_1 \geq 0 & \\
& -y_{i-1}+2y_{i}-y_{i+1} \geq 0 & 1 \leq i \leq m-1\\
& -y_{m-1}+y_m \geq 0 & 
\end{array} \right\}.$$ 

%The set $\trop(\mathcal{F}_m)$ is the double hull of the following doubly extreme rays:  $\mathbf{r}_i=(1, 2, \ldots, i+1, i+1, \ldots, i+1)$ for $0 \leq i \leq m$.
\end{theorem}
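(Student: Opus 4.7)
The plan is to show both inclusions between $\trop(\mathcal{F}_m)$ and the polyhedron $Q$ on the right-hand side. Under the convention that a $k$-face of the independence complex is a face of dimension $k$ (a $(k+1)$-element independent set), the scaled $f$-vector is $f_k = k!\,I_{k+1}$, where $I_j$ denotes the number of $j$-element independent sets of the matroid. The three families of defining inequalities of $Q$ translate (up to positive constant factors that disappear in the tropical limit) to the binomial inequalities $f_0^2 \ge f_1$, $f_i^2 \ge f_{i-1} f_{i+1}$ for $1 \le i \le m-1$, and $f_m \ge f_{m-1}$. The first is a trivial counting bound ($I_1^2 \ge I_2$). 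The middle family reduces to log-concavity of the independent-set sequence, $I_{i+1}^2 \ge I_i I_{i+2}$, which is Mason's conjecture, resolved in full generality by Adiprasito, Huh, and Katz and by Br\"and\'en and Huh via Lorentzian polynomials; I invoke this as a black box. For the last inequality, double-counting pairs $(S, S')$ with $S \in I_m$, $S' \in I_{m+1}$, $S \subset S'$ yields $(m+1) I_{m+1} \ge I_m$, hence $f_m/f_{m-1} \ge m/(m+1)$. Together these establish $\trop(\mathcal{F}_m) \subseteq Q$.

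For the reverse inclusion $Q \subseteq \trop(\mathcal{F}_m)$, I realize every $\mathbf{y} \in Q$ by a single explicit family of matroids, sidestepping the absence of the Hadamard property (which would otherwise enable the standard extreme-ray-then-realize argument). Introduce slope variables $s_0 := y_0$ and $s_k := y_k - y_{k-1}$ for $1 \le k \le m$; a short rearrangement shows that the defining inequalities of $Q$ are equivalent to the single chain $s_0 \ge s_1 \ge \cdots \ge s_m \ge 0$. Given such a slope vector, define the rank-$(m+1)$ matroid
\[
M_n \;:=\; \bigoplus_{j=0}^{m} U_{1,\,\lceil n^{s_j}\rceil},
\]
the direct sum of $m+1$ rank-one uniform matroids. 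An independent set of $M_n$ consists of at most one element from each summand, so $I_a(M_n) = e_a\bigl(\lceil n^{s_0}\rceil, \ldots, \lceil n^{s_m}\rceil\bigr)$ is the $a$-th elementary symmetric polynomial in the summand sizes. Since the $s_j$ are non-increasing, the dominant monomial of $e_a$ uses the first $a$ summands, giving $\log I_a(M_n)/\log n \to s_0 + \cdots + s_{a-1}$, and hence $\log f_k(M_n)/\log n \to s_0 + \cdots + s_k = y_k$ for every $k$, realizing $\mathbf{y}$ as a point of $\trop(\mathcal{F}_m)$.

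The heaviest step is the log-concavity input in Part 1, but it enters only as a citation. The part we must execute ourselves, the reverse inclusion, turns out to be surprisingly clean once $Q$ is reparametrized by slopes: each slope $s_j$ is tracked faithfully by the size of the $j$th rank-one summand of $M_n$, and tropical limits of elementary symmetric polynomials extract exactly the partial sums $y_k$. Notably, this argument requires no Hadamard or convexity assumption on $\mathcal{F}_m$, so the polyhedrality of the tropicalization emerges as a consequence of the explicit realization rather than being assumed a priori.
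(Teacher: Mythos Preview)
Your proof is correct and follows essentially the same approach as the paper: log-concavity of the independent-set sequence (Mason) together with a double-counting/augmentation bound for $\trop(\mathcal{F}_m)\subseteq Q$, then the slope reparametrization $s_k=y_k-y_{k-1}$ and realization by the partition matroid $\bigoplus_j U_{1,\lceil n^{s_j}\rceil}$ for $Q\subseteq\trop(\mathcal{F}_m)$, exactly as the paper does. One citation quibble: Adiprasito--Huh--Katz proved the Heron--Rota--Welsh conjecture on characteristic-polynomial coefficients, not Mason's conjecture on $I_k$; the correct attributions (which the paper uses and which you also partially give) are Anari--Liu--Oveis Gharan--Vinzant and Br\"and\'en--Huh.
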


\begin{proof}
Let $Q$ be the cone on the righthand side of the equation in the theorem. 

\textbf{Claim 1:} We have that $\trop(\mathcal{F}_m) \subseteq Q$. 

By a version of Mason's Conjecture \cite{masonconjecture}, it is known that the entries of the scaled $f$-vector are a log-concave sequence, i.e., they satisfy $f_{k-1}f_{k+1}\leq f_k^2$, which implies the first $m$ inequalities of $Q$ (since we do not keep track of $f_{-1}$) \cite{anari2018log, MR4172622}. The last inequality comes from the fact that every $(k-1)$-face is contained in a $k$-face (since $f_k\neq 0$ for vectors that we tropicalize), and every $k$-face leads to at most $k$ distinct $(k-1)$-faces. So $\trop(\mathcal{F}_m) \subseteq Q_\mathcal{U}$.

\textbf{Claim 2:} A point is in $Q$ if and only if it has the following shape: $(a_0, a_0+a_1, a_0+a_1+a_2, \ldots, a_0+a_1+\ldots + a_m)$ for some real numbers $a_0\geq a_1\geq  \ldots \geq a_m \geq 0$. 

Certainly, every point $(a_0, a_0+a_1, a_0+a_1+a_2, \ldots, a_0+a_1+\ldots + a_m)$ for any real numbers $a_0\geq a_1\geq  \ldots \geq a_m \geq 0$ satisfies the the defining inequalities of $Q$, and thus such points are in $Q$. 

For the other direction, first note that $y_j \geq y_{j-1}$ is a true inequality for $Q$ for every $1 \leq j \leq m$ since it can be written as the following conic combination of defining inequalities of $Q$:

\begin{align*}
\sum_{k=j}^{m-1} & (-y_{k-1}+2y_{k}-y_{k+1} \geq 0)\\
+& (-y_{m-1} + y_m \geq 0),
\end{align*}
and so, points in $Q$ are non-decreasing, and so certainly any point has the form $(a_0, a_0+a_1, a_0+a_1+a_2, \ldots, a_0+a_1+\ldots + a_m)$ for some real numbers $a_0, a_1,  \ldots, a_m \geq 0$

Since $-y_{i-1}+2y_{i}-y_{i+1}=\sum_{k=0}^{i-1}a_k - 2\sum_{k=0}^{i} a_k + \sum_{k=0}^{i+1}= a_i-a_{i+1}\geq 0$, we have that $a_i \geq a_{i+1}$ for every $1 \leq i \leq m-1$. Finally, since $2y_0-y_1=2a_0-(a_0+a_1)\geq 0$, we also have that $a_0 \geq a_1$. 

\textbf{Claim 3:} We have that $Q \subseteq \trop(\mathcal{F}_m)$, and hence $Q=\trop(\mathcal{F}_m)$.

Consider the point $(a_0, a_0+a_1, a_0+a_1+a_2, \ldots, a_0+a_1+\ldots + a_m)$ for some real numbers $a_0\geq a_1\geq  \ldots \geq a_m \geq 0$. Further, consider the partition matroid $M=(E, \mathcal{I})$ where $E=E_0\cup E_1\cup E_2 \cup \ldots \cup E_m$, $|E_i|=\lceil n^{a_i}\rceil$ and $\mathcal{I}=\left\{S\subseteq E: |S\cap E_i| \leq 1 \ \forall i\in \{0, 1, 2, \ldots, m\}\right\}$. As $n\rightarrow \infty$, the scaled $f$-vector for this partition matroid is such that $$f_i = \sum_{\substack{T\subseteq \{0,1,2,\ldots, m\}:\\ |T|=i+1}} i!  \prod_{k\in T} n^{a_k}$$ since one can take at most one element from each of the $E_k$'s and one needs to pick $i+1$ elements.   Taking the $\log$ of the scaled $f$-vector thus yields a positive multiple of  $(a_0, a_0+a_1, a_0+a_1+a_2, \ldots, a_0+a_1+\ldots + a_m)$ in the limit (since $m$ is finite and $\sum_{i=0}^j a_i \geq \sum_{i\in U} a_i$ for any $U\subseteq \{0,1,2,\ldots, m\}$ and $|U|=j+1$ for any $0\leq j\leq m$), and so each point of $Q$ is in $\trop(\mathcal{F}_m)$. 
\end{proof}
 \begin{remark}
 Note that some of the defining inequalities of the tropicalization of the matroid profile have more than one negative term. Therefore by Corollary~\ref{cor:onenegative} we see that the matroid profile is not max-closed (this is also not hard to show directly). However, we still observe the same polyhedrality phenomenon.
 \end{remark}

% !TEX root =  tropicalizationofpaths-finitesetting.tex
\section{Results of Kopparty and Rossman}\label{sec:kr}

In this section, we go over some ideas introduced by Kopparty and Rossman in \cite{koppartyrossman} which we use in our proofs in the next sections to compute the tropicalization of the number profile for paths. The results of Kopparty and Rossman also allow us to provide more evidence for Conjecture \ref{conj:polyhedrality}.

%------------------------------------------------------------------------------------------
\subsection{A theorem by Kopparty-Rossman and its implication for the polyhedrality conjecture}\label{subsec:koppartyrossman}

The concept of homomorphism domination exponent was introduced in \cite{koppartyrossman}, though the idea behind it had been central to many problems in extremal graph theory for a long time. Let the \emph{homomorphism domination exponent} of a pair of graphs $F_1$ and $F_2$, denoted by $\HDE(F_1;F_2)$, be the maximum value of $c$ such that $\hom(F_1;G) \geq \hom(F_2;G)^c$ for every graph $G$. For example, Sidorenko's conjecture states that $\HDE(P_0^{2|E(H)|-|V(H)|}\cdot H; P_1)=|E(H)|$ for any bipartite graph $H$. The Erd\H{o}s-Simonovits conjecture was equivalent to showing that $\HDE(P_0^{k-t} P_{k}^t; P_t)=k$ for any $k\geq t$ where both $k$ and $t$ are odd. 

In \cite{koppartyrossman}, Kopparty and Rossman showed that $\HDE(F_1; F_2)$ can be found by solving a linear program when $F_1$ is chordal and $F_2$ is series-parallel. Since this is the case when $F_1$ and $F_2$ are unions of paths, this result will be very useful to us in the next sections to compute the tropicalization of the number profile of paths. 
We now recall their results.

Let $\Hom(F_1;F_2)$ be the set of homomorphisms from $F_1$ to $F_2$, and let $\mathcal{P}(F_2)$ be the polytope consisting of normalized $F_2$-polymatroidal functions:
\begin{align*}
\mathcal{P}(F_2)=\big\{p\in \mathbb{R}^{2^{|V(F_2)|}} | \,\, & p(\emptyset) = 0 &&\\
& p(V(F_2)) = 1 && \\
& p(A) \leq p(B) && \forall \  A \subseteq B \subseteq V(F_2)\\
& p(A\cap B) + p(A\cup B) \leq p(A)+p(B) && \forall \ A, B \subseteq V(F_2)\\
& p(A\cap B) + p(A\cup B) = p(A)+p(B) &&\forall \ A, B \subseteq V(F_2) \textup { such that } A\cap B \\
&&& \quad \quad \quad \textup{ separates } A\backslash B \textup{ and }B\backslash A\big\}.
\end{align*}
In the definition above, $A\cap B$ is said to separate $A\backslash B$ and $B\backslash A$ if there is no path in $F_2$ going from a vertex in $A\backslash B$ to a vertex in $B\backslash A$. 

\begin{theorem}[Kopparty-Rossman, 2011]\label{thm:kr11}
Let $F_1$ be a chordal graph and let $F_2$ be a series-parallel graph. Then

$$\HDE(F_1; F_2) = \min_{p\in \mathcal{P}(F_2)} \max_{\varphi\in \Hom(F_1;F_2)}  \sum_{S \subseteq \textup{MaxCliques}(F_1)} -(-1)^{|S|} p(\varphi(\cap S))\\$$
where $\textup{MaxCliques}(F_1)$ is the set of maximal cliques of $F_1$ and $\cap S$ is the intersection of the maximal cliques in $S$. 
\end{theorem}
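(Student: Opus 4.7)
The plan is to prove two matching inequalities. Writing $c^*$ for the min-max value on the right-hand side, the lower bound $\HDE(F_1;F_2)\geq c^*$ comes from an entropy polymatroid associated to a uniformly random homomorphism $F_2\to G$, combined with an inclusion-exclusion identity coming from the junction tree of the chordal graph $F_1$; the matching upper bound is proved by constructing weighted blow-ups of $F_2$ realizing the minimum.

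For the lower bound, fix any $G$ with $\hom(F_2;G)>0$, let $\psi$ be uniform on $\Hom(F_2;G)$, and set $p(A):=H(\psi|_A)/\log\hom(F_2;G)$ for $A\subseteq V(F_2)$, where $H$ denotes Shannon entropy. First I would verify $p\in\mathcal{P}(F_2)$: the normalizations are immediate, monotonicity and submodularity are the standard entropy inequalities, and the crucial modular equality on separating pairs follows from the factorization of the homomorphism count $\#\{\psi:F_2\to G \text{ extending a fixed }\psi|_{A\cap B}\}$ as a product over the connected components of $F_2\setminus(A\cap B)$, yielding the conditional independence $\psi|_{A\setminus B}\perp\psi|_{B\setminus A}\mid \psi|_{A\cap B}$. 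For any $\varphi\in\Hom(F_1;F_2)$ the composition $\psi\circ\varphi\in\Hom(F_1;G)$ gives $\log\hom(F_1;G)\geq H(\psi\circ\varphi)$, and chordality of $F_1$ supplies the Shearer-type identity
\begin{equation*}
H(\psi\circ\varphi) \;=\; \sum_{\emptyset\neq S\subseteq\textup{MaxCliques}(F_1)} -(-1)^{|S|}\,H((\psi\circ\varphi)|_{\cap S}),
\end{equation*}
exact because the nerve of the maximal cliques of a chordal graph is contractible (in effect, junction-tree Möbius inversion). Using $(\psi\circ\varphi)|_{\cap S}=\psi|_{\varphi(\cap S)}$ and dividing by $\log\hom(F_2;G)$ gives $\log\hom(F_1;G)/\log\hom(F_2;G)\geq\sum_S-(-1)^{|S|}p(\varphi(\cap S))$. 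Maximizing over $\varphi$ and then taking the infimum over the achievable $p$'s (all of which lie in $\mathcal{P}(F_2)$) yields $\HDE(F_1;F_2)\geq c^*$.

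For the matching upper bound, fix an optimal $p^*\in\mathcal{P}(F_2)$ and construct $G_n$ as a weighted vertex blow-up of $F_2$: replace each $v\in V(F_2)$ by an independent set $V_v$ of size $\lceil n^{a_v}\rceil$ for nonnegative weights $a_v$ read off from $p^*$ via LP duality, and connect $V_u,V_v$ completely whenever $uv\in E(F_2)$. The series-parallel hypothesis is essential here: treewidth $2$ is exactly what allows the Lagrangian dual of the min-max to decouple into vertex weights $a_v$ that match $p^*$ on every $A\subseteq V(F_2)$ in the limit, giving $p^*(A)=\lim_n\log\hom(F_2[A];G_n)/\log n$ and $\log\hom(F_2;G_n)/\log n\to 1$. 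Summing $\hom(F_1;G_n)$ over $\varphi\in\Hom(F_1;F_2)$, the dominant term has exponent $\max_\varphi\sum_S-(-1)^{|S|}p^*(\varphi(\cap S))=c^*$, yielding $\log\hom(F_1;G_n)/\log\hom(F_2;G_n)\to c^*$. The main obstacle is this realizability step together with the exact form of the chordal Shearer identity: both rely on structural hypotheses (series-parallelism for the blow-up decoupling, chordality for the inclusion-exclusion being tight), and extending the result beyond these classes is expected to replace the equality by an inequality.
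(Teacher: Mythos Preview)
The paper does not prove this theorem; it is quoted from Kopparty--Rossman \cite{koppartyrossman} and used as a black box (the paper only recalls, in Section~\ref{subsec:blowup}, the blow-up construction that supplies the upper bound $\HDE\le c^*$, and only for paths). So there is no proof in the paper to compare against, and your sketch should be judged on its own.

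Your two-step plan (entropy polymatroid for $\HDE\ge c^*$, blow-up for $\HDE\le c^*$) is indeed the Kopparty--Rossman strategy, but the lower-bound step as written has a genuine gap. You chain $\log\hom(F_1;G)\ge H(\psi\circ\varphi)$ with a claimed \emph{equality}
\[
H(\psi\circ\varphi)\;=\;\sum_{\emptyset\neq S\subseteq\mathrm{MaxCliques}(F_1)}-(-1)^{|S|}\,H\big((\psi\circ\varphi)|_{\cap S}\big).
\]
For chordal $F_1$ this alternating sum collapses (since $p(\emptyset)=0$) to the junction-tree expression $\sum_C H(X_C)-\sum_{\text{sep}}H(X_{\text{sep}})$, and submodularity of entropy gives only the inequality $H(X_V)\le\sum_C H(X_C)-\sum_{\text{sep}}H(X_{\text{sep}})$, with equality \emph{only} when $X$ is Markov on the junction tree. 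There is no reason $\psi\circ\varphi$ (with $\psi$ uniform on $\Hom(F_2;G)$) has this Markov property, so your chain $\log\hom(F_1;G)\ge H(\psi\circ\varphi)=\cdots$ breaks: the second relation goes the wrong way. The fix is not to bound $\log\hom(F_1;G)$ via $H(\psi\circ\varphi)$ directly, but to build a \emph{new} distribution $q$ on $\Hom(F_1;G)$ that is junction-tree Markov and whose clique marginals agree with those of $\psi\circ\varphi$ (chordality guarantees such a $q$ exists and is supported on homomorphisms); then $\log\hom(F_1;G)\ge H(q)=\sum_C H(X_C)-\sum_{\text{sep}}H(X_{\text{sep}})$ exactly. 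That is the missing idea.

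Your upper-bound construction is also too coarse: a pure vertex blow-up with complete bipartite joins forces $p(\{u,v\})=p(\{u\})+p(\{v\})$ on every edge, which does not realize general $p^*\in\mathcal{P}(F_2)$. The correct blow-up (as in Section~\ref{subsec:blowup}) carries independent edge weights satisfying $\max\{p(u),p(v)\}\le p(\{u,v\})\le p(u)+p(v)$, and the series-parallel hypothesis on $F_2$ is used to show that every extreme $p^*\in\mathcal{P}(F_2)$ is realizable this way, not to ``decouple into vertex weights'' as you suggest.
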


\noindent To lighten the notation, we let $$c_{\varphi,F_1}(p) = \sum_{S\subseteq \textup{MaxCliques}(F_1)} -(-1)^{|S|} \cdot p(\varphi(\cap S)).$$ 

We combine the Kopparty-Rossman linear program with Corollary \ref{cor:onenegative} to show that $\tropNU$ is a rational polyhedral cone when $\U=\{H_1, \ldots, H_s\}$ contains only graphs that are chordal and series-parallel.

\begin{theorem}\label{thm:chordalseriesparallel}
For any family $\mathcal{U}$ consisting of finitely many chordal series-parallel graphs, $\tropNU$ is a rational polyhedral cone.
\end{theorem}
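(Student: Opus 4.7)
The strategy is to show that the dual cone $\tropNU^{*}$ is rational polyhedral; then $\tropNU = (\tropNU^{*})^{*}$ is too. By Corollary~\ref{cor:dualc}, $\tropNU^{*} = \sum_{j=1}^{s}\bigl(\tropNU^{*} \cap \mathcal{Q}_j\bigr)$, so it suffices to show each $\mathcal{C}_j := \tropNU^{*} \cap \mathcal{Q}_j$ is rational polyhedral. A point of $\mathcal{C}_j$ has the form $\bm{\alpha} - \beta \mathbf{e}_j$ with $\alpha_i,\beta \geq 0$ and $\alpha_j = 0$, and encodes the validity on every graph $G$ of the pure binomial inequality $\prod_{i\neq j}\hom(H_i;G)^{\alpha_i} \geq \hom(H_j;G)^{\beta}$.

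For rational $(\bm{\alpha},\beta)$, I clear denominators so that $\bm{\alpha}$ and $\beta$ are nonnegative integers. Setting $F_1 := \bigsqcup_{i \neq j} \alpha_i H_i$ (a disjoint union of chordal graphs is chordal) and $F_2 := H_j$ (series-parallel), Theorem~\ref{thm:kr11} applies. Since the maximal cliques of $F_1$ live inside single components and distinct components are vertex-disjoint, any subset $S \subseteq \textup{MaxCliques}(F_1)$ containing cliques from more than one component has $\cap S = \emptyset$ and contributes nothing to $c_{\varphi, F_1}(p)$ because $p(\emptyset)=0$. Combined with $\Hom(F_1;H_j) \cong \prod_{i\neq j}\Hom(H_i;H_j)^{\alpha_i}$, this yields the additive decomposition
$$\max_{\varphi \in \Hom(F_1;H_j)} c_{\varphi, F_1}(p) \;=\; \sum_{i\neq j} \alpha_i\, M_i(p), \quad M_i(p) := \max_{\varphi_i \in \Hom(H_i;H_j)} c_{\varphi_i, H_i}(p),$$
so Theorem~\ref{thm:kr11} says that a rational $(\bm{\alpha},\beta)$ lies in $\mathcal{C}_j$ iff $\min_{p \in \mathcal{P}(H_j)}\sum_{i\neq j}\alpha_i M_i(p) \geq \beta$.

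To turn this into a rational polyhedral description, I linearize via minimax. Writing $M_i(p) = \max_{\mu_i}\sum_{\varphi_i}\mu_{i,\varphi_i}c_{\varphi_i, H_i}(p)$ with $\mu_i$ a probability distribution on $\Hom(H_i;H_j)$, the resulting expression is bilinear in $(p,\mu)$ over a product of compact convex sets, so Sion's theorem swaps the min over $p$ with the max over $\mu$. For fixed $\mu$ the expression is linear in $p$, so its minimum over the rational polytope $\mathcal{P}(H_j)$ is attained at a vertex $v$ in the finite rational vertex set $V$. Substituting $\nu_{i,\varphi_i} := \alpha_i\mu_{i,\varphi_i}$ converts the condition $(\bm{\alpha},\beta) \in \mathcal{C}_j$ into the existence of $\nu \geq 0$ with $\sum_{\varphi_i}\nu_{i,\varphi_i} = \alpha_i$ for each $i$ and $\sum_{i\neq j,\varphi_i}\nu_{i,\varphi_i}c_{\varphi_i, H_i}(v) \geq \beta$ for each $v \in V$, a finite rational linear system in $(\bm{\alpha},\beta,\nu)$. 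Hence $\mathcal{C}_j$ is the projection onto $(\bm{\alpha},\beta)$ of a rational polyhedron, which by Fourier--Motzkin elimination is itself a rational polyhedral cone $P_j$. The passage from rational to real exponents is handled by approximating any real $(\bm{\alpha},\beta)\in\mathcal{C}_j$ by rational $(\bm{\alpha}^{(n)},\beta^{(n)})\in\mathcal{C}_j$ with $\bm{\alpha}^{(n)}\geq\bm{\alpha}$ and $\beta^{(n)}\leq\beta$ (monotonicity of $\mathcal{C}_j$ under this partial order follows from $\tropNU \subseteq \mathbb{R}^s_{\geq 0}$) and using closedness of both $\mathcal{C}_j$ and $P_j$ to conclude $\mathcal{C}_j = P_j$. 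The main technical point to nail down is the additive splitting of $c_{\varphi, F_1}(p)$ across the components of a disjoint union, i.e.\ that all ``mixed'' maximal-clique subsets are killed by $p(\emptyset)=0$; once that is in place, the minimax swap, Fourier--Motzkin, and rational-to-real density argument are routine.
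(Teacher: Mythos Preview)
Your proposal is correct and follows the paper's strategy: reduce via Corollary~\ref{cor:dualc} to inequalities with one negative exponent, apply Theorem~\ref{thm:kr11}, and use the additive splitting of $c_{\varphi,F_1}(p)$ over the connected components of $F_1$ (which the paper phrases as ``the optimal choice leads to choosing one homomorphism and repeating it $\alpha_i$ times'') to make the Kopparty--Rossman value linear in the exponent vector $\bm\alpha$. The one substantive difference is how the inner $\max_\varphi$ is linearized: the paper builds a regular subdivision of $\mathcal{P}(H_j)$ into regions on which a fixed homomorphism from $F=\bigsqcup_{i\neq j}H_i$ is optimal, so that on each piece the objective is jointly linear in $(\bm\alpha,p)$; you instead relax the max to a probability distribution over homomorphisms, swap $\min_p$ and $\max_\mu$ via Sion's theorem, reduce the inner minimum to the finite rational vertex set of $\mathcal{P}(H_j)$, and project out the auxiliary variables by Fourier--Motzkin. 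Both routes produce the same rational polyhedral cone; yours is the standard LP-duality packaging and sidesteps the subdivision construction, while the paper's version makes the piecewise-linear dependence on $\bm\alpha$ geometrically explicit.
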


\begin{proof}
From Corollary \ref{cor:onenegative}, we know that it suffices to understand valid binomial inequalities for $\NU$ of the form $H_1^{\alpha_1} \cdots H_{j-1}^{\alpha_{j-1}} H_{j+1}^{\alpha_{j+1}} \cdots H_s^{\alpha_s}\geq H_j^{\beta}$ for some $j\in [s]$, and $\alpha_i\geq 0$ for each $i\in [s]\backslash\{j\}$. By taking logarithms, these binomial inequalities become linear inequalities of the form $\sum_{i\in [s]\backslash\{j\}}\alpha_i \log(\hom(H_i;G)) - \beta \log(\hom(H_j;G)) \geq 0$ valid on $\tropNU$. Observe that for nonnegative integers $\alpha_i$,  the largest $\beta$ such that the inequality $H_1^{\alpha_1} \cdots H_{j-1}^{\alpha_{j-1}} H_{j+1}^{\alpha_{j+1}} \cdots H_s^{\alpha_s}\geq H_j^{\beta}$ is valid on $\NU$ satisfies $$\beta = \HDE(H_1^{\alpha_1}\cdots H_{j-1}^{\alpha_{j-1}} H_{j+1}^{\alpha_{j+1}} \cdots H_s^{\alpha_s};  H_j).$$

It suffices for us to show that for nonnegative integers $\alpha_i$, the optimal $\beta$ is computed via an extended rational linear program, which also linearly incorporates the integer exponents $\alpha_i$ in the objective. Our desired result follows immediately from this, via approximation of exponents $\alpha_i$ by rational numbers. Without loss of generality we may assume that $j=1$. 

Let $F$ be the disjoint union of $H_2,\dots, H_s$.
Construct the polytope $\bar{\mathcal{P}}(H_1)$ as follows. For every point $p=(p_1, \ldots, p_{2^{|V(H_1)|}}) \in \mathcal{P}(H_1)$ and every $\varphi\in \Hom(F;H_1)$, create a point $(v_1, \ldots, v_{2^{|V(H_1)|}}, c_{\varphi, F}(p))$, and let $\bar{\mathcal{P}}(H_1)$ be the convex hull of these points. Taking the upper hull of $\bar{\mathcal{P}}(H_1)$ and projecting down, we obtain a regular subdivision of $\mathcal{P}(H_1)$ that we call $\mathcal{P}_{\textup{subdiv}}(H_1)$. Denote the regions of $\mathcal{P}_{\textup{subdiv}}(H_1)$ by $\mathcal{P}_{i}(H_1)$ for $i\in [k]$.

Observe that each region $\mathcal{P}_i(H_1)$ is associated to a homomorphism $\varphi_i$ such that, for any point $p\in \mathcal{P}_i(H_1)$, $c_{\varphi_i,F}(p) \geq c_{\varphi,F}(p)$ for all $\varphi\in \Hom(F;H_1)$. Note that when computing $\HDE(F;H_1)$ we are allowed to choose different homomorphisms to $H_1$ on different copies of $H_i$ contained in $F_1:=H_2^{\alpha_2}\cdots H_s^{\alpha_s}$. However, the optimal choice leads to choosing one homomorphism and repeating it $\alpha_i$ times.
Accordingly, for a homomorphism $\varphi: F \rightarrow H_1$ and $\bm{\alpha}=(\alpha_2, \ldots, \alpha_s)\in \mathbb{N}^{s-1}$, let $\bm{\alpha}\cdot \varphi$ be the homomorphism from $F_1$ to $H_1$ such that every copy of the component $H_i$ is sent to $H_1$ in the way determined by $\varphi$. Then

$$\textup{HDE}(F_1;H_1)= \min_{i\in [k]} \min_{p\in \mathcal{P}_i(H_1)} c_{\bm{\alpha}\cdot \varphi_i,F_1}(p).$$ 

This gives us the desired rational linear program and therefore when $\mathcal{U}$ contains only chordal series-parallel graphs, $\tropNU$ is a rational polyhedral cone. 
\end{proof}

 \subsection{Blow-up construction}\label{subsec:blowup}
 
We now present a few properties of a blow-up construction introduced in \cite{koppartyrossman} that gives one direction of Theorem \ref{thm:kr11} ($\leq$) and which will allow us to construct some rays in in the tropicalization of the number profile of paths later on. We explain their ideas only for paths as this is the only case that we need. 
 
 \begin{definition}
Let $P$ be a path of length $2f+1$ where $V(P)=\{0, 1, \ldots, 2f+1\}$ and $E(P)=\{\{i,i+1\}|0 \leq i\leq 2f\}$. Let $p: V(P)+E(P) \rightarrow \mathbb{R}_{\geq 0}$ be a weight function on the vertices and edges of $P$ such that $\max\{p(\{v\}), p(\{w\})\} \leq p(\{v,w\}) \leq p(\{v\})+p(\{w\})$ for every $\{v,w\} \in E(P)$. For every $m\in \mathbb{N}$, create a blow-up of $P$, called $B_m$. Let $V(B_m):=\{(v,a): v\in V(P), a\in [m^{p(\{v\})}]\}$, i.e., for every vertex $v\in V(P)$, construct a stable set with $m^{p(\{v\})}$ vertices. Each vertex $(v,a)$ is adjacent to $m^{p(\{v,v+1\})-p(\{v\})}$ vertices in the stable set corresponding to vertex $v+1$ of $P$ (if $0\leq v \leq 2f$) and to $m^{p(\{v-1, v\})-p(\{v\})}$ vertices in the stable set corresponding to vertex $v-1$ of $P$ (if $1 \leq v \leq 2f+1$). So there are $m^{p(\{v,v+1\})}$ edges between the stable sets corresponding to vertex $v$ and $v+1$ of $P$ for every $0 \leq v \leq 2f$. Note that it might not be possible to construct a graph with exactly those edges for every $m$, but it will always be possible to do so as $m\rightarrow \infty$.
\end{definition}
 
 Just as in \cite{koppartyrossman}, homomorphism numbers from any path with $i$ edges, $P_i$, to $B_m$ can be directly calculated from the weighted path $P$. We repeat this here for completeness. Given a homomorphism $\varphi: P_i \rightarrow P$, define $\Hom_\varphi$ to be the set of homomorphisms from $P_i$ to $B_m$ that send the vertices of $P_i$ to the stable sets of $B_m$ (which correspond to vertices of $P$) according to the homomorphism $\varphi$.
 $$\Hom_\varphi:=\{\vartheta\in \Hom(P_i;B_m) \,\, \mid \,\, \textup{there exists }k_j\in [m^{p(\{\varphi(j)\})}] \textup{ such that }\vartheta(j)=(\varphi(j),k_j) \textup{ for all } 0 \leq j \leq i\}.$$ Further, let $\hom_{\varphi}=|\Hom_\varphi|$. For a homomorphism $\varphi: P_i \rightarrow P$,  we define $p(\varphi)$ as $$p(\varphi):=\sum_{j=0}^{i-1} p(\{\varphi(j), \varphi(j+1)\})-\sum_{j=1}^{i-1} p(\{\varphi(j)\}).$$

\begin{lemma}\label{lem:BmP}
Let $\varphi: P_i \rightarrow P$ be a homomorphism. As $m\rightarrow \infty$, we have that
\begin{enumerate}
\item $\hom_\varphi = m^{p(\varphi)}$, and
\item $\hom(P_i; B_m)=\sum_{\varphi\in \Hom(P_i;P)} \hom_\varphi$.
\end{enumerate}
\end{lemma}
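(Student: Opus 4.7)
My plan is to establish this lemma by a direct counting argument on the structure of $B_m$, where part (2) comes from a clean partition of $\Hom(P_i;B_m)$ by the induced ``coarse'' walk on $P$, and part (1) comes from a sequential choice of images along $P_i$.

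For part (2), I would first observe that by the construction of $B_m$, every edge of $B_m$ connects the stable set indexed by some vertex $v \in V(P)$ to the stable set indexed by an adjacent vertex of $P$. Consequently, if $\vartheta \in \Hom(P_i;B_m)$ and we write $\vartheta(j) = (v_j, a_j)$, then the sequence $(v_0, v_1, \dots, v_i)$ must be a walk in $P$, i.e.\ the coordinate projection $\varphi_\vartheta : j \mapsto v_j$ is a homomorphism $P_i \to P$. By definition $\vartheta \in \Hom_{\varphi_\vartheta}$, and distinct $\varphi$'s yield disjoint $\Hom_\varphi$'s, so $\Hom(P_i;B_m) = \bigsqcup_{\varphi \in \Hom(P_i;P)} \Hom_\varphi$, from which the identity follows by taking cardinalities.

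For part (1), fix $\varphi \in \Hom(P_i;P)$ and build a homomorphism $\vartheta \in \Hom_\varphi$ one vertex at a time. The image of vertex $0$ may be any of the $m^{p(\{\varphi(0)\})}$ vertices in the stable set for $\varphi(0)$. Inductively, once $\vartheta(j-1)$ has been placed in the stable set for $\varphi(j-1)$, the image $\vartheta(j)$ must both lie in the stable set for $\varphi(j)$ and be adjacent to $\vartheta(j-1)$ in $B_m$; by the construction there are $m^{p(\{\varphi(j-1),\varphi(j)\}) - p(\{\varphi(j-1)\})}$ such vertices. Multiplying the choices and telescoping gives
\[
\hom_\varphi = m^{\,p(\{\varphi(0)\}) + \sum_{j=1}^i \bigl(p(\{\varphi(j-1),\varphi(j)\}) - p(\{\varphi(j-1)\})\bigr)} = m^{p(\varphi)}.
\]

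The only mild obstacle is the rounding issue already flagged before the lemma statement: for a given finite $m$, the bipartite graph between stable sets indexed by $v$ and $v+1$ may not be exactly biregular with the stated degrees, so the counts in part (1) may fail to hold on the nose. However, the blow-up construction can be carried out so that it is asymptotically biregular with the desired degrees, and the equality $\hom_\varphi = m^{p(\varphi)}$ must therefore be read in the leading-order sense as $m \to \infty$, which is the sense in which the lemma is subsequently used to realize rays of $\tropNU$.
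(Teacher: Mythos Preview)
Your proof is correct and follows essentially the same approach as the paper: part~(2) is obtained by observing that the coordinate projection onto $V(P)$ partitions $\Hom(P_i;B_m)$ into the fibers $\Hom_\varphi$, and part~(1) is the same sequential count of choices along the walk, multiplying $m^{p(\{\varphi(0)\})}$ by the successive degree factors $m^{p(\{\varphi(j-1),\varphi(j)\})-p(\{\varphi(j-1)\})}$. Your explicit remark about the asymptotic biregularity caveat is a nice addition that the paper relegates to the definition preceding the lemma.
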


\begin{proof}
We count the number of homomorphisms in $\Hom_\varphi$. Vertex $0$ has $m^{p(\{\varphi(0)\})}$ vertices that it can be sent to. If $p(\{\varphi(0),\varphi(1)\})=p(\{\varphi(0)\})+p(\{\varphi(1)\})$, then there are $m^{p(\{\varphi(1)\})}$ choices for where vertex $1$ gets mapped to since it means that the stable sets corresponding to $\varphi(0)$ and $\varphi(1)$ form a complete bipartite graph in $B_m$. However, if there are less edges, the degree of each vertex in the part corresponding to $0$, and thus the number of choices for where vertex $1$ gets sent to,  will be $m^{p(\{\varphi(1)\})-(p(\{\varphi(0)\})+p(\{\varphi(1)\})-p(\{\varphi(0),\varphi(1)\}))}=m^{-p(\{\varphi(0)\})+p(\{\varphi(0),\varphi(1)\})}$. More generally, the degree of each vertex in the stable set of $B_m$ corresponding to vertex $j$ in $P_i$, and thus the number of choices for where vertex $j+1$ goes, is $m^{-p(\{\varphi(j)\})+p(\{\varphi(j),\varphi(j+1)\})}$. So the total number of homomorphisms in $\Hom_\varphi$ is $$m^{p(\{\varphi(0)\})}\cdot m^{-p(\{\varphi(0)\})+p(\{\varphi(0),\varphi(1)\})} \cdot m^{-p(\{\varphi(1)\})+p(\{\varphi(1),\varphi(2)\})} \cdots m^{-p(\{\varphi(i-1)\})+p(\{\varphi(i-1),\varphi(i)\})}=m^{p(\varphi)}$$

To see that $\hom(P_i; B_m)=\sum_{\varphi\in \Hom(P_i;P)} \hom_\varphi$, it suffices to note that the $\Hom_\varphi$'s partition $\Hom(P_i; B_m)$. 
\end{proof}

\begin{corollary}\label{cor:BuP}
As $m\rightarrow \infty$, $\frac{\log(\hom(P_i;B_m))}{\log m} \rightarrow p(\varphi^*)$ where $\varphi^*=\argmax_{\varphi\in \Hom(P_i; P)}\{p(\varphi)\}$.
\end{corollary}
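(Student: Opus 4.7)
The plan is to apply Lemma \ref{lem:BmP} directly and then use the standard fact that, for a finite sum of positive terms $m^{p(\varphi)}$, the logarithm is dominated by the largest exponent as $m\to\infty$.

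First, I would combine the two parts of Lemma \ref{lem:BmP} to write
\[
\hom(P_i; B_m) \;=\; \sum_{\varphi\in \Hom(P_i;P)} \hom_\varphi \;=\; \sum_{\varphi\in \Hom(P_i;P)} m^{p(\varphi)}.
\]
Since $\Hom(P_i;P)$ is a finite set whose cardinality $N := |\Hom(P_i;P)|$ depends only on $i$ and the underlying graph $P$, and not on $m$, the maximum $p(\varphi^*)$ is attained. The key observation is the two-sided sandwich
\[
m^{p(\varphi^*)} \;\leq\; \sum_{\varphi\in \Hom(P_i;P)} m^{p(\varphi)} \;\leq\; N\cdot m^{p(\varphi^*)},
\]
where the lower bound follows from keeping only the term with $\varphi=\varphi^*$, and the upper bound uses $p(\varphi)\leq p(\varphi^*)$ for every $\varphi$.

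Next, I would take logarithms of this sandwich and divide through by $\log m$:
\[
p(\varphi^*) \;\leq\; \frac{\log\hom(P_i;B_m)}{\log m} \;\leq\; p(\varphi^*) + \frac{\log N}{\log m}.
\]
Since $N$ is a fixed finite constant, the right-hand error term $\frac{\log N}{\log m}$ tends to $0$ as $m\to\infty$, so by the squeeze theorem the middle expression converges to $p(\varphi^*)$, which is precisely the claim.

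There is essentially no obstacle here; the only minor caveat is that the statement of Lemma \ref{lem:BmP}(1) holds ``as $m\to\infty$'' rather than exactly (reflecting the fact that the desired blow-up graph may not exist for every $m$ due to integrality of $m^{p(\{v\})}$ and $m^{p(\{v,w\})-p(\{v\})}$). I would therefore note that one should work with $\hom_\varphi = (1+o(1))\, m^{p(\varphi)}$, which does not affect the argument: the multiplicative $(1+o(1))$ factor contributes only an additive $o(1)$ term to $\log \hom(P_i;B_m)/\log m$, and the conclusion is unchanged.
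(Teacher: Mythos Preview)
Your proof is correct and takes essentially the same approach as the paper: the paper simply observes that $\hom(P_i;B_m)=O(m^{p(\varphi^*)})$ and concludes the limit, while you spell out the sandwich $m^{p(\varphi^*)}\leq \hom(P_i;B_m)\leq N\,m^{p(\varphi^*)}$ and the squeeze argument explicitly. Your added remark about the $(1+o(1))$ correction from the asymptotic nature of Lemma~\ref{lem:BmP}(1) is a nice clarification that the paper leaves implicit.
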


\begin{proof}
From the previous lemma, we have that $\hom(P_i; B_m)=O(m^{p(\varphi^*)})$, and so $$\lim_{m\rightarrow \infty} \frac{\log(\hom(P_i;B_m))}{\log(m)} = p(\varphi^*).$$
\end{proof}

Therefore to calculate homomorphism numbers from any path $P_i$ to $B_m$ with $m$ large, it suffices to find which weighed homomorphisms are maximal, an observation that we will use in the next sections.

% !TEX root =  tropicalizationofpaths-finitesetting.tex

\section{Valid binomial inequalities involving paths}\label{sec:truebinomialinequalitieswithpaths}

The rest of the paper focuses on calculating the tropicalization of the number profile for paths and discussing its consequences. This turns out to be much more intricate than the examples that were given in Section \ref{sec:examplesoftropicalizations}.

In this section, we derive some pure binomial inequalities involving paths, including some that were previously known and others that we prove here. In some cases, we use the results of Kopparty and Rossman introduced in the previous section to do so. These binomial inequalities will be necessary to describe the tropicalization of the number profile for paths in the next section. In particular, in Section~\ref{subsec:generalizederdossimonovits}, we prove a generalization of the Erd\H{o}s-Simonovits conjecture. We proved the original conjecture in \cite{BRerdossimonovits}, unaware of the earlier proof by Sa\u{g}lam in \cite{saglam} using different techniques. 
In Section~\ref{subsec:evenoddeven}, we prove a different type of inequality for paths, which is also necessary for describing the number profile; to the best of our knowledge, this inequality is new. Finally, in \ref{subsec:findknownineqs}, we go over some inequalities involving paths that are simpler and/or that were previously known, and that are necessary to describe the number profile.

\subsection{A generalization of the Erd\H{o}s-Simonovits conjecture}\label{subsec:generalizederdossimonovits}

In \cite{BRerdossimonovits}, we explained how the Erd\H{o}s-Simonovits conjecture, which stated that $P_0^{2v-2w}P_{2v+1}^{2w+1} \geq P_{2w+1}^{2v+1}$ for any $w<v$, is equivalent to proving the validity of $P_0^2 P_{2v+1}^{2v-1} \geq P_{2v-1}^{2v+1}$. Here, we show a more general inequality $P_{2u}^{2v-2w}P_{2v+1}^{2w+1-2u} \geq P_{2w+1}^{2v+1-2u}$ for any $v>w>u$ (see Lemma \ref{lem:biggestgeneralizationES}) by first showing that $P_{2u}^2P_{2v+1}^{2v-1-2u}\geq P_{2v-1}^{2v+1-2u}$ for any $0\leq u<v-1$. These last inequalities are necessary to describe the tropicalization of the number profile of paths; the rest of the inequalities in Lemma \ref{lem:biggestgeneralizationES} are derived from them.
Note that the case when $u=0$ yields the original Erd\H{o}s-Simonovits conjecture.

Let  $V(P_{k})=\{0, 1, 2, \ldots, k\}$, and let $E(P_{k})=\{\{i,i+1\}| i \in \{0, 1, 2, \ldots, k\}\}$. Lemma 2.5 of \cite{koppartyrossman} implies that for any $p\in \mathcal{P}(P_{k})$ we have
$$p(V(P_{k}))=\sum_{\{i, i+1\}\in E(P_{k})}p(\{i,i+1\}) -\sum_{i \in \{1,\ldots,k-1\}}p(\{i\}).$$See  Lemma 2.2 of \cite{BRerdossimonovits} for details.

\begin{theorem}\label{thm:generalizationoferdossimonovits}
We have that $\HDE(P_{2u}^2 P_{2v+1}^{2v-1-2u}; P_{2v-1})=2v+1-2u$.
\end{theorem}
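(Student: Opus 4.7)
The plan is to invoke the Kopparty--Rossman minimax characterization (Theorem~\ref{thm:kr11}), which applies because $F_1 := P_{2u}^2 P_{2v+1}^{2v-1-2u}$ is a disjoint union of paths (hence chordal) and $F_2 := P_{2v-1}$ is a path (hence series-parallel). Since the maximal cliques of a path are its edges, and any three distinct edges of a path have empty intersection, the expression $c_{\varphi,P_k}(p)$ telescopes to the same quantity $p(\varphi) = \sum_{j=0}^{k-1} p(\{\varphi(j),\varphi(j+1)\}) - \sum_{j=1}^{k-1} p(\{\varphi(j)\})$ that appears in Corollary~\ref{cor:BuP}. Since $F_1$ is a disjoint union, the maximization decouples and Theorem~\ref{thm:kr11} becomes
\[
\HDE(F_1;P_{2v-1}) \;=\; \min_{p \in \mathcal{P}(P_{2v-1})} \Bigl[\, 2 \max_{\varphi_a : P_{2u} \to P_{2v-1}} p(\varphi_a) \;+\; (2v-1-2u) \max_{\varphi_b : P_{2v+1} \to P_{2v-1}} p(\varphi_b) \Bigr].
\]

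For the upper bound $\HDE \leq 2v+1-2u$, I would exhibit the uniform polymatroidal function $p^\ast$ defined by $p^\ast(\{i\}) = 1$ and $p^\ast(\{i,i+1\}) = 1$ on every vertex and edge of $P_{2v-1}$. The polymatroid axioms are immediate since $\max\{1,1\} = 1 \leq 1 \leq 1+1$, and the path normalization $p^\ast(V(P_{2v-1})) = (2v-1)\cdot 1 - (2v-2)\cdot 1 = 1$ holds. Crucially, for \emph{any} $\varphi \colon P_k \to P_{2v-1}$, one gets $p^\ast(\varphi) = k - (k-1) = 1$ independent of $\varphi$, so $2\,p^\ast(\varphi_a) + (2v-1-2u)\,p^\ast(\varphi_b) = 2v+1-2u$ uniformly, giving the upper bound. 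Equivalently, the same bound can be witnessed on the graph side: a direct count on $K_{n,n}$ yields $\hom(P_k;K_{n,n}) = 2n^{k+1}$, and one checks that both sides of the target inequality agree in both the constant and the $n$-exponent, so the ratio is identically $1$.

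For the lower bound $\HDE \geq 2v+1-2u$, which is the generalized Erd\H{o}s--Simonovits inequality $\hom(P_{2u};G)^{2}\,\hom(P_{2v+1};G)^{2v-1-2u} \geq \hom(P_{2v-1};G)^{2v+1-2u}$ and is the main technical content, one must show that for every polymatroidal $p \in \mathcal{P}(P_{2v-1})$ there exist homomorphisms $\varphi_a,\varphi_b$ with $2\,p(\varphi_a) + (2v-1-2u)\,p(\varphi_b) \geq 2v+1-2u$. The natural candidate witnesses are zig-zag folds concentrated on a well-chosen edge $\{i^\ast, i^\ast+1\}$ of $P_{2v-1}$: take $\varphi_b$ to alternate between $i^\ast$ and $i^\ast+1$ along the entire $(2v+1)$-path and $\varphi_a$ to do the same on the $2u$-path, producing values of $p(\varphi_a)$ and $p(\varphi_b)$ expressible purely in terms of $p(\{i^\ast\}),\,p(\{i^\ast+1\}),\,p(\{i^\ast,i^\ast+1\})$. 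The edge-versus-endpoint polymatroid inequality $\max\{p(\{i^\ast\}),p(\{i^\ast+1\})\} \leq p(\{i^\ast,i^\ast+1\}) \leq p(\{i^\ast\})+p(\{i^\ast+1\})$ together with the normalization $p(V(P_{2v-1})) = 1$ should then provide the needed bound after choosing $i^\ast$ to maximize an appropriate linear functional of $p$.

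The main obstacle is precisely this uniformity in $p$: showing that a single fold (or a short menu of folds) suffices across the entire polytope $\mathcal{P}(P_{2v-1})$. I expect the argument to proceed by LP-duality, treating the uniform $p^\ast$ above as a candidate minimizer and constructing a convex combination of folded homomorphisms as the dual witness; alternatively, a reduction to extreme points of $\mathcal{P}(P_{2v-1})$ combined with an explicit classification of fold-optimal $\varphi$ on each extreme ray should work. The parity-sensitive bookkeeping between the odd length $2v-1$ of the target, the even length $2u$, and the odd length $2v+1$ will need particular care, since a fold on an edge produces interior-vertex counts that differ depending on whether the path length is even or odd, and this is where the exponent $2v+1-2u$ emerges naturally.
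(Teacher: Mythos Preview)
Your framework (Kopparty--Rossman minimax) and your upper bound are correct. The constant polymatroid $p^\ast(S)=1$ for nonempty $S$ is a valid element of $\mathcal{P}(P_{2v-1})$, and since $c_{\varphi,P_k}(p^\ast)=k-(k-1)=1$ for every homomorphism $\varphi$, the inner maximum equals $2+(2v-1-2u)=2v+1-2u$ regardless of $\varphi$. The paper uses a different but related witness, namely $p^\ast(\{j\})=\tfrac{1}{2v}$ and $p^\ast(\{j,j+1\})=\tfrac{2}{2v}$, obtained as the average of the indicator polymatroids $p_i(S)=[\,i\in S\,]$; your choice is actually simpler.

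The genuine gap is the lower bound, and you flag it yourself. Single-edge zigzags have not been shown to achieve $\ge 2v+1-2u$ uniformly over $\mathcal{P}(P_{2v-1})$, and the appeals to ``LP duality'' or ``a reduction to extreme points \ldots\ should work'' are not a proof. The paper's argument is structurally different and avoids this difficulty entirely: it exhibits a \emph{single} homomorphism $\psi:F_1\to P_{2v-1}$ for which $c_\psi(p)=2v+1-2u$ holds for \emph{every} $p$. The two copies of $P_{2u}$ are folded at the two ends of $P_{2v-1}$ (one walks $0\to1\to\cdots\to u\to u-1\to\cdots\to0$, the other its mirror image), while the $i$-th copy of $P_{2v+1}$, for $u+1\le i\le 2v-1-u$, traverses $P_{2v-1}$ straight across with exactly one back-step at vertex $i$, so that edge $\{i-1,i\}$ is used three times and every other edge once. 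A direct count then shows that every edge of $P_{2v-1}$ is covered exactly $2v+1-2u$ times and every internal vertex of $P_{2v-1}$ is hit by an internal vertex of some component exactly $2v+1-2u$ times; hence
\[
c_\psi(p)=(2v+1-2u)\Bigl(\sum_{e}p(e)-\sum_{\text{internal }j}p(\{j\})\Bigr)=(2v+1-2u)\,p(V(P_{2v-1}))=2v+1-2u.
\]
This uniform-covering homomorphism is precisely the dual certificate you were hoping LP duality would hand you; note that it is \emph{not} a mixture of single-edge zigzags but rather full-length walks with one localized backtrack, arranged so that the backtracks and the end-folds of the $P_{2u}$'s tile $P_{2v-1}$ evenly.
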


\begin{proof}
We assume that $u\geq 1$. (For the case when $u=0$, see our proof in \cite{BRerdossimonovits}.) We first show that $\HDE(P_{2u}^2 P_{2v+1}^{2v-1-2u}; P_{2v-1})\leq 2v+1-2u.$ For $i \in \{0, 1, 2, \ldots, 2v-1\}$ and $S\subseteq \{0,1,2,\ldots, 2v-1\}$, let $p_i\in \mathbb{R}^{2^{2v}}$ be such that $p_i(S)=1$ if $S$ contains $i$, and $p_i(S)=0$ otherwise. It's easy to check that $p_i \in \mathcal{P}(P_{2v-1})$. Let $p^*$ be the average of the $p_i$'s, i.e., $p^*= \frac{1}{2v} \sum_{i\in \{0,1,2,\ldots,2v-1\}} p_i$. In particular, this means that $p^*(\{i\})= \frac{1}{2v}$ for any $i\in \{0,1,2,\ldots, 2v-1\}$, and $p^*(\{i,i+1\})=\frac{2}{2v}$ for any $i\in \{0,\ldots, 2v-2\}$. Since $p^*$ is a convex combination of the $p_i$'s, we see that $p^*\in \mathcal{P}(P_{2v-1})$. For any homomorphism $\varphi$ from $P_{2u}^2 P_{2v+1}^{2v-1-2u}$ to $P_{2v-1}$, we have:

\begin{align*}
\sum_{S \subseteq \textup{MaxCliques}(P_{2u}^2 P_{2v+1}^{2v-1-2u})} & -(-1)^{|S|} p^*(\varphi(\cap S)) \\
&= 2\cdot(2u\cdot \frac{2}{2v} - (2u-1) \cdot \frac{1}{2v})+(2v-1-2u)\cdot((2v+1)\cdot\frac{2}{2v} -2v\cdot \frac{1}{2v}  \\
&= 2v-2u+1,
\end{align*} which implies that the optimal value of the linear program is at most $2v-2u+1$.

We now show that $\HDE(P_{2u}^2 P_{2v+1}^{2v-1-2u}; P_{2v-1})\geq 2v+1-2u$. For $u+1\leq i \leq 2v-1-u$ let $\varphi_{i}$ be the homomorphism from $P_{2v+1}$ to $P_{2v-1}$ such that $\varphi_{i}(j)=j$ for all $0\leq j\leq i$, and $\varphi_{i}(j)=j-2$ for all $i+1\leq i\leq 2v+1$. In other words, every edge of $P_{2v-1}$ is visited by $P_{2v+1}$ once, except for $\{i-1, i\}$ which is visited three times. Let $\vartheta_0$ be the homomorphism from $P_{2u}$ to $P_{2v-1}$ such that $\vartheta_0(j)=j$ for all $0\leq j \leq u$ and $\vartheta_0(j)=2u-j$ for all $u\leq j \leq 2u$. Symmetrically, let $\vartheta_{2v-1}$ be the homomorphism from $P_{2u}$ to $P_{2v-1}$ such that $\vartheta_{2v-1}(j)=2v-1-j$ for $0 \leq j \leq u$ and $\vartheta_{2v-1}(j)=2v-1-(2u-j)$ for all $u \leq j \leq 2u$. 

Let $\psi$ be the homomorphism from $P_{2u}^2 P_{2v+1}^{2v-1-2u}$ to $P_{2v-1}$ such that one copy of $P_{2u}$ gets sent to $P_{2v-1}$ via $\vartheta_0$ and the other copy of $P_{2u}$ via $\vartheta_{2v-1}$, and the $i$-th copy of $P_{2v+1}$ is mapped to $P_{2v-1}$ via $\varphi_i$ for $u+1\leq i\leq 2v-1-u$. This matches the fact that there are $2v-2u-1$ copies of of $P_{2v+1}$. See Figure \ref{fig:erdossimonovitsgeneralized} for an example when $u=2$ and $v=4$.

\begin{figure}[ht]
\includegraphics[width=0.65\textwidth]{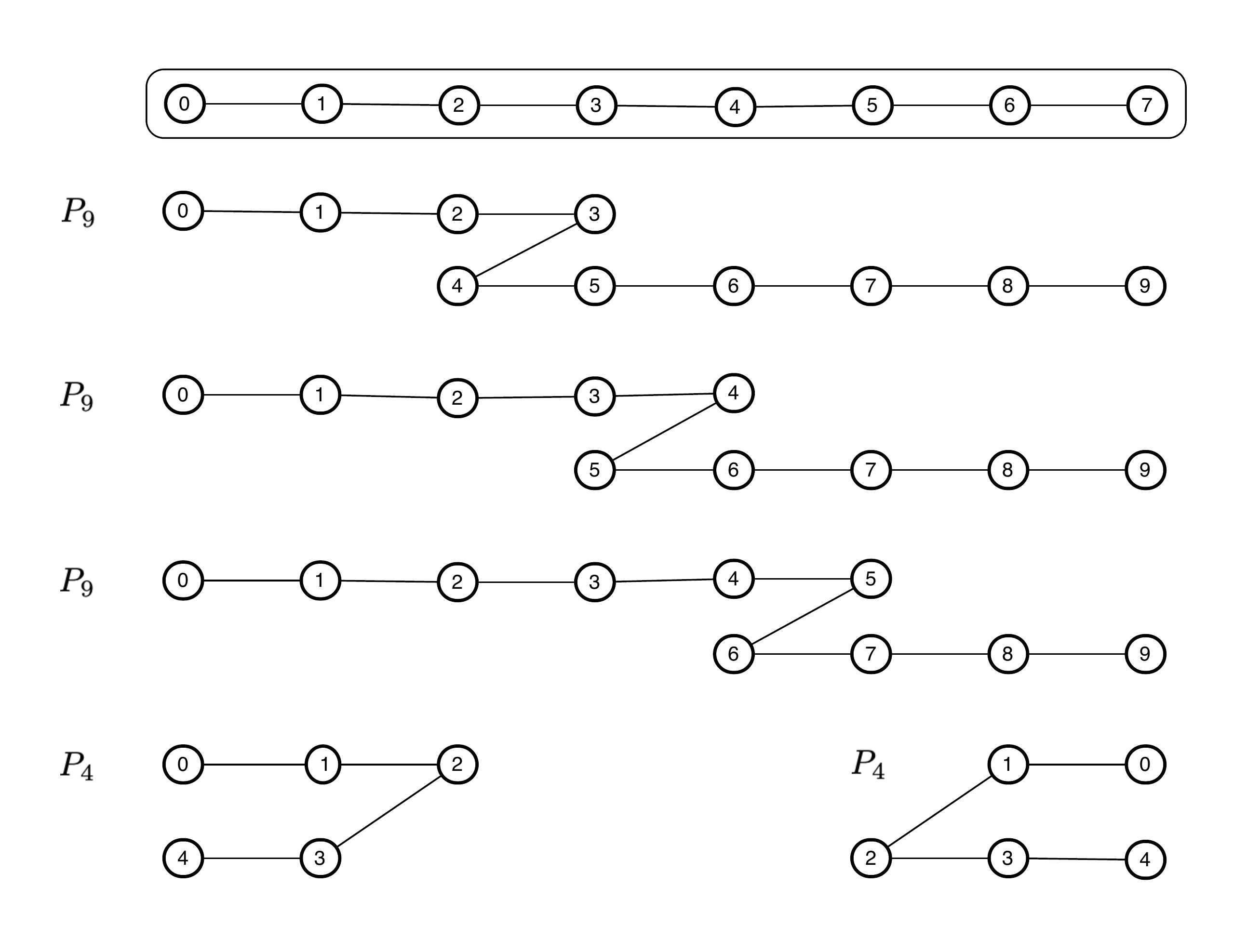}
\caption{\label{fig:erdossimonovitsgeneralized} How $\psi$ sends $P_4^2P_9^3$ to $P_7$}
\end{figure}

Now for any $p \in \mathcal{P}(P_{2v-1})$, we compute $$\sum_{S \subseteq \textup{MaxCliques}(P_{2u}^2 P_{2v+1}^{2v-1-2u})} -(-1)^{|S|} p(\psi(\cap S)).$$ Observe that only sets $S$ of size one or two contribute to the sum above since no three maximal cliques of $P_{2u}^2 P_{2v+1}^{2v-1-2u}$ intersect. Every edge $\{i, i+1\}$ where $u\leq i \leq 2v-1-u$ of $P_{2v-1}$ is covered by an image of an edge of $P_{2v+1}$ via $\psi$ exactly $2v-2u+1$ times. Every edge $\{i, i+1\}$ where $0\leq i< u$ or $2v-1-u<i\leq 2v-2$ of $P_{2v-1}$ is covered by an image of an edge of $P_{2v+1}$ via $\psi$ exactly $2v-2u-1$ times, and by an image of an edge of $P_{2u}$ twice, for a total of $2v-2u+1$. Every vertex $i$ for $u+1\leq i \leq 2v-2-u$ of $P_{2v-1}$ is covered by an image of an inner (non-end) vertex of $P_{2v+1}$ via $\psi$ exactly $2v-2u+1$ times. Vertices $u$ and $2v-1-u$ of $P_{2v-1}$ are covered by an image of an inner vertex of $P_{2v+1}$ via $\psi$ exactly $2v-2u$ times, and by an image of an inner vertex of $P_{2u}$ once, for a total of $2v-2u+1$ times. Finally, vertices $i$ for $1 \leq i \leq u-1$ and $2v-u\leq i \leq 2v-2$ are covered by an image of an inner vertex of $P_{2v+1}$ exactly $2v-2u-1$ times and by an image of an inner vertex of $P_{2u}$ exactly $2$ times via $\psi$, for a total of $2v-2u+1$ times. Note that each inner vertex of some copy of $P_{2v+1}$ (respectively $P_{2u}$) is the intersection of two maximal cliques (i.e., edges) of $P_{2v+1}$ (respectively $P_{2u}$), and thus the coefficient will be negative. Finally, the end vertices of $P_{2v-1}$ are not covered by the image of any inner vertices of $P_{2u}$ or $P_{2v+1}$ via $\psi$, and thus get a coefficient of $0$.  Accordingly, we have  
\begin{align*}
\sum_{S \subseteq \textup{MaxCliques}(P_{2u}^2 P_{2v+1}^{2v-1-2u})} -(-1)^{|S|} &p(\psi(\cap S))\\
&=(2v-2u+1)\left(\sum_{\{i, i+1\}\in E(P_{2v-1})}p(\{i,i+1\}) -\sum_{i \in \{1,\ldots,2v-2\}}p(\{i\})\right)\\
&=(2v-2u+1)p(V(P_{2v-1}))\\
&=2v-2u+1.
\end{align*}

The third line follows from $p(V(P_{2v-1}))=1$ since $p\in \mathcal{P}(P_{2v-1})$. Therefore, for every $p \in \mathcal{P}(P_{2v-1})$, there is a homomorphism that yields $2v-2u+1$, so we see that $\HDE(P_{2u}^2 P_{2v+1}^{2v-1-2u}; P_{2v-1})\geq 2v+1-2u$. This proves that $\HDE(P_{2u}^2 P_{2v+1}^{2v-1-2u}; P_{2v-1})= 2v+1-2u$.
\end{proof}

%------------------------------------------------------------------------------------------
\subsection{New inequality involving paths}\label{subsec:evenoddeven}

Another inequality that is important for describing the tropicalization, and which we could not find in the literature is the following: $P_{2u-2}^{u+1}P_{2u+1} \geq P_{2u}^{u+1}$. Our proof is similar to the generalized Erd\H{o}s-Simonovits inequalities of the previous subsection.

\begin{theorem}\label{thm:generalizationoferdossimonovits2}
We have that $\HDE(P_{2u-2}^{u+1} P_{2u+1}; P_{2u})=u+1$.
\end{theorem}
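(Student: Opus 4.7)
The plan is to establish matching bounds $\HDE(P_{2u-2}^{u+1}P_{2u+1}; P_{2u}) = u+1$ by way of Theorem~\ref{thm:kr11}, closely paralleling the proof of Theorem~\ref{thm:generalizationoferdossimonovits} but adapted to a subtle counting mismatch.

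For the upper bound I will use the uniform $p^* = \frac{1}{2u+1}\sum_{k=0}^{2u}p_k \in \mathcal{P}(P_{2u})$ where $p_k(S)=\mathbf{1}[k\in S]$, so that $p^*(\{i\})=\tfrac{1}{2u+1}$ and $p^*(\{i,i+1\})=\tfrac{2}{2u+1}$. A direct calculation shows that for any homomorphism $\varphi$, each copy of $P_{2u-2}$ contributes $(2u-2)\tfrac{2}{2u+1} - (2u-3)\tfrac{1}{2u+1} = \tfrac{2u-1}{2u+1}$ and the copy of $P_{2u+1}$ contributes $(2u+1)\tfrac{2}{2u+1} - 2u\tfrac{1}{2u+1} = \tfrac{2u+2}{2u+1}$, summing to $(u+1)\tfrac{2u-1}{2u+1} + \tfrac{2u+2}{2u+1} = u+1$. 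Hence $\HDE \leq u+1$.

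The lower bound is more delicate than in Theorem~\ref{thm:generalizationoferdossimonovits}. A quick count shows that the source has exactly one fewer edge and two fewer inner vertices than would be required to give every target edge and inner vertex of $P_{2u}$ uniform coverage $u+1$, so no $\psi$ can reproduce $(u+1)p(V(P_{2u}))$ exactly. Instead I will construct a single $\psi$ whose coverage vector $(e_i, w_j)$ is uniformly $u+1$ except at one chosen edge $\{k,k+1\}$, where $e_k = u$ and $w_k = w_{k+1} = u$ (with $w_0 = w_{2u} = 0$). Using $p(V(P_{2u})) = \sum_i p(\{i,i+1\}) - \sum_{j\text{ inner}} p(\{j\}) = 1$, the residual then collapses to
\[
c_\psi(p) - (u+1) \;=\; p(\{k\}) + p(\{k+1\}) - p(\{k,k+1\}),
\]
which is nonnegative by the submodularity axiom of $\mathcal{P}(P_{2u})$. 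This gives $c_\psi(p) \geq u+1$ for every $p\in \mathcal{P}(P_{2u})$ and hence $\HDE \geq u+1$.

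The main obstacle, and the step requiring the most care, is producing the explicit $\psi$ with this near-uniform coverage profile. The ingredients are: (a) ``back-and-forth'' walks of $P_{2u-2}$ near the endpoints of $P_{2u}$, which inflate $e_0$ or $e_{2u-1}$ without charging $w_0$ or $w_{2u}$; (b) linear shifts $\vartheta_\ell$ of $P_{2u-2}$ covering the middle edges; and (c) a nearly-surjective walk of $P_{2u+1}$ with a single backtrack bridging the rest. For $u=2$ the construction can be exhibited concretely by mapping the three copies of $P_2$ via $0,1,0$; $0,1,2$; and $4,3,4$ and the copy of $P_5$ via $1,2,3,2,3,4$, which yields underfilled edge $\{1,2\}$ and residual $p(\{1\})+p(\{2\})-p(\{1,2\})\geq 0$. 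For general $u$ I expect the analogous recipe, with the number of back-and-forth copies scaling with $u$, to produce the required $\psi$; verifying the combinatorics of the coverage counts is the bulk of the technical work.
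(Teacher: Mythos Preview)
Your upper bound is correct and identical to the paper's: the uniform $p^*$ gives exactly $u+1$ for every $\varphi$.

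Your lower-bound strategy---choose $\psi$ so that the deviation from the uniform-$(u+1)$ coverage collapses to a single submodularity term $p(\{k\})+p(\{k+1\})-p(\{k,k+1\})\geq 0$---is the right idea, and is morally the same mechanism the paper uses. The gap is that you have not actually produced $\psi$ for general $u$; you give only $u=2$ and say you ``expect the analogous recipe'' to work. This is not a formality: getting the exact profile $e_k=u$, $w_k=w_{k+1}=u$, everything else $u+1$, with $w_0=w_{2u}=0$, runs into genuine parity bookkeeping. For instance, with $w_0=0$ each component can contribute at most $2$ to $e_0$ and only via its endpoints, so achieving $e_0=u+1$ forces a careful allocation of source endpoints; when $u$ is even this interacts awkwardly with the fact that $P_{2u-2}$ cannot span from $0$ to $2u$ and $P_{2u+1}$ cannot have both ends at $\{0,2u\}$. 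A construction does exist (one can check $u=3,4$ by hand with some effort), but it is not the uniform ``back-and-forth plus shifts'' pattern you sketch, and you have not supplied it.

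The paper sidesteps this entirely by doubling: it proves $\HDE(P_{2u-2}^{2u+2}P_{2u+1}^2;P_{2u})\geq 2(u+1)$ instead, which is equivalent. With two copies of everything, one can take $u+1$ copies of $\vartheta_0$, $u+1$ copies of $\vartheta_{2u}$, and send the two $P_{2u+1}$'s to alternate on $\{u-1,u\}$ and $\{u,u+1\}$ respectively. The coverage is then manifestly symmetric, and the residual becomes the sum of \emph{two} submodularity terms $[p(\{u-1\})+p(\{u\})-p(\{u-1,u\})]+[p(\{u\})+p(\{u+1\})-p(\{u,u+1\})]\geq 0$. This is the missing idea: doubling restores enough symmetry to make the construction and the count routine, whereas your single-copy plan requires ad hoc work you have not carried out.
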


\begin{proof}
We first show that  $\HDE(P_{2u-2}^{u+1} P_{2u+1}; P_{2u})\leq u+1$. Let $p^*$ be as in the previous proof. For any homomorphism $\varphi$ from $P_{2u-2}^{u+1} P_{2u+1}$ to $ P_{2u}$,

\begin{align*}
&\sum_{S \subseteq \textup{MaxCliques}(P_{2u-2}^{u+1} P_{2u+1})}  -(-1)^{|S|} p^*(\varphi(\cap S)) \\
&\quad \quad \quad\quad = (u+1)((2u-2)\cdot\frac{2}{2u+1}-(2u-3)\cdot\frac{1}{2u+1})+1\cdot((2u+1)\cdot\frac{2}{2u+1}-2u\cdot\frac{1}{2u+1})  \\
&\quad \quad \quad\quad = u+1,
\end{align*} which implies that the optimal value of the linear program is at most $u+1$.

We now need to show that $\HDE(P_{2u-2}^{u+1} P_{2u+1}; P_{2u})\geq u+1$. Actually, we instead show that $$\HDE(P_{2u-2}^{2u+2} P_{2u+1}^2; P_{2u})\geq 2u+2$$ which is equivalent. Let $\vartheta_0$ be the homomorphism from $P_{2u-2}$ to $P_{2u}$ such that $\vartheta_0(j)=j$ for all $0\leq j \leq u-1$ and $\vartheta_0(j)=2u-2-j$ for all $u-1\leq j \leq 2u-2$. Symmetrically, let $\vartheta_{2u}$ be the homomorphism from $P_{2u-2}$ to $P_{2u}$ such that $\vartheta_{2u}(j)=2u-j$ for $0 \leq j \leq u-1$ and $\vartheta_{2u}(j)=2u-(2u-2-j)$ for all $u-1 \leq j \leq 2u-2$. Furthermore, let $\varphi_{u-1}$ be the homomorphism from $P_{2u+1}$ to $P_{2u}$ that sends the vertices of $P_{2u+1}$ to $u-1$ and $u$ in an alternating fashion. Similarly, let $\varphi_{u}$ be the homomorphism from $P_{2u+1}$ to $P_{2u}$ that sends the vertices of $P_{2u+1}$ to $u$ and $u+1$ in an alternating fashion.  Let $\psi$ be the homomorphism from $P_{2u-2}^{u+1} P_{2u+1}$ to $P_{2u}$ such that $u+1$ copies of $P_{2u-2}$ get sent to $P_{2u}$ via $\vartheta_0$ and the other $u+1$ copies of $P_{2u}$ via $\vartheta_{2u}$, and one copy of $P_{2u+1}$ gets sent to $P_{2u}$ via $\varphi_{u-1}$, and the other copy via $\varphi_u$. See Figure \ref{fig:newinequality} for an example when $u=3$.

\begin{figure}[ht]
\includegraphics[width=0.5\textwidth]{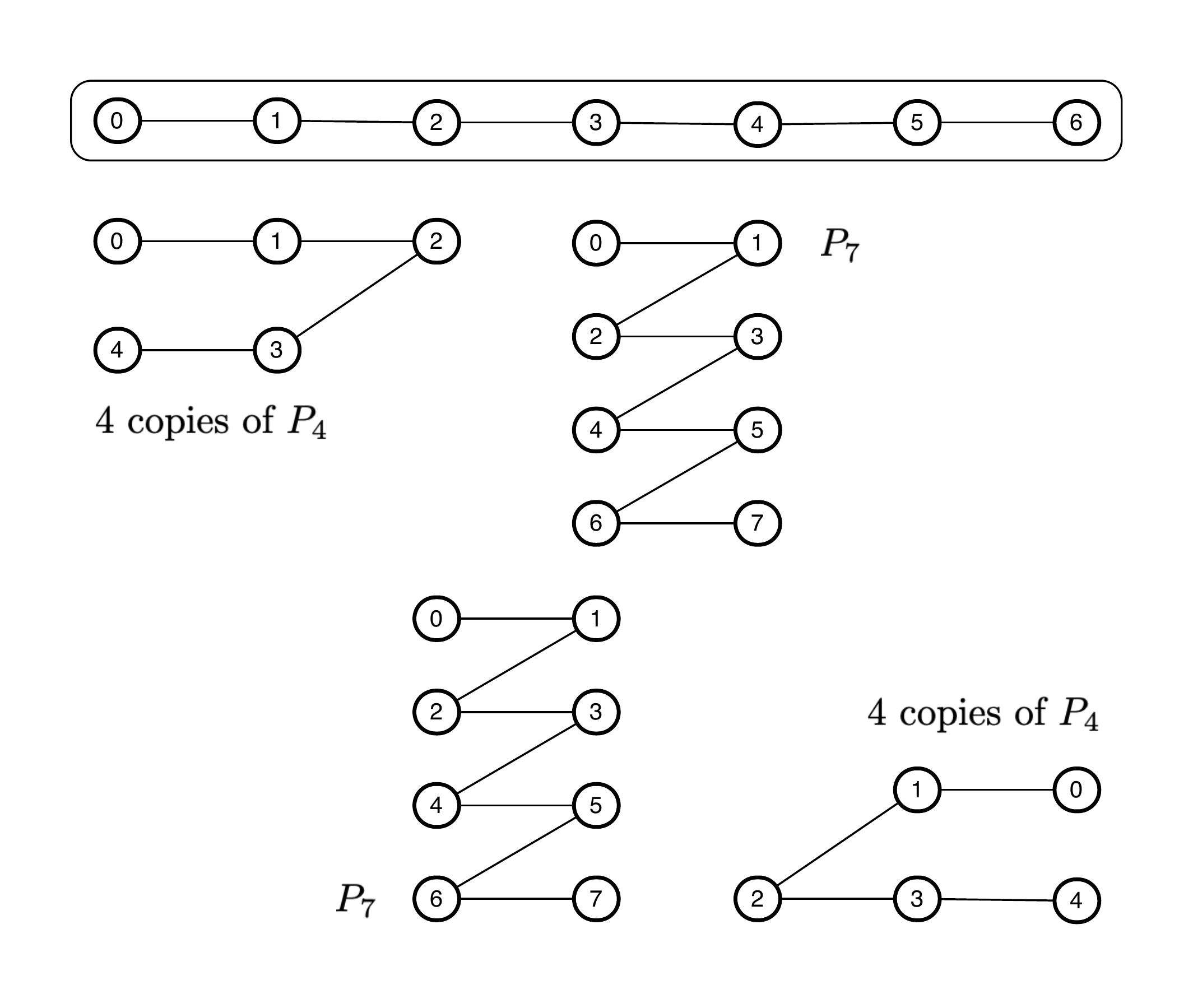}
\caption{\label{fig:newinequality} How $\psi$ sends $P_4^8 P_7^2$ to $P_6$}
\end{figure}

Now for any $p \in \mathcal{P}(P_{2u})$, we compute $$\sum_{S \subseteq \textup{MaxCliques}(P_{2u-2}^{u+1} P_{2u+1})} -(-1)^{|S|} p(\psi(\cap S)).$$ Observe that only sets $S$ of size one or two contribute in the above sum since no three maximal cliques of $P_{2u}^2 P_{2v+1}^{2v-1-2u}$ intersect. Every edge $\{i, i+1\}$ where $0 \leq i \leq u-2$ or $u+1 \leq 2u-1$ of $P_{2u}$ is covered by an image of an edge of $P_{2u-2}$ via $\psi$ exactly $2$ times for each of $u+1$ copies of $P_{2u-2}$ for a total of $2(u+1)$. The edges $\{u-1,u\}$ and $\{u,u+1\}$ of $P_{2u}$ are each covered by an image of an edge of $P_{2u+1}$ via $\psi$ exactly $2u+1$ times. Every vertex $i$ for $1\leq i \leq  u-2$ of $P_{2u}$ is covered by an image of an inner (non-end) vertex of $P_{2u-2}$ via $\psi$ exactly $2$ times for each of $u+1$ copies of $P_{2u-2}$ for a total of $2(u+1)$. Vertices $u-1$ and $u+1$ of $P_{2u}$ are covered by an image of an inner vertex of $P_{2u-2}$ via $\psi$ exactly one time for each of $u+1$ copies, and by an image of an inner vertex of $P_{2u+1}$ $u$ times, for a total of $2u+1$. Finally, vertex $u$ is covered by an image of an inner vertex of $P_{2u+1}$ exactly $u$ times for each of two copies, for a total of $2u$ times. Note that each inner vertex of some copy of $P_{2u-2}$ (respectively $P_{2u+1}$) is the intersection of two maximal cliques (i.e., edges) of $P_{2u-2}$ (respectively $P_{2u+1}$), and thus the coefficient will be negative. Finally, the end vertices of $P_{2u}$ are not covered by the image of any inner vertices of $P_{2u-2}$ or $P_{2u+1}$ via $\psi$, and thus get a coefficient of $0$.  Accordingly, we have  
\begin{align*}
&\sum_{S \subseteq \textup{MaxCliques}(P_{2u}^2 P_{2v+1}^{2v-1-2u})} -(-1)^{|S|} p(\psi(\cap S))\\
=&2(u+1)\left(\sum_{\{i, i+1\}\in E(P_{2u})}p(\{i,i+1\}) -\sum_{i \in \{1,\ldots,2v-2\}}p(\{i\})\right) \\
&- p(\{u-1,u\}) - p(\{u,u+1\}) + p(\{u-1\}) + 2p(\{u\}) + p(\{u+1\})\\
=&2(u+1)p(V(P_{2u}))- p(\{u-1,u\}) - p(\{u,u+1\}) + p(\{u-1\}) + 2p(\{u\}) + p(\{u+1\})\\
=&2(u+1)- p(\{u-1,u\}) - p(\{u,u+1\}) + p(\{u-1\}) + 2p(\{u\}) + p(\{u+1\})\\
\geq &2(u+1).
\end{align*}

The third line follows from $p(V(P_{2u}))=1$ since $p\in \mathcal{P}(P_{2u})$. Similarly, the fourth line follows from $p\in \mathcal{P}(P_{2u})$ because we know that $p(\{u-1\})+p(\{u\})\geq p(\{u-1,u\})$ and $p(\{u\})+p(\{u+1\})\geq p(\{u,u+1\})$. Therefore, for every $p \in \mathcal{P}(P_{2u})$, there is an homomorphism that yields at least $2(u+1)$, so we see that $\HDE(P_{2u-2}^{2u+2} P_{2u+1}^2; P_{2u})\geq 2(u+1)$. This proves that $\HDE(P_{2u-2}^{u+1} P_{2u+1}; P_{2u})=u+1$.
\end{proof}

%------------------------------------------------------------------------------------------
\subsection{Other important inequalities for paths}\label{subsec:findknownineqs}

We now present other inequalities that are important for understanding the tropicalization of the number profile for paths.
In \cite{DressGutman}, Dress and Gutman showed that $P_{2a}P_{2b} \geq P_{a+b}^2$. One way to prove this is to observe that it can be obtained from the $2\times 2$ minor of the moment matrix with rows and columns indexed by a path of length $a$ and $b$, each with an end vertex labeled 1. 

One type of binomial inequality that follows from the definition of homomorphism numbers is $P_{u} \leq P_{u+1}$ for $u\geq 1$. Indeed, for any homomorphism $\varphi$ from $P_u$ to some graph $G$ (assuming there exists at least one), there exists a homomorphism $\varphi':P_{u+1}\rightarrow G$ be such that $\varphi'(j)=\varphi(j)$ for all $0\leq j \leq u$ and $\varphi'(u+1)=\varphi(k-1)$. 

The final type of inequalities we need are inclusion inequalities $P_{2u+1}P_{2v+1} \geq P_{2u+2v+3}$. These follow more or less from the definition of homomorphism numbers: as stated in Lemma 2.2(e) from \cite{koppartyrossman}, if there exists a surjective homomorphism from $F$ onto $G$, then $F\geq G$. Here, let $\psi:P_{2u+1}P_{2v+1} \rightarrow P_{2u+2v+3}$ be the homomorphism such that $\psi(i)=i$ for all $i \in V(P_{2u+1})$, and $\psi(i)=i+2u+2$ for all $i\in V(P_{2v+1})$. This covers all the vertices of $P_{2u+2v+3}$, so $P_{2u+1}P_{2v+1} \geq P_{2u+2v+3}$. 

If we want to only describe binomial inequalities in homomorphism numbers of paths that are extremal for homomorphisms into graphs with no isolated vertices, the inequalities we have discussed would suffice. However, since we would like to also consider target graphs with isolated vertices, we need the following variation for binomial inequalities that contain $P_0$ on the larger side of the inequality. If $P_0^a P_v^b \geq P_w^c$ is a valid inequality for some $a,b,c\geq 0$, then $P_1^a P_v^b \geq P_w^c$ is also valid. Indeed, this is clear for any graph $G$ such that $\hom(P_1;G) \geq \hom(P_0;G)$: in that case, $P_1^a P_v^b \geq P_0^a P_v^b \geq P_w^c$ is valid on such graphs. Now consider some graph $G$ such that $\hom(P_1;G) < \hom(P_0;G)$. Then $G$ must contain some isolated vertices. Let $G=G'\cup S$ where $S$ consists of all the isolated vertices of $G$. Then note that $\hom(P_i;G)=\hom(P_i;G')$ for all $i\geq 1$, and that $\hom(P_1;G')\geq \hom(P_0;G')$. Moreover, we know that $P_0^a P_v^b \geq P_w^c$ is a valid inequality for any graph, including $G'$, so $\hom(P_0;G')^a \hom(P_v;G')^b \geq \hom(P_w;G')^c$. Therefore, we have that 
\begin{align*}\hom(P_1;G)^a \hom(P_v;G)^b&= \hom(P_1;G')^a \hom(P_v;G')^b\\
&\geq \hom(P_0;G')^a \hom(P_v;G')^b \\ 
&\geq \hom(P_w;G')^c\\
&=\hom(P_w;G)^c
\end{align*} as desired, so $P_1^a P_v^b \geq P_0^a P_v^b \geq P_w^c$ is valid for all graphs.

% !TEX root =  tropicalizationofpaths-finitesetting.tex
\section{Tropicalization of path profile}\label{sec:troppaths}
Let $\mathcal{U}_{2n+1}=\{P_0, P_1, \ldots, P_{2n+1}\}$. In this section, we describe $\troptn$. We show that the pure binomial inequalities discussed in the previous section can be used to characterize $\troptn$. However, unlike in previous examples, we need to consider binomial inequalities involving larger paths.
 Indeed, we need to consider binomial inequalities in paths of length up to $4n+3$. Let $y_i:=\log P_i$ for $0 \leq i \leq 4n+3$. 
 Consider the cone $C$ in variables $y_0, \ldots, y_{4n+3}$ whose inequalities come from taking the log of some of the inequalities mentioned in the previous section:
\begin{align}
C=\big\{\mathbf{y}=&(y_0, y_1, y_2, \ldots, y_{4n+3}) \in \mathbb{R}^{4n+4} | && \nonumber\\
&y_{2u}-2y_{2u+1}+y_{2u+2} \geq 0, &&0 \leq u \leq 2n, \\
& y_{2u} - 2y_{2u+2} + y_{2u+4} \geq 0, && 0 \leq u \leq 2n-1, \\
& -y_{2u} + y_{2u+1} \geq 0, && 1\leq u \leq 2n+1,  \\
& y_{2u+1}+y_{2v+1}-y_{2u+2v+3} \geq 0, && 0 \leq u \leq v \textup{ such that } 2u+2v+3\leq 4n+3,\\
& 2y_{2u}-(2v+1-2u)y_{2v-1}+(2v-1-2u)y_{2v+1} \geq 0, && 0 \leq u<v-1, v\leq 2n+1,   \\
& (u+1)y_{2u-2}-(u+1)y_{2u}+y_{2u+1} \geq 0, &&1 \leq u\leq 2n+1, \\
& y_1-2y_2+y_4\geq 0, &&\\
& 2y_1-(2v+1)y_{2v-1}+(2v-1)y_{2v+1} \geq 0,  && 2 \leq v\leq 2n+1,\\
& -y_1+y_2 \geq 0 \big\}.
\end{align}

Let $\projC=\{(r_0, r_1, \ldots, r_{2n+1})\,\,|\,\,(r_0, r_1, \ldots, r_{4n+3})\in C\}$. Our overarching goal is to show that $\projC=\troptn$. 
Therefore, $C$ gives us \emph{a lifted representation} of $\troptn$. The inequality description of $\troptn$ only using the inequalities in paths of length at most $2n+1$ seems to be more complicated.

Using results of Section \ref{sec:truebinomialinequalitieswithpaths}, we quickly show that $\troptn$ is contained in $\projC$:

\begin{theorem} 
We have that $ \trop (\mathcal{N}_{\mathcal{U}_{2n+1}})  \subseteq \textup{proj}_{2n+1}(C)$.
 \end{theorem}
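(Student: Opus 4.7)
The plan is to prove the inclusion by lifting: every point $\mathbf{x}\in\troptn$ arises as a limit of scalings of log-homomorphism vectors of actual graphs, and I will show that appending the log-homomorphism counts for paths $P_{2n+2},\dots,P_{4n+3}$ produces a lifted vector that (in the limit) lies in $C$, which by definition gives $\mathbf{x}\in\projC$.

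First, I would go through the nine families of defining inequalities of $C$ and verify that each is the logarithm of a previously established pure binomial inequality in path homomorphism counts. The log-convexity inequalities (5.1) and (5.2) are special cases of the Dress--Gutman inequality $P_{2a}P_{2b}\geq P_{a+b}^2$ recalled in Section~\ref{subsec:findknownineqs}. Inequalities (5.3) and (5.9) are instances of the monotonicity $P_u\leq P_{u+1}$ from the same section, and (5.4) is the path-union inequality $P_{2u+1}P_{2v+1}\geq P_{2u+2v+3}$. Inequality (5.5) is the generalized Erd\H{o}s--Simonovits inequality of Theorem~\ref{thm:generalizationoferdossimonovits} (with the $u=0$ case covered by \cite{BRerdossimonovits}). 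Inequality (5.6) is the new inequality of Theorem~\ref{thm:generalizationoferdossimonovits2}. Finally, (5.7) and (5.8) follow by applying the $P_0\mapsto P_1$ substitution described at the end of Section~\ref{subsec:findknownineqs} to $P_0P_4\geq P_2^2$ and to the original Erd\H{o}s--Simonovits inequality $P_0^2P_{2v+1}^{2v-1}\geq P_{2v-1}^{2v+1}$. The upshot is that whenever $G$ has at least one edge, the full vector $(\log\hom(P_i;G))_{i=0}^{4n+3}$ lies in $C$.

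Next I would perform the actual lifting. Fix $\mathbf{x}\in\troptn$. Using $\troptn=\textup{cl}(\textup{cone}(\log\mathcal{N}_{\mathcal{U}_{2n+1}}))$ from \cite[Lemma 2.2]{BRST2}, write $\mathbf{x}=\lim_{k\to\infty}\alpha_k\bigl(\log\hom(P_i;G_k)\bigr)_{i=0}^{2n+1}$ for some $\alpha_k\geq 0$ and graphs $G_k$ each having at least one edge. Form the lifted sequence $\mathbf{z}_k:=\alpha_k\bigl(\log\hom(P_i;G_k)\bigr)_{i=0}^{4n+3}\in C$. The key observation is that $\{\mathbf{z}_k\}$ is bounded: nonnegativity of each entry is clear since $\hom(P_i;G_k)\geq 1$ whenever $G_k$ has an edge, and the elementary bound $\hom(P_i;G)\leq\hom(P_0;G)\hom(P_1;G)^i$ (walks of length $i$ are determined by a starting vertex and at most $\Delta(G)\leq\hom(P_1;G)$ choices at each step) gives $z_{k,i}\leq z_{k,0}+i\,z_{k,1}$, and both $z_{k,0}$ and $z_{k,1}$ converge.

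Finally, by Bolzano--Weierstrass I pass to a convergent subsequence $\mathbf{z}_k\to\mathbf{z}^*$. Since $C$ is defined by finitely many closed linear inequalities, $\mathbf{z}^*\in C$. The first $2n+2$ coordinates of $\mathbf{z}_k$ converge to $\mathbf{x}$ by construction, so $\textup{proj}_{2n+1}(\mathbf{z}^*)=\mathbf{x}$, and hence $\mathbf{x}\in\projC$. The main work is the bookkeeping in the first step, which is essentially a matter of matching each inequality of $C$ to the correct theorem from Section~\ref{sec:truebinomialinequalitieswithpaths}; the compactness argument is routine, and the $P_0\mapsto P_1$ substitution is the only subtlety, already handled in Section~\ref{subsec:findknownineqs}.
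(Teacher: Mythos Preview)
Your proof is correct and follows the same approach as the paper: both arguments reduce to checking that each defining inequality (5.1)--(5.9) of $C$ is the logarithm of a valid binomial inequality in path homomorphism numbers, and your identification of each family with the corresponding result from Section~\ref{sec:truebinomialinequalitieswithpaths} matches the paper's exactly. The only difference is that you make the passage from ``all inequalities of $C$ are valid on $\mathcal{N}_{\mathcal{U}_{4n+3}}$'' to ``$\troptn\subseteq\projC$'' explicit via a lifting-plus-compactness argument, whereas the paper simply asserts the conclusion; your extra step is sound (and arguably fills a small gap), though one can also bypass Bolzano--Weierstrass by noting that $\projC$ is closed (as the projection of a polyhedral cone) and that $\textup{cone}(\log\mathcal{N}_{\mathcal{U}_{2n+1}})\subseteq\projC$ directly.
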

 \begin{proof}

We show that every inequality of $C$ corresponds to the logarithm of a valid binomial inequality for $\mathcal{N}_{\mathcal{U}_{4n+3}}$. 
Inequalities
\begin{itemize}
\item (5.1) come from the Dress-Gutman inequalities when $(a,b)=(u,u+1)$,
\item (5.2) come from the Dress-Gutman inequalities when $(a,b)=(u,u+2)$,
\item (5.3) come from $P_{u} \leq P_{u+1}$ for $u\geq 1$,
\item (5.4) come from the inclusion inequalities discussed in \ref{subsec:findknownineqs},
\item (5.5) come from the generalized Erd\H{o}s-Simonovits inequalities we certified in \ref{subsec:generalizederdossimonovits},
\item (5.6) come from new inequalities we certified in \ref{subsec:evenoddeven},
\end{itemize}

Inequalities (5.7), (5.8), and (5.9) are the variations of inequalities containing $P_0$ on the lefthand side discussed in Section~\ref{subsec:findknownineqs}. Indeed, inequality (5.7) is the variation for $P_0P_4\geq P_2^2$ from (5.2), inequalities (5.8) are the variation for Erd\H{o}s-Simonovits inequalities of (5.5) with $u=0$, and inequality (5.9) is the variation for $P_0P_2 \geq P_1^2$ of (5.1). We thus proved that $\trop (\mathcal{N}_{\mathcal{U}_{2n+1}}) \subseteq \projC$. 
\end{proof}

It remains to show that $\trop (\mathcal{N}_{\mathcal{U}_{2n+1}}) \supseteq \projC$. In Section \ref{subsec:moreaboutC}, we prove that certain inequalities are valid on $\projC$, which will help proving the results in further sections. Then in Section \ref{subsec:construction}, we construct a family of rays $\mathcal{R}$ that lies in $\trop (\mathcal{N}_{\mathcal{U}_{2n+1}}) $. Finally, in Section \ref{subsec:doublehullofconstruction}, we show that $\projC$ is contained in the double hull of $\mathcal{R}$, and thus $\trop (\mathcal{N}_{\mathcal{U}_{2n+1}}) = \projC$.
%------------------------------------------------------------------------------------------------------------
\subsection{Valid inequalities on $\projC$}\label{subsec:moreaboutC}

The following inequalities will be useful later on. Note that though all of these inequalities are in variables $y_0, y_1, \ldots, y_{2n+1}$, they are built from the defining inequalities of $C$ that involve variables up to $y_{4n+3}$. 
Note that each new (linear) inequality we prove to be valid on $\projC$ corresponds to a valid binomial inequality for $\mathcal{N}_{\mathcal{U}_{2n+1}}$ given that we have already shown that $\troptn \subseteq \projC$. 

\begin{lemma}\label{lem:forbprimenonneg}
Let $u, v\in \mathbb{N}$ such that $u \leq \frac{v}{2}$ and $v\leq 2n+1$. Then $$vy_{2u}-2uy_v \geq 0$$ is a valid inequality for $C$ and $\projC$, and equivalently $$P_{2u}^v \geq P_{v}^{2u}$$ is a valid inequality in graph homomorphism numbers.
\end{lemma}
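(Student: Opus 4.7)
The two statements are equivalent: the binomial inequality $P_{2u}^v \geq P_v^{2u}$ holds on $\troptn$ precisely when the linear inequality $v y_{2u} - 2u y_v \geq 0$ holds there, and since $\troptn \subseteq \projC$ has already been established, it suffices to derive the linear inequality as a nonnegative combination of the defining inequalities of $C$. I would first record the auxiliary fact that $y_{2u} \geq 0$ is itself valid on $C$, which handles the case $u=0$: for $y_0$, combining (5.1) at $u=0$, (5.3) at $u=1$, and (5.4) at $u=v=0$ gives the chain $y_0 \geq 2y_1 - y_2 \geq y_3 - y_2 \geq 0$; an analogous chain yields $y_1, y_2 \geq 0$, after which the log-convexity (5.2) propagates $y_{2u} \geq 0$ to all $u$.

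For $u \geq 1$ the core step reduces the odd case at $v = 2w+1$ (with $u < w$) to the larger odd case at $v = 2w+3$ via the generalized Erd\H{o}s--Simonovits inequality (5.5) at $(a,b) = (u, w+1)$. The key algebraic identity is
\[
(2w+1) y_{2u} - 2u\, y_{2w+1} = \tfrac{2u}{2w+3-2u}\bigl[\,2 y_{2u} - (2w+3-2u) y_{2w+1} + (2w+1-2u) y_{2w+3}\bigr] + \tfrac{2w+1-2u}{2w+3-2u}\bigl[\,(2w+3) y_{2u} - 2u\, y_{2w+3}\bigr],
\]
whose verification uses $4u + (2w+1-2u)(2w+3) = (2w+1)(2w+3-2u)$ and whose coefficients are nonnegative for $u \leq w$. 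Iterating this reduction from $w=n$ upward to $v = 4n+3$ produces a representation of $(2n+1) y_{2u} - 2u\, y_{2n+1}$ as a nonnegative combination of (5.5)-instances plus $\beta \cdot [(4n+3) y_{2u} - 2u\, y_{4n+3}]$, where the product of telescoping ratios gives $\beta = (2n+1-2u)/(4n+3-2u)$. The boundary term is eliminated via the linear identity $(4n+3) y_{2u} - 2u\, y_{4n+3} = 2[(2n+1) y_{2u} - 2u\, y_{2n+1}] + 2u\cdot[2y_{2n+1} - y_{4n+3}] + y_{2u}$; substituting this back yields a self-referential equation whose self-coupling coefficient is $2\beta < 1$, since $1 - 2\beta = (1+2u)/(4n+3-2u) > 0$, and one can solve positively for $(2n+1) y_{2u} - 2u\, y_{2n+1}$ as a nonnegative combination of the defining inequalities together with $y_{2u} \geq 0$. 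All other odd cases for $u < w < n$ then follow by unfolding the reduction identity.

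The remaining cases close quickly. The odd case $v = 2u+1$ (where (5.5) is not applicable) follows from (5.1) at $u' = u$ combined with the even case at $v = 2u+2$: $y_{2u+1} \leq (y_{2u}+y_{2u+2})/2 \leq \tfrac{2u+1}{2u} y_{2u}$. The even case $v = 2w$ for $w \geq u+1$ is handled by forward induction in $w$: combining (5.6) at $u' = w$, which gives $y_{2w+1} \geq (w+1)(y_{2w}-y_{2w-2})$, with the already-established odd bound $y_{2w+1} \leq \tfrac{2w+1}{2u} y_{2u}$ and the inductive even bound $y_{2w-2} \leq \tfrac{w-1}{u} y_{2u}$, one obtains $y_{2w} \leq \tfrac{2w^2+2w-1}{2u(w+1)} y_{2u} \leq \tfrac{w}{u} y_{2u}$, closing the induction. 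The main obstacle is the apparent circularity of the odd-case reduction at the boundary $v = 4n+3$, which is resolved precisely by the strict contraction $2\beta < 1$ forced by the available range $v \leq 4n+3$ in $C$.
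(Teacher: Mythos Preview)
Your proof is correct but takes a considerably more circuitous route than the paper's. The paper reduces everything to the consecutive cases $v = 2u+2$ (even) and $v = 2u+1$ (odd), each of which it writes out explicitly as a conic combination using only inequalities of types (5.1)--(5.4)---Dress--Gutman convexity, monotonicity, and inclusion---after which the general case follows by telescoping the ratios $y_{2u}/y_{2u+2} \geq u/(u+1)$ and $y_{2u}/y_{2u+1} \geq 2u/(2u+1)$. Your argument instead invokes the heavier inequalities (5.5) (generalized Erd\H{o}s--Simonovits) for the odd case via an upward recursion and a self-referential contraction at the boundary $v = 4n+3$, and (5.6) for the even case via forward induction, together with an auxiliary step establishing $y_{2u} \geq 0$. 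Both approaches stay within the variable range $y_0,\dots,y_{4n+3}$, so both are legitimate derivations inside $C$; but the paper's route is shorter and shows that this lemma already follows from the most elementary defining inequalities, without appealing to (5.5) or (5.6).

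One minor correction: your claim that ``log-convexity (5.2) propagates $y_{2u} \geq 0$ to all $u$'' is not quite right as stated, since (5.2) alone does not force monotonicity of the even subsequence. The correct propagation combines (5.1) with (5.3) to obtain $y_{2u+2} \geq 2y_{2u+1} - y_{2u} \geq y_{2u}$ for $u \geq 1$, after which $y_2 \geq 0$ (which you do establish) suffices for all $y_{2u} \geq 0$.
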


\begin{proof}
First note that if $v$ is even, then it is sufficient to do it for the case when $2u=v-2$. Indeed, that case implies that for some $u',v'$ such that $2u'<v'-2$, we have
$$\frac{y_{2u'}}{y_{v'}}=\frac{y_{2u'}}{y_{2u'+2}}\cdot \frac{y_{2u'+2}}{y_{2u'+4}}\cdots \frac{y_{v'-2}}{y_v'} \geq \frac{u'}{u'+1}\cdot \frac{u'+1}{u'+2} \cdots \frac{\frac{v'-2}{2}}{\frac{v'}{2}} = \frac{u'}{\frac{v'}{2}}$$ as desired. We thus now show that when $v$ is even and $2u=v-2$, the inequality $vy_{v-2}-(v-2)y_v\geq 0$ holds on $C$  since one can easily check that it can be written as the following conic combination of defining inequalities of $C$:

\begin{align*}
2&(y_{v-2}-2y_{v-1}+y_v \geq 0)\\
+\sum_{w=\frac{v}{2}-1}^{v-3} (2v-4-2w)&(y_{2w}-2y_{2w+2}+y_{2w+4} \geq 0)\\
+2&(-y_{2v-2}+y_{2v-1} \geq 0)\\
+2&(2y_{v-1}-y_{2v-1} \geq 0).
\end{align*}

Indeed, note that these are all valid inequalities for $C$, namely inequalities of type (5.1), (5.2), (5.3) and (5.4), since no variable past $y_{4n+3}$ is used (in fact, the largest subscript of a variable involved in the conic combination is $4n-1$ which appears in the third and fourth row if $v=2n$).

Similarly, if $v$ is odd, it is sufficient to do the case when $2u=v-1$. Indeed that case together with the even case imply that for some $u', v'$ such that $2u'<v'-1$, we have $$\frac{y_{2u'}}{y_{v'}}=\frac{y_{2u'}}{y_{v'-1}}\cdot \frac{y_{v'-1}}{y_{v'}} \geq \frac{2u'}{v'-1}\cdot\frac{v'-1}{v'}= \frac{2u'}{v'}$$ as desired. We thus now show that when $v$ is odd and $2u=v-1$, the inequality $vy_{v-1} - (v-1)y_{v} \geq 0$ holds on $C$ since it can be written as the following conic combination of defining inequalities of $C$:

\begin{align*}
(\frac{v+1}{2})&(y_{v-1}-2y_v+y_{v+1} \geq 0)\\
+\sum_{w=\frac{v-1}{2}}^{v-2}(v-1-w)&(y_{2w}-2y_{2w+2}+y_{2w+4} \geq 0)\\
+&(-y_{2v}+y_{2v+1} \geq 0)\\
+&(2y_v - y_{2v+1} \geq 0).
\end{align*}

Indeed, note that these are still the same valid inequalities for $C$ as in the even case, and again no variable past $y_{4n+3}$ is used; here the largest subscript of a variable involved in the conic combination is actually $4n+3$, which appears in the third and fourth rows if $v=2n+1$. 

Moreover, note that this implies that $vy_{2u}-2uy_v \geq 0$ is also a valid inequality for $\projC$ for every $u \leq \frac{v}{2}$ and $v\leq 2n+1$ since such inequalities do not involve variables past $y_{2n+1}$. 
\end{proof}

\begin{lemma}\label{lem:forupperboundd0}
Let $u, v\in \mathbb{N}$ such that $u \leq \lfloor\frac{v-1}{2}\rfloor$ and $v\leq 2n+1$. Then $$(v-2u-1)y_{2u} - (v-2u)y_{2u+1}+y_v \geq 0$$ is a valid inequality on $C$ and $\projC$, and equivalently, $$P_{2u}^{v-2u-1} P_v \geq P_{2u+1}^{v-2u}$$ is a valid inequality in graph homomorphism numbers.
\end{lemma}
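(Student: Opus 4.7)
The plan is to split the inequality by the parity of $v$ and in each case exhibit an explicit nonnegative conic combination of defining inequalities of $C$. Denote
\[
L_j := y_{2j} - 2y_{2j+1} + y_{2j+2}, \qquad Q_j := y_{2j} - 2y_{2j+2} + y_{2j+4},
\]
\[
E_{a,b} := 2y_{2a} - (2b+1-2a)y_{2b-1} + (2b-1-2a)y_{2b+1},
\]
i.e.\ the type (5.1), (5.2), and (generalized Erd\H{o}s--Simonovits) type (5.5) inequalities, each of which is $\geq 0$ on $C$.

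For $v = 2w$ even (so $u \leq w-1$), I propose the identity
\[
(2w - 2u - 1)y_{2u} - (2w - 2u)y_{2u+1} + y_{2w} \;=\; (w - u)\,L_u + \sum_{j=u}^{w-2}(w-1-j)\,Q_j,
\]
all of whose weights are positive integers. Verification is a direct bookkeeping argument: the $y_{2u}$ coefficient equals $(w-u)+(w-1-u) = 2w-2u-1$; the $y_{2u+1}$ coefficient is $-2(w-u)$; the $y_{2w}$ coefficient comes solely from the $j=w-2$ term and equals $1$; and the intermediate even-indexed coefficients are annihilated by the discrete second-difference identity $(w-1-(j-2)) - 2(w-1-(j-1)) + (w-1-j) = 0$.

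For $v = 2w+1$ odd, the case $u = w$ collapses to $0 \geq 0$, so assume $u < w$. I propose the identity
\[
(2w-2u)y_{2u} - (2w-2u+1)y_{2u+1} + y_{2w+1} \;=\; \sum_{b=u+1}^{w} \alpha_b\, E_{u,b}, \qquad \alpha_b := \frac{2w-2u+1}{(2b-2u-1)(2b-2u+1)} > 0.
\]
The main calculation is the coefficient of $y_{2u}$: substituting $m = b-u$ and using the partial-fraction telescoping $\tfrac{1}{(2m-1)(2m+1)} = \tfrac{1}{2}\bigl(\tfrac{1}{2m-1} - \tfrac{1}{2m+1}\bigr)$ gives
\[
\sum_{b=u+1}^{w} 2\alpha_b \;=\; 2(2w-2u+1)\sum_{m=1}^{w-u}\frac{1}{(2m-1)(2m+1)} \;=\; 2(2w-2u+1)\cdot\frac{w-u}{2(w-u)+1} \;=\; 2(w-u),
\]
matching the target. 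The coefficient of $y_{2u+1}$ receives contribution only from $b=u+1$, giving $-3\alpha_{u+1} = -(2w-2u+1)$; the coefficient of $y_{2w+1}$ comes only from $b=w$, giving $(2w-1-2u)\alpha_w = 1$; and for each intermediate $k$ with $u+1 \leq k \leq w-1$, the coefficient of $y_{2k+1}$ is $(2k-1-2u)\alpha_k - (2k+3-2u)\alpha_{k+1}$, which vanishes because the weights are defined precisely by the ratio $\alpha_{k+1}/\alpha_k = (2k-1-2u)/(2k+3-2u)$.

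Since each summand is a defining inequality of $C$ (where the $b = u+1$ case of $E_{u,b}$ is the sharp boundary case of the generalized Erd\H{o}s--Simonovits inequality established in Theorem~\ref{thm:generalizationoferdossimonovits}) and all scalar weights are nonnegative, the linear inequality holds on $C$, and by taking the projection, on $\projC$ as well. The equivalent binomial inequality $P_{2u}^{v-2u-1}P_v \geq P_{2u+1}^{v-2u}$ then follows by exponentiating the linear inequality on log-coordinates. The main technical obstacle is identifying the correct weights $\alpha_b$ for the odd case and verifying the telescoping; the even case is essentially bookkeeping once the weights $(w-u)$ and $(w-1-j)$ are read off from the discrete second-difference pattern.
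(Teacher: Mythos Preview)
Your proof is correct and essentially identical to the paper's own argument: the paper also splits by parity of $v$, and in each case uses exactly the same conic combination with the same weights you propose (in the even case $(w-u)L_u + \sum_{j=u}^{w-2}(w-1-j)Q_j$, in the odd case the telescoping sum of type~(5.5) inequalities with coefficients $\frac{v-2u}{(2b+1-2u)(2b-1-2u)}$, which is your $\alpha_b$). Your verification of the coefficients is somewhat more explicit than the paper's, but the substance is the same.
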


\begin{proof}
When $v$ is odd, one can see $(v-2u-1)y_{2u} - (v-2u)y_{2u+1}+y_v \geq 0$ is valid on $C$ since it is the following conic combination of defining inequalities of $C$:

\begin{align*}
\sum_{w=u+1}^{\frac{v-1}{2}} \frac{v-2u}{(2w+1-2u)(2w-1-2u)}(2y_{2u}-(2w+1-2u)y_{2w-1}+(2w-1-2u)y_{2w+1} \geq 0).
\end{align*}

Indeed, these are inequalities of type (5.5) for $C$ and no variable past $y_{4n+3}$ is used; that variable occurs when $v=2n+1$ and $w=\frac{v-1}{2}$. Moreover, note that
\begin{align*}
\sum_{w=u+1}^{\frac{v-1}{2}} \frac{1}{(2w+1-2u)(2w-1-2u)} & = \sum_{w=1}^{\frac{v-2u-1}{2}} \left( \frac{w}{2w-1}-\frac{w+1}{2(w+1)-1}\right)\\
& = 1 - \frac{v-2u+1}{2(v-2u)} = \frac{v-2u-1}{2(v-2u)}
\end{align*}
by canceling like terms. Therefore, the coefficient for $y_{2u}$ is $2(v-2u)\frac{v-2u-1}{2(v-2u)}=v-2u-1$.

Similarly, when $v$ is even, we have the following conic combination

\begin{align*}
\sum_{w=u}^{\frac{v}{2}-2} (\frac{v}{2}-w-1)&(y_{2w}-2y_{2w+2}+y_{2w+4} \geq 0)\\
+(\frac{v}{2}-u)&(y_{2u}-2y_{2u+1}+y_{2u+2} \geq 0)
\end{align*}
yielding $(v-2u-1)y_{2u} - (v-2u)y_{2u+1}+y_v \geq 0$ as desired. Here, note that the inequalities in the conic combination are of type (5.2) and (5.1) for $C$, and the $y_i$'s involved are such that $i\leq 4n < 4n+3$.

Moreover, note that this implies that $(v-2u-1)y_{2u} - (v-2u)y_{2u+1}+y_v \geq 0$ is also a valid inequality for $\projC$ for every $u \leq \lfloor\frac{v-1}{2}\rfloor$ and $v\leq 2n+1$ since such inequalities do not involve variables past $y_{2n+1}$. 
\end{proof}

\begin{lemma}\label{lem:forlowerboundd0}
Let $t, u, v\in \mathbb{N}$ such that $t\leq \lfloor \frac{v}{2} \rfloor$, $u\leq \lfloor\frac{v-1}{2}\rfloor$ and $v\leq 2n+1$. Then $$(v-2t)y_{2u+1}+2(u+1)y_{2t}-2(u+1)y_v \geq 0$$ is a valid inequality for $C$ and $\projC$, and equivalently, $$P_{2u+1}^{v-2t} P_{2t}^{2(u+1)} \geq P_v^{2(u+1)}$$ is a valid inequality in graph homomorphism numbers.
\end{lemma}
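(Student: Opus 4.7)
The plan is to exhibit the stated linear inequality as a nonnegative conic combination of the defining inequalities of the lifted cone $C$. Because every variable appearing in the target has index at most $2n+1$, any such combination valid on $C$ descends immediately to $\projC$. I first dispose of the trivial boundary $v=2t$ and the low-gap subcases $v\in\{2t+1,2t+2\}$, which reduce directly to the new inequality (5.6) together with Lemmas~\ref{lem:forupperboundd0} and~\ref{lem:forbprimenonneg}.

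For the main case $v \ge 2t+3$ I split according to the parity of $v$. When $v = 2m+1$ is odd with $t\le m-1$, the core ingredient is a telescoping conic combination of the generalized Erd\H{o}s--Simonovits inequalities (5.5) applied at first parameter $u'=t$ and second parameter $v'$ running from $m+1$ up to $2n+1$. Choosing coefficients $\alpha_{v'}$ recursively so that every intermediate $y_{2v'+1}$ term cancels, and evaluating the resulting sum in closed form via the partial-fraction identity
\[
\frac{1}{(2k-1-2t)(2k+1-2t)} \;=\; \tfrac{1}{2}\Bigl[\tfrac{1}{2k-1-2t}-\tfrac{1}{2k+1-2t}\Bigr],
\]
produces, after clearing a positive factor, the inequality
\[
2(2n+1-m)\,y_{2t}\;-\;(4n+3-2t)\,y_{2m+1}\;+\;(2m+1-2t)\,y_{4n+3}\;\ge\;0.
\]
The residual $y_{4n+3}$ term is then absorbed by adding $(2m+1-2t)$ copies of the inclusion inequality $2y_{2n+1}-y_{4n+3}\ge 0$ obtained from (5.4) at $u=v=n$, yielding a valid inequality supported on $\{y_{2t},y_{2n+1},y_{2m+1}\}$. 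To convert this to the target inequality, which involves $y_{2u+1}$ rather than $y_{2n+1}$, I blend in an appropriate multiple of Lemma~\ref{lem:forbprimenonneg} applied to the pair $(2t,2n+1)$ together with copies of the odd-index monotonicity chain $y_{2u+1}\le y_{2u+3}\le\cdots\le y_{2n+1}$ built from (5.3) and (5.9), with blend weights chosen so that the $y_{2t}$ and $y_{2n+1}$ coefficients collapse to those prescribed by the target.

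For $v=2m$ even, the identical scheme applies with the log-convexity inequalities (5.1), (5.2), and (5.7) playing the role of (5.5) in the telescope, and with Lemma~\ref{lem:forupperboundd0} closing the analogous final step. The main obstacle is the book-keeping: verifying that all recursively defined telescoping coefficients remain nonnegative (which follows from $t<m\le 2n$ keeping every factor of the form $2k\pm c -2t$ in the recursion strictly positive) and that the final blending step produces exactly the target coefficients without introducing stray terms. The hypothesis $u\le\lfloor(v-1)/2\rfloor$ is crucial at this last stage, as it guarantees that the odd-index monotonicity chain actually fits between $y_{2u+1}$ and $y_{2n+1}$.
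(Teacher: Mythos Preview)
Your proposal has a genuine gap at the ``blending'' step in the odd-$v$ case. After the telescope and the absorption of $y_{4n+3}$ via (5.4), you hold
\[
2(2n+1-m)\,y_{2t}-(4n+3-2t)\,y_{2m+1}+2(2m+1-2t)\,y_{2n+1}\ge 0,
\]
and you wish to reach the target
\[
(2m+1-2t)\,y_{2u+1}+2(u+1)\,y_{2t}-2(u+1)\,y_{2m+1}\ge 0,
\]
which carries a strictly \emph{positive} coefficient on $y_{2u+1}$. In any nonnegative conic combination of your listed ingredients this is impossible: Lemma~\ref{lem:forbprimenonneg} at $(2t,2n+1)$ does not involve $y_{2u+1}$ at all, and the monotonicity chain $y_{2u+1}\le\cdots\le y_{2n+1}$, written as valid inequalities $-y_{2u+1}+y_{2n+1}\ge 0$, contributes only a \emph{negative} multiple of $y_{2u+1}$. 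Hence no choice of blend weights can produce the required positive term. The same sign obstruction afflicts your even-$v$ sketch: (5.2) touches no odd index, (5.1) and Lemma~\ref{lem:forupperboundd0} contribute $y_{2u+1}$ only with a negative sign, and (5.7) helps only in the single case $u=0$.

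The paper's proof obtains the positive $y_{2u+1}$ coefficient from the inclusion inequalities (5.4), and this is the missing idea. For odd $v$ it splits on the relative order of $u$ and $t$: when $u<t$ a single copy of $y_{2u+1}+y_{v-2u-2}-y_v\ge 0$ supplies both the $y_{2u+1}$ and the $-y_v$ terms, and a short telescope of (5.5) at first parameter $t$ over $w\in\{\tfrac{v-2u-1}{2},\ldots,\tfrac{v-1}{2}\}$ cancels the auxiliary $y_{v-2u-2}$; when $u\ge t$ one uses $2y_{2u+1}-y_{4u+3}\ge 0$ together with two telescoped blocks of (5.5). For even $v$ the paper iterates (5.4) in the form $y_{2u+1}+y_{2w(u+1)-1}-y_{2(w+1)(u+1)-1}\ge 0$ for $w=1,\ldots,\bar w$, so that the telescoping auxiliary odd index climbs past $v$ in steps of $2(u+1)$, and then closes with one instance of (5.6) and two blocks of (5.2).
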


\begin{proof}
When $v$ is even, $(v-2t)y_{2u+1}+2(u+1)y_{2t}-2(u+1)y_v \geq 0$ can be written as the following conic combination of defining inequalities of $C$ of type (5.6), (5.2) and (5.4):

\begin{align*}
\frac{v-2t}{\bar{w}+1} & ((\frac{v}{2}+1)y_{\bar{v}-2} - (\frac{v}{2}+1)y_{\bar{v}} + y_{\bar{v}+1} \geq 0)\\
+ \sum_{w=t}^{\frac{v}{2}-2} 2(u+1)(w-t+1)&(y_{2w}-2y_{2w+2}+y_{2w+4} \geq 0)\\
+ \sum_{w=\frac{v}{2}-1}^{\frac{\bar{v}}{2}-2} 2(u+1)&(\frac{v}{2}-t)(y_{2w}-2y_{2w+2}+y_{2w+4} \geq 0)\\
+\sum_{w=1}^{\bar{w}} \frac{v-2t}{\bar{w}+1} & (y_{2u+1} + y_{2wu+2w-1} - y_{2u+2w+2uw+1} \geq 0)
\end{align*}
where $\bar{w}:=\min\{w\in \mathbb{N}| 2u+2w+2uw \geq v\}$ and $\bar{v}:= 2u+2\bar{w}+2u\bar{w}$. To see that these are indeed all valid inequalities for $C$, we need to show that there is no $y_i$ involved such that $i>4n+3$. First observe that $\bar{w}=\lceil\frac{v-2u}{2(1+u)}\rceil$. Then a quick check reveals that $\bar{v}\leq 2v$. The most worrisome variable in the conic combination is $y_{2u+2w+2uw+1}$ when $w=\bar{w}$, which is equal to $y_{\bar{v}+1}$ (which also appears in the first row), and given that $\bar{v}\leq 2v \leq 4n$, the inequalities involve no variable past $y_{4n+3}$. 

Now, to see that the conic combination yields the desired inequality, note that odd coordinates only appear in the last row except for $y_{\bar{v}+1}$ which appears also in the first row. Looking at $y_{2u+1}$, it appears twice in the bottom row when $w=1$, and once for every $w\in \{2, \ldots, \bar{w}\}$, and each time with a coefficient of $\frac{v-2t}{\bar{w}+1}$. So the coefficient for $y_{2u+1}$ is $(\bar{w}+1)\cdot \frac{v-2t}{\bar{w}+1}= v-2t$ as desired. Now note that $2(w+1)u+2(w+1)-1=2u+2w+2uw+1$, so other odd coordinates in the last row cancel themselves out for subsequent $w$'s, except for $y_{2u+2\bar{w}+2u\bar{w}+1}$ which only appears with a coefficient of $-\frac{v-2t}{\bar{w}+1}$ when $w = \bar{w}$. However, as explained before, since $2u+2\bar{w}+2u\bar{w}+1=\bar{v}+1$, it also appears in the first row with a coefficient of $\frac{v-2t}{\bar{w}+1}$ which thus means that that the coefficient for $y_{2u+2\bar{w}+2u\bar{w}+1}$ in the total is zero as desired.

It's easy to check that the coefficients of most even coordinates cancel out. The only slightly complicated cases are $y_{v-2}$ that appears in the second row both when $w=\frac{v}{2}-3$ and $w=\frac{v}{2}-2$ as well as in the third row when $w=\frac{v}{2}-1$, $y_{\bar{v}-2}$ that appears in the first row as well as in the third row when $w=\frac{\bar{v}}{2}-3$ and $w=\frac{\bar{v}}{2}-2$, and $y_{\bar{v}}$ which appears in the first row as well as in the third row when $w=\frac{\bar{v}}{2}-2$. Finally, $y_{2t}$ only appears in the second row when $w=t$ and yields a coefficient of $2(u+1)$ as desired, $y_{v}$ appears in the second row when $w=\frac{v}{2}-2$ and in the third row when $w$ is either $\frac{v}{2}-1$ or $\frac{v}{2}$, for a total coefficient of $-2(u+1)$ also as desired.

When $v$ is odd, we do the same thing in two different cases. In the first case, we assume that $0 \leq u \leq t-1$:

\begin{align*}
(v-2t)&(y_{2u+1}+y_{v-2u-2}-y_v \geq 0)\\
+\sum_{w=\frac{v-2u-1}{2}}^{\frac{v-1}{2}} \frac{v-2t}{(2w+1-2t)(2w-1-2t)}&(2y_{2t}-(2w+1-2t)y_{2w-1}+(2w-1-2t)y_{2w+1} \geq 0).
\end{align*}

Here it is easy to see that no variables involved are past $y_{4n+3}$, and this is thus a conic combination of inequalities of type (5.4) and (5.5) of $C$. In the second case, we assume that $t \leq u$, and have a conic combination of inequalities of type (5.4) and (5.5) again:

\begin{align*}
(v-2t)&(2y_{2u+1}-y_{4u+3} \geq 0)\\
+ \sum_{w=u+1}^{\frac{v-1}{2}} \frac{(v-2t)(2u-2t+1)}{(2w+1-2t)(2w-1-2t)}&(2y_{2t} - (2w+1-2t)y_{2w-1} + (2w-1-2t)y_{2w+1} \geq 0)\\
+\sum_{w=\frac{v+1}{2}}^{2u+1} \frac{(4u+3-2t)(v-2t)}{(2w+1-2t)(2w-1-2t)}&(2y_{2t}-(2w+1-2t)y_{2w-1} + (2w-1-2t)y_{2w+1} \geq 0).
\end{align*}

Here again it is easy to see that no variables involved are past $y_{4n+3}$.

Finally, note that this implies that $(v-2t)y_{2u+1}+2(u+1)y_{2t}-2(u+1)y_v \geq 0$ is also a valid inequality for $\projC$ for every $t\leq \lfloor \frac{v}{2} \rfloor$, $u\leq \lfloor\frac{v-1}{2}\rfloor$ and $v\leq 2n+1$ since such inequalities do not involve variables past $y_{2n+1}$. 

\end{proof}

%\ar{It seems as though we don't actually end up using this inequality anywhere anymore.
%\begin{lemma}\label{lem:forclaim7}
%Let $0\leq u<v \leq 2n+1$. Then $$y_{2u}-2(v-u)y_{2v-1} + (2(v-u)-1) y_{2v} \geq 0$$ is a valid inequality for $C$ and $\projC$, and equivalently, $$P_{2u}P_{2v}^{2v-1-2u} \geq P_{2v-1}^{2v-2u}$$ is a valid inequality in graph homomorphism numbers.
%\end{lemma}
%
%\begin{proof}
%This inequality holds as it can be written as the following conic combination of defining inequalities of $C$ of types (5.2) and (5.1):
%
%\begin{align*}
%\sum_{w=u}^{v-2} (w-u+1)(y_{2w}-2y_{2w+2}+x_{2y+4} &\geq 0)\\
%+(v-u)(y_{2v-2}-2y_{2v-1}+y_{2v}&\geq 0).
%\end{align*}
%
%The largest index of any variable involved in the conic combination is $y_{2v}$, and since $v \leq 2n+1$, no variable past $y_{4n+3}$ is used. 
%
%Moreover, note that this implies that $y_{2u}-2(v-u)y_{2v-1} + (2(v-u)-1) y_{2v} \geq 0$ is also a valid inequality for $\projC$ for any $0 \leq u<v \leq 2n+1$ since such inequalities do not involve variables past $y_{2n+1}$. 
%\end{proof}
 
 \begin{lemma}\label{lem:lowerboundd0leven}
 Let $0 \leq u<v \leq n$. Then $$(2v+1)y_{2u}-(2u+1)y_{2v} \geq 0$$ is a valid inequality on $C$ and $\projC$, and equivalently, $$P_{2u}^{2v+1} \geq P_{2v}^{2u+1}$$ is a valid inequality in graph homomorphism numbers.
 \end{lemma}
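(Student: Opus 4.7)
The plan is to reduce the target to the \emph{consecutive case} $v = u+1$, namely $(2u+3) y_{2u} - (2u+1) y_{2u+2} \geq 0$, by a telescoping conic combination, and then to establish the consecutive case directly from Lemma~\ref{lem:forlowerboundd0} together with inequality (5.1). The main obstacle will be the consecutive base case; once that is in hand the telescoping step is forced and routine.

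For the consecutive case (valid for $0 \leq u \leq n-1$, which is all that the telescoping will require), I will invoke Lemma~\ref{lem:forlowerboundd0} with its parameters specialized so that the lemma's $v$ equals $2u+2$, its $t$ equals our $u$, and its index $u$ also equals our $u$. The hypotheses $v \leq 2n+1$, $t \leq \lfloor v/2 \rfloor$, and $u \leq \lfloor (v-1)/2 \rfloor$ all reduce to $u \leq n-1$, which is precisely the range we need. Lemma~\ref{lem:forlowerboundd0} then yields the valid inequality $2 y_{2u+1} + 2(u+1)\bigl(y_{2u} - y_{2u+2}\bigr) \geq 0$ on $C$. Adding to this the log-convexity inequality $y_{2u} - 2 y_{2u+1} + y_{2u+2} \geq 0$ from (5.1) at index $u$, the $y_{2u+1}$ contributions cancel exactly and I obtain $(2u+3) y_{2u} - (2u+1) y_{2u+2} \geq 0$.

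For the general case $0 \leq u < v \leq n$, I will use the telescoping identity
\begin{equation*}
(2v+1) y_{2u} - (2u+1) y_{2v} \;=\; \sum_{i=u}^{v-1} \frac{(2u+1)(2v+1)}{(2i+1)(2i+3)} \bigl[(2i+3) y_{2i} - (2i+1) y_{2i+2}\bigr].
\end{equation*}
A direct check verifies this: the right-hand coefficient of $y_{2u}$ equals $(2u+3) \cdot \tfrac{(2u+1)(2v+1)}{(2u+1)(2u+3)} = 2v+1$, the coefficient of $y_{2v}$ equals $-(2v-1) \cdot \tfrac{(2u+1)(2v+1)}{(2v-1)(2v+1)} = -(2u+1)$, and for each intermediate index $j$ with $u < j < v$ the contributions from the summands $i = j-1$ and $i = j$ cancel, both equaling $\pm \tfrac{(2u+1)(2v+1)}{2j+1}$. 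All telescoping coefficients are strictly positive, and each bracketed expression is nonnegative on $C$ by the consecutive case (which applies because $i \leq v-1 \leq n-1$); hence $(2v+1) y_{2u} - (2u+1) y_{2v} \geq 0$ is valid on $C$. Since the target involves only $y_{2u}$ and $y_{2v}$ with indices at most $2n$, it descends to $\projC$, and the equivalent binomial inequality is $P_{2u}^{2v+1} \geq P_{2v}^{2u+1}$.
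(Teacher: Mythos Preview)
Your proof is correct and follows the same overall architecture as the paper: reduce to the consecutive case $(2u+3)y_{2u}-(2u+1)y_{2u+2}\geq 0$ and then telescope. The telescoping step is identical to the paper's (the paper writes it as a product of ratios $\frac{y_{2u}}{y_{2u+2}}\cdots\frac{y_{2v-2}}{y_{2v}}\geq \frac{2u+1}{2v+1}$, which unwinds to exactly your conic combination).

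The one genuine difference is in how the consecutive base case is obtained. The paper builds it directly as a conic combination of the defining inequalities (5.1), (5.2), (5.4), (5.6) of $C$, using variables up to $y_{4v-1}$. You instead invoke the already-established Lemma~\ref{lem:forlowerboundd0} with $(t,u,v)=(u,u,2u+2)$ to get $2y_{2u+1}+2(u+1)(y_{2u}-y_{2u+2})\geq 0$, and add one copy of (5.1). This is a legitimate and arguably cleaner route, since it recycles a previously proved lemma rather than assembling a fresh certificate from scratch; the trade-off is that it hides the dependence on the lifted variables inside the black box of Lemma~\ref{lem:forlowerboundd0}, whereas the paper's direct combination makes that dependence explicit.
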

 
 \begin{proof}
 We first show the case when $u=v-1$, i.e., we show that $(2v+1)y_{2v-2}-(2v-1)y_{2v} \geq 0$. This inequality holds because it can be written as the following conic combination of inequalities of types (5.4), (5.1), (5.2), and (5.6) for $C$:
 \begin{align*}
& 1\cdot (2y_{2v-1} - y_{4v-1} \geq 0)\\
+ & 1 \cdot (y_{2v-2}-2y_{2v-1}+y_{2v} \geq 0)\\
+&\sum_{w=v-1}^{2v-3} 2v(y_{2w}-2y_{2w+2}+y_{2w+4} \geq 0)\\
+& 1 \cdot (2vy_{4v-4} - 2v y_{4v-2} + 1 y_{4v-1} \geq 0).
 \end{align*}
 
 Observe that it is easy to see that no variables involved are past $y_{4n+3}$. Now note that this implies that for a general $u<v$, we have that
 
 \begin{align*}
 \frac{y_{2u}}{y_{2v}}&=\frac{y_{2u}}{y_{2u+2}}\cdot \frac{y_{2u+2}}{y_{2u+4}}\cdots \frac{y_{2v-4}}{y_{2v-2}}\cdot \frac{y_{2v-2}}{y_{2v}}\\
 &\geq \frac{2u+1}{2u+3}\cdot \frac{2u+3}{2u+5}\cdots \frac{2i-3}{2i-1}\cdot \frac{2i-1}{2i+1}\\
 & = \frac{2u+1}{2v+1}
 \end{align*}
 as desired.
 
 Finally, note that this implies that $(2v+1)y_{2u}-(2u+1)y_{2v} \geq 0$ is also a valid inequality for $\projC$ for every $0 \leq u<v \leq n$  since such inequalities do not involve variables past $y_{2n+1}$. 
 \end{proof}
 
 \begin{lemma}\label{lem:biggestgeneralizationES}
 Let $0\leq u<v$, $l\geq 0$, $v+l\leq n$. Then $$2(l+1) y_{2u} - (2v+2l+1-2u)y_{2v-1} + (2v-1-2u) y_{2(v+l)+1}\geq 0$$ is a valid inequality for $C$ and $\projC$, and equivalently, $$P_{2u}^{2(l+1)} P_{2(v+l)+1}^{2v-1-2u} \geq P_{2v-1}^{2v+2l+1-2u}$$ is a valid inequality in graph homomorphism numbers. 
 \end{lemma}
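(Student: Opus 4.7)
The plan is to split the proof into two cases according to whether $u\leq v-2$ or $u=v-1$, since only the former is directly reachable from the Erd\H{o}s--Simonovits-type inequality (5.5) of $C$ (which requires $u<v-1$). In both cases I will write the target as an explicit non-negative combination of inequalities already known to be valid on $C$, so that the resulting inequality is valid on $C$ and hence on $\projC$ (every variable appearing will have index at most $2v+2l+1\leq 2n+1$). The base case $l=0$ is already built in: for $u<v-1$ it is inequality (5.5) itself, and for $u=v-1$ it is covered by the edge case below.

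For the main case $u\le v-2$, I would apply inequality (5.5) of $C$ at the parameter pair $(u, v+i)$ for $i=0,1,\ldots,l$; call the resulting inequalities $E_0,\ldots,E_l$, so that
\[
E_i:\quad 2y_{2u}-(2v+2i+1-2u)\,y_{2v+2i-1}+(2v+2i-1-2u)\,y_{2v+2i+1}\ \geq\ 0.
\]
Each $E_i$ is valid since $u< v-1\leq v+i-1$ and $v+i\leq v+l\leq n<2n+1$. I then seek positive weights $c_0,\ldots,c_l$ so that the combination $\sum_{i=0}^l c_iE_i$ cancels the intermediate odd terms $y_{2v+1},y_{2v+3},\ldots,y_{2v+2l-1}$ and produces exactly the target coefficients on $y_{2u}$, $y_{2v-1}$, and $y_{2v+2l+1}$. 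Matching the cancellation at each intermediate index forces the two-term recursion $c_i=c_{i-1}\,(2v+2i-3-2u)/(2v+2i+1-2u)$, which telescopes to $c_i=c_0\cdot (2v-1-2u)(2v+1-2u)/\bigl((2v+2i-1-2u)(2v+2i+1-2u)\bigr)$, positive under $u<v$. Choosing $c_0=(2v+2l+1-2u)/(2v+1-2u)$ gives the correct coefficient $-(2v+2l+1-2u)$ on $y_{2v-1}$ and, through the same factorization, the coefficient $2v-1-2u$ on $y_{2v+2l+1}$. Finally, the partial-fraction identity
\[
\frac{1}{(2v+2i-1-2u)(2v+2i+1-2u)}=\tfrac{1}{2}\!\left(\frac{1}{2v+2i-1-2u}-\frac{1}{2v+2i+1-2u}\right)
\]
makes the sum $\sum_{i=0}^l c_i$ telescope to $l+1$, yielding coefficient $2(l+1)$ on $y_{2u}$, as required.

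For the edge case $u=v-1$, the desired inequality becomes $(2l+2)y_{2v-2}-(2l+3)y_{2v-1}+y_{2v+2l+1}\geq 0$, and I would derive it in one shot by invoking Lemma~\ref{lem:forupperboundd0} at $u'=v-1$ and $v'=2v+2l+1$: the hypothesis $u'\leq\lfloor(v'-1)/2\rfloor$ reduces to $l\geq -1$ and the bound $v'\leq 2n+1$ follows from $v+l\leq n$, so the lemma's conclusion coincides with the target inequality. The only genuine obstacle is locating the right weights $c_i$ in the main case --- once one spots that the cancellation forces the telescoping recursion above and that the remaining sum simplifies via partial fractions on consecutive odd integers, the rest is routine bookkeeping.
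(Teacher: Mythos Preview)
Your proposal is correct and takes essentially the same approach as the paper: both derive the target inequality as a conic combination of the Erd\H{o}s--Simonovits-type inequalities (5.5) applied at $(u,w)$ for $w=v,v+1,\ldots,v+l$, with weights $c_w=\frac{(2v-1-2u)(2v+2l+1-2u)}{(2w-1-2u)(2w+1-2u)}$; your recursion unwinds to exactly these coefficients, and your partial-fraction telescoping for $\sum c_i=l+1$ is the same computation the paper carries out. The one place you are actually more careful than the paper is the boundary case $u=v-1$: the defining inequality (5.5) of $C$ is only listed for $u<v-1$, so the paper's proof tacitly relies on the fact that the $w=v$ instance is still valid (it is the $u'=v-1$, $v'=2v+1$ case of Lemma~\ref{lem:forupperboundd0}), whereas you dispatch the whole $u=v-1$ case directly via Lemma~\ref{lem:forupperboundd0} at $v'=2v+2l+1$, which is cleaner.
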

 
 \begin{proof}
 This broader generalization of the Erd\H{o}s-Simonovits inequalities can be obtained as a conic combination from the first generalization of those same inequalities, i.e., inequalities of type (5.5) for $C$:
 
 \begin{align*}
 \sum_{w=v}^{v+l} \frac{(2v-1-2u)(2v+2l+1-2u)}{(2w+1-2u)(2w-1-2u)}\left(2y_{2u} - (2w+1-2u)y_{2w-1}+(2w-1-2u)y_{2w+1} \geq 0 \right).
 \end{align*}
 
Indeed, it's easy to check that no variable past $y_{4n+3}$ is used in this conic combination. To see that we get the desired inequality, note that the coefficient for $y_{2u}$ is
 
 \begin{align*}
 &2(2v-1-2u)(2v+2l+1-2u)\sum_{z=v-u}^{v+l-u} \frac{1}{(2z-1)(2z+1)} \\
 =& 2(2v-1-2u)(2v+2l+1-2u)\left( \sum_{z=1}^{v+l-u} \frac{1}{(2z-1)(2z+1)}- \sum_{z=1}^{v-u-1} \frac{1}{(2z-1)(2z+1)}\right)\\
 =& 2(2v-1-2u)(2v+2l+1-2u) \left( \frac{v+l-u}{2v+2l-2u+1} - \frac{v-u-1}{2v-2u-1} \right)\\
 = & 2(l+1). 
 \end{align*}
 
 The other coefficients are simple to check.  Finally, note that this implies that $2(l+1) y_{2u} - (2v+2l+1-2u)y_{2v-1} + (2v-1-2u) y_{2(v+l)+1}\geq 0$ is also a valid inequality for $\projC$ for every $0\leq u<v$, $l\geq 0$, and $v+l\leq n$ since such inequalities do not involve variables past $y_{2n+1}$. 
 \end{proof}
 
 %-----------------------------------------------------
 
 \subsection{Construction of some rays in $\troptn$}\label{subsec:construction} 

We use the blow-up construction introduced in Section \ref{subsec:blowup} to construct some rays of $\troptn$ using the function $p$ defined below.
Recall from Section \ref{subsec:blowup} that calculating the number of homomorphisms from some path $P_i$ to the blow-up comes down to finding which homomorphisms weighed with $p$ are maximal within the original graph.

\begin{definition}\label{def:blowupgraph}
Let $P$ be a path of length $2f+1$ where $f\leq n$ and with vertex set $V(P)=\{0, 1, 2, \ldots, 2f+1\}$, and edge set $E(P)=\{\{0,1\}, \{1, 2\}, \ldots, \{2f, 2f+1\}\}$. Let $b\geq s\geq 0$ and $\mathbf{d}=(d_0, d_1, \ldots, d_f,0,\ldots, 0)\in \mathbb{R}^{n+1}$ be such that 
\begin{enumerate}
\item $d_v \geq 0$ for all $v\geq 0$,
\item $d_0=b-s$,
\item $d_0 + \ldots + d_u \geq d_{v-u} + \ldots + d_v$ for all $0\leq u< v$,
\item $d_u=d_{f-u}$ for all $u<t$,
\item $d_u=0$ for all $t+1\leq u \leq f-t-1$,
\item $d_t\geq d_{f-t} \geq 0$,
\item $2\sum_{v=1}^f d_v \leq s$,
\item $s \geq d_0$ if $t=0$,
\end{enumerate}
where $t$ is the largest index such that $t\leq \frac{f}{2}$ and $d_t >0$. Define $p:=p_{b, s, \mathbf{d}}:V(P)\cup E(P) \rightarrow \mathbb{R}$ to be such that

 \begin{itemize}
\item  $p(\{0\})=b$,
\item $p(\{1\})=b-s=d_0$,
\item $p(\{2u+1\})=d_0+2\sum_{v=1}^u d_v$ for all $1\leq u \leq \lfloor \frac{f-1}{2} \rfloor$,
\item $p(\{2u\})=s-d_0-2\sum_{v=1}^{u-1} d_v$ for all $1 \leq u\leq \lfloor\frac{f}{2}\rfloor$,
\item $p(\{2f+1-u\})=p(u)$ for all $0\leq u \leq f$,
\item $p(\{2u, 2u+1\})=s+d_u$ for all $0\leq u \leq \lfloor \frac{f-1}{2}\rfloor$,
\item $p(\{2u-1, 2u\})=s$ for all $1\leq u \leq \lfloor \frac{f-1}{2} \rfloor $,
\item $p(\{f,f+1\})=s+d_{\frac{f}{2}}-d_t+d_{f-t}$ if $f$ is even,
\item $p(\{f,f+1\})=p(f)+p(f+1)-d_t+d_{f-t}$ if $f$ is odd,
\item $p(\{2f-u,2f+1-u\})=p(\{u,u+1\})$ for all $0 \leq u \leq f-1$.
\end{itemize}

Note that all weights are nonnegative, which is straightforward to check in most cases. The most ambiguous case is $p(\{2u\})$.  We know that $2\sum_{v=1}^f d_v \leq s$, and if $t>0$, then $d_0=d_f$, so $d_0 + 2\sum_{v=1}^{u-1} d_v \leq 2\sum_{v=1}^f d_v \leq s$, and thus $p(\{2u\})=s-d_0-2\sum_{v=1}^{u-1} d_v \geq 0$ for $1 \leq u\leq \lfloor\frac{f}{2}\rfloor$. If $t=0$, then $p(\{2u\}) \geq 0$ as well since $s \geq d_0$. 
\end{definition}

\begin{example}
Let $n \geq f=6$, $b=34$, $s=30$, $\mathbf{d}=(4,3,3,0,1,3,4, 0, \ldots, 0)\in \mathbb{R}^{n+1}$. Then the weights $p$ on the vertices and edges of $P_{13}$ are given in Figure \ref{fig:exampleconstruction1}.
\begin{figure}[ht]
\includegraphics[width=0.95\textwidth]{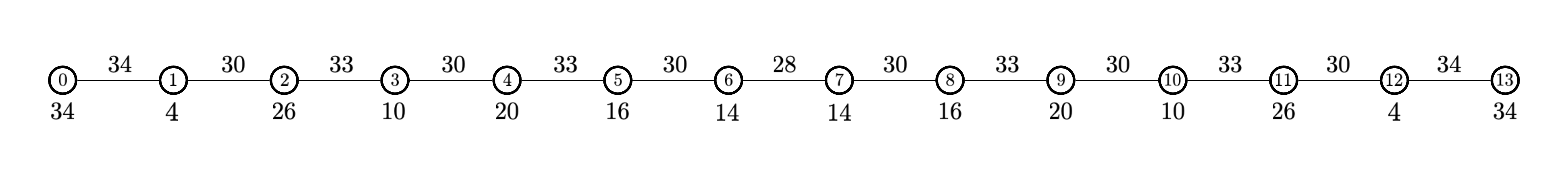}
\caption{\label{fig:exampleconstruction1} Weights $p$ when $f=6$, $b=34$, $s=30$, $\mathbf{d}=(4,3,3,0,1,3,4, 0, \ldots, 0)$.}
\end{figure}

We show below how this weighted path yields the ray in $\mathbb{R}^{2n+2}$ $$(34,34,64,67,94,100,124,130,154,161,184,194,214,228,244,258,274,288,\ldots, 30n, 30n+14).$$ Note that the ray settles into a $(30k, 30k+14)$-double-arithmetic progression starting at entries $2f$ and $2f+1$, i.e., at 214 and 228.
\end{example}

Since they are nonnegative, the weights $p$ on vertices and edges give us a way to create a blow-up graph $B_m$ for $P_{2f+1}$ for which the the number of homomorphisms from $P_i$ to the blow-up graph is $O(m^{b+\frac{i}{2} s})$ if $i$ is even and $O(m^{\lceil \frac{i}{2} \rceil s+\sum_{v=0}^{\lfloor\frac{i}{2} \rfloor} d_v})$ if $i$ is odd.

 \begin{theorem}\label{thm:construction}
As $m\rightarrow \infty$, the ray $(\log(\hom(P_0;B_m)), \log(\hom(P_1;B_m)), \ldots, \log(\hom (P_{2n+1};B_m)))$ converges to $\mathbf{r}=(r_0,\dots,r_{2n+1})$ where $r_{i}=b+\frac{i}{2}s$ if $i$ is even and $r_i=\frac{i+1}{2}s+\sum_{j=0}^{\frac{i-1}{2}} d_v$ if $i$ is odd for $0\leq i \leq 2n+1$. 
 \end{theorem}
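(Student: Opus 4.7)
The plan is to apply Corollary 4.5, which gives $\frac{\log \hom(P_i; B_m)}{\log m} \to p(\varphi_i^*)$ where $\varphi_i^* := \argmax\{p(\varphi) : \varphi \in \Hom(P_i; P_{2f+1})\}$, so the convergence claim reduces to showing $p(\varphi_i^*) = r_i$ for each $i \in \{0, 1, \ldots, 2n+1\}$. I will prove this equality by exhibiting explicit walks that realize $r_i$ and then establishing a matching upper bound.

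For the lower bound $p(\varphi_i^*) \geq r_i$, I exhibit explicit walks. For $i = 2k$ even, take the bouncing walk $\varphi \colon 0 \to 1 \to 0 \to \cdots \to 0$, which uses $\{0,1\}$ exactly $2k$ times and visits $1$ (resp.\ $0$) internally $k$ (resp.\ $k-1$) times, giving $p(\varphi) = 2k(s+d_0) - kd_0 - (k-1)b = b + ks = r_{2k}$. For $i = 2k+1$ odd with $k \leq f$, take the straight walk $0 \to 1 \to 2 \to \cdots \to 2k+1$; the internal vertex contribution
$$\sum_{u=0}^{k-1}\bigl(d_0 + 2{\textstyle\sum_{v=1}^u} d_v\bigr) + \sum_{u=1}^{k}\bigl(s - d_0 - 2{\textstyle\sum_{v=1}^{u-1}} d_v\bigr)$$
collapses to $ks$ after telescoping the double sums, so $p(\varphi) = (2k+1)s + \sum_{v=0}^k d_v - ks = r_{2k+1}$. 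For $k > f$, extend the straight walk by bouncing at the far endpoint $2f+1$; since $d_v = 0$ for $v > f$, each extra pair of bounce steps contributes exactly $s$ to $p(\varphi)$, matching $r_{2k+1} - r_{2f+1} = (k-f)s$.

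For the upper bound $p(\varphi) \leq r_i$ on every $\varphi$, the key identity is
$$ p(\varphi) \;=\; \sum_{e \in E(P_{2f+1})} n_e \, c_e \;+\; \tfrac12\bigl(p(\{\varphi(0)\}) + p(\{\varphi(i)\})\bigr), $$
where $n_e$ is the number of traversals of $e$ by $\varphi$ and $c_e := p(e) - \tfrac12(p(\{u\}) + p(\{v\}))$ for $e = \{u,v\}$. This identity follows from the handshake relation $\sum_{e \ni v} n_e = 2 m_v + t_v$, with $m_v$ the internal multiplicity and $t_v \in \{0,1,2\}$ the endpoint multiplicity of $v$. A direct check using the explicit weights of the construction shows $c_e = s/2$ for every non-middle edge, while conditions (6)-(7) on $\mathbf{d}$ (that is, $d_t \geq d_{f-t}$ and $2\sum_{v \geq 1} d_v \leq s$) give $c_{\{f,f+1\}} \leq s/2$. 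Consequently $p(\varphi) \leq is/2 + \tfrac12(p(\{\varphi(0)\}) + p(\{\varphi(i)\}))$, which reduces the upper bound to controlling endpoint weights.

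The main obstacle is the sharp endpoint bound, which depends delicately on the parity of $i$ and on which endpoint pairs are reachable in $i$ steps. For even $i$, both endpoints lie in the same bipartition class of $P_{2f+1}$ and $\max_v p(\{v\}) = b$ is attained at $0$ and $2f+1$, yielding $p(\varphi) \leq b + is/2 = r_{2k}$. For odd $i$, the endpoints lie in different classes, and the reachability constraint $\mathrm{dist}(\varphi(0), \varphi(i)) \leq i$ couples the two endpoint weights with the minimum number of middle-edge traversals; a case split on $i \leq 2f+1$ versus $i > 2f+1$, together with the explicit formula $p(\{2u+1\}) = d_0 + 2\sum_{v=1}^u d_v$ and the symmetry of $p$, converts this reachability constraint into the precise bound $r_{2k+1}$, with equality attained exactly by the straight (or straight-then-bounce) walk described above. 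Verifying that the balance conditions (5)-(8) on $\mathbf{d}$ make these endpoint and middle-edge inequalities simultaneously tight in the correct regime is the most technical step.
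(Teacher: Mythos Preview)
Your edge-cost decomposition
\[
p(\varphi)=\sum_{e} n_e\, c_e + \tfrac12\bigl(p(\{\varphi(0)\})+p(\{\varphi(i)\})\bigr),\qquad c_e:=p(e)-\tfrac12\bigl(p(\{u\})+p(\{v\})\bigr),
\]
is a genuinely cleaner device than the paper's case analysis on the subpath of odd-multiplicity edges, and the verification that $c_e=s/2$ for every non-middle edge while $c_{\{f,f+1\}}=\sum_{v=1}^{f}d_v\le s/2$ is correct and pleasant. For even $i$ your upper bound argument is complete and matches $r_i$.

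There is, however, a real error in your lower bound for odd $i$. You claim that for all $k\le f$ the straight walk $0\to 1\to\cdots\to 2k+1$ has $p(\varphi)=(2k{+}1)s+\sum_{v=0}^{k}d_v-ks$. This edge-sum formula assumes every traversed edge has weight $s$ or $s+d_u$, which fails as soon as the walk crosses the middle edge $\{f,f+1\}$: that edge contributes only $s+d_{\lfloor f/2\rfloor}-d_t+d_{f-t}$. Concretely, take $f=6$, $t=2$ and $k=3$: the straight walk gives $4s+d_0+d_1+d_4$, whereas $r_7=4s+d_0+d_1+d_2$, and these differ whenever $d_t>d_{f-t}$. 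The paper handles precisely this regime $t< k<f-t$ with a different witness: walk straight to vertex $2t+1$ and then bounce on $\{2t,2t+1\}$, which avoids the middle edge entirely and attains $r_{2k+1}$. Your construction needs this modification.

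For the odd-$i$ upper bound, your reduction is on the right track but is not yet a proof. After the $c_e$ identity one must show, for every admissible endpoint pair $(\varphi(0),\varphi(i))$,
\[
\tfrac12\bigl(p(\{\varphi(0)\})+p(\{\varphi(i)\})\bigr)\;\le\;\tfrac{s}{2}+\sum_{v=0}^{k}d_v+n_{\mathrm{mid}}\Bigl(\tfrac{s}{2}-\sum_{v=1}^{f}d_v\Bigr),
\]
with $n_{\mathrm{mid}}$ the (forced) number of middle-edge traversals. When both endpoints are on the same side of $\{f,f+1\}$ this reduces exactly to condition (3), $\sum_{w=0}^{u}d_w\ge\sum_{w=v-u}^{v}d_w$, via the distance bound, which is the crucial structural input you did not name. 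When the endpoints straddle the middle edge you need to combine the deficit $n_{\mathrm{mid}}(s/2-c_{\mathrm{mid}})$ with the symmetry of $p$ and condition (7). Both cases go through, but they have to be written out; your current sketch stops short of this.
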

 
 \begin{proof}
Let $P$ again denote the path of length $2f+1$ and fix some $i$ between $0$ and $2n+1$. Recall from the proof of Lemma \ref{lem:BmP} that we can think of $p(\varphi)$ for some homomorphism $\varphi: P_i \rightarrow P$ as \begin{align*}p(\{\varphi(0)\}) &+ (-p(\{\varphi(0)\})+p(\{\varphi(0),\varphi(1)\} )) + (-p(\{\varphi(1)\})+p(\{\varphi(1),\varphi(2)\})) \\
&+ \ldots + (-p(\{\varphi(i-1)\})+p(\{\varphi(i-1),\varphi(i)\})).\end{align*} We call $-p(\{\varphi(j)\})+p(\{\varphi(j), \varphi(j+1)\})$ the weight associated to the step going from $\varphi(j)$ to $\varphi(j+1)$. 

First observe that the weight of going from left to right and from right to left (not necessarily one right after the other) over any edge besides $\{f,f+1\}$ is $s$. Moreover, the weight of going from left to right and from right to left over $\{f,f+1\}$ is at most $s$. Indeed, if $f$ is even and $0<t<f-t$, we get $2d_0+4\sum_{v=1}^t d_v-2d_t+2d_{f-t}=2d_0 + 2\sum_{v=1}^t d_v + 2\sum_{v=f-t}^{f-1} d_v = 2\sum_{v=1}^f d_v \leq s$ by assumption. If $t=0$, we get $2d_f=2\sum_{v=1}^f d_f \leq s$ by assumption. If $f$ is even and $t=f-t$, we get $2d_0+4\sum_{v=1}^{\frac{f-2}{2}} d_v+d_{\frac{f}{2}}=2\sum_{v=1}^f d_v - d_{\frac{f}{2}} \leq s$ by assumption. Finally, if $f$ is odd, $2d_0+4\sum_{v=1}^{\frac{f-1}{2}}d_v - 2d_t+2d_{f-t}=2\sum_{v=1}^f d_v -2d_t+ 2d_{f-t} \leq s$ by assumption.

Now consider the edges of $P$ that are covered an odd number of times by $\varphi(P_i)$. Look at the subgraph formed by these edges, and observe that it must be connected and is thus a path, say $P_j$, with vertices $w, w+1, \ldots, w+j$ and edges $\{w, w+1\}, \ldots, \{w+j-1, w+j\}$. Note that $i$ and $j$ have the same parity since we are forgetting about edges that are covered an even number of times by $P_i$.  Without loss of generality because of symmetry, assume that $w\leq f$. Observe that $\varphi(0)\in \{w, w+j\}$, and we can also assume without loss of generality that $\varphi(0)=w$. Thus $$p(\varphi(P_i))\leq  \sum_{k=0}^{j-1} p(\{w+k,w+k+1\}) - \sum_{k=1}^{j-1} p(\{w+k\}) +\frac{i-j}{2}\cdot s.$$ 

When $j$ is odd, $P_j$ has an even number of vertices, and in particular, an even number of internal vertices. We partition the internal vertices in groups of two consecutive vertices with the edge in between. That is, we observe that

\begin{align*}
\sum_{k=0}^{j-1} p(\{w+k,w+k+1\}) &- \sum_{k=1}^{j-1} p(\{w+k\})\\
= & \sum_{k\in \{1,3,\ldots, j-2\}} (-p(\{w+k\})-p(\{w+k+1\})+p(\{w+k,w+k+1\}))\\
& + \sum_{k\in \{0, 2, \ldots, j-1\}} p(\{w+k, w+k+1\}).
\end{align*}

Let's first consider the case when $w$ is even. Then $-p(\{w+k\})-p(\{w+k+1\})+p(\{w+k,w+k+1\})=0$ for all $k\in \{1,3,\ldots, j-2\}$ except if $f$ is odd, where $-p(\{f\})-p(\{f+1\})+p(\{f,f+1\})$ contributes $-d_t+d_{f-t}$. Moreover, $p(\{w+k,w+k+1\})=s+d_{\frac{w+k}{2}}$ for all $k\in \{0, 2, \ldots, j-1\}$ except when $w+k=2(f-t)$, when the contribution is $s+d_t$, and when $f$ is even, when $-p(\{f\})-p(\{f+1\})+p(\{f,f+1\})$ contributes $s-d_t+d_{f-t}+d_\frac{f}{2}$. Note that $d_{\frac{f}{2}}=0$ if $t<f-t$. Therefore, $P_j$ contributes $\frac{j+1}{2}s + \sum_{v=\frac{w}{2}}^{\frac{w+j-1}{2}}d_v$ if $P_j$ ends either at $f$ or before or at $2(f-t)+1$ or after. Otherwise, $P_j$ contributes $\frac{j+1}{2}s + \sum_{v=\frac{w}{2}}^{\frac{w+j-1}{2}}d_v - d_t+d_{f-t} + d_{\frac{f}{2}}$. 

Let's now consider the case when $w$ is odd. Then $-p(\{w+k\})-p(\{w+k+1\})+p(\{w+k,w+k+1\})=-d_u$ for all $k\in \{1,3,\ldots, j-2\}$ except when $w+k=2(f-t)$, when the contribution is $-d_t$, and when $f$ is even and the contribution of $-p(\{f\})-p(\{f+1\})+p(\{f,f+1\})$ is $-s+2\sum_{v=1}^f d_v - d_{\frac{f}{2}} + d_t - d_{f-t}\leq d_t-d_{f-t}-d_{\frac{f}{2}}$. Furthermore, $p(\{w+k, w+k+1\})=s$ for all  $k \in \{0, 2, \ldots, j-1\}$ except when $f$ is odd and the contribution of $p(\{f, f+1\})$ is $2\sum_{v=1}^f d_v - d_{f-t}+d_t \leq s- d_{f-t}+d_t$. So it is clear that the contribution of $P_j$ is always bigger when $w$ is even.

Thus, when $i$ and $j$ are odd, the maximum $p(\varphi(P_i))$ can be is at most $\frac{i+1}{2}s + \sum_{v=0}^{\frac{i-1}{2}} d_v$ because of property (3) of $\mathbf{d}$ in Definition \ref{def:blowupgraph}. Note that this is tight. Indeed, if $i\leq 2t+1$, we can take $\varphi: P_i \rightarrow P$ such that $\varphi(l)=l$ for all $0\leq l \leq i$. If $2t+3 \leq i \leq 2(f-t)-1$, let $\varphi$ be such that $\varphi(l)=l$ for all $0\leq l \leq 2t+1$, $\varphi(l)=2t$ for all $2t+2\leq l \leq i$ even, and $\varphi(l)=2t+1$ for all $2t+1\leq l \leq i$ odd. If $2(f-t)+1 \leq l \leq 2f+1$, let $\varphi(l)=l$ for all $0 \leq l \leq i$. Finally, if $i\geq 2f+3$, let $\varphi$ be such that $\varphi(l)=l$ for all $0\leq l \leq 2f+1$, $\varphi(l)=2f$ for all $2f+2\leq l \leq i$ even, and $\varphi(l)=2f+1$ for all $2f+1\leq l \leq i$ odd. 

We now consider the case when $j$ and $i$ are even (where $P_j$ is still the subgraph of $P_i$ whose edges are covered an odd number of times). Here, instead of partitioning the internal vertices in groups of consecutive pairs, we partition the edges in a similar way. Consider the weight associated to going forward by two steps from $j$ to $j+1$ to $j+2$ that do not go over the edge $\{f, f+1\}$:

\begin{displaymath}
\begin{array}{|l|c|}
\hline
 & -p(\{j\})+p(\{j,j+1\})\\
& -p(\{j+1\})+p(\{j+1,j+2\})\\
\hline
j=2u \textup{ and } 2u, 2u+1, 2u+2\leq f & s-d_u\\
j=2u-1 \textup{ and } 2u-1, 2u, 2u+1\leq f & s+d_u\\
j=2u \textup{ and } 2u, 2u+1, 2u+2 \geq f & s-d_{f-u}\\
j=2u-1 \textup { and } 2u-1, 2u, 2u+1 \geq f & s+d_{f-u}\\
\hline
\end{array}
\end{displaymath}

Similarly, look at the weight associated to going forward by two steps from $j$ to $j+1$ to $j+2$ that does go over the edge $\{f, f+1\}$:

\begin{displaymath}
\begin{array}{|l|c|}
\hline
 & -p(\{j\})+p(\{j,j+1\})\\
& -p(\{j+1\})+p(\{j+1,j+2\})\\
\hline
j=f \textup{ and } f \textup{ is even}  & 2d_0+4\sum_{v=1}^{\frac{f-2}{2}} d_v +d_{\frac{f}{2}} -d_t +d_{f-t}\\
j=f-1 \textup{ and } f \textup{ is even} & s+d_{\frac{f}{2}}-d_t+d_{f-t}\\
j=f \textup{ and } f \textup{ is odd} & s+d_{\frac{f-1}{2}} - d_t+d_{f-t}\\
j=f-1 \textup{ and } f \textup{ is odd} & 2d_0+4\sum_{v=1}^{\frac{f-3}{2}} d_v + 3d_{\frac{f-1}{2}} - d_t + d_{f-t}\\
\hline
\end{array}
\end{displaymath}

Note that $2d_0+4\sum_{v=1}^{\frac{f-2}{2}} d_v +d_{\frac{f}{2}} -d_t +d_{f-t}=2\sum_{v=1}^f d_v - d_{\frac{f}{2}}+ d_t -d_{f-t}\leq s - d_{\frac{f}{2}}+d_t-d_{f-t}$. 

We will show that the homomorphism $\varphi^*$ such that $\varphi^*(k)=0$ if $k$ is even, and $\varphi^*(k)=1$ if $k$ is odd, is maximal. Note that $p(\varphi^*(P_i))=b+\frac{i}{2}s$. 

If $w$ is even, note that any two steps starting on an even vertex adds at most $s$ if none of these steps go over $\{f, f+1\}$. In such cases $p(\varphi(P_i))\leq \varphi(0) + \frac{j}{2}\cdot s +\frac{i-j}{2}\cdot s=p(\{\varphi(0)\}) + \frac{i}{2} \cdot s \leq b + \frac{i}{s}$ since $p(\{0\})\geq p(\{2u\})$ for all $u$. If we go over $\{f, f+1\}$ with $f$ even, the contribution of that double step, $2\sum_{v=1}^f d_v - d_{\frac{f}{2}}+ d_t -d_{f-t}$ can be more than $s$, so we need to be a bit more careful. Adding up, we get

\begin{align*}
s-d_0&-2\sum_{v=1}^{u-1} d_v + \frac{j-2}{2} s - \sum_{v=u}^{\frac{f-2}{2}}d_v - \sum_{v=\frac{f+2}{2}}^{\frac{2u+j-2}{2}} d_{f-v} + 2\sum_{v=1}^f d_v - d_{\frac{f}{2}} + d_t - d_{f-t} + \frac{i-j}{2}s\\
&\leq \frac{i}{2}s + 2\sum_{v=1}^f d_v + d_t -d_0\\
&\leq \frac{i}{2}s + b.
\end{align*}
where the second line follows from the fact that all $d_v$'s are nonnegative, and the third line because $d_t\leq d_0$, $2\sum_{v=1}^f d_v \leq s$, and $b=d_0+s$. Similarly, if we go over $\{f, f+1\}$ with $f$ odd, the contribution of that particular double step can be more than $s$, so we need to look at the total contribution a bit more carefully. Again, adding up, we get

\begin{align*}
\frac{j-2}{2}s &- \sum_{v=u}^{\frac{f-3}{2}} d_v - \sum_{v=\frac{f+1}{2}}^{\frac{2u+j-2}{2}} d_{f-v} + 2d_0 + 4\sum_{v=1}^{\frac{f-3}{2}} d_v + 3d_{\frac{f-1}{2}} - d_t + d_{f-t} + s-d_0 - 2\sum_{v=1}^{u-1} d_v + \frac{i-j}{2}s\\
&= \frac{i}{2}s + \underbrace{2\sum_{v=1}^f d_v}_{\leq s} \underbrace{-d_0 + d_t}_{\leq 0} \underbrace{-\sum_{v=1}^{\frac{f-3}{2}} d_v - \sum_{v=\frac{f+1}{2}}^{\frac{2u+j+2}{2}} d_v -\sum_{v=1}^{u-1} d_v - 2d_{\frac{f+1}{2}} - d_{f-t}}_{\leq 0} \underbrace{+ d_{\frac{f-1}{2}}}_{\leq d_0}\\
&\leq \frac{i}{2}s + s + d_0 \\
& \leq \frac{i}{2}s + b
\end{align*}

If $w$ is odd, then $p(\{\varphi(0)\})= d_0 + 2\sum_{v=1}^{\frac{w-1}{2}} d_v\leq \frac{s}{2}$. If $P_j$ does not contain $\{f, f+1\}$, $\sum_{k=0}^{j-1} p(\{w+k,w+k+1\}) - \sum_{k=1}^{j-1} p(\{w+k\})= \frac{j}{2}\cdot s + \sum_{v=\frac{w+1}{2}}^{\frac{w-1+2j}{2}} d_v$. So \begin{align*}
p(\varphi(P_i)) & \leq d_0 + 2\sum_{v=1}^{\frac{w-1+2j}{2}} d_v + \frac{j}{2} \cdot s+\frac{i-j}{2}\cdot s \\
& \leq d_0 + s + \frac{i}{2}\cdot s \leq b + \frac{i}{2}\cdot s.
\end{align*} 

If $P_j$ contains $\{f, f+1\}$, note that the two steps containing $\{f, f+1\}$ that start on an odd vertex all contribute at most $s$ except if $f$ is even and $t=\frac{f}{2}$ or if $f$ is odd and $t=\frac{f-1}{2}$. In both of those cases, we add $s+d_{\lfloor \frac{f}{2}\rfloor}$. Note that $d_{\lfloor \frac{f}{2}\rfloor}$ does not appear in the weight of any other double step. Therefore, we have that $p(\varphi(P_i)) \leq d_0 + 2\sum_{v=1}^{f} d_v + \frac{j}{2} \cdot s+\frac{i-j}{2}\cdot s \leq d_0+ s + \frac{i}{2}\cdot s \leq b + \frac{i}{2}s$.

So in both cases, we have that $p(\varphi(P_i)) \leq b + \frac{i}{2}s$ for any homomorphism $\varphi:P_i\rightarrow P$ when $i$ is even, and we've already shown that this maximum is attainable through $\varphi^*$.
 \end{proof}

\begin{definition}\label{def:raysproperties}
Consider the family of rays $\bigcup_{0\leq s \leq b} \mathcal{R}_{s,b}$ where $(r_0, r_1, \ldots, r_{2n+1})=:\mathbf{r}\in \mathcal{R}_{s,b}$ if
\begin{enumerate}
\item $r_{2i}=is+b$ for all $0\leq i \leq n$ where $b\geq s \geq 0$, and 
\item $r_{2i+1}=(i+1)s + \sum_{v=0}^i d_v$ for all $0 \leq i \leq n$ where $\mathbf{d}:=(d_0, d_1, \ldots, d_f, 0, \ldots, 0)\in \mathbb{R}^{n+1}$ is such that
\begin{enumerate}
\item $d_v \geq 0$ for all $0\leq v \leq n$,
\item $d_0=b-s$,
\item $d_0 + \ldots + d_u \geq d_{v-u} + \ldots + d_v$ for any $0\leq u< v \leq n$,
\item $d_u=d_{f-u}$ for all $0 \leq u<t$,
\item $d_u=0$ for all $t+1\leq u \leq f-t-1$,
\item $d_t\geq d_{f-t} \geq 0$,
\item $2\sum_{v=1}^f d_v \leq s$,
\end{enumerate}
where $t$ is the largest index such that $t\leq \frac{f}{2}$ and $d_t>0$. Note that here we do not require that $s \geq d_0$ if $t=0$ as before.
\end{enumerate}
\end{definition}

\begin{corollary}
The rays $ \mathcal{R}_{s,b}$ with $0\leq s \leq b$ are contained in $\troptn$, i.e.,
$$\bigcup_{0\leq s \leq b} \mathcal{R}_{s,b} \subseteq \troptn.$$ 
\end{corollary}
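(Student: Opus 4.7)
The plan is to reduce the corollary to Theorem~\ref{thm:construction} via a case analysis. The conditions defining $\mathcal{R}_{s,b}$ in Definition~\ref{def:raysproperties} match conditions (1)--(7) of Definition~\ref{def:blowupgraph}, and the only extra requirement of the latter is (8): $s \geq d_0$ when $t=0$. So the essential task is to handle rays with $t=0$ and $s < d_0$; all other rays are realized immediately as log-ratio limits of the blow-up sequence $B_m$ given by Theorem~\ref{thm:construction}, and hence belong to $\troptn = \mathrm{cl}(\mathrm{cone}(\log(\mathcal{N}_{\mathcal{U}_{2n+1}})))$. I will first observe that when $t \geq 1$, condition (4) of Definition~\ref{def:blowupgraph} with $u=0$ forces $d_0 = d_f$, and condition (7) then gives $d_0 \leq s/2 \leq s$; so (8) is automatic whenever $t \geq 1$, leaving only the case $t=0$ to require any extra work.

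For the remaining case ($t=0$ and $s < d_0$, equivalently $b > 2s$), the strategy is to decompose $\mathbf{r}$ as a sum of two rays already known to lie in $\troptn$. Specifically, I will introduce $\mathbf{r}^{(1)} \in \mathcal{R}_{s,2s}$ with weight vector $\mathbf{d}^{(1)}$ agreeing with $\mathbf{d}$ in all coordinates except $d_0^{(1)}:=s$, together with the constant ray $\mathbf{r}^{(2)} := (b-2s)\cdot\mathbf{1}$, where $\mathbf{1}=(1,1,\dots,1)$. A direct calculation, using the fact that $d_v=0$ for $1\leq v \leq f-1$ (condition (5) at $t=0$), will verify $\mathbf{r}^{(1)} + \mathbf{r}^{(2)} = \mathbf{r}$ coordinate by coordinate. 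The parameters of $\mathbf{r}^{(1)}$ now satisfy $s^{(1)} = d_0^{(1)}$, so condition (8) is met.

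Having defined $\mathbf{r}^{(1)}$, I need to check that its data satisfies the remaining conditions (1)--(7) of Definition~\ref{def:blowupgraph}, so that Theorem~\ref{thm:construction} delivers $\mathbf{r}^{(1)} \in \troptn$. Most conditions transfer immediately from the original $\mathbf{d}$; the only nontrivial verification is condition (3), where lowering $d_0$ from $b-s$ to $s$ shrinks the left-hand side. The key observation is that since $t=0$, the only possibly nonzero entry of $\mathbf{d}$ beyond $d_0$ is $d_f$, and $d_f \leq s/2$ by condition (7); a short case split on whether $f \in [v-u,v]$ will conclude. The ray $\mathbf{r}^{(2)}$ is a positive multiple of $\mathbf{1}$, and $\mathbf{1}\in \troptn$ since $\hom(P_i; m\cdot K_2) = 2m$ for every $i \geq 0$, so $\log/\log m \to 1$ in every coordinate. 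Because $\troptn$ is a closed convex cone by Corollary~\ref{tropismaxclosed2}, the sum $\mathbf{r} = \mathbf{r}^{(1)}+\mathbf{r}^{(2)}$ lies in $\troptn$.

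The main obstacle is conceptual rather than computational: the blow-up of Definition~\ref{def:blowupgraph} requires nonnegative weights on every vertex, and when $t=0$ with $s < d_0$ the weight $p(\{2u\}) = s - d_0$ is negative, so that construction breaks. The decomposition fixes this by peeling off the excess mass $d_0-s$ from each odd entry into an all-ones direction realized by an independent trivial graph sequence, and then recombining via tensor products (which corresponds to addition of log vectors). The only delicate technical step is re-checking the monotonicity inequality (3) after this reassignment of $d_0$.
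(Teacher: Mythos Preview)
Your proposal is correct and follows essentially the same approach as the paper: both arguments handle the problematic case $t=0$, $s<d_0$ by writing $\mathbf{r}$ as a ray satisfying condition~(8) plus a nonnegative multiple of $\mathbf{1}\in\troptn$. The only cosmetic difference is how much of $\mathbf{1}$ is peeled off---the paper subtracts $(d_0-d_f)\cdot\mathbf{1}$ so the new first entry becomes $d_f$, whereas you subtract $(b-2s)\cdot\mathbf{1}=(d_0-s)\cdot\mathbf{1}$ so it becomes $s$; both choices work, and your verification of condition~(3) for the modified vector is a step the paper leaves implicit.
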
 

\begin{proof}
If $t>0$ or $t=0$ and $s \geq d_0$, this follows directly from the blow-up construction in Theorem \ref{thm:construction}. The only case that needs more thought is if $t=0$ and $s< d_0$. So consider a ray $\mathbf{r}\in \mathcal{R}_{s,b}$ associated to $\mathbf{d}=(d_0, 0, \ldots, 0, d_f)$ where $2d_f \leq s < d_0$.

First note that $(1,1, \ldots, 1)\in \troptn$ since $\hom(P_i;P_1)=2$ for all $i \geq 0$. Let $\mathbf{r}'\in \mathcal{R}_{s,b-d_0+d_f}$ be associated with $\mathbf{d}'=(d_f, 0, \ldots, 0, d_f)$. Note that $b-d_0+d_f \geq s$ since $d_f \geq 0$, and $d'_0=d_f\leq s$, so we know that $\mathbf{r}'\in \troptn$. Now observe that $\mathbf{r'}+(d_0-d_f)\cdot (1,1,\ldots, 1) = \mathbf{r}$. Since $\mathbf{r}$ is a conic combination of rays in $\troptn$, $\mathbf{r}$ is also in $\troptn$.
\end{proof}

We now build a bigger family of rays $\mathcal{\bar{R}}_{s,b}$ such that $\bigcup_{0\leq s \leq b} \mathcal{R}_{s,b} \subset \bigcup_{0\leq s \leq b} \mathcal{\bar{R}}_{s,b} \subseteq \trop (\mathcal{N}_{\mathcal{U}_{2n+1}})$ where we remove the ``almost symmetric'' requirement on $\mathbf{d}$, i.e., where we remove the previous properties (d), (e) and (f) of Definition \ref{def:raysproperties}.

 \begin{definition}\label{def:1?}

Consider the family of rays $\bigcup_{0\leq s \leq b} \mathcal{\bar{R}}_{s,b}$ where $(r_0, r_1, \ldots, r_{2n+1} )=:\mathbf{r}\in \mathcal{\bar{R}}_{s,b}$ if

\begin{enumerate}
\item $r_{2i}=is+b$ for all $0\leq i\leq n$ where $b\geq s \geq 0$, and 
\item $r_{2i+1}=(i+1)s + \sum_{v=0}^i d_v$ for all $0 \leq i \leq n$ where $\mathbf{d}:=(d_0, d_1, \ldots, d_f, 0, \ldots, 0) \in \mathbb{R}^{n+1}$ is such that
\begin{enumerate}
\item $d_v \geq 0$ for all $0 \leq v \leq n$,
\item $d_0=b-s$,
\item $d_0 + \ldots + d_u \geq d_{v-u} + \ldots + d_v$ for any $0\leq u< v \leq n$,
\item $2\sum_{v=1}^f d_v \leq s$,
\end{enumerate}
\end{enumerate}
\end{definition}

We show that these new rays are in the max closure of the rays in $\mathcal{R}_{s,b}$. We first give an example to give some insight of why this is true.

\begin{example}
Suppose $2n+1=13$ and consider the ray $\mathbf{r} \in \mathcal{\bar{R}}_{30,34}\backslash \mathcal{R}_{30,34}$ associated to $\mathbf{d}=(4,3,2,1,2,4,0)$, i.e., $\mathbf{r}=(34,34,64,67,94,99, 124, 130, 154, 162, 184, 196, 214, 226)$. Now look at the rays $\mathbf{r}_0, \ldots, \mathbf{r}_5\in \mathcal{R}_{30,34}$ associated to $\mathbf{d}_0=(4,0,0,0,0,0,0)$, $\mathbf{d}_1=(4,3,0,0,0,0,0)$, $\mathbf{d}_2=(4,1,4,0,0,0,0)$, $\mathbf{d}_3=(4, 1, 1, 4,0,0,0)$, $\mathbf{d}_4=(4, 3,0,1,4,0,0)$, $\mathbf{d}_5=(4,3,2,0,3,4,0)$, namely \begin{align*}
&(34, \underline{34}, 64, 64, 94, 94, 124, 124, 154, 154, 184, 184, 214, 214)\\
&(34, 34, 64, \underline{67}, 94, 97, 124, 127, 154, 157, 184, 187, 214, 217)\\
&(34, 34, 64, 65, 94, \underline{99}, 124, 129, 154, 159, 184, 189, 214, 219)\\
&(34, 34, 64, 65, 94, 96, 124, \underline{130}, 154, 160, 184, 190, 214, 220)\\
&(34, 34, 64, 67, 94, 97, 124, 128, 154, \underline{162}, 184, 192, 214, 222)\\
&(34, 34, 64, 67, 94, 99, 124, 129, 154, 162, 184, \underline{196}, 214, \underline{226}). \end{align*} Note that the sum of the first $j<i$ entries in $\mathbf{d}_i$ is at most the sum of the first $j$ entries in $\mathbf{d}$, and that the sum of all entries in $\mathbf{d}_i$ is equal to the sum of the first $i$ entries of $\mathbf{d}$. Thus $r_{2i+1}$ is equal to the $(2i+1)$st component of $\mathbf{r}_i$, and it is greater or equal to the $i$th component of $\mathbf{r}_j$ for any $j$. Moreover, the even entries of $\mathbf{r}$ and $\mathbf{r}_i$ are the same for all $i$. Thus $\mathbf{r}$ is in the max closure of the $\mathbf{r}_i$'s.
\end{example}

More generally, given a ray $\mathbf{r}\in \mathcal{\bar{R}}_{s,b}$ associated to some vector $\mathbf{d}$, we first show that turning the last non-zero coordinate of $\mathbf{d}$ into zero yields a ray that is also in $\mathcal{\bar{R}}_{s,b}$. That will allow us to proceed recursively. We then only need to show that there exists a ray $\mathbf{r}'\in \mathcal{R}_{s,b}$ where $r'_i \leq r_i$ for all $0\leq i\leq 2f-1$, and $r'_i=r_i$ for all $2f\leq i \leq 2n+1$.

 \begin{lemma}\label{lem:subsequence}
Consider $\mathbf{r}\in \mathcal{\bar{R}}_{s,b}$ associated with some sequence $\mathbf{d}=(d_0, d_1, \ldots, d_f, 0, \ldots, 0)\in \mathbb{R}^{n+1}$. Then there is a ray $\mathbf{r}'\in \mathcal{\bar{R}}_{s,b}$ associated to $\mathbf{d}'$ where $d'_v = d_v$ for all $v\in \{0, 1, 2, \ldots, n\} \backslash \{f\}$ and $d'_f=0$. In particular, $r'_v=r_v$ for all $v\leq 2f$, and $r'_v \leq r_v$ for all $2f+1 \leq v \leq 2n+1$.
 \end{lemma}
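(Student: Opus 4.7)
The plan is to take $\mathbf{d}' = (d_0, d_1, \ldots, d_{f-1}, 0, 0, \ldots, 0) \in \mathbb{R}^{n+1}$, that is, $d'_v = d_v$ for $v \neq f$ and $d'_f = 0$, and verify that $\mathbf{d}'$ satisfies all four defining conditions (a)–(d) of Definition~\ref{def:1?}, so that the associated ray $\mathbf{r}'$ lies in $\bar{\mathcal{R}}_{s,b}$. We may assume $f \geq 1$, since otherwise $\mathbf{d}$ already lies in $\mathcal{R}_{s,b}$ with $t=0$ (as in Definition~\ref{def:raysproperties}) and there is nothing to prove. Conditions (a) and (b) are immediate: nonnegativity is preserved, and $d'_0 = d_0 = b - s$ since $f \geq 1$. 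Condition (d) follows from $2\sum_{v=1}^{f-1} d'_v = 2\sum_{v=1}^{f-1} d_v \leq 2\sum_{v=1}^f d_v \leq s$.

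The substantive step is condition (c). Write $S_u := \sum_{v=0}^u d_v$ and $T_{u,v} := \sum_{w=v-u}^v d_w$ for $0 \leq u < v \leq n$, and let $S'_u$, $T'_{u,v}$ denote the same quantities for $\mathbf{d}'$. The only differences come from whether the index $f$ lies in $\{0,\dots,u\}$ and in $\{v-u,\dots,v\}$: namely, $S'_u = S_u$ if $u < f$ and $S'_u = S_u - d_f$ if $u \geq f$, while $T'_{u,v} = T_{u,v}$ if $f \notin [v-u, v]$ and $T'_{u,v} = T_{u,v} - d_f$ if $v-u \leq f \leq v$. A short case analysis handles all combinations. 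If $u < f$, then $S'_u = S_u \geq T_{u,v} \geq T'_{u,v}$, using property (c) for $\mathbf{d}$. If $u \geq f$, then necessarily $v > u \geq f$, so the subcase $f > v$ is excluded; if $v - u > f$, then every index in $T_{u,v}$ exceeds $f$, forcing $T'_{u,v} = T_{u,v} = 0 \leq S'_u$; if $v - u \leq f \leq v$, both $S'_u$ and $T'_{u,v}$ drop by the same amount $d_f$, so $S'_u \geq T'_{u,v}$ reduces to $S_u \geq T_{u,v}$, which is property (c) for $\mathbf{d}$. This establishes $\mathbf{r}' \in \bar{\mathcal{R}}_{s,b}$.

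The coordinate comparison is a direct substitution into the formulas defining the ray. Even coordinates $r_{2i} = is + b$ depend only on $s$ and $b$, so $r'_{2i} = r_{2i}$ for every $i$. For odd coordinates $r_{2i+1} = (i+1)s + \sum_{v=0}^i d_v$: when $i \leq f-1$ the partial sum $\sum_{v=0}^i d_v$ is unaffected by zeroing out $d_f$, so $r'_{2i+1} = r_{2i+1}$; when $i \geq f$ we have $r'_{2i+1} = r_{2i+1} - d_f \leq r_{2i+1}$. Hence $r'_v = r_v$ for all $v \leq 2f$ and $r'_v \leq r_v$ for $2f+1 \leq v \leq 2n+1$, as claimed.

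The only obstacle is the careful bookkeeping for property (c); no deeper argument is needed, since each case either reduces immediately to property (c) for $\mathbf{d}$ or collapses to the trivial inequality $S'_u \geq 0$.
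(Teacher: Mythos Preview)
Your proof is correct and follows essentially the same approach as the paper. The only cosmetic difference is in the case split for property~(c): the paper first reduces without loss of generality to $u < v-u$ and then splits according to whether $d_f$ appears in the left sum, the right sum, or neither, whereas you split directly on $u < f$ versus $u \geq f$; the two organizations are equivalent and of comparable length.
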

 
 \begin{proof}
 Certainly, properties (a), (b), and (d) in Definition \ref{def:1?} still hold for $\mathbf{d}'$. We only need to show that $\sum_{w=0}^u d'_w \geq \sum_{w=v-u}^{v} d'_w$ for all $0 \leq u <v \leq n$. Without loss of generality, we can assume that $u<v-u$. Certainly, we know that $\sum_{w=0}^u d_w \geq \sum_{w=v-u}^{v} d_w$ since $\mathbf{r}\in \mathcal{\bar{R}}_{s,b}$. If $d_f$ is not involved at all in either sum, then those two expressions remain unchanged, and we are done. If it is involved in the righthand side, then $$\sum_{w=0}^u d'_w=\sum_{w=0}^u d_w \geq \sum_{w=v-u}^{v} d_w \geq \sum_{w=v-u}^{v} d_w - d_f = \sum_{w=v-u}^{v} d'_w.$$
 
Finally, if $d_f$ is involved in the lefthand side, then the righthand side is zero since $u<v-u$ and $d_f$ was the last non-zero entry of $\mathbf{d}$, and since $\sum_{w=0}^u d'_w\geq 0$, we are done. 
 \end{proof}

  \begin{lemma}\label{lem:domination}
For any ray $\mathbf{r}\in \mathcal{\bar{R}}_{s,b}$ associated to some sequence $\mathbf{d}=(d_0, d_1, \ldots, d_f, 0, \ldots, 0) \in \mathbb{R}^{n+1}$, there exists a ray $\mathbf{r}'\in \mathcal{R}_{s,b}$ such that $r_i \geq r'_i$ for all $0 \leq i \leq 2f-1$ and $r_i = r'_i$ for all $2f \leq i \leq 2n+1$.
 \end{lemma}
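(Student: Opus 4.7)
The strategy is to construct $\mathbf{d}'$ explicitly by redistributing the mass of $\mathbf{d}$ into a palindromic shape, then verify that $\mathbf{d}'$ satisfies all the defining properties of $\mathcal{R}_{s,b}$ and dominates $\mathbf{d}$ on the relevant prefix sums. Writing $S_u := \sum_{v=0}^u d_v$ (with convention $S_{-1} := 0$) and $M := \sum_{v=1}^f d_v$, the desired conclusion $r_i \geq r'_i$ for $0 \leq i \leq 2f-1$ and $r_i = r'_i$ for $2f \leq i \leq 2n+1$ translates to $S'_k \leq S_k$ for $0 \leq k \leq f-1$ and $S'_k = S_f$ for $k \geq f$ (with $\mathbf{d}'$ supported in $\{0,\ldots,f\}$). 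Property (c) of $\bar{\mathcal{R}}_{s,b}$ is equivalent to $S_i + S_j \geq S_{i+j+1}$ for all $i, j \geq 0$ with $i+j+1 \leq f$; applying this with $i = \lfloor f/2 \rfloor - (f\bmod 2\oplus 1)$ and $j = \lceil f/2\rceil - 1$ forces $S_{\lfloor f/2\rfloor} \geq S_f/2$. I will therefore let $t$ be the smallest index in $\{0, 1, \ldots, \lfloor f/2 \rfloor\}$ with $S_t \geq S_f/2$.

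Setting $R := S_f - 2 S_{t-1}$, I would construct $\mathbf{d}'$ by
\begin{align*}
d'_v &= d_v \quad \text{for } 0 \leq v \leq t-1, \\
d'_t &= \min(d_t, R), \\
d'_v &= 0 \quad \text{for } t+1 \leq v \leq f-t-1, \\
d'_{f-t} &= R - d'_t, \\
d'_{f-v} &= d_v \quad \text{for } 0 \leq v \leq t-1,
\end{align*}
with the understanding that when $t = f/2$ the two middle entries coincide and equal $R$ (which is $\leq d_t$ by property (c) applied with $i = t-1$, $j = t$). The palindromic conditions of $\mathcal{R}_{s,b}$ are built into the construction, with the decisive inequality $d'_t \geq d'_{f-t}$ following from $2 d_t \geq R$, which is equivalent to $S_t \geq S_f/2$. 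The conditions $d'_v \geq 0$, $d'_0 = d_0$, and $2\sum d'_v \leq s$ are immediate (the last because the total mass is preserved). For the prefix sum comparison I would split into the three ranges and use the explicit formulas: $S'_u = S_u$ on $[0, t-1]$; $S'_u = S_{t-1} + d'_t$ is constant on $[t, f-t-1]$ and is bounded above by $\min(S_t,\, S_f - S_{t-1})$, each of which is at most $S_u$ in this range; and $S'_u = S_f - S_{f-u-1}$ on $[f-t, f]$, which is at most $S_u$ directly by property (c) of $\mathbf{d}$ applied with $i = u$, $j = f-u-1$. Equality $S'_f = S_f$ holds by mass preservation.

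The main obstacle is verifying that $\mathbf{d}'$ still satisfies property (c), namely $S'_a + S'_b \geq S'_{a+b+1}$ for all $a, b \geq 0$ with $a+b+1 \leq f$. I would organize this by which of the three intervals $[0, t-1]$, $[t, f-t-1]$, $[f-t, f]$ each of $a, b, a+b+1$ occupies. Most combinations reduce to one or two applications of property (c) of $\mathbf{d}$ chained using the piecewise formula for $S'$. The delicate case is $a \leq t-1$, $b \in [t, f-t-1]$, $a+b+1 \geq f-t$. Setting $c := f - a - b - 2 \in [0, t-1]$, the constraint $b \leq f-t-1$ forces $a + c = f - b - 2 \geq t - 1$, so property (c) of $\mathbf{d}$ yields $S_a + S_c \geq S_{a+c+1} \geq S_t$. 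Combined with the defining inequality $S_t \geq S_f/2$, this gives $S_a + S_t + S_c \geq 2 S_t \geq S_f$ when $d'_t = d_t$ (the case $d_t \leq R$, where $S_{t-1} + d'_t = S_t$) and $S_a + S_c \geq S_t \geq S_{t-1}$ when $d'_t = R$ (the case $d_t > R$, where $S_{t-1} + d'_t = S_f - S_{t-1}$). These are exactly the inequalities needed to conclude $S'_a + S'_b \geq S'_{a+b+1}$ in both subcases, completing the verification.
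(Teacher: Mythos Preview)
Your construction is essentially the same as the paper's: pick $t$ to be the smallest index with $S_t \geq S_f/2$, keep $d'_v = d_v$ for $v \leq t-1$, zero out the middle, and mirror on the right so that the total mass is preserved. The one refinement is that you set $d'_t = \min(d_t, R)$ with $R = S_f - 2S_{t-1}$, whereas the paper simply takes $d'_t = d_t$ and $d'_{f-t} = R - d_t$; your choice guarantees $d'_{f-t} \geq 0$ in the borderline situation $S_{t-1} + S_t > S_f$, which the paper does not explicitly address. Your zone-by-zone verification of property~(c) is organized differently from the paper's argument via $\sum_{w=0}^u d_w \geq \sum_{w=0}^u d'_w$, but the underlying computations are the same; the phrase ``each of which is at most $S_u$'' is slightly imprecise (only $S_t \leq S_u$ is needed and true on the middle range), and ``most combinations reduce'' should be expanded, but the delicate case you single out is indeed the only one requiring more than a direct application of (c) for $\mathbf{d}$.
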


\begin{proof}
Let $T:=\sum_{w=0}^f d_w$ and let $t$ be the smallest index such that $\sum_{w=0}^t d_w \geq \frac{T}{2}$. Note that $t\leq \frac{f}{2}$ since we know that $\sum_{w=0}^{\lfloor\frac{f}{2}\rfloor} d_w \geq \sum_{w=\lceil \frac{f}{2} \rfloor}^{f} d_w$. 

Let $\mathbf{d'}\in \mathbb{R}^{n+1}$ be such that $d'_w=d_w$ for $0 \leq w\leq t$, $d'_w=d_{f-w}$ for $f-t+1 \leq w \leq f$, $d'_{f-t}=T-2\sum_{w=0}^{t-1} d_w - d_t$ (which is at most $d_t$ by the definition of $t$), and $d'_w=0$ otherwise. Note that properties (a), (b), (d), (e), (f), (g) from Definition \ref{def:raysproperties} hold for $\mathbf{d}'$.

Clearly, $\sum_{w=0}^f d_w=\sum_{w=0}^f d'_w=T$. We now show that $\sum_{w=0}^u d_w \geq \sum_{w=0}^u d'_w$. If $u\leq f-t-1$, then this follows from the the definition of $\mathbf{d}'$. Let's now consider the case when $u\geq f-t$. Certainly we know that $\sum_{w=0}^{f-u-1} d_w \geq \sum_{w=u+1}^f d_w$. Moreover, we have that $\sum_{w=u+1}^f d'_w =\sum_{w=0}^{f-u-1} d_w$, so we have

\begin{align*}
\sum_{w=u+1}^f d'_w &\geq \sum_{w=u+1}^f d_w\\
T-\sum_{w=u+1}^f d_w &\geq T-\sum_{w=u+1}^f d'_w\\
\sum_{w=0}^f d_w-\sum_{w=u+1}^f d_w&\geq \sum_{w=0}^f d'_w-\sum_{w=u+1}^f d'_w\\
\sum_{w=0}^u d_w &\geq \sum_{w=0}^u d'_w
\end{align*} as desired.

We now show that property (c) from Definition \ref{def:raysproperties} also holds for $\mathbf{d}'$, that is, that $\sum_{w=0}^u d'_w \geq \sum_{w=v-u}^v d'_w$ for all $0 \leq u < v \leq 2n+1$. Without loss of generality, assume that $u< v-u$. Moreover, observe that if we do not have that $u-v \leq t < f-t \leq v$, then the righthand side will correspond to adding up a subsequence of vector $\mathbf{d}$, and so the inequality must hold since the number of non-zero $d_w$'s in the lefthand side is greater or equal to the number of non-zero $d_w$'s in the righthand side.

So assume that $u-v \leq t < f-t \leq v$. This implies that $\sum_{w=0}^u d'_w=\sum_{w=0}^u d_w$ since $u<t$. Moreover,
\begin{align*}
\sum_{w=v-u}^v d'_w&=\sum_{w=0}^v d'_w - \sum_{w=0}^{v-u-1} d'_w\\
& \leq \sum_{w=0}^v d_w - \sum_{w=0}^{v-u-1} d_w\\
&= \sum_{w=v-u}^{v} d_w\\
&\leq \sum_{w=0}^{u} d_w\\
& = \sum_{w=0}^u d'_w
\end{align*}
where the second line follows from the fact that we have already shown that $\sum_{w=0}^v d'_w \leq \sum_{w=0}^v d_w$ and $\sum_{w=0}^{v-u-1} d_w=\sum_{w=0}^{v-u-1} d'_w$ since $v-u-1<t$.

So the ray $\mathbf{r}'$ associated to $\mathbf{d}'$ is in $\mathcal{R}_{s,b}$, and $r_i \geq r'_i$ for all $0 \leq i \leq 2f-1$ and $r_i = r'_i$ for all $2f \leq i \leq 2n+1$.
\end{proof}

 \begin{theorem}
The rays $\mathcal{\bar{R}}_{s,b}$ with $0\leq s \leq b$ are contained in $\troptn$:  $$\bigcup_{0\leq s \leq b} \mathcal{\bar{R}}_{s,b} \subseteq \troptn.$$
  \end{theorem}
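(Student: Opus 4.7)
The plan is to express every ray in $\mathcal{\bar{R}}_{s,b}$ as a finite tropical sum of rays in $\mathcal{R}_{s,b}$. Since $\mathcal{R}_{s,b} \subseteq \troptn$ and $\troptn$ is a closed, max-closed convex cone by Corollary~\ref{tropismaxclosed2}, such a decomposition will immediately place $\mathbf{r}$ in $\troptn$ and complete the proof.

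Fix $\mathbf{r} \in \mathcal{\bar{R}}_{s,b}$ associated with $\mathbf{d} = (d_0, \ldots, d_f, 0, \ldots, 0)$ where $f$ is the last nonzero index (if $\mathbf{d}$ vanishes identically, which forces $b=s$, then $\mathbf{r} \in \mathcal{R}_{s,b}$ already). For each $i \in \{0, 1, \ldots, f\}$, I would introduce the truncated sequence $\mathbf{d}^{[i]} := (d_0, \ldots, d_i, 0, \ldots, 0)$ and let $\mathbf{r}^{[i]}$ denote its associated ray. Iterating Lemma~\ref{lem:subsequence} (zeroing the current last nonzero entry one step at a time) shows that $\mathbf{r}^{[i]} \in \mathcal{\bar{R}}_{s,b}$ and gives the pointwise bound $\mathbf{r}^{[i]}_v \leq \mathbf{r}_v$; a direct check against the defining formulas of $\mathcal{\bar{R}}_{s,b}$ further yields the equality $\mathbf{r}^{[i]}_v = \mathbf{r}_v$ for all $v \leq 2i+1$.

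Next I would invoke Lemma~\ref{lem:domination} on $\mathbf{r}^{[i]}$ to extract a surrogate $\mathbf{q}^{[i]} \in \mathcal{R}_{s,b}$ with $\mathbf{q}^{[i]}_v = \mathbf{r}^{[i]}_v$ for $v \geq 2k$, where $k \leq i$ is the last nonzero index of $\mathbf{d}^{[i]}$. Three consequences follow: every even coordinate of $\mathbf{q}^{[i]}$ automatically equals $js + b = \mathbf{r}_{2j}$, since membership in $\mathcal{R}_{s,b}$ forces this; the odd coordinate $\mathbf{q}^{[i]}_{2i+1}$ equals $\mathbf{r}_{2i+1}$ because $2i+1 \geq 2k$; and $\mathbf{q}^{[i]}_v \leq \mathbf{r}_v$ at every coordinate, by chaining the bound from Lemma~\ref{lem:domination} with the bound $\mathbf{r}^{[i]}_v \leq \mathbf{r}_v$.

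It then remains to verify $\mathbf{r} = \mathbf{q}^{[0]} \oplus \cdots \oplus \mathbf{q}^{[f]}$. Even coordinates match term by term; for an odd coordinate $2j+1$, the value $\mathbf{r}_{2j+1}$ is realized by $\mathbf{q}^{[j]}$ when $j \leq f$, and by $\mathbf{q}^{[f]}$ when $j > f$ (since $\mathbf{d}^{[f]} = \mathbf{d}$ and the equality range of Lemma~\ref{lem:domination} contains all $v \geq 2f$). Because no $\mathbf{q}^{[i]}$ ever exceeds $\mathbf{r}$, the coordinatewise maximum equals $\mathbf{r}_{2j+1}$ exactly. The hard part of this theorem---engineering, from a generic $\mathbf{d}$, an almost-symmetric sequence that reproduces enough of its partial sums---was already resolved in Lemma~\ref{lem:domination}; what remains is the combinatorial observation that the full ray can be recovered as a max over the family of such surrogates indexed by truncation length, and no new technical ingredient is needed.
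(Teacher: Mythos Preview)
Your proposal is correct and follows essentially the same approach as the paper's proof: both decompose $\mathbf{r}$ as a tropical sum of rays in $\mathcal{R}_{s,b}$ obtained by iteratively truncating $\mathbf{d}$ via Lemma~\ref{lem:subsequence} and then symmetrizing each truncation via Lemma~\ref{lem:domination}. The paper phrases this as a recursion descending from $f$, while you unfold it into an explicit family $\{\mathbf{q}^{[i]}\}_{i=0}^{f}$, but the content is identical.
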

 
 \begin{proof}
Consider any ray $\mathbf{r}\in \mathcal{\bar{R}}_{s,b}$. From Lemma \ref{lem:domination}, there exists a ray $\mathbf{r}'\in \mathcal{R}_{s,b}$ such that $r_v \geq r'_v$ for all $0\leq v \leq 2f-1$ and $r_v=r'_v$ for all $2f \leq v \leq 2n+1$. From Lemma \ref{lem:subsequence}, there is a ray $\mathbf{r}^*\in \mathcal{\bar{R}}_{s,b}$ with sequence $\mathbf{d}^*=(d_0, d_1, \ldots, d_{f-1}, 0, 0, 0, \ldots)$ such that $r^*_v=r_v$ for all $0\leq v\leq 2f$ and $r^*_v\leq r_v$ for all $2f+1 \leq v \leq 2n+1$. Now again from Lemma \ref{lem:domination}, we can find a ray $\mathbf{r}''\in \mathcal{R}_{s,b}$ such that $r^*_v \geq r''_v$ for all $0 \leq v \leq 2f-3$ and $r^*_v=r''_v$ for all $2f-2\leq v \leq 2n+1$. Note that this implies that $r''_v=r_v$ for $v\in \{2f-2, 2f-1\}$. Keeping doing this recursively, we obtain a set of vectors in $\mathcal{R}_{s,b}$ whose max closure is exactly $\mathbf{r}$. Since we know $\troptn$ is closed under max closure, and that $\bigcup_{0\leq s \leq b} \mathcal{R}_{s,b} \subseteq \troptn$, we also have that $\bigcup_{0\leq s \leq b} \mathcal{\bar{R}}_{s,b} \subseteq \troptn$.
  \end{proof}

 We now build an even bigger family of rays in $\troptn$ by relaxing the criterion that $b-s=d_0$.

 \begin{definition}\label{def:?}

Consider the family of rays $\bigcup \mathcal{R}^*_{s,b}$ where the union is over pairs $s, b\geq 0$ such that $b-s\leq d_0 \leq \frac{2b-s}{2}$ where $(r_0, r_1, \ldots, r_{2n+1} )=:\mathbf{r}\in \mathcal{R}^*_{s,b}$ if

\begin{enumerate}
\item $r_{2i}=is+b$ for all $0 \leq i \leq n$ for some $b,s\geq 0$, and
\item $r_{2i+1}=(i+1)s + \sum_{v=0}^i d_v$ for all $0 \leq i \leq n$ where $\mathbf{d}:=(d_0, d_1, \ldots, d_f, 0, \ldots, 0)\in \mathbb{R}^{n+1}$ is such that
\begin{enumerate}
\item $d_v\geq 0$ for all $v\geq 0$
\item $d_0 + \ldots + d_u \geq d_{v-u} + \ldots + d_v$ for any $0\leq u\leq v \leq n$
\item $2\sum_{v=1}^f d_v\leq s$
\end{enumerate}
\end{enumerate}
\end{definition}

 Let $$\mathcal{R}:=(1,0,0, \ldots, 0) \cup \bigcup_{\substack{s,b\geq 0:\\ b-s\leq d_0 \leq \frac{2b-s}{2}}} \mathcal{R}^*_{s,b}.$$

\begin{theorem}
The family of rays $\mathcal{R}$ is contained in $\troptn$: $$\mathcal{R} \subseteq \troptn.$$
  \end{theorem}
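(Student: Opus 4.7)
The proof splits naturally into two cases corresponding to the two types of rays comprising $\mathcal{R}$.

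First, for the ray $(1,0,\ldots,0)$, I would take $G_n$ to be a single edge together with $n-2$ isolated vertices. Then $\hom(P_0;G_n)=n$ while $\hom(P_k;G_n)=2$ for every $k\ge 1$, so the scaled log-vector $\frac{1}{\log n}\bigl(\log\hom(P_0;G_n),\ldots,\log\hom(P_{2n+1};G_n)\bigr)$ converges to $(1,0,\ldots,0)$ as $n\to\infty$, placing this ray in $\troptn$.

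Second, pick any $\mathbf{r}\in\mathcal{R}^*_{s,b}$ and set $\epsilon:=d_0-(b-s)\in[0,s/2]$. When $\epsilon=0$ we have $\mathbf{r}\in\mathcal{\bar{R}}_{s,b}$ and membership follows from the previous theorem. When $\epsilon>0$, I would write
\[
  \mathbf{r} \;=\; \epsilon\cdot(1,2,3,\ldots,2n+2) + \mathbf{r}',
\]
where the first summand is the ray obtained from the complete graphs $K_m$ as $m\to\infty$ (which lies in $\troptn$ via the realization $\log\hom(P_k;K_m)/\log m\to k+1$), and $\mathbf{r}'$ denotes the element of $\mathcal{\bar{R}}_{s-2\epsilon,\,b-\epsilon}$ whose $d$-vector agrees with that of $\mathbf{r}$ in all coordinates $v\ge 1$; then automatically $d_0' = (b-\epsilon)-(s-2\epsilon)=d_0$, and a direct coordinatewise computation verifies the decomposition on both even and odd indices.

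The main task is to confirm that $\mathbf{r}'$ really belongs to $\mathcal{\bar{R}}_{s-2\epsilon,\,b-\epsilon}$. The inequalities $b-\epsilon\ge s-2\epsilon\ge 0$ reduce to $b+\epsilon\ge s$ (from $d_0\ge 0$) and $\epsilon\le s/2$ (from $d_0\le\frac{2b-s}{2}$); the non-negativity and concavity of the shifted $d$-vector are inherited from $\mathbf{d}$; and the remaining upper bound $2\sum_{v\ge 1}d_v\le s-2\epsilon$ is the main technical obstacle, which I would deduce by combining the upper bound on $d_0$ with the concavity condition (b) of $\mathcal{R}^*_{s,b}$ in a redistribution argument analogous to those used in Lemmas~\ref{lem:subsequence} and~\ref{lem:domination}. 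Once $\mathbf{r}'\in\mathcal{\bar{R}}_{s-2\epsilon,\,b-\epsilon}$, the previous theorem gives $\mathbf{r}'\in\troptn$, and the conic decomposition above yields $\mathbf{r}\in\troptn$, completing the proof.
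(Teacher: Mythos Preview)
Your decomposition is exactly the one the paper uses: with $\epsilon=d_0-(b-s)$ the paper writes $\mathbf{r}=\epsilon\cdot(1,2,\ldots,2n+2)+\mathbf{r}'$ and asserts $\mathbf{r}'\in\mathcal{\bar R}_{2b-s-2d_0,\,2b-s-d_0}$, which in your notation is $\mathcal{\bar R}_{s-2\epsilon,\,b-\epsilon}$. The treatment of $(1,0,\ldots,0)$ also matches the paper.

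The step you single out as ``the main technical obstacle''---verifying $2\sum_{v\ge1}d_v\le s-2\epsilon$ so that $\mathbf{r}'\in\mathcal{\bar R}_{s-2\epsilon,\,b-\epsilon}$---is not merely technical: it is false in general, so no redistribution argument can rescue it. Take $s=b=2$ and $\mathbf{d}=(1,1,0,\ldots,0)$. Then $b-s=0\le d_0=1\le\tfrac{2b-s}{2}=1$, $2\sum_{v\ge1}d_v=2\le s$, and the partial-sum condition~(b) holds, so this defines a legitimate ray in $\mathcal{R}^*_{2,2}$; yet $\epsilon=1$ gives $s-2\epsilon=0<2$. In fact the resulting ray $\mathbf{r}=(2,3,4,6,6,8,\ldots)$ satisfies $r_2+r_4-2r_3=-2<0$, violating the Dress--Gutman inequality, so $\mathbf{r}\notin\troptn$ and the theorem is false as literally stated. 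The paper's own proof simply asserts $\mathbf{r}'\in\mathcal{\bar R}$ without checking condition~(d), so it shares this gap. The paper's global argument is unaffected, because the only rays it actually needs later (the $\mathbf{r}'_l$ of Section~\ref{subsec:doublehullofconstruction}) happen to satisfy the extra relation $\sum_{w\ge0}d'_w=b'-\tfrac{s'}{2}$ (odd $l$; see the proof of Claim~7 there) or have $d'_v=0$ for $v\ge1$ (even $l$), and for those the missing check does go through. But as a standalone claim about all of $\mathcal{R}$ as defined, neither your proposal nor the paper's argument can be completed.
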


 \begin{proof}
First note that $(1,0,0,\ldots, 0)$ is in $\troptn$ since it can be realized by a graph on $m$ vertices with exactly one edge as $m\rightarrow \infty$. Now consider a ray $\mathbf{r}\in \mathcal{R}^*_{s,b}$ for some $b,s\geq 0$ associated to $\mathbf{d}$ where $b-s\leq d_0 \leq \frac{2b-s}{2}$. Then $\mathbf{r}$ can be written as the conic combination of $d_0+s-b$ times $(1,2,3,4,\ldots, 2n+2)$ and one times the ray $\mathbf{r}'$, where $\mathbf{r}'\in \mathcal{\bar{R}}_{2b-s-2d_0, 2b-s-d_0}$ with $\mathbf{d}':=\mathbf{d}$. Note that $d'_0=d_0=2b-s-d_0-(2b-s-2d_0)$ as desired. Thus, any ray $\mathbf{r}\in \mathcal{R}^*_{s,b}$ is in the double hull of $\bigcup_{0\leq k \leq b} \mathcal{\bar{R}}_{s,b}$, and so $\mathcal{R}\subseteq \troptn$.

\end{proof}
%------------------------------------------------------------

\subsection{Double hull of $\mathcal{R}$ is $\projC$}\label{subsec:doublehullofconstruction}

For any ray spanned by $\mathbf{r}=(r_0, r_1, \ldots, r_{2n+1}) \in \projC$, we construct rays $\mathbf{r}'_l\in \mathcal{R}$ for all $0 \leq l \leq 2n+1$ such that every coordinate of $\mathbf{r}'_l$ is at most the corresponding coordinate of $\mathbf{r}$ and the $l$-th coordinate of $\mathbf{r}'_l$ is equal to $r_l$.  Therefore we have $\mathbf{r}=\oplus_{l=1}^{2n+1} \mathbf{r}'_l$ and $\projC \subseteq \trop (\mathcal{N}_{\mathcal{U}_{2n+1}})$.
We split the construction into in two parts: first when $l$ is odd, and then when $l$ is even.

\begin{definition}[$\mathbf{r}'_l$ when $l$ is odd]\label{def:rlodd}
Given $\mathbf{r}=(r_0, r_1, \ldots, r_{2n+1}) \in \projC$ and $0 \leq l \leq 2n+1$ such that $l=2i+1$, let $\mathbf{r}'_{l}=(r'_{0}, r'_{1}, \ldots, r'_{2n+1})$ be as follows.

Let $$s'=\max_{0\leq j\leq i} \left\{ \frac{2(r_{2i+1}-r_{2j})}{2i+1-2j} \right\}$$ and  $$j' = \argmax_{0\leq j\leq i}\left\{ \frac{2(r_{2i+1}-r_{2j})}{2i+1-2j} \right\}.$$ We now give some geometric intuition behind the above definitions. We can think of $\mathbf{r}=(r_0,\dots,r_{2n+1}) \in \projC$ as a function from $\{0,1,\dots, 2n+1\}$ to $\mathbb{R}$ sending $i$ to $r_i$. Consider the graph of this function in $\mathbb{R}^2$.
If we look at the slopes of the lines between pairs of points in the graph, we see that $s'$ is the maximal slope involving the point $(2i+1,r_{2i+1})$ and a point $(2j,r_{2j})$ with $2j<2i+1$, and $j'$ is the index where the maximal slope occurs. We are going to use the slope $s'$ when building our new sequence $\mathbf{r}'_{l}=(r'_0, r'_1, \ldots, r'_{2n+1})$.

Let $\mathbf{d}'=(d'_0, d'_1, \ldots, d'_n)\in \mathbb{R}^{n+1}$ where $$d'_0= \min_{0\leq u \leq i} \{r_{2u+1}-(u+1)s'\}$$ 
and $$d'_v = \min_{v\leq u \leq i} \{r_{2u+1}-(u+1)s'\}-\sum_{w=0}^{v-1}d'_w$$ for all $0<v\leq i$. Let $d'_v=0$ for all $i<v\leq n$. Here the entries $d'_v$ for $v\geq 1$ are giving us the difference of the deviation of one odd entry from the arithmetic sequence defined by even entries compared to the previous odd entry. 

Let $r'_{2v} = r_{2j'} + (v-j')s'$ for all $0\leq v\leq n$ so that all even points are on the same line with slope $s'$. Note that $s'$ is the slope between even entries, so the step size is $2$, i.e., $s'$ is double the slope we would use for all entries. Let $b':=r'_0$ and $f':=i$. Let $$r'_{2v+1}=(v+1)s'+\sum_{w=0}^{v} d'_w$$ for any $0\leq v \leq n$.
\end{definition}

\begin{example}
Let $2n+1=21$, $l=13$, and $$\mathbf{r}=(14,14,24,24,34,35,44,45,55,55,66,66,77,79,88,89,99,99,110,110,121,121).$$

We have that $s'=\max\{10, 10, 10, 10, 9.6, 8.\overline{6}, 4\}=10$ and $j'$ can be chosen to be either 0, 1, 2 or 3. Let's now compute a vector recording $r_{2u+1}-(u+1)s'$ for each $0 \leq u\leq i$: $(4, 4, 5, 5, 5, 6, 9)$. So $\mathbf{d}'=(4,0,1,0,0,1,3,0,0,0,0)$. Finally, this allows us to compute $$\mathbf{r}'_{13}=(14, 14, 24, 24, 34, 35, 44, 45, 54, 55, 64, 66, 74, 79, 84, 89, 94, 99, 104, 109, 114, 119).$$

Observe that $\mathbf{r}'_{13}$ is such that $r'_{13}=r_{13}$ and $r'_v\leq r_v$ for all $0 \leq v \leq 21$, and such that the even entries of $\mathbf{r}'$ form an arithmetic sequence. 
\end{example}

\begin{example}
Let $2n+1=17$, $l=15$, and $$\mathbf{r}=(16, 16, 24, 25, 32, 32, 40, 40, 49, 49, 58, 58, 67, 67, 76, 79, 85, 88).$$

We have $s'=\max\{8.4, 8.\overline{461538}, 8.\overline{54}, 8.\overline{6},8.\overline{571428}, 8.4, 8, 6\}=8.\overline{6}$, and $j'=3$. Computing the vector recording $r_{2u+1}-(u+1)s'$ for each $0\leq u \leq i$, we get $(7.\overline{3}, 7.\overline{6}, 6, 5.\overline{3}, 5.\overline{6}, 6, 6.\overline{3}, 9.\overline{6})$, thus yielding $\mathbf{d}'=(5.\overline{3}, 0, 0, 0, 0.\overline{3}, 0.\overline{3}, 0.\overline{3}, 3.\overline{3},0)$. We thus obtain $$\mathbf{r}'_{15}=(14, 14, 22.\overline{6}, 22.\overline{6}, 31.\overline{3}, 31.\overline{3}, 40, 40, 48.\overline{6}, 49, 57.\overline{3}, 58, 66, 67, 74.\overline{6}, 79, 83.\overline{3}, 87.\overline{6}).$$
Observe again that $\mathbf{r}'_{15}$ is such that $r'_{15}=r_{15}$ and $r'_v\leq r_v$ for all $0 \leq v \leq 17$, and such that the even entries of $\mathbf{r}'$ form an arithmetic sequence. 
\end{example}

We first show through the next three lemmas a few important properties of $\mathbf{r}'_l$:
\begin{itemize}
\item another way to compute $d'_v$ for some $0 <v \leq i$ is $\min_{v\leq u \leq i} \{r_{2u+1}-(u+1)s'\} - \min_{v-1\leq u \leq i} \{r_{2u+1}-(u+1)s'\}$,
\item if $d'_{v+1}>0$, then $r'_{2v+1}=r_{2v+1}$, i.e., showing that certain components of $\mathbf{r}$ and $\mathbf{r}'_l$ must be equal, and
\item $\mathbf{r}'_l$ and $\mathbf{r}$ are equal in the $l$th coordinate, and $\mathbf{r}_l$ is upper bounded by $\mathbf{r}$ in all coordinates. 
\end{itemize}

\begin{lemma}\label{lem:otherwaytocompdv}
We have that $d'_v = \min_{v\leq u \leq i} \{r_{2u+1}-(u+1)s'\} - \min_{v-1\leq u \leq i} \{r_{2u+1}-(u+1)s'\}$ for all $0 < v \leq i$. 
\end{lemma}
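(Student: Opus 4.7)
The plan is to proceed by induction on $v$, exploiting the telescoping structure of the recursive definition of $d'_v$. Specifically, I will show as an auxiliary claim that
\[
\sum_{w=0}^{v-1} d'_w \;=\; \min_{v-1 \leq u \leq i} \{r_{2u+1}-(u+1)s'\}
\]
for every $1 \leq v \leq i$. Once this identity is established, substituting it into the defining recurrence
\[
d'_v \;=\; \min_{v \leq u \leq i}\{r_{2u+1}-(u+1)s'\} - \sum_{w=0}^{v-1} d'_w
\]
immediately yields the alternative formula claimed in the lemma.

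The base case $v = 1$ of the auxiliary claim is simply $d'_0 = \min_{0\leq u \leq i}\{r_{2u+1}-(u+1)s'\}$, which holds by definition. For the inductive step, assume the identity holds for $v-1$, that is,
\[
\sum_{w=0}^{v-2} d'_w \;=\; \min_{v-2 \leq u \leq i}\{r_{2u+1}-(u+1)s'\}.
\]
Plugging the defining formula for $d'_{v-1}$ into $\sum_{w=0}^{v-1} d'_w = d'_{v-1} + \sum_{w=0}^{v-2} d'_w$, the $\sum_{w=0}^{v-2} d'_w$ term cancels and the partial sum collapses to $\min_{v-1 \leq u \leq i}\{r_{2u+1}-(u+1)s'\}$, completing the induction.

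There is no substantive obstacle here; the lemma is essentially a telescoping restatement of the recursive definition, and the real work will come in subsequent lemmas (where this reformulation is used to establish nonnegativity of the $d'_v$ and to verify the properties from Definition \ref{def:?} required to place $\mathbf{r}'_l$ in $\mathcal{R}$).
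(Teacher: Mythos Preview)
Your proof is correct and follows essentially the same approach as the paper. Both arguments establish the auxiliary identity $\sum_{w=0}^{v} d'_w = \min_{v \leq u \leq i}\{r_{2u+1}-(u+1)s'\}$ and then take the difference of consecutive partial sums; the paper obtains this identity by a direct one-line rearrangement of the defining recurrence (adding $\sum_{w=0}^{v-1} d'_w$ to both sides) rather than by explicit induction, but the content is identical.
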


\begin{proof}
From Definition \ref{def:rlodd}, we know that $d'_v = \min_{v\leq u \leq i} \{r_{2u+1}-(u+1)s'\}-\sum_{w=0}^{v-1}d'_w$ for all $0<v\leq i$. This implies that $\sum_{w=0}^v d'_w = \min _{v\leq u \leq i} \{r_{2u+1}-(u+1)s'\}$ for all $0<v\leq i$ as well as when $v=0$ by definition of $d'_0$. We thus also have that $\sum_{w=0}^{v-1} d'_w = \min_{v-1 \leq u \leq i} \{r_{2u+1}-(u+1)s'\}$ for all $0<v\leq i$. So $d'_v = \sum_{w=0}^v d'_w - \sum_{w=0}^{v-1} d'_w = \min _{v\leq u \leq i} \{r_{2u+1}-(u+1)s'\} -  \min_{v-1 \leq u \leq i} \{r_{2u+1}-(u+1)s'\}$ for all $0<v\leq i$ as desired. 
\end{proof}

\begin{lemma}\label{lem:somecoordsequalinrlodd}
If $d'_{v+1}>0$, then $r'_{2v+1}=r_{2v+1}$. 
\end{lemma}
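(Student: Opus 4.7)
The plan is to exploit the formula proved in the previous lemma. By Lemma \ref{lem:otherwaytocompdv}, we know
\[
\sum_{w=0}^{v} d'_w \;=\; \min_{v \leq u \leq i}\{r_{2u+1} - (u+1)s'\}
\]
for every $0 \leq v \leq i$. Substituting into the defining formula $r'_{2v+1} = (v+1)s' + \sum_{w=0}^{v} d'_w$ from Definition \ref{def:rlodd} rewrites the quantity we care about as
\[
r'_{2v+1} \;=\; (v+1)s' + \min_{v \leq u \leq i}\{r_{2u+1} - (u+1)s'\}.
\]
So the whole question reduces to identifying where that minimum is attained.

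Next I would translate the hypothesis $d'_{v+1} > 0$ into a statement about minima. Using Lemma \ref{lem:otherwaytocompdv} again,
\[
d'_{v+1} \;=\; \min_{v+1 \leq u \leq i}\{r_{2u+1} - (u+1)s'\} \;-\; \min_{v \leq u \leq i}\{r_{2u+1} - (u+1)s'\},
\]
so $d'_{v+1} > 0$ exactly says that enlarging the minimization range from $\{v+1, \dots, i\}$ to $\{v, v+1, \dots, i\}$ strictly decreases the minimum value. The only way this can happen is if the minimum over $\{v, \dots, i\}$ is attained at $u = v$ (and is strictly smaller than $r_{2u+1} - (u+1)s'$ for all $u \geq v+1$). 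Hence
\[
\min_{v \leq u \leq i}\{r_{2u+1} - (u+1)s'\} \;=\; r_{2v+1} - (v+1)s'.
\]

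Plugging this into the rewritten formula yields $r'_{2v+1} = (v+1)s' + r_{2v+1} - (v+1)s' = r_{2v+1}$, which is exactly the desired conclusion. I do not foresee any real obstacle here; the argument is a short unwinding of definitions once Lemma \ref{lem:otherwaytocompdv} is in hand, with the only non-cosmetic step being the observation that a strict drop in a minimum when enlarging the index set by one element forces that new element to be the unique minimizer.
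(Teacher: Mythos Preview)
Your proof is correct and follows essentially the same approach as the paper: both use Lemma~\ref{lem:otherwaytocompdv} to see that $d'_{v+1}>0$ forces the minimum over $\{v,\dots,i\}$ to be attained at $u=v$, and then substitute into the formula for $r'_{2v+1}$. The only cosmetic difference is that you invoke the identity $\sum_{w=0}^{v} d'_w = \min_{v\le u\le i}\{r_{2u+1}-(u+1)s'\}$ directly (which is the intermediate step in the proof of Lemma~\ref{lem:otherwaytocompdv}, or equivalently a rearrangement of Definition~\ref{def:rlodd}), whereas the paper routes through the formula for $d'_v$ and cancels $\sum_{w=0}^{v-1} d'_w$ explicitly.
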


\begin{proof}
From Lemma \ref{lem:otherwaytocompdv}, we know that $d'_{v+1}=\min_{v+1\leq u \leq i} \{r_{2u+1}-(u+1)s'\} - \min_{v\leq u \leq i} \{r_{2u+1}-(u+1)s'\}.$ So if $d'_{v+1}>0$, this implies that $\min_{v+1\leq u \leq i} \{r_{2u+1}-(u+1)s'\}>  \min_{v\leq u \leq i} \{r_{2u+1}-(u+1)s'\}$, and so $\min_{v\leq u \leq i} \{r_{2u+1}-(u+1)s'\}= r_{2v+1}-(v+1)s'$. By the construction of $d'_v$ in Definition \ref{def:rlodd}, we thus know that $d'_v = r_{2v+1}-(v+1)s' - \sum_{w=0}^{v-1} d'_w$. Furthermore, by the construction of $r'_{2v+1}$ in Definition \ref{def:rlodd}, we have that
\begin{align*}
r'_{2v+1} & = (v+1)s'+\sum_{w=0}^v d'_w =(v+1)s' + \sum_{w=0}^{v-1} d'_w + d'_v\\
& = (v+1)s' + \sum_{w=0}^{v-1} d'_w +r_{2v+1}-(v+1)s' - \sum_{w=0}^{v-1} d'_w = r_{2v+1}
\end{align*}
as desired.
\end{proof}

\begin{lemma}\label{lem:rupperboundprimeodd}
Given $\mathbf{r}=(r_0, r_1, \ldots, r_{2n+1}) \in \projC$ and $\mathbf{r}'_{l}=(r'_{0}, r'_{1}, \ldots, r'_{2n+1})$ for some $l=2i+1$, we have $r'_v\leq r_v$ for all $0\leq v \leq 2n+1$ and $r_l=r'_l$.
\end{lemma}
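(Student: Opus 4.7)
The claim has two parts.  The equality $r'_l = r_l$ at $l = 2i+1$ is immediate from the construction: Lemma~\ref{lem:otherwaytocompdv} (or a direct unfolding of the recursive definition of the $d'_w$) yields $\sum_{w=0}^{i} d'_w = \min_{i\le u\le i}\{r_{2u+1}-(u+1)s'\} = r_{2i+1} - (i+1)s'$, so $r'_{2i+1} = (i+1)s' + (r_{2i+1}-(i+1)s') = r_{2i+1}$.  For the bound $r'_v \le r_v$, I would treat the even and odd coordinates separately, and within each, the positions $v \le 2i$ and $v > 2i$ separately.

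For an even coordinate $r'_{2u} = r_{2j'} + s'(u-j')$ with $u\le i$: the defining maximality $s'\ge 2(r_{2i+1}-r_{2u})/(2i+1-2u)$ combined with the identity $s' = 2(r_{2i+1}-r_{2j'})/(2i+1-2j')$ gives, upon subtraction, $r_{2u}-r_{2j'}\ge s'(u-j')$, so $r_{2u}\ge r'_{2u}$; the same arithmetic is valid whether $u\ge j'$ or $u<j'$.  For $u>i$, I would propagate this bound forward using the log-convexity of the even subsequence (inequality~(5.2)), which says the per-step slopes $(r_{2k+2}-r_{2k})/2$ are nondecreasing in $k$.  When $j'<i$, the slope $(r_{2i+2}-r_{2i})/2$ is already at least $s'/2$ by combining the previous case (at $u=i$) with one step of log-convexity; when $j'=i$, inequality~(5.1) with $u=i$ gives $r_{2i+2}\ge 2r_{2i+1}-r_{2i}$, hence $r_{2i+2}-r_{2i}\ge 2(r_{2i+1}-r_{2i}) = s'$.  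Either way, iterating along the log-convex chain yields $r_{2u}\ge r_{2i}+s'(u-i)\ge r_{2j'}+s'(u-j') = r'_{2u}$.  For an odd coordinate $r'_{2u+1}$ with $u\le i$, the bound is immediate from Lemma~\ref{lem:otherwaytocompdv}: $\sum_{w=0}^{u} d'_w \le r_{2u+1} - (u+1)s'$, so $r'_{2u+1} = (u+1)s' + \sum_{w=0}^{u} d'_w \le r_{2u+1}$.

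The main obstacle lies in the final case: odd coordinates $r'_{2u+1}$ with $u>i$, where $d'_w = 0$ for $w>i$ forces $r'_{2u+1} = r_{2i+1} + s'(u-i)$.  Here I would invoke the generalized Erd\H{o}s--Simonovits inequality of Lemma~\ref{lem:biggestgeneralizationES} with parameters $u_L = j'$, $v_L = i+1$, $l_L = u-i-1$, whose hypotheses $0\le j' < i+1$, $l_L\ge 0$, and $v_L+l_L = u\le n$ are all met.  The lemma gives
\[
2(u-i)\,r_{2j'} \;-\; (2u+1-2j')\,r_{2i+1} \;+\; (2i+1-2j')\,r_{2u+1} \;\ge\; 0.
\]
Substituting the identity $(2i+1-2j')s' = 2(r_{2i+1}-r_{2j'})$ into the first two terms collapses this to $(2i+1-2j')\bigl(r_{2u+1}-r_{2i+1}-s'(u-i)\bigr) \ge 0$, and since $2i+1-2j'>0$ we conclude $r_{2u+1}\ge r'_{2u+1}$.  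Appealing to Lemma~\ref{lem:biggestgeneralizationES} is essential: direct ``convexity-type'' arguments from inequalities~(5.1)--(5.4) only furnish $r_{2u+1}\ge r_{2i+1}+s'(u-i)-s'/2$, off by exactly $s'/2$.
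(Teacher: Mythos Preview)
Your proof is correct and follows essentially the same five-part decomposition as the paper's proof (even coordinates below and above the anchor, odd coordinates below and above $i$, plus the equality at $l$), invoking Lemma~\ref{lem:biggestgeneralizationES} for the crucial odd $u>i$ case exactly as the paper does. Your treatment of the even coordinates with $u\le i$ is in fact slightly cleaner than the paper's: you get $r_{2u}-r_{2j'}\ge s'(u-j')$ directly from the maximality of $s'$, uniformly in the sign of $u-j'$, whereas the paper splits into $u\le j'$ and $u\ge j'$ and invokes log-convexity (5.2) in each.

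One small wording issue: in the even $u>i$ case with $j'<i$, the phrase ``the previous case (at $u=i$) with one step of log-convexity'' does not actually yield $r_{2i+2}-r_{2i}\ge s'$. What works is to take your previous-case inequality at $u=j'+1$ (valid since $j'+1\le i$) to get $r_{2j'+2}-r_{2j'}\ge s'$, and then propagate via (5.2); this is also how the paper proceeds.
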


\begin{proof}

\textbf{Claim 1:}  We have that $r'_{2v} \leq r_{2v}$ for all $0\leq v \leq j'$.

Since $\mathbf{r}\in \projC$, by inequalities (5.2) for $C$, we know that $r_{2u+4}-r_{2u+2} \geq r_{2u+2}-r_{2u}$ for any $0\leq u \leq n-2$. In particular, this means that $r_{2j'}-r_{2j'-2} \geq r_{2u}-r_{2u-2}$ for all $0\leq u \leq j'$. Now observe that by definition of $s'$ and $j'$, $s'=\frac{2(r_{2i+1} - r_{2j'})}{2i+1-2j'} \geq \frac{2(r_{2i+1}-r_{2j'-2})}{2i+1-(2j'-2)}$ which is equivalent of $(2i+1-(2j'-2))(r_{2i+1}- r_{2j'}) \geq (2i+1-2j')(r_{2i+1}-r_{2j'-2})$, which in turn is equivalent to $\frac{2(r_{2i+1}-r_{2j'})}{2i+1-2j'} \geq r_{2j'}-r_{2j'-2}$.  So we get that $s' \geq r_{2j'}-r_{2j'-2}$ which implies that $s' \geq r_{2u}-r_{2u-2}$ for all $0\leq u\leq j'$. Together with the fact that $r'_{2v}=r'_{2j'}-(j'-v)s'=r_{2j'}-(j'-v)s'$ by definition, this implies that $r'_{2v} \leq r_{2v}$ for all $0\leq v\leq j'$.\\

\textbf{Claim 2:} We have that $r'_{2v} \leq r_{2v}$ for all $j'\leq v \leq n$.

We first show that $s'\leq r_{2j'+2}-r_{2j'}$. Let's first assume that $i> j'$. Then $$s'=\frac{2(r_{2i+1}-r_{2j'})}{2i+1-2j'} \geq \frac{2(r_{2i+1}-r_{2j'+2})}{2i+1-(2j'+2)}$$ by the definition of $s'$ and $j'$, which is equivalent to $$\frac{2(r_{2i+1}-r_{2j'})}{2i+1-2j'} \leq r_{2j'+2} - r_{2j'},$$ as desired.  Since $\mathbf{r}\in \projC$, by inequalities (5.2) for $C$, we get that $s'\leq r_{2j'+2} - r_{2j'} \leq r_{2u+2}-r_{2u}$ for all $ j' \leq u \leq n$. Together with the fact that $r'_{2v}=r'_{2j'}+(v-j')s'=r_{2j}+(v-j')s'$ by definition, this implies that $r'_{2v} \leq r_{2v}$ for all $ v\geq j'$.

Finally we consider the case when $j'=i$. Then $s'=2(r_{2i+1}-r_{2i})$. Therefore, by inequalities (5.1) for $C$, we know that $s'\leq r_{2i+2}-r_{2i}$, and by the same argument as before, we have that $r'_{2v} \leq r_{2v}$ for all $ v\geq j'$. \\

\textbf{Claim 3:} We have that $r'_{2v+1} \leq r_{2v+1}$ for all $i\leq v\leq n$.

First note that, by Definition \ref{def:rlodd}, 

\begin{align*}
r'_{2v+1}&=(v+1)s'+\sum_{w=0}^v d'_w=(v+1)s' + \sum_{w=0}^i d'_w\\
& = (i+1)s' + \sum_{w=0}^i d'_w + (v-i)s'=r'_{2i+1}+(v-i)s'=r_{2i+1}+(v-i)s'.
\end{align*}

Thus showing that $r'_{2v+1} \leq r_{2v+1}$ is equivalent to showing that $$(2i+1-2j')r_{2v+1}-(2v+1-2j')r_{2i+1}+2(v-i)r_{2j'}\geq 0.$$ From our biggest generalization of the Erd\H{o}s-Simonovits inequalities in Lemma \ref{lem:biggestgeneralizationES}, we know that this inequality holds since $\mathbf{r}\in \projC$.

\smallskip

\textbf{Claim 4:} We have that $r'_{2v+1} \leq r_{2v+1}$ for all $0\leq v < i$.

This holds by the construction of $\mathbf{d}'$ and $r'_{2v+1}$ in Definition \ref{def:rlodd}. Indeed, when $v=0$, we have

$$r'_1=s'+\min_{0\leq u <i} \{r_{2u+1}-(u+1)s'\} \leq s' + r_1 -s' = r_1.$$

For any $1\leq v < i$, we have 
\begin{align*}
r'_{2v+1} &= (v+1)s' + \sum_{w=0}^v d'_w\\
& = (v+1)s' + \sum_{w=0}^{v-1} d'_w + \min_{v\leq u <i} \{r_{2u+1}-(u+1)s'\} - \sum_{w=0}^{v-1} d'_w\\
& \leq (v+1)s' + r_{2v+1} - (v+1)s' = r_{2v+1}.
\end{align*}

\textbf{Claim 5:} We have that $r'_{2i+1}=r_{2i+1}$. 

Note that by the construction of $d'_i$ in Definition \ref{def:rlodd}, we have that $d'_i = \min_{i \leq u \leq i}\{r_{2u+1} - (u+1)s'\} - \sum_{w=0}^{i-1} d'_w$, which implies that $\sum_{w=0}^i d'_w=r_{2i+1}-(i+1)s'.$ Thus, by the construction of $r'_{2v+1}$ in Definition \ref{def:rlodd}, we have that $r'_{2i+1}=(i+1)s'+r_{2i+1}-(i+1)s'=r_{2i+1}$.

\end{proof}

We now show that the points $\mathbf{r}'_l$ are in $\mathcal{R}$ when $l$ is odd, and thus in $\trop (\mathcal{N}_{\mathcal{U}_{2n+1}})$. 

\begin{theorem}
Let $\mathbf{r}\in \projC$ such that $r_1\geq r_0$ and fix $0\leq l \leq 2n+1$ such that $l=2i+1$. Then $\mathbf{r}'_l$ is in $\mathcal{R}$.
\end{theorem}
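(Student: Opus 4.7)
The proof is a verification that $\mathbf{r}'_l$, constructed in Definition \ref{def:rlodd}, satisfies each defining property of $\mathcal{R}^*_{s',b'}$ from Definition \ref{def:?}, taking $s'$ as defined there and $b' = r_{2j'} - j' s'$. By construction, $r'_{2v} = b' + v s'$ and $r'_{2v+1} = (v+1)s' + \sum_{w=0}^v d'_w$ with $d'_w = 0$ for $w > i$, so the structural forms for even and odd entries hold, leaving six conditions to verify: (i) $s', b' \geq 0$; (ii) $d'_v \geq 0$ for every $v$; (iii) the dominance $d'_0 + \cdots + d'_u \geq d'_{v-u} + \cdots + d'_v$ for $0 \leq u \leq v \leq n$; (iv) $2\sum_{v \geq 1} d'_v \leq s'$; (v) $b' - s' \leq d'_0 \leq (2b' - s')/2$.

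For (i), monotonicity of $\mathbf{r}$, which follows from the $-y_{2u} + y_{2u+1} \geq 0$ and $-y_1 + y_2 \geq 0$ inequalities of $C$ together with the hypothesis $r_1 \geq r_0$, gives $r_{2i+1} \geq r_0$ and so $s' \geq 0$. Nonnegativity of $b'$ follows from Lemma \ref{lem:forbprimenonneg} applied with $u = j'$ and $v = 2i+1$, which yields $(2i+1) r_{2j'} \geq 2j' r_{2i+1}$; substituting the definition of $s'$ rearranges this to $r_{2j'} \geq j' s'$. For (ii), Lemma \ref{lem:otherwaytocompdv} expresses $d'_v$ (for $v \geq 1$) as a difference of minima over nested sets, hence $d'_v \geq 0$. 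Writing $g(u) = r_{2u+1} - (u+1) s'$, Lemma \ref{lem:forlowerboundd0} applied with $(t,u,v) \mapsto (j', u, 2i+1)$ together with the definition of $s'$ gives $r_{2u+1} \geq (u+1) s'$ for all $0 \leq u \leq i$, hence $g(u) \geq 0$ and $d'_0 = \min_u g(u) \geq 0$.

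For (v) and (iv), a direct computation in the cases $j' < i$ and $j' = i$ shows that $g(i) = b' - s'/2$. Taking $u = i$ in the min defining $d'_0$ yields $d'_0 \leq g(i) = b' - s'/2$, giving the upper bound. The lower bound $d'_0 \geq b' - s'$ is equivalent to $r_{2u+1} \geq r_{2j'} + (u-j')s' = r'_{2u}$ for every $0 \leq u \leq i$, which follows by combining monotonicity $r_{2u+1} \geq r_{2u}$ (using the hypothesis for $u = 0$) with $r_{2u} \geq r'_{2u}$ from Claims 1 and 2 of Lemma \ref{lem:rupperboundprimeodd}. Condition (iv) is equivalent to this lower bound, since $\sum_{v=1}^i d'_v = g(i) - d'_0 = b' - s'/2 - d'_0 \leq s'/2$ if and only if $d'_0 \geq b' - s'$.

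For (iii), set $S(u) = \sum_{w=0}^u d'_w$, so $S(u) = \min_{u \leq k \leq i} g(k)$ for $u \leq i$ and $S(u) = g(i)$ for $u \geq i$. The dominance reduces to $S(u) + S(t) \geq S(u+t+1)$ for $u, t \geq 0$. Let $k_1, k_2$ realize the minima (set to $i$ if $u$ or $t$ exceeds $i$). The inclusion inequality $y_{2u+1} + y_{2v+1} \geq y_{2u+2v+3}$, valid in the lift $C$, gives $g(k_1) + g(k_2) \geq g(k_1 + k_2 + 1)$, with $g$ extended beyond $i$ via the lift of $\mathbf{r}$ in $C$. If $k_1 + k_2 + 1 \leq i$, monotonicity of $S$ gives $g(k_1+k_2+1) \geq S(k_1+k_2+1) \geq S(u+t+1)$. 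Otherwise, applying the generalized Erd\H{o}s-Simonovits inequality (Lemma \ref{lem:biggestgeneralizationES}) in $C$ with $u = j'$, $v = i+1$, $l = k_1 + k_2 - i$ yields $r_{2k_1+2k_2+3} - r_{2i+1} \geq (k_1+k_2+1-i) s'$, i.e., $g(k_1+k_2+1) \geq g(i) \geq S(u+t+1)$. The main obstacle is this boundary case, where both the extension of $\mathbf{r}$ to $C$ and the full strength of Lemma \ref{lem:biggestgeneralizationES} are essential.
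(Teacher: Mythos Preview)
Your proof is correct and follows essentially the same overall strategy as the paper: verify the membership conditions for $\mathcal{R}^*_{s',b'}$ one by one, using the lemmas of Section~\ref{subsec:moreaboutC} and the already-established Lemma~\ref{lem:rupperboundprimeodd}. A few tactical differences are worth noting. For the upper bound $d'_0 \leq b'-s'/2$, you observe directly that $g(i)=b'-s'/2$ and take $u=i$ in the minimum, whereas the paper takes $u=j'$ and invokes Lemma~\ref{lem:forupperboundd0}; your route avoids that lemma entirely. For the lower bound $d'_0 \geq b'-s'$, you reduce to $r_{2u+1}\geq r'_{2u}$ and cite Lemma~\ref{lem:rupperboundprimeodd}, while the paper re-derives this from the maximality of $j'$; both arguments ultimately rest on $r_{2u+1}\geq r_{2u}$, and you are right to flag that the hypothesis $r_1\geq r_0$ is needed at $u=0$. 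Your observation that condition (iv) is equivalent to the lower bound in (v) streamlines the paper's separate Claim~7.

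For the dominance condition (iii), your explicit case split according to whether $k_1+k_2+1\leq i$ is more careful than the paper's treatment, which does not spell out what happens when $\bar u+\bar v+1$ exceeds $n$. Your use of Lemma~\ref{lem:biggestgeneralizationES} in the boundary case is the right fix; just be aware that you are implicitly using that lemma in $C$ with the relaxed constraint $v+l\leq 2n+1$ (rather than $v+l\leq n$), which is not stated but follows from the same conic combination since the (5.5) inequalities used there are available up to index $2n+1$.
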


\begin{proof}
We show that all properties of rays in $\mathcal{R}$ hold for $\mathbf{r}'_l$.\\

\textbf{Claim 1:} We have that $s'\geq 0$.

By the construction of $s'$ in Definition \ref{def:rlodd}, $s'=\max_{0\leq j \leq i} \left\{ \frac{2(r_{2i+1}-r_{2j})}{2i+1-2j} \right\}$. The denominator is trivially nonnegative, and we claim the numerator is also nonnegative. Indeed, since $\mathbf{r}\in \projC$, we know that $r_{2j+1} \geq r_{2j}$ for all $1\leq j \leq n$ from inequalities (5.3) for $C$. Moreover, combining inequalities (5.1) and (5.3) for $C$ yields that $r_{2j+2}\geq r_{2j+1}$ for all $0\leq u \leq n-1$. Finally, we assumed that $r_1 \geq r_0$. Thus $r_{2i+1}-r_{2j}\geq 0$ for any $0 
\leq j \leq i$, and so $s'\geq 0$.\\

\textbf{Claim 2:} We have that $b'\geq 0$.

By the construction of $b'$ in Definition \ref{def:rlodd}, we need to show that $r_{2j'}-j's'\geq 0$. By the definition of $s'$, this is equivalent to showing that $(2i+1)r_{2j'}-2j'r_{2i+1} \geq 0$, which we know holds by Lemma \ref{lem:forbprimenonneg}.\\

\textbf{Claim 3:} We have that $d'_0 \geq b'-s'$.

By the constructions of $d'_0$ and $b'$ in Definition \ref{def:rlodd}, we want to show that $\min_{0\leq u \leq i}\{r_{2u+1}-(u+1)s'\} \geq r_{2j'}-j's' - s'$ which is equivalent to showing that $r_{2u+1} - (u+1)s' \geq r_{2j'} - (j'+1)s'$ for all $0 \leq u \leq i$. By the definition of $s'$, this is equivalent to showing that $(2i+1-2j)r_{2u+1} + 2(j'-u)r_{2i+1} - (l-2u)r_{2j'} \geq 0$ for all $0 \leq u \leq i$. Note that, from the definition of $s'$ and $j'$, we already know that $\frac{2(r_{2i+1}-r_{2j'})}{2i+1-2j'} \geq \frac{2(r_{2i+1}-r_{2u})}{2i+1-2u}$ for all $0 \leq u \leq i$, which is equivalent to $(2i+1-2j')r_{2u}+2(j'-u)r_{2i+1}-(2i+1-2u)r_{2j'} \geq 0$. Since $\mathbf{r}\in \projC$, $r_{2u+1} \geq r_{2u}$ for every $0 \leq u \leq i$ by inequalities of type (5.3) for $C$, and thus the result holds.\\

\textbf{Claim 4:} We have that $d'_0 \leq \frac{2b'-s'}{2}$.

By the construction of $d'_0$ in Definition \ref{def:rlodd}, we want to show that $\min_{0\leq u \leq i}\{r_{2u+1}-(u+1)s'\}\leq \frac{2b'-s'}{2}$. It is thus sufficient to find any $u$ such that $0 \leq u \leq i$ and $r_{2u+1}-(u+1)s'\leq \frac{2b'-s'}{2}$. Set $u=j'$; we want to show that $2r_{2j'+1}-2(j'+1)s'\leq 2b'-s'$. Then plugging in the expressions for $s'$ and $b'$ in Definition \ref{def:rlodd}, this is equivalent to showing that $r_{2i+1}+(2i-2j')r_{2j'}-(2i+1-2j')r_{2j'+1} \geq 0$, which follows from Lemma \ref{lem:forupperboundd0}.\\

\textbf{Claim 5:} We have that $d'_v \geq 0$ for all $0 \leq v \leq n$.

By Lemma \ref{lem:otherwaytocompdv} and by the construction of $d'_0$ in Definition \ref{def:rlodd}, it suffices to show that $r_{2u+1}-(u+1)s' \geq 0$ for all $0 \leq u \leq i$, which, by the definition of $s'$, is equivalent to showing that $(2i+1-2j')r_{2u+1}+2(u+1)r_{2j'} - 2(u+1) r_{2i+1} \geq 0$ for all $0 \leq u \leq i$. This holds by Lemma \ref{lem:forlowerboundd0}.\\

\textbf{Claim 6:} We have that $\sum_{w=0}^u d'_w \geq \sum_{w=v-u}^v d'_w$ for all $0 \leq u < v \leq n$.

First note that showing $\sum_{w=0}^u d'_w \geq \sum_{w=v-u}^v d'_w$ is equivalent to showing $(u+1)s' + \sum_{w=0}^u d'_w + (v-u)s' + \sum_{w=0}^{v-u-1} d'_w \geq (v+1)s' + \sum_{w=0}^v d'_w$.  This in turn is equivalent to showing that $r'_{2u+1}+r'_{2(v-u-1)+1} \geq r'_{2v+1}$. Renaming indices, we can show instead that $r'_{2u+1}+r'_{2v+1}\geq r'_{2(u+v+1)+1}$.

For any $0\leq u<i$, let $\bar{u}$ be the smallest index such that $\bar{u}\geq u$ and $d'_{\bar{u}+1}>0$. Then observe that 

\begin{align*}
r'_{2\bar{u}+1} &= (\bar{u}+1)s' + \sum_{w=0}^{\bar{u}} d'_w\\
&= (u+1)s' + (\bar{u}-u)s' + \sum_{w=0}^u d'_w + \sum_{w=u+1}^{\bar{u}} d'_w\\
& = r'_{2u+1} + (\bar{u}-u)s'\\
& = r_{2u+1} + (\bar{u}-u)s'
\end{align*}
where the third line follows from the fact that $\sum_{w=u+1}^{\bar{u}} d'_w=0$ by definition of $\bar{u}$, and where the last line follows from the observation in Lemma \ref{lem:somecoordsequalinrlodd} that since $d'_{\bar{u}+1}>0$, we know that $r'_{2\bar{u}+1} = r_{2\bar{u}+1}$ . Thus

\begin{align*}
r'_{2u+1}+r'_{2v+1}&= r_{2\bar{u}+1}-(\bar{u}-u)s' + r_{2\bar{v}+1}-(\bar{v}-v)s'\\
& \geq r_{2(\bar{u}+\bar{v}+1)+1} - (\bar{u}-u)s'-(\bar{v}-v)s'\\
& \geq r'_{2(\bar{u}+\bar{v}+1)+1}- (\bar{u}-u)s'-(\bar{v}-v)s'\\
& \geq r'_{2(u+v+1)+1}
\end{align*}
where the first line holds by the definition of $\bar{u}$ and $\bar{v}$, the second because $\mathbf{r} \in \projC$ and inequalities of type (5.4) for $C$, the third because $r'_w \leq r_w$ for every $w\geq 0$ by Lemma \ref{lem:rupperboundprimeodd}, and the last one because $r'_{2w+1}-r'_{2w-1} \geq s'$ for every $w$ in our construction.\\

\textbf{Claim 7:} We have that $2\sum_{v=1}^{f'} d'_v \leq s'$.

Note that by the construction of $r'_{2v+1}$ in Definition \ref{def:rlodd}, we have that $$\sum_{w=0}^i d'_w=r_{2i+1}-(i+1)s'.$$ We have already shown that $r'_{2i+1}=r_{2i+1}$ in Lemma \ref{lem:rupperboundprimeodd}.  Moreover, by the definition of $r'_{2v}$ in Definition \ref{def:rlodd}, we know that $r'_{2i}=r_{2j'}+(i-j')s'$. Thus
$$r'_{2i+1}-r'_{2i}-\frac{s'}{2} = r_{2i+1} - r_{2j'} - \frac{(i-j)\cdot 2 \cdot(r_{2i+1}-r_{2j'})}{2i+1-2j'} - \frac{r_{2i+1}-r_{2j'}}{2i+1-2j'}=0.$$
In other words, $\frac{s'}{2}=r'_{2i+1}-r'_{2i}$. We also know that $r'_{2i+1}-r'_{2i} = (i+1)s'+\sum_{w=0}^{i} d'_w - (b'+is')$. So 

\begin{align*}
\frac{s'}{2} &= -b'+\sum_{w=0}^{i} d'_w+s'\\
b'-\frac{s'}{2} -d'_0 &= \sum_{w=1}^{i} d'_w\\
\frac{s'}{2} &\geq \sum_{w=1}^{i} d'_w
\end{align*}
where the last line follows from the fact that $b' \leq d'_0+s'$ from Claim 3.
\end{proof}

We now give a similar definition and prove similar results for even $l$.

\begin{definition}[$\mathbf{r}_l$ when $l$ is even]\label{def:rleven}
Given $\mathbf{r}=(r_0, r_1, \ldots, r_{2n+1}) \in \projC$ and $0 \leq l \leq 2n+1$ such that $l=2i$, let $\mathbf{r}'_{l}=(r'_{0}, r'_{1}, \ldots, r'_{2n+1})$ be as follows.

Let $$s'=\max_{0\leq j\leq i} \left\{ \frac{r_{2i}-r_{2j}}{i-j} \right\}$$ and  $$j' = \argmax_{0\leq j\leq i}\left\{ \frac{r_{2i}-r_{2j}}{i-j} \right\}.$$ Here $s'$ is the maximal slope involving the point $l$ and a point $2j$ before it, and $j'$ is the index where the maximal slope occurs. We are going to use the slope $s'$ when building our alternative sequence $\mathbf{r}'_{l}=(r'_0, r'_1, \ldots, r'_{2n+1})$.

Let $r'_{2v} = r_{2j'} + (v-j')s'$ for all $0\leq v\leq n$ so that all even points are on the same line with slope $s'$. Let $b':=r'_0$ 

Let $\mathbf{d}'=(d'_0, 0, \ldots, 0) \in \mathbb{R}^n+1$ where $d'_0= b'-s'$ if $b'>s'$ and $d'_0=0$ is $b'\leq s'$. Let $$r'_{2v+1}=(v+1)s'+d'_0$$ for any $0\leq v \leq n$.
\end{definition}

\begin{example}
Let $2n+1=21$, $l=14$, and $$\mathbf{r}=(14,14,24,24,34,35,44,45,55,55,66,66,77,79,88,89,99,99,110,110,121,121).$$

Then $s'=10$, $b'=14$, $\mathbf{d}'=(4,0,0,0, \ldots, 0)$, and $$\mathbf{r}'_{14}=(11, 11, 22, 22, 33, 33, 44, 44, 55, 55, 66, 66, 77, 77, 88, 88, 99, 99, 110, 110, 121, 121).$$

\end{example}

\begin{example}
Let $2n+1=15$, $l=12$, and $$\mathbf{r}=(11, 11, 19, 20, 27, 30, 36, 40, 45, 50, 55, 60, 65, 70, 75, 80).$$

Then $s'=10$, $b'=5$, $\mathbf{d}'=(0,0,\ldots, 0)$, and $$\mathbf{r}'_{12}=(5,10,15,20,25,30,35,40,45,50,55,60,65,70,75, 80).$$ 
\end{example}

\begin{lemma}
Given $\mathbf{r}=(r_0, r_1, \ldots, r_{2n+1}) \in \projC$ and $\mathbf{r}'_{l}=(r'_{0}, r'_{1}, \ldots, r'_{2n+1})$ for some even $l=2i$, $r'_v\leq r_v$ for all $0\leq v \leq 2n+1$ and $r_l=r'_l$.
\end{lemma}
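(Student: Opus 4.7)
The plan is to mirror the structure of the proof of Lemma \ref{lem:rupperboundprimeodd} and establish three claims in sequence: (i) $r'_{2i} = r_{2i}$, (ii) $r'_{2v} \leq r_{2v}$ for every $v$, and (iii) $r'_{2v+1} \leq r_{2v+1}$ for every $v$. Claim (i) is immediate from the construction, since $r'_{2i} = r_{2j'} + (i-j')s' = r_{2j'} + (r_{2i} - r_{2j'}) = r_{2i}$.

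For Claim (ii), I would rely entirely on the log-convexity of the even entries from inequalities of type (5.2), which says the sequence $\{r_{2k}\}$ is convex in $k$. When $v \leq j'$, convexity gives $(r_{2j'}-r_{2v})/(j'-v) \leq (r_{2i}-r_{2j'})/(i-j') = s'$, yielding $r_{2v} \geq r_{2j'} - (j'-v)s' = r'_{2v}$. When $j' \leq v \leq i$, the defining property of $j'$ as argmax of $(r_{2i}-r_{2j})/(i-j)$ gives $(r_{2i}-r_{2v})/(i-v) \leq s'$, hence $r_{2v} \geq r_{2i} - (i-v)s' = r'_{2v}$. When $v \geq i$, convexity forces $(r_{2v}-r_{2i})/(v-i) \geq s'$, so $r_{2v} \geq r_{2i} + (v-i)s' = r'_{2v}$.

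For Claim (iii), I split on the sign of $b' - s'$. If $b' > s'$, then a direct computation gives $r'_{2v+1} = vs' + b' = r'_{2v}$, and the bound $r'_{2v+1} \leq r_{2v+1}$ follows by chaining Claim (ii) with the monotonicity inequalities of type (5.3) (together with the hypothesis $r_0 \leq r_1$ for the boundary case $v=0$). If $b' \leq s'$, then $r'_{2v+1} = (v+1)s'$, and for $v \leq i-1$ the bound $r_{2v+1} \geq (v+1)s'$ follows directly from Lemma \ref{lem:forlowerboundd0} applied with $t = j'$, $u = v$, and parameter $2i$ in place of $v$, which yields $(i-j')r_{2v+1} \geq (v+1)(r_{2i}-r_{2j'})$.

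The main obstacle is handling the remaining subcase $b' \leq s'$ and $v \geq i$, where Lemma \ref{lem:forlowerboundd0} no longer applies directly. My plan is to invoke inequality (5.6) with $u = v$ to obtain $r_{2v+1} \geq (v+1)(r_{2v} - r_{2v-2})$, and then bound $r_{2v}-r_{2v-2} \geq s'$ as follows: the successive slopes $r_{2k+2}-r_{2k}$ are non-decreasing in $k$ by convexity, and $s'$ is the average of these slopes over $k \in \{j', \dots, i-1\}$, so $s' \leq r_{2i} - r_{2i-2} \leq r_{2v} - r_{2v-2}$ for any $v \geq i$. Chaining these two inequalities yields $r_{2v+1} \geq (v+1)s'$ and closes the argument.
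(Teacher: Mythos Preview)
Your proof is correct and follows essentially the same route as the paper. Your Claims (i)--(ii) match the paper's Claims 1--3, and your split of Claim (iii) on the sign of $b'-s'$ is exactly Claim 4 of the paper. For the subcase $b'\le s'$ and $v\ge i$, the paper also uses inequality (5.6) at index $v$ together with (5.2), but packages them as a single explicit conic combination rather than your two-step chain; your presentation is arguably cleaner and is the same argument.

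One small remark: you invoke the hypothesis $r_0\le r_1$ to handle the boundary $v=0$ in the case $b'>s'$. The lemma as stated does not carry this hypothesis, and the paper's own proof also glosses over $v=0$ (it cites (5.3), which only gives $r_{2v}\le r_{2v+1}$ for $v\ge 1$). In practice this is harmless, since the lemma is only applied in the subsequent theorem under the standing assumption $r_1\ge r_0$, but your flagging of this point is accurate.
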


\begin{proof}
\textbf{Claim 1:} We have that $r'_{2i}=r_{2i}$.

By Definition \ref{def:rleven}, we have
\begin{align*}
r'_{2i}-r_{2i}&=r_{2j'}+(i-j')s'-r_{2i}\\
&=r_{2j'}+(i-j')\frac{(r_{2i}-r_{2j'})}{i-j'} - r_{2i}\\
&=0.
\end{align*}

\textbf{Claim 2:} We have that $r'_{2v} \leq r_{2v}$ for all $0\leq v \leq j'$.

Since $\mathbf{r}\in \projC$, by inequalities (5.2) for $C$, we know that $r_{2j'}-r_{2j'-2} \geq r_{2u}-r_{2u-2}$ for all $0\leq u \leq j'$. Now observe that by the construction of $s'$ and $j'$ in Definition \ref{def:rleven}, $s'=\frac{r_{2i} - r_{2j'}}{i-j'} \geq \frac{r_{2i}-r_{2j'-2}}{i-(j'-1)}$ which is equivalent to $(i-(j'-1))(r_{2i} - r_{2j'}) \geq (i-j')(r_{2i}-r_{2j'-2})$, which in turn is equivalent to $\frac{r_{2i}-r_{2j'}}{i-j'} \geq r_{2j'}-r_{2j'-2}$.  So we get that $s' \geq r_{2j'}-r_{2j'-2}$ which implies that $s' \geq r_{2u}-r_{2u-2}$ for all $u\leq j'$. Together with the fact that $r'_{2v}=r'_{2j'}-(j'-v)s'=r_{2j'}-(j'-v)s'$ by definition, this implies that $r'_{2v} \leq r_{2v}$ for all $0\leq v\leq j'$.\\

\textbf{Claim 3:}  We have that $r'_{2v} \leq r_{2v}$ for all $j'\leq v \leq n$.

We first show that $s'\leq r_{2j'+2}-r_{2j'}$. Let's first assume that $j'<i$. Then $$\frac{r_{2i}-r_{2j'}}{i-j'} \geq \frac{r_{2i}-r_{2j'+2}}{i-(j'+1)}$$ by the construction of $s'$ and $j'$ in Definition \ref{def:rleven} which is equivalent to $$\frac{r_{2i}-r_{2j'}}{i-j'} \leq r_{2j'+2} - r_{2j'},$$ as desired.   Thus $s'\leq r_{2u+2}-r_{2u}$ for all $u\geq j'$. Together with the fact that $r'_{2v}=r'_{2j'}+(v-j')s'=r_{2j}+(v-j')s'$ by the construction of $r'_{2v}$ in Definition \ref{def:rleven}, this implies that $r'_{2v} \leq r_{2v}$ for all $ v\geq j'$.

Finally we consider the case when $j'=i$. Then $s'=0$, and so $r'_{2v}=r_{2i}$ for all $v\geq j'$. Since $\mathbf{r}\in \projC$, by combining inequalities of types (5.1) and (5.3) for $C$, we know $r_{2v} \geq r_{2i}$ for every $v\geq j'$, and so we again have that $r'_{2v} \leq r_{2v}$.\\

\textbf{Claim 4:} We have that $r'_{2v+1} \leq r_{2v+1}$ for all $0\leq v\leq n$.

By Definition \ref{def:rleven}, if $b'>s'$, then $\mathbf{d}'=(b'-s', 0, \ldots, 0)$ and $r'_{2v+1}=(v+1)s'+b'-s'=b'+vs'=r'_{2v}$ for every $0\leq v \leq n$. We've already shown that $r'_{2v} \leq r_{2v}$. Moreover, since $\mathbf{r}\in \projC$, by inequalities of type (5.3), $r_{2v}\leq r_{2v+1}$. Therefore, the result holds.

If $b'\leq s'$, then $\mathbf{d}'=(0, 0, \ldots, 0)$ and $r'_{2v+1}=(v+1)s'$ for all $0\leq v \leq n$. So we want to show that  $$(v+1)\frac{(r_{2i}-r_{2j'})}{i-j'} \leq r_{2v+1},$$ which is equivalent to showing that
$$(v+1) r_{2j'} - (v+1) r_{2i} + (i-j')r_{2v+1}\geq 0.$$

The case when $0\leq v \leq i-1$ is covered by Lemma \ref{lem:forlowerboundd0}. It remains to show that this inequality also holds when $i \leq v \leq n$. It does since it can be written as the following conic combination of inequalities (5.5) and (5.2) for $C$ which hold since $\mathbf{r}\in\projC$:

\begin{align*}
&(i-j')((v+1) r_{2v-2} - (v+1)r_{2v} + r_{2v+1} \geq 0)\\
+&\sum_{w=j'}^{\min\{i-1, v-2\}} (w-j'+1)(v+1)(r_{2w}-2r_{2w+2}+r_{2w+4} \geq 0)\\
+ & \sum_{w=i}^{v-2}(i-j')(v+1)(r_{2w}-2r_{2w+2}+r_{2w+4} \geq 0).
\end{align*}

Note that if $v\in \{i, i+1\}$, then the last row disappears. By collecting terms, one can check that $r_{2j'}$ appears $v+1$ times, $r_{2i'}$ appears $-(v+1)$ times, and $r_{2v+1}$ appears $i-j'$ times. 

\end{proof}

\begin{theorem}
Let $\mathbf{r}\in \projC$ such that $r_1\geq r_0$ and fix $0\leq l \leq 2n+1$ such that $l=2i$. Then $\mathbf{r}'_l$ is in $\mathcal{R}$.
\end{theorem}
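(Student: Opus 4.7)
The proof follows the same blueprint as the preceding theorem for odd $l$. I would verify in turn that the quantities $s'$, $b'$ and $\mathbf{d}'$ produced by Definition~\ref{def:rleven} satisfy every hypothesis of Definition~\ref{def:?}, placing $\mathbf{r}'_l$ inside $\mathcal{R}^*_{s',b'}$. The simplification compared to the odd case is that $\mathbf{d}' = (d'_0, 0, 0, \ldots, 0)$ has at most one nonzero coordinate, so most of the combinatorial conditions become automatic.

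First, I would establish $s' \geq 0$. Using the hypothesis $r_1 \geq r_0$ together with inequalities of type (5.1) and (5.3) for $C$, an easy induction shows $r_{2u+2} \geq r_{2u}$ for all $0 \leq u \leq n-1$; hence $r_{2i} \geq r_{2j}$ for every $j \leq i$, and $s' \geq 0$. Next, $b' = r_{2j'} - j' s' \geq 0$ is equivalent after clearing denominators to $(2i)\, r_{2j'} \geq (2j')\, r_{2i}$, which is Lemma~\ref{lem:forbprimenonneg} applied with $2u = 2j'$ and $v = 2i$. Nonnegativity $d'_0 \geq 0$ and the lower bound $b' - s' \leq d'_0$ are immediate from $d'_0 = \max\{0, b' - s'\}$. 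The majorization condition $\sum_{w=0}^u d'_w \geq \sum_{w=v-u}^v d'_w$ for $0 \leq u < v \leq n$ is trivial: the right-hand side equals $0$ whenever $v - u \geq 1$, while the left-hand side is $d'_0 \geq 0$. The growth bound $2\sum_{v=1}^{f'} d'_v \leq s'$ is $0 \leq s'$, already established.

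The only substantive task is the upper bound $d'_0 \leq (2b' - s')/2 = b' - s'/2$. In the case $d'_0 = b' - s' > 0$ this reduces to $-s' \leq -s'/2$, i.e.\ $s' \geq 0$. In the case $d'_0 = 0$ (so $b' \leq s'$), I need $s' \leq 2b'$, which after substituting the definitions of $s'$ and $b'$ and clearing denominators becomes
\[
(2i + 1)\, r_{2j'} \;\geq\; (2j' + 1)\, r_{2i}.
\]
This is precisely Lemma~\ref{lem:lowerboundd0leven} with $u = j'$ and $v = i$; it is available when $j' < i$, and in the degenerate case $j' = i$ one has $s' = 0$ and $b' = r_{2i}$, making the bound trivial.

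I do not expect any real obstacle: every defining condition of $\mathcal{R}^*_{s',b'}$ either follows from nonnegativity of $s'$, the already-proved Lemmas~\ref{lem:forbprimenonneg} and~\ref{lem:lowerboundd0leven}, or the triviality of the vector $\mathbf{d}' = (d'_0, 0, \ldots, 0)$. The only point of genuine interest is recognising Lemma~\ref{lem:lowerboundd0leven} as the exact translation of the critical upper bound on $d'_0$; this is the even-$l$ analog of Claim~4 in the odd-$l$ proof, and it is noteworthy that a single Erd\H{o}s--Simonovits-type inequality is all that is required here, whereas the odd case draws on both Lemma~\ref{lem:forupperboundd0} and Lemma~\ref{lem:forlowerboundd0}.
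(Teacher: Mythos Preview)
Your proposal is correct and follows essentially the same approach as the paper: the same seven conditions are checked, with Lemma~\ref{lem:forbprimenonneg} handling $b'\geq 0$ and Lemma~\ref{lem:lowerboundd0leven} supplying the key inequality $(2i+1)r_{2j'}\geq (2j'+1)r_{2i}$ for the upper bound on $d'_0$ in the case $b'\leq s'$. Your observation about the degenerate case $j'=i$ and your closing comparison with the odd-$l$ proof are helpful additions not made explicit in the paper.
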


\begin{proof}
We show that all properties of rays in $\mathcal{R}$ hold for $\mathbf{r}'_l$.\\

\textbf{Claim 1:} We have that $s'\geq 0$.

By the construction of $s'$ in Definition \ref{def:rleven}, $s'=\max_{0\leq j\leq i} \left\{ \frac{r_{2i}-r_{2j}}{i-j} \right\}$. Since $\mathbf{r}\in \projC$, we know that $r_{2i} \geq r_{2j}$ for all $0\leq j \leq i$ since from type (3) inequalities, $r_{2u+1}\geq r_{2u}$ for all $1 \leq u\leq n$, and combining type (1) and type (3) inequalities yields that $r_{2u+2}\geq r_{2u+1}$ for all $1\leq u \leq n-1$. Finally, we assumed that $r_1 \geq r_0$.\\

\textbf{Claim 2:} We have that $b'\geq 0$.

By the construction of $b'$ in Definition \ref{def:rleven}, we need to show that $r_{2j'}-j's'\geq 0$. By the definition of $s'$, this is equivalent to showing that $ir_{2j'}-j'r_{2i} \geq 0$, which we know holds by Lemma \ref{lem:forbprimenonneg}.\\

\textbf{Claim 3:} We have that $d'_0 \geq b'-s'$.

If $b'>s'$, then $d'_0=b'-s'$ and the inequality thus holds. If $b'\leq s'$, then $d'_0=0\geq b'-s'$ as desired.\\

\textbf{Claim 4:} We have that $d'_0 \leq \frac{2b'-s'}{2}$.

If $b'>s'$, then $d'_0=b'-s'$, and since $s'\geq 0$, we have that $d'_0 \leq \frac{2b'-s'}{2}$. If $b'\leq s'$, then $d'_0=0$ and so, by the construction of $b'$ in Definition \ref{def:rleven},  we need to show that $r_{2j'}-j's' - \frac{s'}{2}\geq 0$ which, by the construction for $s'$, is equivalent to showing that $$(2i+1)r_{2j'} - (2j'+1)r_{2i} \geq 0,$$ which we know holds from Lemma \ref{lem:lowerboundd0leven}.\\

\textbf{Claim 5: }We have that $d'_v \geq 0$ for all $0 \leq v \leq n$.

This is clear from the construction $\mathbf{d}'$ in Definition \ref{def:rleven}.\\

\textbf{Claim 6:} We have that $\sum_{v=0}^u d'_v \geq \sum_{v=w-u}^w d'_v$.

Again, this is clear by the construction of $\mathbf{d}'$ in Definition \ref{def:rleven}.\\

\textbf{Claim 7:} We have that $2\sum_{v=1}^{f'} d'_v \leq s'$.

Note that the lefthand side is 0 by our construction of $\mathbf{d}'$ in Definition \ref{def:rleven}, and we've already shown that $s'\geq 0$ in Claim 1.\\
\end{proof}

\begin{theorem}\label{thm:rnotdecreasingintropn}
Let $\mathbf{r}\in \projC$ such that $r_1 \geq r_0$. Then $\mathbf{r}$ is in the max closure of rays $\mathbf{r}'_l$ for $0 \leq l \leq 2n+1$. Moreover, this implies that $\mathbf{r} \in \troptn$. 
\end{theorem}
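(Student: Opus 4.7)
The plan is a bookkeeping step that assembles the facts about the auxiliary rays $\mathbf{r}'_l$ established in the preceding material. The ingredients already in place are: for each $l\in\{0,1,\ldots,2n+1\}$, Lemma \ref{lem:rupperboundprimeodd} in the odd case $l=2i+1$ and the analogous unnumbered lemma in the even case $l=2i$ provide the coordinate-wise domination $(\mathbf{r}'_l)_v \leq r_v$ for all $v$, together with the exact equality $(\mathbf{r}'_l)_l = r_l$ at the distinguished index. The two theorems immediately preceding the present statement show that each $\mathbf{r}'_l$ lies in the family $\mathcal{R}$, and the constructions of Section \ref{subsec:construction} show that $\mathcal{R} \subseteq \troptn$.

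First I would fix $\mathbf{r}\in\projC$ with $r_1\geq r_0$ and, for each $l$, consider the ray $\mathbf{r}'_l$ produced by Definition \ref{def:rlodd} or Definition \ref{def:rleven} according to the parity of $l$. Computing the $v$-th entry of the tropical sum coordinate by coordinate gives
$$\Bigl(\bigoplus_{l=0}^{2n+1} \mathbf{r}'_l\Bigr)_v \;=\; \max_{0\leq l\leq 2n+1}(\mathbf{r}'_l)_v.$$
The upper bound $(\mathbf{r}'_l)_v\leq r_v$ valid for every $l$ shows that this maximum is at most $r_v$, while the equality $(\mathbf{r}'_v)_v = r_v$ achieved at $l=v$ shows the bound is attained. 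Hence $\bigoplus_{l=0}^{2n+1}\mathbf{r}'_l = \mathbf{r}$, which is the first assertion of the theorem.

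To conclude the second assertion I would invoke the two preceding theorems to place each $\mathbf{r}'_l$ in $\mathcal{R}$, and then the inclusion $\mathcal{R}\subseteq\troptn$. Since $\troptn$ is a max-closed convex cone by Corollary \ref{tropismaxclosed2}, the finite tropical sum of the $\mathbf{r}'_l$ remains in $\troptn$, and the equality from the previous paragraph yields $\mathbf{r}\in\troptn$. The hypothesis $r_1\geq r_0$ enters only through the two preceding theorems, which explicitly assume it; the remaining situation $r_1<r_0$ within $\projC$ is handled by the exceptional ray $(1,0,\ldots,0)$ that was included separately in $\mathcal{R}$.

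The main obstacle in the whole chain was not this theorem but the construction of $\mathbf{r}'_l$ together with the verification that it satisfies every defining inequality of $\mathcal{R}$ and coordinate-wise dominates $\mathbf{r}$ with equality at position $l$; that work was absorbed into Section \ref{subsec:construction} and the preceding lemmas. Once those are in hand, the present statement is purely a recombination step converting coordinate-wise domination into membership via max-closedness, so no new obstacle appears at this final stage.
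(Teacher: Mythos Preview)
Your proposal is correct and follows essentially the same approach as the paper: both argue that $\mathbf{r}=\bigoplus_{l=0}^{2n+1}\mathbf{r}'_l$ from the coordinatewise domination with equality at index $l$, then conclude $\mathbf{r}\in\troptn$ via $\mathbf{r}'_l\in\mathcal{R}\subseteq\troptn$ and max-closedness. Your final remark about the case $r_1<r_0$ is extraneous here (it is handled separately in Lemma~\ref{lem:rdecreasingintropn}, not merely by the exceptional ray), but this does not affect the proof of the present statement.
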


\begin{proof}
This follows from that fact that $r_l$ is equal to the $l$th component of $\mathbf{r}'_l$, and that all the other components of the latter are smaller or equal to the corresponding components of $\mathbf{r}$. Thus, taking the tropical sum of all $\mathbf{r}'_l$ yields exactly $\mathbf{r}$, i.e., $\mathbf{r}= \oplus_{l=0}^{2n+1} \mathbf{r}'_l$. Since we have shown that each $\mathbf{r}'_l \in \mathcal{R}$ and since we know that the double hull of $\mathcal{R}$ is in $\troptn$, it follows that $\mathbf{r}\in \troptn$.
\end{proof}

\begin{lemma}\label{lem:rdecreasingintropn}
Let $\mathbf{r}=(r_0, r_1, \ldots, r_{2n+1}) \in \projC$ such that $r_1<r_0$. Then $\mathbf{r}\in \troptn$
\end{lemma}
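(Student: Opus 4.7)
The plan is to reduce the case $r_1<r_0$ to the previously handled case $r_1\geq r_0$ by decomposing $\mathbf{r}$ into a sum of two rays already known to lie in $\troptn$. Concretely, define $\mathbf{r}^*:=(r_1,r_1,r_2,r_3,\dots,r_{2n+1})$, that is, $\mathbf{r}$ with its first coordinate lowered from $r_0$ to $r_1$. Note that since $r_0-r_1>0$,
\[
\mathbf{r}=\mathbf{r}^*+(r_0-r_1)\cdot(1,0,0,\dots,0).
\]
The ray $(1,0,0,\dots,0)$ lies in $\mathcal{R}\subseteq\troptn$ by the definition of $\mathcal{R}$ in Section~\ref{subsec:construction}, so once we show that $\mathbf{r}^*\in\troptn$, convexity of $\troptn$ will finish the argument.

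To show $\mathbf{r}^*\in\troptn$, the strategy is to first verify that $\mathbf{r}^*\in\projC$, and then apply Theorem~\ref{thm:rnotdecreasingintropn}, which is applicable because $r^*_1=r^*_0$. Lifting $\mathbf{r}$ to $\tilde{\mathbf{r}}\in C$ and replacing only the $y_0$ coordinate by $r_1$ yields a lift $\tilde{\mathbf{r}}^*$; every defining inequality of $C$ that does not involve $y_0$ continues to hold trivially, so only the four families (5.1) with $u=0$, (5.2) with $u=0$, (5.5) with $u=0$, and (5.6) with $u=1$ need to be rechecked. The first three translate respectively into $r_2-r_1\geq 0$, $r_1-2r_2+r_4\geq 0$, and $2r_1-(2v+1)r_{2v-1}+(2v-1)r_{2v+1}\geq 0$, which are exactly the defining inequalities (5.9), (5.7), and (5.8) of $C$ applied to the original lift $\tilde{\mathbf{r}}$.

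The main obstacle — and the only genuinely nontrivial verification — is (5.6) with $u=1$ applied to $\mathbf{r}^*$, namely $2r_1-2r_2+r_3\geq 0$. I plan to derive this as follows: from (5.3) with $u=1$ we have $r_3\geq r_2$, and from (5.4) with $u=v=0$ we have $2r_1-r_3\geq 0$, i.e., $r_3\leq 2r_1$. Chaining these two gives $r_2\leq r_3\leq 2r_1$, in particular $r_2\leq 2r_1$. Combining this with $r_3\geq r_2$ once more yields
\[
2r_1-2r_2+r_3\;\geq\; 2r_1-2r_2+r_2\;=\;2r_1-r_2\;\geq\;0,
\]
which is exactly (5.6) with $u=1$ on $\mathbf{r}^*$. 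Thus $\mathbf{r}^*\in\projC$.

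Finally, since $r^*_1=r^*_0=r_1$, Theorem~\ref{thm:rnotdecreasingintropn} applies and gives $\mathbf{r}^*\in\troptn$. Combining with the observation that $(r_0-r_1)(1,0,\dots,0)\in\troptn$ and using that $\troptn$ is a convex cone, we conclude that $\mathbf{r}\in\troptn$, as required.
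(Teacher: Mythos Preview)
Your proof is correct and follows essentially the same reduction as the paper, defining $\mathbf{r}^*=(r_1,r_1,r_2,\dots,r_{2n+1})$, verifying $\mathbf{r}^*\in\projC$, and invoking Theorem~\ref{thm:rnotdecreasingintropn}. The one genuine difference is in how you recombine: the paper writes $\mathbf{r}=\mathbf{r}^*\oplus(r_0,r_1,r_1,\dots,r_1)$ and appeals to max-closure (which costs a small argument that $r_v\geq r_1$ for all $v\geq 1$), whereas you write $\mathbf{r}=\mathbf{r}^*+(r_0-r_1)(1,0,\dots,0)$ and appeal to convexity, which is immediate. Your decomposition is cleaner here.

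You are also more thorough in one respect: the paper's proof lists only the three $y_0$-inequalities coming from (5.1), (5.2), (5.5), but the defining family (5.6) with $u=1$, namely $2y_0-2y_2+y_3\geq 0$, also involves $y_0$ and must be rechecked on $\mathbf{r}^*$. You caught this and gave a valid derivation from (5.3) and (5.4), so your verification that $\mathbf{r}^*\in\projC$ is in fact more complete than the paper's.
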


\begin{proof}
First observe that the ray $\mathbf{r}^*=(r^*_0, r^*_1, r^*_2, r^*_3, \ldots, r^*_{2n+1}):=(r_1, r_1, r_2, r_3, \ldots, r_{2n+1}) \in \projC$ as well. Indeed, any inequality involving $y_0$ is still valid: $r^*_0-2r^*_1+r^*_2=-r_1+r_2\geq 0$ by (9), $r^*_0-2r^*_2+r^*_4=r_1-2r_2+r_4$ by (7), and $2r^*_{0}-(2v+1)r^*_{2v-1}+(2v-1)r^*_{2v+1}= 2r_1-(2v+1)r_{2v-1}+(2v-1)r_{2v+1}\geq 0$ for all $2\leq u \leq n$ by (8). All other inequalities did not change, and so they are still valid as well given that $\mathbf{r}$ was in $\projC$. By Theorem \ref{thm:rnotdecreasingintropn}, $\mathbf{r}^* \in  \troptn$

Now observe that $(r_0, r_1, r_1, r_1, \ldots, r_1)\in \troptn$ as it can be obtained as a conic combination of $(1,0,0, \ldots, 0)$ and $(1,1,1, \ldots, 1)$ which are both in $\troptn$, namely $$(r_0, r_1, r_1, r_1, \ldots, r_1)=(r_0-r_1)\cdot(1,0,0, \ldots, 0)+r_1\cdot(1,1,1, \ldots, 1).$$ 

Finally, observe that $\mathbf{r}=(r_1, r_1, r_2, r_3, \ldots, r_{2n+1})\oplus (r_0, r_1, r_1, r_1, \ldots, r_1)$. Indeed, since $\mathbf{r}\in \projC$, inequality (5.9) for $C$ tells us that $r_1\leq r_2$, combining inequalities of types (5.1) and (5.3) yields $-r_{2u+1} +r_{2u+2}\geq 0$ for all $1\leq u \leq n-1$, and we also have $-r_{2u}+r_{2u+1}\geq 0$ for all $1\leq u \leq n$ from inequalities of type (5.3). Therefore, $r_1\leq r_v$ for $1\leq v \leq 2n+1$. Since $\mathbf{r}$ is in the max closure of two rays that are in $\troptn$, $\mathbf{r} \in \troptn$.
\end{proof}

Thus we have shown that $ \projC= \troptn$.

% !TEX root =  tropicalizationofpaths-finitesetting.tex

\section{Applications}\label{sec:applications}

\subsection{HDE of two paths}\label{subsec:HDEPmPn}

Recall that $\HDE(F_1;F_2)$ denotes the maximum value of $c$ such that $$\hom(F_1;G) \geq \hom(F_2;G)^c$$ for every graph $G$. In \cite{koppartyrossman}, Kopparty and Rossman show that $\HDE(P_v, P_w) = 1$ when $v\geq w$ and $\HDE(P_v,P_w)=\frac{v+1}{w+1}$ when $v\leq w$ and $v$ is even. The case when $v\leq w$ and $v$ is odd is open in general, though in the same paper, Kopparty and Rossman showed that $\HDE(P_1, P_w)=\frac{1}{\lceil \frac{w+1}{2} \rceil}$ and that
$$\HDE(P_3, P_{4u+i-1}) = \left\{ \begin{array}{ll}
\frac{1}{u} & \textup{if } i=0,\\
\frac{2}{2u+1} & \textup{if } i=1,\\
\frac{4u+1}{4u^2+3u+1} & \textup{if } i=2,\\
\frac{1}{u+1} & \textup{if } i=3.
\end{array}\right.$$

Since $\hom(P_v;G) \geq \hom(P_w;G)^c$ is a binomial inequality, we know it has to be implied by the defining inequalities of the tropicalization of the path profile. Using results from the previous sections, we prove the following theorem which computes $\HDE(P_v, P_w)$ when $v\leq w$ and $v$ is odd, thus resolving the problem of finding a closed expression for $\HDE(P_v, P_w)$ for all $v$ and $w$. This implies Theorem \ref{thm:iHDE} from the introduction.

\begin{theorem}\label{thm:HDEf}
We have that $$\HDE(P_v, P_w) = \frac{v+1}{w+2},$$ when $v$ is odd and $w$ is even with $v\leq w$, and 

$$\HDE(P_v, P_w) =\frac{k(v+1)-v}{kw+2k-w-1},$$ when $v$ and $w$ are both odd, $v\leq w$, and where $k$ is the smallest integer such that $k(v+1)\geq w+1$.
\end{theorem}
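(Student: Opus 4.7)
The plan is to derive both bounds in Theorem~\ref{thm:HDEf} from the complete description of $\tropNU$ for $\mathcal{U}=\{P_0,P_1,\dots,P_{2N+1}\}$ developed in Section~\ref{sec:troppaths}. By log-duality, $\HDE(P_m,P_n)$ equals the infimum of $y_m/y_n$ over $\trop(\mathcal{N}_{\mathcal{U}_{2N+1}})\setminus\{y_n=0\}$ for any $N \geq n$, a linear-fractional program on a rational polyhedral cone whose infimum is attained at an extreme ray. Proving the theorem therefore reduces to two independent tasks: (i) exhibit a ray in $\tropNU$ (equivalently, a sequence of graphs whose normalized log-homomorphism vectors tend to it) realizing the claimed ratio $c$, which gives the upper bound; and (ii) verify the pure binomial inequality $\hom(P_m;G)\ge\hom(P_n;G)^{c}$ for all $G$, i.e.\ that the corresponding linear inequality is valid on $\tropNU$, which gives the matching lower bound.

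For the upper bound in the case $m$ odd, $n$ even, the star graph $S_N$ on $N+1$ vertices suffices: a direct count yields $\hom(P_{2i+1};S_N)\sim N^{i+1}$ and $\hom(P_{2j};S_N)\sim N^{j+1}$, so the limiting ray is a positive multiple of $(1,1,2,2,3,3,\dots)\in\tropNU$ and produces $r_m/r_n \to (m+1)/(n+2)$. For the case $m,n$ both odd, write $m=2i+1$, $n=2j+1$, and $k=\lceil (n+1)/(m+1)\rceil$. I would use the blow-up ray of Definition~\ref{def:blowupgraph} with $\mathbf{d}\in\mathbb{R}^{n+1}$ having nonzero entries only at the arithmetic progression of indices $0,i+1,2(i+1),\dots,(k-1)(i+1)$, each of value $s/(2(k-1))$, and with $b=s+d_0$. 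The conditions in Definition~\ref{def:raysproperties} are straightforward to verify: the bound $2\sum_{\ell\ge 1}d_\ell\le s$ is an equality, $d_0\ge d_\ell$ for all $\ell$ holds trivially, and the prefix-sum domination reduces to the fact that any interval of length $u+1$ contains at most $\lfloor u/(i+1)\rfloor+1$ multiples of $i+1$, which is exactly the count in $[0,u]$. Theorem~\ref{thm:construction} then gives a sequence of blow-up graphs realizing the ray, and direct evaluation yields $r_m=(i+1)s+s/(2(k-1))$ and $r_n=(j+1)s+ks/(2(k-1))$, whence
\[
\frac{r_m}{r_n}=\frac{2(k-1)(i+1)+1}{2(k-1)(j+1)+k}=\frac{(k-1)m+k}{(k-1)n+2k-1},
\]
matching the claimed value.

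For the lower bound I would certify the required binomial inequality as a conic combination of the defining inequalities of the lift cone $C$ (equivalently, of the binomial inequalities established in Section~\ref{sec:truebinomialinequalitieswithpaths}). In the first case, the inequality $P_m^{n+2}\ge P_n^{m+1}$ can be assembled from the new ``even-odd-even'' inequality of Theorem~\ref{thm:generalizationoferdossimonovits2} together with Dress-Gutman log-convexity and monotonicity. In the second case, the inequality $P_m^{(k-1)n+2k-1}\ge P_n^{(k-1)m+k}$ follows from a weighted sum of instances of the generalized Erd\H{o}s-Simonovits inequalities of Theorem~\ref{thm:generalizationoferdossimonovits} (equivalently, the cleaner Lemma~\ref{lem:biggestgeneralizationES}) with parameters keyed to $k$. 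The main obstacle is this last conic decomposition: the coefficients grow and shift with $k$, and guessing them by hand is unwieldy. The cleanest route is to use LP duality / complementary slackness with respect to the explicit upper-bound ray: the defining inequalities of $\projC$ that are tight on that ray determine the support of the dual multipliers, and solving the resulting small linear system fixes the coefficients uniquely. Verifying that this prescription yields valid multipliers (nonnegative and producing the desired inequality) then completes the proof.
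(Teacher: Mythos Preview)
Your overall plan is correct and close to the paper's. The upper-bound constructions are fine: for $m$ odd and $n$ even, your star graph $S_N$ is a valid (and arguably cleaner) witness than the paper's blow-up of $P_n$, and for $m,n$ both odd your ray is exactly the one the paper uses (up to scaling; with $s=2(k-1)$ your $d$-values become $1$ and $b=2k-1$, matching the paper verbatim). Your verification of the prefix-sum condition~(c) via counting multiples of $i+1$ in an interval is correct.

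Where you diverge substantively is the odd--odd lower bound, and here you are making the problem harder than it is. The paper does not need LP duality or complementary slackness: it gives a three-line certificate. Writing $m=v$, $n=w$, the inequality $P_v^{\,kw+2k-w-1}\ge P_w^{\,k(v+1)-v}$ follows by multiplying
\[
P_v^{\,k}\ \ge\ P_{k(v+1)-1}, \qquad P_v\ \ge\ P_{v-1}, \qquad P_{v-1}^{\,k(v+1)-w-1}\,P_{k(v+1)-1}^{\,w-v+1}\ \ge\ P_w^{\,k(v+1)-v}.
\]
The first two are inclusion/monotonicity (a surjection $P_v^{\,k}\twoheadrightarrow P_{k(v+1)-1}$, and $P_v\ge P_{v-1}$); the third is a single instance of the generalized Erd\H{o}s--Simonovits inequality of Lemma~\ref{lem:biggestgeneralizationES} with $2u=v-1$, $2v'-1=w$, $2(v'+l)+1=k(v+1)-1$. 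Raising the first to the power $w-v+1$ and the second to the power $k(v+1)-w-1$ and multiplying gives exactly $P_v^{\,kw+2k-w-1}\ge P_w^{\,k(v+1)-v}$, with no parameter search required.

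For the odd--even lower bound your ingredient list is essentially right but incomplete: besides the new inequality of Theorem~\ref{thm:generalizationoferdossimonovits2}, Dress--Gutman log-convexity, and monotonicity $y_{2u+1}\ge y_{2u}$, the paper's certificate also uses the inclusion inequality $2y_{2i+1}\ge y_{4i+3}$ (and more generally $y_{2i+1}+y_{2l-2}\ge y_{2i+2l}$). Concretely, one first proves $(i+1)y_{2i}+y_{2i+1}-(i+1)y_{2i+2}\ge 0$ and $y_{2i+1}+(i+1)y_{2l-2}-(i+1)y_{2l}\ge 0$ for $l\ge i+2$, telescopes these to obtain $(i+1)y_{2i}+(j-i)y_{2i+1}-(i+1)y_{2j}\ge 0$, and then applies $y_{2i+1}\ge y_{2i}$ to conclude $(j+1)y_{2i+1}\ge (i+1)y_{2j}$.
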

Note that unlike the statement of Kopparty-Rossman for $P_3$ and $P_n$ there are in fact only two cases, depending on the parity of $w$ and their result can be reformulated in this way. We split the proof into two cases, based on the parity of $w$ and start with even $w$ first.

\begin{lemma}
We have that $P_{2i+1}^{j+1} \geq P_{2j}^{i+1}$ when $2i+1 \leq 2j$. In particular, this implies that $\HDE(P_{2i+1}, P_{2j}) \geq \frac{i+1}{j+1}$ when $2i+1 \leq 2j$.
\end{lemma}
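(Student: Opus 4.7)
The plan is to exhibit the logarithmic form of the inequality, namely $(j+1) y_{2i+1} \geq (i+1) y_{2j}$, as a nonnegative combination of the defining inequalities of the lift cone $C$. Since Section~\ref{sec:troppaths} establishes $\projC = \troptn$, such a derivation automatically certifies the binomial inequality on all graph homomorphism numbers, and the HDE lower bound then follows by rearrangement.

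For the generic case $i \geq 1$ (so that $i \leq j-1$), I would apply Lemma~\ref{lem:forlowerboundd0} with the choice $u = i$, $v = 2j$, $t = i$; the hypotheses $t \leq \lfloor v/2 \rfloor$, $u \leq \lfloor (v-1)/2 \rfloor$, and $v \leq 2n+1$ all hold under $2i+1 \leq 2j$ (taking $n$ large enough). This yields $(2j - 2i) y_{2i+1} + 2(i+1) y_{2i} - 2(i+1) y_{2j} \geq 0$. Combining with the monotonicity inequality (5.3) at $u = i$, which says $y_{2i+1} \geq y_{2i}$ and is valid since $i \geq 1$, we may replace $y_{2i}$ by $y_{2i+1}$; collecting terms produces $2(j+1) y_{2i+1} - 2(i+1) y_{2j} \geq 0$, as required.

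The residual case $i = 0$ requires a separate argument, since (5.3) demands $u \geq 1$ and indeed $y_1 \geq y_0$ is not a valid inequality in general (it fails on graphs with many isolated vertices). Instead, I would iterate the inclusion inequality (5.4) at $u = 0$, namely $y_{2k+3} \leq y_1 + y_{2k+1}$, to obtain by induction on $k$ that $y_{2k+1} \leq (k+1) y_1$ for all $k \geq 0$. Specializing to $k = j$ and invoking monotonicity (5.3) at $u = j \geq 1$ gives $y_{2j} \leq y_{2j+1} \leq (j+1) y_1$, which is precisely the target for $i = 0$. The main subtlety of the argument is this bifurcation at $i = 0$: the clean application of Lemma~\ref{lem:forlowerboundd0} with $t = i$ breaks down because $y_0 \leq y_1$ fails, forcing us to exploit instead the inclusion structure of odd paths via (5.4) to handle the base case.
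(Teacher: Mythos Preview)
Your argument is correct and, for $i \geq 1$, lands on exactly the same intermediate inequality as the paper --- namely $(j-i)y_{2i+1} + (i+1)y_{2i} \geq (i+1)y_{2j}$ --- followed by the same finishing step via (5.3). The only difference is packaging: you invoke Lemma~\ref{lem:forlowerboundd0} with $u=t=i$, $v=2j$ directly, whereas the paper re-derives that inequality from scratch via a fresh conic combination of (5.2), (5.6), (5.4), and a general inclusion inequality. Your route is more economical.

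Your separate treatment of $i=0$ is a genuine improvement. The paper's proof appeals to (5.3) for $y_{2i+1} \geq y_{2i}$ uniformly in $i$, but (5.3) is stated only for $u \geq 1$, and indeed $y_1 \geq y_0$ is false in general (graphs with many isolated vertices). Your argument via iterated (5.4) plus (5.3) at $u=j$ closes this gap cleanly.
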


\begin{proof}
This is equivalent to showing that $(j+1)y_{2i+1} - (i+1)y_{2j} \geq 0$ where $y_v=\log(P_v)$.  

We first note that $$(i+1)y_{2i}+y_{2i+1} - (i+1)y_{2i+2} \geq 0$$ since it can be written as the following conic combination of inequalities for $C$ of type (5.2), (5.6), and (5.4):

\begin{align*}
\sum_{v=i}^{2i-1} (i+1)&(y_{2v}-2y_{2v+2} + y_{2v+4} \geq 0)\\
+ \frac{1}{2} & ((2i+2)y_{4i} - (2i+2) y_{4i+2} +y_{4i+3} \geq 0)\\
+ \frac{1}{2} & (2y_{2i+1} - y_{4i+3} \geq 0).
\end{align*}
Moreover, note that $$y_{2i+1} + (i+1)y_{2l-2} - (i+1) y_{2l} \geq 0$$ for any $l \geq i+2$ since it can be written as the following linear conic combination of inequalities of type (5.2) and a general inclusion inequality described in Section \ref{subsec:findknownineqs}:

\begin{align*}
\sum_{v=l-1}^{i+l-2} (i+l-v-1)&(y_{2v}-y_{2v+2}+y_{2v+4} \geq 0)\\
+ & (y_{2i+1}+y_{2l-2}-y_{2i+2l} \geq 0).
\end{align*}

Taking a conic combination of the previous two types of inequalities

\begin{align*}
&((i+1)y_{2i}+y_{2i+1} - (i+1)y_{2i+2} \geq 0)\\
+ \sum_{l= i+2 }^{j} &(y_{2i+1} + (i+1)y_{2l-2} - (i+1) y_{2l} \geq 0),
\end{align*}
we obtain $$(i+1)y_{2i}+(j-i) y_{2i+1} - (i+1) y_{2j} \geq 0.$$

Finally, since $y_{2i+1} \geq y_{2i}$ by inequalities of type (5.3), this last inequality implies that $(j+1) y_{2i+1} \geq (i+1) y_{2j}$.
\end{proof}

\begin{lemma}
We have that $\HDE(P_{2i+1}, P_{2j}) \leq \frac{i+1}{j+1}$ when $2i+1 \leq 2j$
\end{lemma}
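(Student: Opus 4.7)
I would prove the bound by exhibiting an explicit graph family that witnesses it. Take $G_n = K_{1,n}$, the star with $n$ leaves (i.e.\ the graph $S_n$ from Section~\ref{sec:examplesoftropicalizations}). Since $K_{1,n}$ is bipartite with the leaf side as an independent set, in any homomorphism $\varphi : P_k \to K_{1,n}$ the image of each vertex of $P_k$ is forced to lie on a fixed side of the bipartition once $\varphi(0)$ is chosen: every step onto the center is deterministic while every step onto a leaf contributes a factor of $n$. Splitting walks by whether $\varphi(0)$ is the center or a leaf, a direct count gives
\[
\hom(P_k; K_{1,n}) \;=\; n^{\lceil k/2 \rceil} \;+\; n^{\lfloor k/2 \rfloor + 1}.
\]
Specializing,
\[
\hom(P_{2i+1}; K_{1,n}) \;=\; 2 n^{i+1}, \qquad \hom(P_{2j}; K_{1,n}) \;=\; n^{j+1} + n^j.
\]

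Now suppose $\HDE(P_{2i+1}, P_{2j}) \geq c$. Then $\hom(P_{2i+1}; G) \geq \hom(P_{2j}; G)^{c}$ for every graph $G$; applying this to $G = K_{1,n}$ and taking logarithms yields
\[
\log 2 + (i+1)\log n \;\geq\; c \log\bigl(n^{j+1} + n^j\bigr)
\;=\; c\bigl((j+1)\log n + \log(1 + 1/n)\bigr).
\]
Dividing by $\log n$ and letting $n \to \infty$ gives $i+1 \geq c(j+1)$, that is, $c \leq \tfrac{i+1}{j+1}$. Since this holds for every admissible $c$, we conclude $\HDE(P_{2i+1}, P_{2j}) \leq \tfrac{i+1}{j+1}$, as desired.

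Conceptually, the star family is precisely the tropicalization witness we need: the normalized logarithm vector $\bigl(\log \hom(P_k; K_{1,n})/\log n\bigr)_{k=0}^{2n+1}$ converges to the ray $(1,1,2,2,3,3,\dots)$, which lies in $\mathcal{R}$ with parameters $s=1$, $b=1$, $\mathbf{d}=\mathbf{0}$ (and matches, after reindexing paths by vertex count, the extreme ray $(1,2,3,4,\dots,m+1)$ identified for star profiles). The extremal ratio between the $(2i+1)$st and $(2j)$th coordinates of this ray is exactly $\tfrac{i+1}{j+1}$, so no obstacle arises; the walk count is routine and the limit argument is immediate.
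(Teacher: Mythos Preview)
Your proof is correct. The core argument --- exhibit $K_{1,n}$, compute $\hom(P_k;K_{1,n})$ exactly, take logs, divide by $\log n$, and let $n\to\infty$ --- is clean and complete.

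The paper takes a different route: it invokes its general blow-up machinery (Section~\ref{subsec:blowup}), building a graph $B_m$ from the path $P_{2j}$ with weights $p(\{v,v+1\})=\tfrac{1}{j+1}$ on edges, $p(\{v\})=\tfrac{1}{j+1}$ on even vertices, and $p(\{v\})=0$ on odd vertices. Concretely, this $B_m$ is a chain of $j$ stars glued at their leaf-sets, each with roughly $m^{1/(j+1)}$ leaves. Your single star $K_{1,n}$ is in fact enough, because the relevant limiting ray $(1,1,2,2,3,3,\dots)$ is already realized by it; the chain structure in the paper's $B_m$ adds nothing for this particular bound. Your approach is thus more elementary and avoids any dependence on the blow-up framework, while the paper's proof has the virtue of being a direct instantiation of the general construction it develops throughout. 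One small remark: your final conceptual paragraph's aside about the star-profile extreme ray $(1,2,3,\dots,m+1)$ is a bit tangential (that ray records $\hom(S_k;\,\cdot\,)$, not $\hom(P_k;\,\cdot\,)$), but this does not affect the proof itself.
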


\begin{proof}
Recall the blow-up graph introduced in Section \ref{subsec:blowup}. For every $m\in \mathbb{N}$, we create a blow-up of $P_{2j}$ called $B_m$. We let $p(\{v, v+1\})=\frac{1}{j+1}$ for $0 \leq v \leq 2j-1$ and $p(\{v\})=\frac{1}{j+1}$ if $0 \leq v \leq 2j$ is even, and $p(\{v\})=0$ if $1 \leq v \leq 2j-1$ is odd. As $m\rightarrow \infty$, we will get $\hom(P_{2i+1}, B_m)\rightarrow (2i+1)\frac{1}{j+1}-i\frac{1}{j+1}=\frac{i+1}{j+1}$ since any homomorphism from $P_{2i+1}$ to the weighed version of $P_{2j}$ gives exactly $\frac{i+1}{j+1}$. Similarly, as $m\rightarrow \infty$, $\hom(P_{2j}, B_m) \rightarrow (2j)\frac{1}{j+1} - (j-1) \frac{1}{j+1}=\frac{j+1}{j+1}$ since the maximum of any homomorphism from $P_{2j}$ to the weighed version of $P_{2j}$ is $\frac{j+1}{j+1}$ (here taking the maximum over homomorphisms is not extraneous, as a homomorphism that sends endpoints of $P_{2j}$ to an even vertex of $P_{2j}$ gives only $\frac{j}{j+1}$).

So since $\hom(P_{2i+1}, B_m)\rightarrow \frac{i+1}{j+1}$ and $\hom(P_{2j}, B_m)\rightarrow 1$ as $m\rightarrow \infty$, $\HDE(P_{2i+1}, P_{2j}) \leq \frac{i+1}{j+1}$.
\end{proof}

This finishes the proof of the even $w$ case of Theorem~\ref{thm:HDEf} and we now deal with the case of odd $w$.

\begin{lemma}
We have that $P_v^{kw+2k-w-1} \geq P_w^{k(v+1)-v}$ when $v$ and $w$ are both odd, $v\leq w$, and where $k$ is the smallest integer such that $k(v+1)\geq w+1$.
\end{lemma}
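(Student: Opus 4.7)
The plan is to derive the claimed inequality $P_v^A \ge P_w^B$, where $A := kw + 2k - w - 1$ and $B := k(v+1) - v$, by chaining three known binomial inequalities in path homomorphism numbers. Set $r := k(v+1) - (w+1)$; this is a nonnegative even integer (since both $v+1$ and $w+1$ are even) and satisfies $r < v+1$ by minimality of $k$.

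First I dispose of the degenerate case $r = 0$. Here $w = k(v+1) - 1$, and a direct calculation gives $A = kB$, so the inequality reduces to $P_v^k \ge P_w$. This follows from $k - 1$ applications of the inclusion inequality $P_a P_b \ge P_{a+b+1}$ for odd $a, b$ (Section~\ref{subsec:findknownineqs}): by induction on $k$ one obtains $P_v^k \ge P_{k(v+1) - 1} = P_w$.

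For $r \ge 2$ one automatically has $v \ge 3$, since odd $v = 1$ forces $k = (w+1)/2$ and hence $r = 0$. The plan now combines three ingredients. The same iterated inclusion gives $P_v^k \ge P_{k(v+1)-1} = P_{w+r}$. Next, applying Lemma~\ref{lem:biggestgeneralizationES} with parameters $a = v-1$ (which is even since $v$ is odd), $b = w$, $c = w + r$ yields
$$P_{v-1}^{r} \cdot P_{w+r}^{w-v+1} \ge P_w^{w+r-v+1};$$
the hypotheses of that lemma are met because $v - 1 \le w$ (as $v \le w$) and $r \ge 2$ is exactly the condition that forces the corresponding $l \ge 0$ in the lemma's variables. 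Finally, the monotonicity $P_{v-1} \le P_v$, valid since $v \ge 3$, gives $P_{v-1}^{r} \le P_v^{r}$. Raising the inclusion step to the $(w-v+1)$-th power and chaining everything together produces
$$P_v^{r + k(w-v+1)} \;\ge\; P_{v-1}^{r}\,P_v^{k(w-v+1)} \;\ge\; P_{v-1}^{r}\,P_{w+r}^{w-v+1} \;\ge\; P_w^{w+r-v+1},$$
and short arithmetic checks verify $r + k(w-v+1) = A$ and $w+r-v+1 = B$, completing the argument.

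The main obstacle is the bookkeeping: making sure the parameter restrictions of Lemma~\ref{lem:biggestgeneralizationES} are satisfied for the specific choice $a = v-1$, $b = w$, $c = w+r$, and carefully carrying out the two arithmetic identities. Nothing more subtle is needed; in particular, the potentially troublesome case $v = 1$ (where $P_{v-1} = P_0$ and monotonicity fails on graphs with isolated vertices) never occurs in the $r \ge 2$ branch, as shown above.
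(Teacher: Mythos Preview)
Your proof is correct and follows essentially the same route as the paper: both combine the iterated inclusion $P_v^k \ge P_{k(v+1)-1}$, the monotonicity $P_v \ge P_{v-1}$, and the generalized Erd\H{o}s--Simonovits inequality (Lemma~\ref{lem:biggestgeneralizationES}) applied with even index $v-1$ and odd indices $w$, $k(v+1)-1$. Your write-up is in fact slightly more careful than the paper's in separating out the degenerate case $r=0$ (which covers $v=1$ and avoids invoking $P_1 \ge P_0$) and in explicitly verifying the parameter constraints needed for Lemma~\ref{lem:biggestgeneralizationES}.
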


\begin{proof}
The above inequality following immediately from combining three inequalities we have already seen:
$$P_{v}^k\geq P_{k(v+1)-1},  \,\,\,\, P_v\geq P_{v-1},\,\,\,\, P_{v-1}^{k(v+1)-w-1} P^{w-v+1}_{k(v+1)-1}\geq P_w^{k(v+1)-v}.$$
The first two inequalities are just inclusion inequalities. Although the last inequality looks complicated, it is just our generalization of Erd\H{o}s-Simonovits (since $v-1$ is even) presented in Lemma \ref{lem:biggestgeneralizationES}.
\end{proof}

\begin{lemma}
We have that $\HDE(P_{v}, P_{w}) \leq \frac{k(v+1)-v}{kw+2k-w-1}$ when $v$ and $w$ are both odd, $v\leq w$, and where $k$ is the smallest integer such that $k(v+1)\geq w+1$.
\end{lemma}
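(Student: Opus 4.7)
The plan is to mirror the even $w$ case by exhibiting a sequence of graphs $B_m$ for which
$$\log \hom(P_v, B_m)\big/\log \hom(P_w, B_m) \;\longrightarrow\; c := \frac{k(v+1)-v}{kw+2k-w-1},$$
which immediately forces $\HDE(P_v,P_w) \leq c$. Rather than designing the blow-up weights directly, I will exhibit a ray $\mathbf{r} \in \troptn$ with $r_v/r_w = c$ and then appeal to Theorem \ref{thm:construction}, which turns any such ray into an explicit graph family.

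Writing $v = 2i+1$ and $w = 2j+1$, the minimality of $k$ gives $(k-1)(i+1) \leq j < k(i+1)$. Following the structure of the lower-bound proof, I will choose parameters so that the three inequalities driving that proof are simultaneously tight: the inclusion $P_v^k \geq P_{k(v+1)-1}$, the monotonicity $P_v \geq P_{v-1}$, and the generalized Erd\H{o}s--Simonovits inequality of Lemma \ref{lem:biggestgeneralizationES}. Concretely, I would set $s = 2(k-1)$, $d_0 = 1$, $b = s+d_0 = 2k-1$, and
$$d_\ell \;=\; \begin{cases} 1 & \text{if } \ell \in L := \{0, i+1, 2(i+1), \dots, (k-1)(i+1)\}, \\ 0 & \text{otherwise,} \end{cases}$$
so that $f = (k-1)(i+1)$ and the support of $\mathbf{d}$ is symmetric about $f/2$. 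Since $(k-1)(i+1)\leq j$, exactly $k$ elements of $L$ lie in $[0,j]$ and only $d_0 \in L$ lies in $[0,i]$, giving
$$r_v = (i+1)s + 1 = 2(k-1)(i+1) + 1, \qquad r_w = (j+1)s + k = 2(k-1)(j+1) + k,$$
and a routine computation confirms $r_v/r_w = c$.

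Next I would verify that $\mathbf{r}$ lies in $\mathcal{R}_{s,b}$ (Definition \ref{def:raysproperties}): nonnegativity, $d_0 = b-s$, the tight bound $2\sum_{\ell\geq 1} d_\ell = 2(k-1) = s$, the symmetry $d_\ell = d_{f-\ell}$, and the vanishing of $d_\ell$ on the appropriate interior gap all follow from the arithmetic structure of $L$. The main combinatorial step is the log-concavity
$$\sum_{\ell=0}^{u} d_\ell \;\geq\; \sum_{\ell=v'-u}^{v'} d_\ell, \qquad 0 \leq u < v' \leq n,$$
which in our setting reduces to the elementary number-theoretic statement that an arithmetic progression with common difference $i+1$ has at most $\lfloor u/(i+1) \rfloor + 1$ integer points in any interval of length $u+1$, with equality precisely when the interval is $[0,u]$. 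Once this is established, $\mathbf{r}\in \mathcal{R}\subseteq \troptn$ and Theorem \ref{thm:construction} yields blow-up graphs $B_m$ with $\log\hom(P_t, B_m)/\log m \to r_t$ for every $t$, giving $\HDE(P_v,P_w) \leq r_v/r_w = c$.

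The main obstacle will be the log-concavity verification in the boundary regimes $u \geq f$ or $v' - u < 1$, where one must also confirm the bound still holds when the interval $[v'-u, v']$ extends past the end of $L$; this is ultimately the same AP-counting fact but requires careful bookkeeping. The degenerate cases $k=1$ (where $v=w$ and $c=1$) and the case $(k-1)(i+1) = j$ (where $d_0$ sits at the boundary of $L$) are easy to handle separately.
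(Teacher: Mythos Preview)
Your proposal is correct and is essentially identical to the paper's proof: the paper chooses the same parameters $s=2(k-1)$, $b=2k-1$, and $\mathbf{d}$ supported on the arithmetic progression $\{0,\,i+1,\,2(i+1),\dots,(k-1)(i+1)\}$, asserts that the resulting ray lies in $\mathcal{R}_{s,b}$, and reads off $r_v$ and $r_w$ exactly as you do. You are in fact more careful than the paper, which simply states that $\mathbf{r}\in\mathcal{R}_{s,b}$ without spelling out the verification of conditions (c)--(g) of Definition~\ref{def:raysproperties}; your AP-counting argument for condition (c) and your symmetry observation for (d)--(f) are the right way to fill that in.
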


\begin{proof}
Let $s=2(k-1)$, $b=2k-1$, and $\mathbf{d}=(d_0, d_1, \ldots)$ where 
$$d_u=\left\{\begin{array}{ll}
1 & \textup{if } u\equiv 0 \ \textup{mod} \frac{v+1}{2} \textup{ and } u\leq(k-1)\frac{(v+1)}{2}\\
0 & \textup{otherwise.}
\end{array}\right. $$
Note that the ray $\mathbf{r}$ built from this $b, s, \mathbf{d}$ is in $\mathcal{R}_{s,b}$ and is thus realizable. Observe that $2\cdot (k-1) \frac{(v+1)}{2} +1 \leq w$ so long as $l>0$, which we know is true since if $l=0$, then $w=(k-1)(v+1)$, and since $(v+1)$ is even and thus $w$ is even, we reach a contradiction.

We have that $$r_v = \frac{v+1}{2}s + \sum_{u=0}^{v-1}{2} d_u=\frac{v+1}{2}\cdot (2k-2) +1 = k(v+1)-v,$$ and $$r_w=\frac{w+1}{2}s+\sum_{u=0}^{\frac{w-1}{2}} d_u = (w+1)(k-1) + k = kw+2k-w-1.$$ So we know there exists a blow-up graph $B_m$ for which, as $m\rightarrow\infty$, $\log(\hom(P_v, B_m))\rightarrow k(v+1)-v$ and $\log(\hom(P_w, B_m))\rightarrow kw+2k-w-1$, so  $\HDE(P_{v}, P_{w}) \leq \frac{k(v+1)-v}{kw+2k-w-1}$. 
\end{proof}

This finishes the proof of Theorem~\ref{thm:HDEf}.

\subsection{Previously known inequalities}\label{subsec:recoveroldineqs}

Binomial inequalities between numbers of homomorphisms of paths have been studied for a long time. Erd\H{o}s and Simonovits proved in \cite{erdossimonovits} that $$P_0^{k-1}P_k \geq P_1^k.$$ Lagarias, Mazo, Shepp and McKay showed in \cite{Lagarias} that $$P_0P_{2a+2b} \geq P_{2a+b}P_b.$$ In \cite{DressGutman}, Dress and Gutman showed that $$P_{2a}P_{2b} \geq P_{a+b}^2.$$ In \cite{Hemmecke}, Hemmecke, Kosub, Mayr, T\"{a}ubig and Weihmann generalize all previous listed inequalities by certifying the following two types of inequalities: $$P_{2a}P_{2(a+b+c)} \geq P_{2a+c}P_{2(a+b)+c}$$ and $$P_{2l+pk}P_{2l}^{k-1}\geq P_{2l+p}^k.$$ Our results show that all of these inequalities (and any other true binomial inequality involving paths) are implied by the binomial inequalities defining $C$. We show here how the two most general inequalities can be recovered.

As was seen in the previous section, it is more convenient to check the validity of a binomial inequality for $\NU$ by checking the validity of the corresponding linear inequality for $\tropNU$. There, we simply need to show that this linear inequality can be written as a conic combination of the defining inequalities for $\tropNU$. If there is no such conic combination, then the original pure binomial inequality is not valid for $\NU$. Finding such a conic combination can be done via a linear program. Indeed, suppose one wants to check whether $\prod_{i\in I_1} P_i^{\alpha_i} \geq \prod_{i\in I_2} P_i^{\beta_i}$ is a valid inequality, where $I_1, I_2 \subseteq \{0, 1, 2, \ldots, 2n+1\}$ for some $n\in \mathbb{N}$. This is equivalent to checking that $\sum_{i\in  I_1} \alpha_i y_i - \sum_{i\in I_2} \beta_i y_i \geq 0$ on $\tropn$. Thus one can simply minimize $\sum_{i\in  I_1} \alpha_i y_i - \sum_{i\in I_2} \beta_i y_i$ over the cone $C$ given at the beginning of Section \ref{sec:troppaths}. If the optimal value is 0, then the inequality is valid, and the dual solution gives the conic combination of inequalities of $C$ that yields $\sum_{i\in  I_1} \alpha_i y_i - \sum_{i\in I_2} \beta_i y_i \geq 0$. Otherwise, the inequality is not valid. 

We now show that $P_{2a}P_{2(a+b+c)} \geq P_{2a+c}P_{2(a+b)+c}$ can indeed be recovered in that way. 
Indeed, the conic combination

\begin{align*}
&\sum_{i=a}^{a+\lfloor \frac{c}{2} \rfloor -1} (i+1-a)\cdot (y_{2i}-2y_{2i+2}+y_{2i+4} \geq 0)\\
\bigg[+ & \frac{1}{2}\cdot (y_{2a+c-1}-2y_{2a+c}+y_{2a+c+1}\geq 0)\bigg] \\
+ & \sum_{i=a+\lfloor \frac{c}{2} \rfloor}^{a+b+\lfloor \frac{c}{2} \rfloor-2} \frac{c}{2} \cdot (y_{2i}-2y_{2i+2}+y_{2i+4}\geq 0)\\
\bigg[+ & \frac{1}{2}\cdot (y_{2(a+b)+c-1}-2y_{2(a+b)+c} + y_{2(a+b)+c+1} \geq 0)\bigg]\\
+ & \sum_{i=a+b+\lfloor \frac{c}{2} \rfloor}^{a+b+c-2} (a+b+2c'-1-i)\cdot (y_{2i}-2y_{2i+2}+y_{2i+4} \geq 0),\\
\end{align*}
where the lines in brackets are present only if $c$ is odd, yields $y_{2a} - y_{2a+c} - y_{2(a+b)+c}+ y_{2(a+b+c)}\geq 0$ as desired. So $P_{2a}P_{2(a+b+c)} \geq P_{2a+c}P_{2(a+b)+c}$ can be recovered only with the binomials of the form $P_{2v}P_{2v+4}\geq P_{2v+2}^2$ when $c$ is even, and $P_{2v}P_{2v+4}\geq P_{2v+2}^2$ with $P_{2v}P_{2v+2}\geq P_{2v+1}^2$ when $c$ is odd.

For $P_{2l+pk}P_{2l}^{k-1}\geq P_{2l+p}^k$, if $pk$ is even, this can again be recovered only with the binomials of the form $P_{2v}P_{2v+4}\geq P_{2v+2}^2$ and $P_{2v}P_{2v+2}\geq P_{2v+1}^2$ since the conic combination 

\begin{align*}
& \sum_{i=l}^{l+\lceil\frac{p}{2}\rceil -2}(l+1-i)(k-1)\cdot (y_{2i}-2y_{2i+2}+y_{2i+4} \geq 0)\\
\bigg[+ &\frac{k}{2} \cdot (y_{2l+p-1} - 2y_{2l+p} + y_{2l+p+1}\geq 0)\bigg]\\
+ & \sum_{i=l+\lceil\frac{p}{2}\rceil-1}^{l+\frac{pk}{2} - 2} (l+\frac{p}{2}k-1-i) \cdot (y_{2i}-2y_{2i+2} + y_{2i+4} \geq 0),
\end{align*}
where the line in brackets is present only if $p$ is odd, yields $(k-1)y_{2l}-ky_{2l+p}+y_{2l+pk} \geq 0$ as desired. Otherwise, if $p$ and $k$ are both odd, $P_{2l+pk}P_{2l}^{k-1}\geq P_{2l+p}^k$ can be retrieved from the generalized Erd\H{o}s-Simonovits inequalities. Indeed, the conic combination

$$\sum_{i=\frac{2l+p+1}{2}}^{\frac{2l+pk-1}{2}} \frac{kp}{(2i+1-2l)(2i-1-2l)}\cdot (y_{2l}-(2i+1-2l)y_{2i-1} + (2i-1-2l)y_{2i+1} \geq 0)$$
yields $(k-1)y_{2l}-ky_{2l+p}+y_{2l+pk} \geq 0$ as desired.

To the best of our knowledge for a parametrized family of valid binomial inequalities for $\NU$, there does not necessarily exist a nice unified parametrized family of conic combinations that gives certificates for the entire family. One may have to find several  different families of parameterized certificates. 
Moreover, a natural inequality may require a complicated conic combination of the extremal inequalities that, though maybe not as natural, can be certified more easily.

 \bibliographystyle{alpha}
\bibliography{tropofpathsreferences}
 
\end{document}